\definecolor{winered}{rgb}{0.8,0,0}
\definecolor{deepblue}{rgb}{0,0,0.8}
\newtheorem{thm}{Theorem}[subsection]
\newtheorem{prop}[thm]{Proposition}
\newtheorem{cor}[thm]{Corollary}
\newtheorem{lem}[thm]{Lemma}
\newtheorem*{theorem.3.4.3}{Theorem 3.4.3}
\newtheorem*{theorem.5.2.6}{Theorem 5.2.6}
\newtheorem*{prop1}{Proposition 1}
\theoremstyle{definition}
\newtheorem{df}[thm]{Definition}
\newtheorem{rmk}[thm]{Remark}
\newtheorem{exm}[thm]{Example}
\newtheorem{const}[thm]{Construction}
\theoremstyle{remark}
\numberwithin{equation}{section}
\newcommand{\A}{\mathbb{A}}
\newcommand{\C}{\mathbb{C}}
\newcommand{\E}{\mathbb{E}}
\newcommand{\G}{\mathbb{G}}
\newcommand{\N}{\mathbb{N}}
\renewcommand{\P}{\mathbb{P}}
\newcommand{\R}{\mathbb{R}}
\newcommand{\Sphere}{\mathbb{S}}
\newcommand{\Z}{\mathbb{Z}}
\newcommand{\cA}{\mathcal{A}}
\newcommand{\cC}{\mathcal{C}}
\newcommand{\cD}{\mathcal{D}}
\newcommand{\cE}{\mathcal{E}}
\newcommand{\cF}{\mathcal{F}}
\newcommand{\cG}{\mathcal{G}}
\newcommand{\cV}{\mathcal{V}}
\newcommand{\rB}{\mathrm{B}}
\newcommand{\sX}{\mathscr{X}}
\newcommand{\et}{{\acute{e}t}}
\newcommand{\Et}{\mathbf{\acute{E}t}}
\newcommand{\isoEtAff}{\mathbf{iso\acute{E}tAff}}
\DeclareMathOperator{\Hom}{Hom}
\DeclareMathOperator{\Tor}{Tor}
\DeclareMathOperator{\Spec}{Spec}
\newcommand{\colim}{\mathop{\mathrm{colim}}}
\newcommand{\id}{\mathrm{id}}
\newcommand{\ul}{\underline}
\newcommand{\cofib}{\mathrm{cofib}}
\newcommand{\fib}{\mathrm{fib}}
\newcommand{\Set}{\mathbf{Set}}
\newcommand{\Shv}{\mathbf{Shv}}
\newcommand{\Psh}{\mathbf{Psh}}
\newcommand{\Sp}{\mathrm{Sp}}
\newcommand{\SpO}{\mathrm{Sp}^O}
\newcommand{\CAlg}{\mathrm{CAlg}}
\newcommand{\Comm}{\mathbf{Comm}}
\newcommand{\CommG}{\mathbf{Comm}_G}
\newcommand{\NAlg}{\mathrm{NAlg}}
\newcommand{\Mod}{\mathrm{Mod}}
\newcommand{\Sch}{\mathbf{Sch}}
\newcommand{\Aff}{\mathbf{Aff}}
\newcommand{\Fun}{\mathrm{Fun}}
\newcommand{\Map}{\mathrm{Map}}
\newcommand{\Nerve}{\mathrm{N}}
\newcommand{\infPsh}{\mathcal{P}\mathrm{sh}}
\newcommand{\infShv}{\mathcal{S}\mathrm{hv}}
\newcommand{\Span}{\mathrm{Span}}
\newcommand{\Fin}{\mathrm{Fin}}
\newcommand{\FinGpd}{\mathrm{FinGpd}}
\newcommand{\PSigma}{\mathcal{P}_{\Sigma}}
\newcommand{\Mack}{\mathrm{Mack}}
\newcommand{\Green}{\mathrm{Green}}
\newcommand{\Ho}{\mathrm{Ho}}
\newcommand{\Fold}{\mathrm{Fold}}
\newcommand{\res}{\mathrm{res}}
\newcommand{\tran}{\mathrm{tran}}
\newcommand{\tfib}{\mathrm{tfib}}
\newcommand{\pt}{\mathrm{pt}}
\newcommand{\EM}{\mathrm{H}}
\newcommand{\infH}{\mathcal{H}}
\newcommand{\infHpt}{\mathcal{H}_{\bullet}}
\newcommand{\infSH}{\mathcal{SH}}
\newcommand{\Bdi}{\mathrm{B}^{\mathrm{di}}}
\newcommand{\Bsigma}{\mathrm{B}^\sigma}
\newcommand{\Ncy}{\mathrm{N}^\mathrm{cy}}
\newcommand{\Ndi}{\mathrm{N}^\mathrm{di}}
\newcommand{\Nsigma}{\mathrm{N}^\sigma}
\newcommand{\THH}{\mathrm{THH}}
\newcommand{\THR}{\mathrm{THR}}
\newcommand{\THO}{\mathrm{THO}}
\newcommand{\TC}{\mathrm{TC}}
\newcommand{\TCR}{\mathrm{TCR}}
\newcommand{\sd}{\mathrm{sd}}
\newcommand{\Ztwo}{{\mathbb{Z}/2}}
\newcommand{\Cat}{\mathrm{Cat}}
\newcommand*{\KR}{\mathbf{KR}}
\newcommand*{\KO}{\mathbf{KO}}
\newcommand*{\SHG}{\mathcal{SH}^{\Ztwo}(k)}
\begin{document}
\author{Jens Hornbostel}
\address{Bergische Universit{\"a}t Wuppertal,
Fakult{\"a}t f\"ur Mathematik und Naturwissenschaften
\\
Gau{\ss}strasse 20, 42119 Wuppertal, Germany}
\email{hornbostel@math.uni-wuppertal.de}
\author{Doosung Park}
\address{Bergische Universit{\"a}t Wuppertal,
Fakult{\"a}t f\"ur Mathematik und Naturwissenschaften
\\
Gau{\ss}strasse 20, 42119 Wuppertal, Germany}
\email{dpark@uni-wuppertal.de}

\title{Real topological Hochschild homology of schemes}
\subjclass{Primary 19D55; Secondary 11E70, 16E40, 55P91}
\keywords{real topological Hochschild homology, equivariant homotopy theory, isovariant \'etale descent}
\begin{abstract}
We prove that real topological Hochschild homology $\THR$ for schemes with involution satisfies base change and descent for the $\Ztwo$-isovariant \'etale topology. As an application, we provide computations for the projective line (with and without involution) and the higher dimensional projective spaces.
\end{abstract}
\maketitle

\section{Introduction}

Hochschild and cyclic homology and their refinements $\THH$ and $\TC$ have been extensively studied for many decades, both for their own sake and for their deep connections with algebraic $K$-theory via traces, see e.g.\ \cite{BHM93}. Standard textbook references include \cite{Lo} and \cite{DGM13}. More recently, Bhatt-Morrow-Scholze \cite{BMS19} have introduced a filtration on $\THH$ and $\TC$ that is strongly related to integral $p$-adic Hodge theory. Hahn-Raksit-Wilson \cite{HRW} have provided an alternative construction of the filtration that applies to commutative ring spectra satisfying certain assumptions.

\medskip

Both topological and algebraic $K$-theory have real or hermitian refinements. This is also true for $\THH$ and $\TC$, but the serious study of their real variants $\THR$ and $\TCR$ has just started, see e.g.\ \cite{DO19}, \cite{DMPR21}, and \cite{QS}. These theories apply to very different branches in algebra and geometry: commutative rings, group rings, schemes, ring spectra, ... all with or without a non-trivial involution. Concerning the $\Ztwo$-equivariant cyclotomic trace, we understand that there is work in progress by Harpaz, Nikolaus, and Shah studying this map in the very general setting of stable Poincar\'e $\infty$-categories. (When studying traces, beware of the difference between rings, which is what most algebraic $K$-theorists and algebraic geometers look at, and ring spectra, which are the input of $\THH$ and $\TC$.)

\medskip 

The current article contributes to a better understanding of $\THR$ in algebraic geometry, although some of our results also apply to other settings. Our first main result is the base change result in Theorem
\ref{etale.5} for the isovariant \'etale topology. This then is one of the main ingredients in the proof of the following isovariant \'etale descent theorem for $\THR$.

\begin{theorem.3.4.3}
The presheaf
\[
\THR\in \infPsh(\Aff_{\Ztwo},\Sp_{\Ztwo})
\]
satisfies isovariant \'etale descent, where $\Aff_{\Ztwo}$ denotes the category of affine schemes with involutions, and $\Sp_{\Ztwo}$ denotes the $\infty$-category of $\Ztwo$-spectra.
\end{theorem.3.4.3}

In the non-equivariant case, similar results have been established for $\THH$ and $\TC$ by Weibel-Geller \cite{GW91} and Geisser-Hesselholt \cite{GH}.

\medskip

This descent theorem implies in particular that $\THR$ satisfies the equivariant Zariski-Mayer-Vietoris
property for affine schemes with involutions. This can be used to extend $\THR$ to non-affine schemes with involutions, see 
Definition \ref{etale.34}, in a way that is compatible with existing definitions of $\THH$, see Proposition \ref{etale.35}.

\medskip

Using this Zariski-Mayer-Vietoris theorem and various explicit computations of $\THR$ of products of monoid rings and maps between them, we are able to compute $\THR(X)$ for $X=\P^1,\P^{\sigma}$ and more generally $\P^n$ with trivial involution, see Theorems \ref{period.3}, \ref{period.1} and \ref{proj.2}.
Here $\P^{\sigma}$ denotes the projective line with the involution switching the homogeneous coordinates.
We recall that Blumberg and Mandell \cite{BM} compute $\THH(\P^n)$. For $\P^n$ with the trivial involution, we obtain the following:

\begin{theorem.5.2.6}
For any separated scheme with involution $X$ and integer $n\geq 0$, there is an equivalence of $\Ztwo$-spectra
\[
\THR(X\times \P^{n})
\simeq
\left\{
\begin{array}{ll}
\THR(X)
\oplus
\bigoplus_{j=1}^{\lfloor n/2 \rfloor} i_*\THH(X) &
\text{if $n$ is even,}
\\
\THR(X)
\oplus
\bigoplus_{j=1}^{\lfloor n/2 \rfloor} i_*\THH(X) \oplus \Sigma^{n(\sigma-1)}\THR(X) &
\text{if $n$ is odd.}
\end{array}
\right.
\]
\end{theorem.5.2.6}

For a $\Ztwo$-spectra $E$, $i_*E:=E \oplus E$ with the obvious involution, see \eqref{etale.8.1}.
We refer to Remarks \ref{krthr}, \ref{krpn}, and \ref{thrpn} for a comparison with hermitian and real algebraic $K$-theory.
\medskip

Recall that unlike algebraic and hermitian $K$-theory, $\THH$ and $\THR$ are not $\A^1$-invariant even on regular schemes. On the other hand, $\THH$ and $\TC$ do extend to log schemes, and using descent, trivial $\P^1$-bundle formula, and computations on the log schemes $(\P^n,\P^{n-1})$, can be shown to be representable in the log-variant of the $\P^1$-stable motivic Morel-Voevodsky homotopy category. This is done in the very recent joint work \cite{BPO2} of the second author with Federico Binda and Paul Arne {\O}stv{\ae}r.
We refer to \cite{BPOCras} for an overview of their work.
The results of this article are expected to provide most of the necessary ingredients for showing
stable representability of $\THR$ or at least its fixed points $\THO$ in the corresponding equivariant log-homotopy categories. Furthermore, the work of Quigley-Shah \cite{QS} allows us to extend many results of this article including Theorems \ref{etale.7} and \ref{proj.2} to $\TCR$. The second author hopes to carry out motivic representability of $\THR$ and $\TCR$ in forthcoming work.

\medskip

We conclude with a short overview of the different sections. Section \ref{sec2} discusses some generalities about commutative ring spectra with involutions and their $\THR$.
This includes the study of several functors between $\Ztwo$-equivariant and non-equivariant categories, including the norm and equivariant notion of flatness.
The necessary background on $G$-stable equivariant homotopy theory for finite groups $G$, both ${\infty}$- and model categorical, is provided in Appendix \ref{secA}. 
Section \ref{sec3} reviews and extends several definitions for Grothendieck topologies on schemes with involutions, notably Thomason's isovariant \'etale topology. Base change and descent for $\THR$ are established in subsections \ref{sec3.2} and \ref{sec3.4}.
Note that the proof of \'etale base change in subsection \ref{sec3.2} heavily relies on various results about Green functors established in subsections \ref{seca.4} and \ref{seca.5}.
Section \ref{sec4} recollects and extends material on real and dihedral nerves, which is crucial when computing $\THR$ of spherical monoid rings and of monoid rings over Eilenberg MacLane spectra, and where the monoids may have involutions. These monoid ring computations together with isovariant Zariski descent are then used in the computations for projective spaces in section \ref{sec5}.

\medskip

\textit{Acknowledgements.} This research was conducted in the framework of the DFG-funded research training group GRK 2240: {\em Algebro-
Geometric Methods in Algebra, Arithmetic and Topology}. We are pleased to thank the referee for a very precise and helpful report.

\medskip

\textbf{Please also note the addendum at the end of this article.}

\section{Real topological Hochschild homology of rings with involutions}
\label{sec2}

\begin{rmk}\label{krthr}
$\THR$ is to real algebraic $K$-theory $\KR$ what $\THH$ is to algebraic $K$-theory.
Hence we recall some recent results on {\em real algebraic $K$-theory} $\KR$. This is a $\Ztwo$-equivariant motivic spectrum constructed in \cite{HuKO} and \cite{Ca}. We recover hermitian $K$-theory  when restricting $\KR$ to schemes with trivial $\Ztwo$-action. (This is one incarnation of the philosophy ``the fixed points of $\KR$ are hermitian $K$-theory'' in the world of presheaves on $\Ztwo$-schemes). Forgetting the action, we recover Voevodsky's algebraic $K$-theory spectrum $KGL$. As observed in \cite[section 7.3]{Xi}, see also \cite{Ca}, Schlichting's techniques generalize to show that $\KR$ is representable in $\SHG$. 
In particular, $\KR$ satisfies equivariant Nisnevich descent. Here and below, following \cite{Ca} and \cite{HuKO}, we consider $\KR$ as a motivic spectrum with respect to the circle $T^{\rho} \simeq \P^1 \wedge \P^{\sigma}$, where $\rho$ is the regular representation of $\Ztwo$, and $\P^{\sigma}$ denotes $\P^1$ with involution switching the homogeneous coordinates. It might be more consistent with the notations for $\THH$ to denote the (motivic) hermitian $K$-theory spectrum on schemes with involution by $\KO$, and to reserve the notation $\KR$ for a (motivic) spectrum with involution whose fixed points are $\KO$. For further results on $\KR$ and projective spaces we refer to Remarks \ref{krpn} and \ref{thrpn} below. 
\end{rmk}

Throughout this section, we fix morphisms of finite groupoids
\begin{equation}
\label{etale.0.2}
\pt
\xrightarrow{i}
\rB (\Z/2)
\xrightarrow{p}
\pt,
\end{equation}
where $\rB G$ denotes the finite groupoid consisting of a single set $*$ with $\Hom_{\rB G}(*,*):=G$.
According to section \ref{orth.sec}, we often use the alternative notation
\[
N^{\Ztwo}=i_{\otimes},
\;
\Phi^{\Ztwo}=p_{\otimes},
\;
\iota=p^*,
\text{ and }
(-)^G=p_*
\]
instead of the notation in section \ref{equivinfty.sec}.
In particular, all functors in the sequel are $\infty$-functors, which admit lifts to Quillen functors between model categories.

We then have adjoint pairs
\begin{equation}
\label{etale.0.1}
i^*: \Sp_{\Ztwo} \rightleftarrows \Sp : i_*
\text{ and }
\iota : \Sp \rightleftarrows \Sp_{\Ztwo} : (-)^G
\end{equation}
and functors 
\[
N^{\Ztwo} : \Sp \to \Sp_{\Ztwo}
\text{ and }
\Phi^{\Ztwo} : \Sp_{\Ztwo} \to \Sp.
\]
By \cite[Theorem 7.12]{Schwede}, the pair of functors $(i^*,\Phi^{\Ztwo})$ is conservative.
We will use this fact frequently.
We also have adjoint pairs
\begin{gather*}
N^{\Ztwo}:\CAlg \rightleftarrows \NAlg_{\Ztwo} : i^*,
\text{ }
i^*:\NAlg_{\Ztwo}\rightleftarrows \CAlg : i_*,
\\
\text{ and }
\iota : \CAlg \rightleftarrows \CAlg_{\Ztwo} : (-)^{\Ztwo}
\end{gather*}
and a colimit preserving functor
\[
\Phi^{\Ztwo}:\CAlg_{\Ztwo}\to \CAlg. 
\]
We refer sections \ref{equivinfty.sec} and \ref{orth.sec} for further properties of all these functors.

We now define real topological Hochschild homology for commutative ring spectra with involution.

\subsection{Definition of \texorpdfstring{$\THR$}{THR}}

\begin{df}
\label{etale.14}
Suppose $A\in \NAlg_{\Ztwo}$. For abbreviation, we set
\[
A^{\wedge \Ztwo}
:=
N^{\Ztwo}i^* A.
\]
Since $N^{\Ztwo}$ is left adjoint to $i^*$, we have the counit map $A^{\wedge \Ztwo}\to A$.
We use this map to define
\[
\THR(A)
:=
A\wedge_{A^{\wedge \Ztwo}} A\in \NAlg_{\Ztwo},
\]
which is called the  \emph{real topological Hochschild homology of $A$}.
The first $\wedge$ in the formulation of $\THR$ is the pushout in $\NAlg_{\Ztwo}$.
Under a certain flatness condition, this is equivalent to the B\"okstedt model of the real topological Hochschild homology, see \cite[Theorem, p.\ 65]{DMPR21}.
Note also that it is possible to define $\THR$ for $\Ztwo$-spectra with slightly less structure, e.g. for $\mathbb{E}_{\sigma}$-algebras as in \cite{AKGH}.

The map to the second smash factor in $A\wedge_{A^{\wedge \Ztwo}} A$ gives a canonical map
\begin{equation}
\label{etale.14.1}
A\to \THR(A).
\end{equation}

In analogy with hermitian $K$-theory $\KO$ and real algebraic $K$-theory $\KR$, we  define
\[
\THO(A) = (\THR(A))^{\Ztwo} \simeq (A \wedge_{N^{\Ztwo} i^*A} A)^{\Ztwo} \in \CAlg
\]
for $A \in \NAlg_{\Ztwo}$. 
For $B\in \NAlg=\CAlg$, recall that the \emph{topological Hochschild homology of $B$} is defined to be
\[
\THH(B)
:=
B\wedge_{B\wedge B} B \in \CAlg.
\]
which in turn generalizes the classical \cite[Proposition 1.1.13]{Lo} from rings to ring spectra.
\end{df}

\begin{df}
\label{etale.27}
Let $\cC$ be a category.
An object $X$ of $\Fun(\rB (\Ztwo),\cC)$ is called an \emph{object of $\cC$ with involution}.
Explicitly, $X$ is an object of $\cC$ equipped with an automorphism $w\colon X\to X$ such that $w\circ w=\id$.

In particular, we have the notions of commutative rings with involutions, commutative monoids with involutions, etc.
\end{df}

We do not discuss definitions of $\mathrm{HR}(A)$ refining $\mathrm{HH}(A)$ and comparison results between $\THR(A)$ and $\mathrm{HR}(A)$, similarly to e.g.\ \cite[Proposition IV.4.2]{NS}, but see Remark \ref{remHRloday} below.  
The adjoint functors $\ul{\pi}_0$ and $\EM$, which are studied in the appendix, preserve many of the adjunctions we study, see e.g.\ Definition \ref{etale.17}.

\begin{prop}
\label{etale.10}
For $A\in \NAlg_{\Ztwo}$ and $B\in \CAlg$, there exist canonical equivalences
\[
\THH(i^*A)
\simeq
i^*\THR(A)
\text{ and }
\THR(N^{\Ztwo}B)
\simeq
N^{\Ztwo}\THH(B).
\]
Hence there exists a canonical equivalence
\[
\THR(A^{\wedge \Ztwo})
\simeq
\THR(A)^{\wedge \Ztwo}.
\]
\end{prop}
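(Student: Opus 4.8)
\emph{Strategy.} The first two equivalences are obtained by pushing the pushout square $\THR(A)=A\wedge_{A^{\wedge\Ztwo}}A$ through colimit-preserving functors — $i^*$ in the first case, $N^{\Ztwo}$ in the second — and identifying the resulting diagram; the third equivalence is then formal. Throughout I would use that $i^*\colon\NAlg_{\Ztwo}\to\CAlg$ is symmetric monoidal and a left adjoint (to $i_*$), that $N^{\Ztwo}\colon\CAlg\to\NAlg_{\Ztwo}$ is symmetric monoidal and a left adjoint (to $i^*$), that $\THH$ preserves coproducts, and the restriction-of-the-norm identification $i^*N^{\Ztwo}C\simeq C\wedge C$, natural in $C\in\CAlg$, from Appendix~\ref{secA}.

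\emph{The equivalence $\THH(i^*A)\simeq i^*\THR(A)$.} Since $i^*$ preserves pushouts, $i^*\THR(A)\simeq i^*A\wedge_{i^*A^{\wedge\Ztwo}}i^*A$. Now $i^*A^{\wedge\Ztwo}=i^*N^{\Ztwo}i^*A\simeq i^*A\wedge i^*A$ by the restriction formula, and under this identification the restricted counit $i^*\epsilon_A\colon i^*A\wedge i^*A\to i^*A$ is the multiplication of $i^*A$ — a priori precomposed with the automorphism $\id\wedge w$ coming from the residual involution $w$ on $i^*A$; this follows from the triangle identities together with the compatibility of $\epsilon$ with the restriction formula recorded in the appendix. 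Since both structure maps of the pushout are $i^*\epsilon_A$, this automorphism is absorbed by a reparametrization of the middle object and does not change the pushout, so $i^*\THR(A)\simeq i^*A\wedge_{i^*A\wedge i^*A}i^*A=\THH(i^*A)$.

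\emph{The equivalence $\THR(N^{\Ztwo}B)\simeq N^{\Ztwo}\THH(B)$.} Here I would argue by conservativity of the pair $(i^*,\Phi^{\Ztwo})$ (\cite[Theorem 7.12]{Schwede}), after producing a natural comparison map. Combining the first part with $i^*N^{\Ztwo}B\simeq B\wedge B$ and the coproduct-preservation of $\THH$ gives $i^*\THR(N^{\Ztwo}B)\simeq\THH(i^*N^{\Ztwo}B)\simeq\THH(B\wedge B)\simeq\THH(B)\wedge\THH(B)\simeq i^*N^{\Ztwo}\THH(B)$. Let $\Theta\colon N^{\Ztwo}\THH(B)\to\THR(N^{\Ztwo}B)$ be the $(N^{\Ztwo},i^*)$-adjoint of $\THH(B)\xrightarrow{\THH(\eta_B)}\THH(i^*N^{\Ztwo}B)\xrightarrow{\ \sim\ }i^*\THR(N^{\Ztwo}B)$, where $\eta_B\colon B\to i^*N^{\Ztwo}B$ is the unit (which, under the restriction formula, is the first coproduct inclusion). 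Unwinding the adjunction, $i^*\Theta\colon\THH(B)\wedge\THH(B)\to\THH(B)\wedge\THH(B)$ restricts along the first coproduct inclusion to $\THH(\eta_B)$, i.e.\ to that inclusion; by $C_2$-equivariance of $\Theta$ it restricts along the second inclusion to the second inclusion, hence $i^*\Theta$ is the identity, in particular an equivalence. For geometric fixed points, $\Phi^{\Ztwo}$ is symmetric monoidal and colimit-preserving and satisfies $\Phi^{\Ztwo}N^{\Ztwo}\simeq\id$ (since $p\circ i=\id_{\pt}$); applying it to $\THR(N^{\Ztwo}B)=N^{\Ztwo}B\wedge_{N^{\Ztwo}i^*N^{\Ztwo}B}N^{\Ztwo}B$ yields $B\wedge_{i^*N^{\Ztwo}B}B$ with structure map $\Phi^{\Ztwo}\epsilon_{N^{\Ztwo}B}\colon i^*N^{\Ztwo}B\simeq B\wedge B\to B$, which the triangle identity and naturality force to be the multiplication $\mu_B$; hence $\Phi^{\Ztwo}\THR(N^{\Ztwo}B)\simeq B\wedge_{B\wedge B}B=\THH(B)\simeq\Phi^{\Ztwo}N^{\Ztwo}\THH(B)$, and one checks $\Phi^{\Ztwo}\Theta$ realizes this equivalence. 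By conservativity, $\Theta$ is an equivalence.

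\emph{The last equivalence, and the main obstacle.} The displayed consequence is then formal: $\THR(A^{\wedge\Ztwo})=\THR(N^{\Ztwo}i^*A)\simeq N^{\Ztwo}\THH(i^*A)\simeq N^{\Ztwo}i^*\THR(A)=\THR(A)^{\wedge\Ztwo}$, using the second equivalence with $B=i^*A$ and then the first. The manipulations with colimit-preservation and with adjunction units and counits are routine; the genuine content, and the place where care is needed, lies in the two identifications of norm-counits with multiplication maps — $i^*\epsilon_A$ and $\Phi^{\Ztwo}\epsilon_{N^{\Ztwo}B}$ — and in checking that $\Theta$ restricts under $i^*$ and $\Phi^{\Ztwo}$ to the expected equivalences, all of which depend on tracking the coherences in the norm package (the restriction formula and its compatibility with the symmetric monoidal structures and with the unit and counit of $(N^{\Ztwo},i^*)$).
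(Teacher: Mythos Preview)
Your proposal is correct, but it is considerably more laborious than the paper's argument, particularly for the second equivalence.

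For $\THH(i^*A)\simeq i^*\THR(A)$ you do exactly what the paper does: push the defining pushout through the colimit-preserving functor $i^*$ and invoke $i^*N^{\Ztwo}\simeq(-)^{\wedge 2}$ (Proposition \ref{orth.5}(6)). Your digression about a ``residual involution $w$ on $i^*A$'' is misplaced---there is no involution left on $i^*A\in\CAlg$---and the paper simply omits any such discussion.

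For $\THR(N^{\Ztwo}B)\simeq N^{\Ztwo}\THH(B)$ the paper proceeds directly and symmetrically to the first part: since $N^{\Ztwo}$ is a left adjoint on normed algebras it preserves pushouts, so $N^{\Ztwo}\THH(B)\simeq N^{\Ztwo}B\wedge_{N^{\Ztwo}(B\wedge B)}N^{\Ztwo}B$, and one then identifies $N^{\Ztwo}(B\wedge B)\simeq N^{\Ztwo}i^*N^{\Ztwo}B$ via the same Proposition \ref{orth.5}(6). That is the entire proof. You instead build a comparison map $\Theta$ by adjunction and verify it on $i^*$ and $\Phi^{\Ztwo}$ separately, invoking conservativity. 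This works, but it introduces a second functor $\Phi^{\Ztwo}$ and several compatibility checks that the direct pushout argument avoids entirely. The coherence concerns you raise at the end (that $\epsilon_{N^{\Ztwo}B}$ matches $N^{\Ztwo}\mu_B$, etc.) are real but are absorbed in both approaches by the fact that in $\NAlg_{\Ztwo}$ the smash product is the coproduct, so the counit---being a retraction of $N^{\Ztwo}\eta_B$ and equivariant for the swap---is forced to be the fold map. The paper's route makes this bookkeeping essentially invisible; yours surfaces it twice.
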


In particular, the first equivalence implies $i^*\THR(\iota B) \simeq \THH(B)$, using (2) of Proposition \ref{orth.5}.  

\begin{proof}
Since $N^{\Ztwo}$ and $i^*$ preserve colimits, we have equivalences
\[
\THH(i^*A)
\simeq
i^*A\wedge_{i^*A\wedge i^*A}i^*A
\simeq
i^*A \wedge_{i^*N^{\Ztwo}i^*A} i^*A
\simeq
i^*\THR(A)
\]
and
\[
\THR(N^{\Ztwo}B)
\simeq
N^{\Ztwo}B \wedge_{N^{\Ztwo}i^*N^{\Ztwo} B} N^{\Ztwo} B
\simeq
N^{\Ztwo}(B\wedge_{B\wedge B} B)
\simeq
N^{\Ztwo}\THH(B)
\]
by Proposition \ref{orth.5}(6).
\end{proof}

\begin{prop}
\label{etale.9}
For $B\in \CAlg$, there exists a canonical equivalence
\begin{equation}
\label{etale.9.1}
\THR(i_*B)\xrightarrow{\simeq} i_*\THH(B).
\end{equation}
\end{prop}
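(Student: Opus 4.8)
The plan is to construct the canonical map of \eqref{etale.9.1} by adjunction, and then to verify that it is an equivalence after applying the conservative pair $(i^*,\Phi^{\Ztwo})$.

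\textbf{Construction of the map.} Since $(i^*,i_*)$ is an adjoint pair $\NAlg_{\Ztwo}\rightleftarrows\CAlg$, giving a map $\THR(i_*B)\to i_*\THH(B)$ in $\NAlg_{\Ztwo}$ is the same as giving a map $i^*\THR(i_*B)\to\THH(B)$ in $\CAlg$. By Proposition~\ref{etale.10} there is a canonical equivalence $i^*\THR(i_*B)\simeq\THH(i^*i_*B)$, and $i^*i_*B\simeq B\times B$ is the product ring (coinduction along $\pt\xrightarrow{i}\rB\Ztwo$). Composing with $\THH$ applied to the counit $i^*i_*B\to B$, which under this identification is the first projection $\mathrm{pr}_1\colon B\times B\to B$, yields the required map $i^*\THR(i_*B)\to\THH(B)$; I then let $\alpha\colon\THR(i_*B)\to i_*\THH(B)$ be its adjunct, which is the map of \eqref{etale.9.1}.

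\textbf{Reduction and geometric fixed points.} A morphism of $\NAlg_{\Ztwo}$ is an equivalence as soon as its underlying morphism of $\Ztwo$-spectra is, so by conservativity of $(i^*,\Phi^{\Ztwo})$ (\cite[Theorem 7.12]{Schwede}) it suffices to check that $i^*\alpha$ and $\Phi^{\Ztwo}\alpha$ are equivalences. For $\Phi^{\Ztwo}$: note that $\THR(i_*B)$ is an $(i_*B)$-algebra via \eqref{etale.14.1}, while $i_*\THH(B)$ is an $(i_*B)$-algebra via $i_*$ applied to the canonical map $B\to\THH(B)$; applying the symmetric monoidal functor $\Phi^{\Ztwo}$ (see section~\ref{orth.sec}), both sides become algebras over $\Phi^{\Ztwo}(i_*B)$. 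But the underlying $\Ztwo$-spectrum of $i_*B$ is induced from the trivial subgroup, so $\Phi^{\Ztwo}(i_*B)\simeq 0$, whence both $\Phi^{\Ztwo}\THR(i_*B)$ and $\Phi^{\Ztwo}i_*\THH(B)$ vanish and $\Phi^{\Ztwo}\alpha$ is (trivially) an equivalence.

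\textbf{The underlying map.} Identify the source and target of $i^*\alpha$ via $i^*\THR(i_*B)\simeq\THH(B\times B)$ and $i^*i_*\THH(B)\simeq\THH(B)\times\THH(B)$. Since $\THH$ carries finite products of commutative ring spectra to products, $\THH(B\times B)\simeq\THH(B)\oplus\THH(B)$ with the two projections given by $\THH(\mathrm{pr}_1)$ and $\THH(\mathrm{pr}_2)$. By the triangle identity for $(i^*,i_*)$, together with the fact that the counit at $\THH(B)$ is again the first projection, the composite $\mathrm{pr}_1\circ i^*\alpha$ equals $\THH(\mathrm{pr}_1)$. On the other hand $\alpha$ is $\Ztwo$-equivariant, so $i^*\alpha$ intertwines the residual involutions; and the residual involution $W$ on $i^*\THR(i_*B)$ is a ring automorphism which, under the identification above, is induced from the swap $\tau$ of the two outer factors in the pushout defining $\THR(i_*B)$ --- a swap that interchanges the two idempotents of $B\times B$ --- so $W$ interchanges the two summands $\THH(B)\oplus\THH(B)$. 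Writing $W=\left(\begin{smallmatrix}0&u\\v&0\end{smallmatrix}\right)$ in block form, with $u$ and $v$ mutually inverse equivalences (as $W^2\simeq\id$), equivariance forces $i^*\alpha=\left(\begin{smallmatrix}\id&0\\0&u\end{smallmatrix}\right)$, which is an equivalence. Hence $\alpha$ is an equivalence by conservativity.

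\textbf{Main obstacle.} The only non-formal point is the last one: one must actually trace through the equivalence of Proposition~\ref{etale.10} --- or, equivalently, apply $i^*$ directly to the defining pushout of $\THR$ using $i^*N^{\Ztwo}i^*(-)\simeq i^*(-)\wedge i^*(-)$ --- to confirm that the residual $\Ztwo$-action on $i^*\THR(i_*B)$ genuinely interchanges the two copies of $\THH(B)$ rather than preserving each of them. Everything else reduces to standard facts about geometric fixed points, norms, and the behaviour of $\THH$ on products.
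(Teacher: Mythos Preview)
Your proof is correct and follows essentially the same strategy as the paper's: construct the map by adjunction, then check it is an equivalence after applying the conservative pair $(i^*,\Phi^{\Ztwo})$; both sides have vanishing geometric fixed points, and the underlying map is identified with an equivalence between two copies of $\THH(B)\oplus\THH(B)$. The paper handles the $\Phi^{\Ztwo}$-step by directly computing $\Phi^{\Ztwo}$ of the pushout defining $\THR(i_*B)$ (using $\Phi^{\Ztwo}N^{\Ztwo}\simeq\id$), whereas you observe more succinctly that both sides are algebras over $\Phi^{\Ztwo}(i_*B)\simeq 0$; either argument works. For the $i^*$-step the paper is extremely terse, simply citing Proposition~\ref{etale.10}, while you unpack this into a matrix computation and correctly isolate the one nontrivial point---that the residual involution on $i^*\THR(i_*B)\simeq\THH(B\times B)$ swaps the two summands. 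One small caution: your phrasing ``induced from the swap $\tau$ of the two outer factors in the pushout'' could be misread as swapping the two \emph{legs} of the pushout; what actually happens is that the residual $\Ztwo$-action acts via the involution on each copy of $i_*B$ separately (together with the norm's swap on the middle), and for $A=i_*B$ this involution on $i^*A=B\times B$ is the swap of the two $B$'s---hence $\THH$ of it swaps the summands. With that clarification, your argument is complete.
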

\begin{proof}
The composite
\begin{equation}
\THR(i_*B)
\to
i_*i^*\THR(i_*B)
\xrightarrow{\simeq}
i_*\THH(i^*i_*B)
\to
i_*\THH(B)
\end{equation}
defines \eqref{etale.9.1}, where the first (resp.\ third) map is induced by the unit (resp.\ counit).
Proposition \ref{etale.10} shows that the induced map
\[
i^*\THR(i_*B)
\to
i^*i_*\THH(B)
\]
is an equivalence.
By \cite[Theorem 7.12]{Schwede} (note that $i^*=\Phi^e$), it remains to show that the induced map
\[
\Phi^{\Ztwo}\THR(i_*B)
\to
\Phi^{\Ztwo}i_*\THH(B)
\]
is an equivalence.
The right hand side is equivalent to $0$ by Proposition \ref{orth.5}(3),(5).
On the other hand, we have equivalences
\[
\Phi^{\Ztwo}\THR(i_*B)
\simeq
\Phi^{\Ztwo}(i_*B) \wedge_{i^*i_*B }\Phi^{\Ztwo}(i_*B) 
\simeq
0 \wedge_{i^*i_*B }0,
\]
which is equivalent to $0$ too.
\end{proof}

Applying $i^*$ and  Proposition \ref{orth.5}(2), we obtain $\THO(i_*B) \simeq \THH(B)$,
which compares nicely with the well-known $\KO(X\amalg X)\simeq K(X)$ for schemes $X$, where the involution on $X\amalg X$ switches the components.

\begin{prop}
\label{etale.1}
Let $R\to A,B$ be maps in $\NAlg_{\Ztwo}$.
Then there exists a canonical equivalence
\[
\THR(A)\wedge_{\THR(R)}\THR(B)
\simeq
\THR(A\wedge_R B).
\]
\end{prop}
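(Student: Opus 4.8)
The plan is to unwind everything to pushouts of $\Ztwo$-equivariant algebras and then reduce to a purely formal diagram chase. Recall from Definition \ref{etale.14} that $\THR(A) = A \wedge_{A^{\wedge\Ztwo}} A$ with $A^{\wedge\Ztwo} = N^{\Ztwo} i^* A$, all pushouts taken in $\NAlg_{\Ztwo}$. The key structural input I would exploit is that $N^{\Ztwo}$ and $i^*$ both preserve colimits (in particular pushouts), so $(A\wedge_R B)^{\wedge\Ztwo} = N^{\Ztwo} i^*(A\wedge_R B) \simeq N^{\Ztwo}(i^*A \wedge_{i^*R} i^*B)$; and since $N^{\Ztwo}$ itself preserves pushouts, this is $\simeq A^{\wedge\Ztwo} \wedge_{R^{\wedge\Ztwo}} B^{\wedge\Ztwo}$. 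So the base ring of the $\THR(A\wedge_R B)$ pushout decomposes compatibly.

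First I would set up the relevant large pushout diagram in $\NAlg_{\Ztwo}$. Consider the commutative cube (or rather the pushout-of-pushouts) whose vertices involve $A$, $B$, $R$, $A^{\wedge\Ztwo}$, $B^{\wedge\Ztwo}$, $R^{\wedge\Ztwo}$, arranged so that one iterated pushout computes $\THR(A)\wedge_{\THR(R)}\THR(B)$ and the other computes $\THR(A\wedge_R B)$. Concretely, the object $\THR(A)\wedge_{\THR(R)}\THR(B)$ is the pushout of the span
\[
\THR(A) \leftarrow \THR(R) \rightarrow \THR(B),
\]
where each term is itself a pushout; using associativity and commutativity of pushouts in the presentable $\infty$-category $\NAlg_{\Ztwo}$, one can reorganize the colimit over the resulting diagram. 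The target side: $\THR(A\wedge_R B) = (A\wedge_R B) \wedge_{(A\wedge_R B)^{\wedge\Ztwo}} (A\wedge_R B)$, and by the computation above the base is $A^{\wedge\Ztwo}\wedge_{R^{\wedge\Ztwo}} B^{\wedge\Ztwo}$. Then I would rewrite both sides as a colimit over one and the same indexing diagram — essentially the pushout of the $3\times 3$ array with rows $(A^{\wedge\Ztwo}, R^{\wedge\Ztwo}, B^{\wedge\Ztwo})$ (base row), $(A, R, B)$, $(A,R,B)$ — and invoke the Fubini/interchange property for iterated colimits to conclude both compute the same object. This is precisely the standard argument that $\THH(-)$ of a pushout is the pushout of $\THH$'s, transported to the $\Ztwo$-equivariant setting via the colimit-preservation of $N^{\Ztwo}$ and $i^*$; indeed an efficient route is to cite Proposition \ref{etale.1}'s non-equivariant analogue (Proposition \ref{orth.5}-type formal properties) after checking that the argument uses nothing beyond colimit-preservation.

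The main obstacle I expect is bookkeeping: making the reorganization of the iterated pushout rigorous at the $\infty$-categorical level rather than hand-waving ``associativity of pushouts.'' The clean way is to present everything as a single colimit of a functor out of a suitable diagram category (a pushout of pushout-squares, i.e.\ a colimit over $(\Delta^1)^{\times 2}$-shaped or ``$W$-shaped'' diagrams glued appropriately) and then apply the fact that colimits commute with colimits in $\NAlg_{\Ztwo}$. One must be careful that the maps $\THR(R)\to\THR(A)$ and the maps $R^{\wedge\Ztwo}\to A^{\wedge\Ztwo}$ are the functorially induced ones, so that the diagram actually commutes; this is where the naturality of the counit $A^{\wedge\Ztwo}\to A$ in $A$ is used. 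Once the diagram is correctly assembled, no genuine computation remains — the equivalence is forced by universal properties. I would also remark that the equivalence so produced is one of objects of $\NAlg_{\Ztwo}$, and that applying $i^*$ recovers the known non-equivariant statement $\THH(i^*A)\wedge_{\THH(i^*R)}\THH(i^*B)\simeq \THH(i^*A\wedge_{i^*R} i^*B)$ via Proposition \ref{etale.10}, which serves as a consistency check.
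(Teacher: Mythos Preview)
Your proposal is correct and follows essentially the same approach as the paper: both arguments first use that $N^{\Ztwo}$ and $i^*$ preserve colimits to identify $(A\wedge_R B)^{\wedge\Ztwo}\simeq A^{\wedge\Ztwo}\wedge_{R^{\wedge\Ztwo}}B^{\wedge\Ztwo}$, and then reorganize the iterated pushouts (your ``Fubini/interchange'' is exactly the paper's one-line manipulation $(A\wedge_{A^{\wedge\Ztwo}}A)\wedge_{R\wedge_{R^{\wedge\Ztwo}}R}(B\wedge_{B^{\wedge\Ztwo}}B)\simeq(A\wedge_R B)\wedge_{A^{\wedge\Ztwo}\wedge_{R^{\wedge\Ztwo}}B^{\wedge\Ztwo}}(A\wedge_R B)$). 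The paper is simply terser about the diagram-chasing step you spell out in detail.
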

\begin{proof}
Both $N^{\Ztwo}$ and $i^*$ preserve colimits.
Hence we obtain a canonical equivalence
\[
A^{\wedge \Ztwo}\wedge_{R^{\wedge \Ztwo}}B^{\wedge \Ztwo}
\simeq
(A\wedge_R B)^{\wedge \Ztwo}.
\]
On the other hand, there are canonical equivalences
\begin{align*}
\THR(A)\wedge_{\THR(R)}\THR(B)
&\simeq
(A\wedge_{A^{\wedge \Ztwo}} A)
\wedge_{R\wedge_{R^{\wedge \Ztwo}} R}
(B\wedge_{B^{\wedge \Ztwo}} B)
\\
&\simeq
(A\wedge_R B)
\wedge_{A^{\wedge \Ztwo}\wedge_{R^{\wedge \Ztwo}}B^{\wedge \Ztwo}} (A\wedge_R B).
\end{align*}
Combine the two equations to obtain the desired equivalence.
\end{proof}

\subsection{Mackey functors for \texorpdfstring{$\Ztwo$}{Z/2}}

We refer to the appendix for a general discussion of Mackey and Green functors for finite groups $G$. We now restrict to the case $G=\Ztwo$.

\begin{exm}
\label{etale.29}
According to \cite[Example 4.38]{Schwede}, a Mackey functor $C$ for $G:=\Ztwo$ can be described as a diagram
\[
\begin{tikzcd}
C(G/e)\ar[loop below,"w"]\ar[r,shift left=0.5ex,"\tran"]\ar[r,shift right=0.5ex,leftarrow,"\res"']&
C(G/G),
\end{tikzcd}
\]
where $C(G/e)$ is an abelian group with an involution $w$, $C(G/G)$ is an abelian group with the trivial involution, $\res$ and $\tran$ are homomorphisms of abelian groups with involutions, and the equality
\[
\res\circ \tran = \id+w
\]
is satisfied (i.e.\ the \emph{double coset formula} holds).
A \emph{morphism of Mackey functors $C\to D$} is a diagram of abelian groups with involutions
\[
\begin{tikzcd}
C(G/G)
\ar[r]
\ar[d,"\res"',shift right=0.5ex]
\ar[d,"\tran",shift left=0.5ex,leftarrow]
&
D(G/G)
\ar[d,"\res"',shift right=0.5ex]\ar[d,"\tran",shift left=0.5ex,leftarrow]
\\
C(G/e)
\ar[r]
\ar[loop,out=190, in=170,looseness=6]
&
D(G/e)
\ar[loop,out=350, in=10,looseness=6]
\end{tikzcd}
\]
such that the horizontal homomorphisms commute with $\tran$ and $\res$.
\end{exm}

\begin{exm}
If $M$ is an abelian group with involution, then we can associate the Mackey functor
\[
\begin{tikzcd}
M\ar[r,shift left=0.5ex,"\tran"]\ar[loop below]\ar[r,shift right=0.5ex,leftarrow,"\res"']&
M^{\Ztwo},
\end{tikzcd}
\]
where $\tran$ maps $x\in M$ to $x+w(x)$, and $\res$ is the inclusion.
In this way, we obtain a fully faithful functor from the category of abelian groups with involutions
to the category of Mackey functors $\Mack_{\Ztwo}$.
We often regard an abelian group with involution as a Mackey functor if no confusion seems likely to arise.
\end{exm}

\begin{lem}
\label{etale.28}
Let $A$ be a Green functor for $G:=\Ztwo$, and let $M$ and $L$ are $A$-modules.
If $L$ is associated 
(in the sense of the previous example) 
with an abelian group with involution, then the induced map
\begin{equation}
\label{etale.28.1}
\Hom_{\Mod_A}(M,L)
\to
\Hom_{\Mod_{A(G/e)}}(M(G/e),L(G/e))
\end{equation}
is an isomorphism.
\end{lem}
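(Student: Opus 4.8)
The plan is to write down an explicit inverse of the restriction map \eqref{etale.28.1}. Put $N:=L(G/e)$, the abelian group with involution $w$ that $L$ is associated with; by the description of the associated Mackey functor we have $L(G/G)=N^{\Ztwo}$, the map $\res_L\colon L(G/G)\to L(G/e)$ is the inclusion of the $w$-fixed subgroup — in particular it is \emph{injective} — and $\res_L\circ\tran_L=\id+w$ on $L(G/e)$. Recall also two standard facts: a morphism of $A$-modules $M\to L$ is a pair $(f_{G/e},f_{G/G})$, with $f_{G/e}$ a map of $A(G/e)$-modules compatible with the involutions, $f_{G/G}$ a map of $A(G/G)$-modules, the two commuting with $\res$ and with $\tran$; and in any $\Ztwo$-Mackey functor the image of $\res\colon M(G/G)\to M(G/e)$ lies in the $w$-invariants of $M(G/e)$ (the Weyl group acts trivially on the $G/G$-level, and $\res$ is equivariant).

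Given $g\in\Hom_{\Mod_{A(G/e)}}(M(G/e),L(G/e))$, I would set $f_{G/e}:=g$ and let $f_{G/G}\colon M(G/G)\to L(G/G)$ be the unique map with $\res_L\circ f_{G/G}=g\circ\res_M$. This is well defined: $\res_M$ has image in the $w$-invariants of $M(G/e)$, so, $g$ being $w$-equivariant, $g\circ\res_M$ has image in $N^{\Ztwo}=\res_L(L(G/G))$, and $\res_L$ is injective. The remaining points are a diagram chase carried out after post-composing with the injective map $\res_L$: $A(G/G)$-linearity of $f_{G/G}$ follows from $A(G/e)$-linearity of $g$ and Frobenius reciprocity (that $\res_M$ and $\res_L$ are module maps over $\res_A$); compatibility with $\res$ holds by construction; and compatibility with $\tran$ reduces, using $\res_L\circ\tran_L=\id+w=\res_M\circ\tran_M$, to the equation $w\circ g=g\circ w$, i.e.\ to the $w$-equivariance of $g$. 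This produces a map of $A$-modules restricting to $g$, so \eqref{etale.28.1} is surjective. It is injective because if $f_{G/e}=0$ then $\res_L\circ f_{G/G}=f_{G/e}\circ\res_M=0$, hence $f_{G/G}=0$. (Alternatively, one can note that $L$ is the value at $N$ of the right adjoint to the forgetful functor $\Mod_A\to\Mod_{A(G/e)}$, $M\mapsto M(G/e)$, and read off \eqref{etale.28.1} from that adjunction; but the direct verification is shorter.)

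The only genuinely delicate point is the bookkeeping of the $\Ztwo$-action: one must use that the morphisms on the right of \eqref{etale.28.1} are required to commute with the involution $w$ on $M(G/e)$ and $L(G/e)$ — equivalently, that $\Mod_{A(G/e)}$ is understood as the category of $A(G/e)$-modules equipped with a semilinear involution (the Weyl action at $G/e$) — and that restriction maps in a Mackey functor automatically land in the $w$-invariants. The first of these is essential: without it the statement already fails for $A$ the constant Green functor $\ul{\Z}$ and $N=\Z[\Ztwo]$ with the swap involution. With these conventions in place there are no derived subtleties, since the whole argument takes place in the abelian category $\Mack_{\Ztwo}$.
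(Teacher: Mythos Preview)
Your proof is correct and follows essentially the same approach as the paper: both construct an explicit inverse by observing that $\res_L$ is injective, defining the $G/G$-component of the lift via $\res_L\circ f_{G/G}=g\circ\res_M$, and then verifying compatibility with $\tran$ using $\res\circ\tran=\id+w$ and the $w$-equivariance of $g$. Your write-up is more careful than the paper's in making the $w$-equivariance convention on $\Mod_{A(G/e)}$ explicit (which the paper uses silently when writing $f\circ(\id+w)=(\id+w)\circ f$) and in spelling out the $A(G/G)$-linearity check.
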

\begin{proof}
Let $f\colon M(G/e)\to L(G/e)$ be a homomorphism of $A(G/e)$-modules.
Then the image of $f\circ \res$ is in $L(G/e)^{\Ztwo}$.
Hence there exists a unique homomorphism of $A(G/G)$-modules $g\colon M(G/G)\to L(G/G)$ such that in the diagram
\[
\begin{tikzcd}
M(G/G)
\ar[r,"g"]
\ar[d,"\res"',shift right=0.5ex]
\ar[d,"\tran",shift left=0.5ex,leftarrow]
&
L(G/G)
\ar[d,"\res"',shift right=0.5ex]
\ar[d,"\tran",shift left=0.5ex,leftarrow]
\\
M(G/e)
\ar[r,"f"]
\ar[loop,out=190, in=170,looseness=6]
&
L(G/e)
\ar[loop,out=350, in=10,looseness=6]
\end{tikzcd}
\]
the pair $(f,g)$ commutes with $\res$.
We have
\[
\res\circ g\circ \tran
=
f\circ \res\circ \tran
=
f\circ (\id+w)
=
(\id +w)\circ f
=
\res\circ \tran\circ f.
\]
Since $\res$ for $L$ is injective, we deduce that the pair $(f,g)$ commutes with $\tran$.
This constructs an inverse of \eqref{etale.28.1}.
\end{proof}

\subsection{Equivariant Eilenberg-MacLane spectra}
In this subsection, we explain basic properties of equivariant Eilenberg-MacLane spectra.
We also explain how to define $\THR$ of commutative rings.

\begin{df}
\label{etale.32}
Let $\cC$ be a category (not an $\infty$-category).
We have the functors
\[
\cC
\xrightarrow{\iota}
\Fun(\rB(\Ztwo),\cC)
\xrightarrow{i^*}
\cC
\]
induced by \eqref{etale.0.2}.
Let $(-)^{\Ztwo}$ denote the right adjoint of $\iota$ if it exists.

Here, we give some examples.
For a commutative ring $A$, $\iota A$ is the commutative ring $A$ with the trivial involution.
For an $A$-module $M$, $\iota M$ is the $\iota A$-module $M$ with the trivial involution.
By abuse of notation we sometimes denote the constant Mackey functors by $\iota A$ and $\iota M$ as well.

For a commutative ring $B$ with involution, $i^*B$ is the commutative ring obtained by forgetting the involution, and $B^{\Ztwo}$ is the $\Ztwo$-fixed point ring. For a $B$-module $L$, $i^*L$ is the $i^*B$-module obtained by forgetting the involution.
\end{df}

\begin{df}
\label{etale.17}
For an abelian group $M$ with involution, we regard $M$ as a Mackey functor, and take the functor \eqref{t.3.3} to obtain the equivariant Eilenberg-MacLane spectrum $\EM M$.
There are canonical equivalences
\begin{equation}
\label{etale.17.1}
i^*\EM M \simeq \EM i^* M
\text{ and }
(\EM M)^{\Ztwo}
\simeq
\EM (M^\Ztwo).
\end{equation}

For a commutative ring $A$ with involution, we can regard $\EM A$ as an object of $\NAlg_{\Ztwo}$ as explained in \cite[Example 11.12]{Schwede}.
\end{df}

Note that for a given commutative ring $B$ the commutative ring spectra $\EM \iota B$ and $\iota \EM B$ are quite different.
E.g., applying $\ul{\pi}_0$ to the first one yields the constant Mackey functor associated with $B$ whereas for the second one a tensor product over $\Z$ with the Burnside ring Mackey functor of $\Ztwo$ appears.

\begin{prop}
\label{etale.33}
Let $M$ be an abelian group.
Then there is a canonical equivalence
\begin{equation}
\label{etale.33.1}
\EM (M^{\oplus \Ztwo})
\simeq
i_*\EM M,
\end{equation}
where $M^{\oplus \Ztwo}$ denotes the abelian group $M\oplus M$ with the involution given by $(x,y)\mapsto (y,x)$.
The equivalence \eqref{etale.33.1} can be promoted to an equivalence in $\NAlg_{\Ztwo}$ if $M$ is a commutative ring.
\end{prop}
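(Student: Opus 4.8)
The plan is to exhibit $\EM(M^{\oplus\Ztwo})$ as the image of $\EM M$ under the functor $i_*$, using the compatibility of the Eilenberg--MacLane functor with $i_*$ on the level of Mackey functors. First I would check the statement on $\underline{\pi}_0$, i.e.\ that the Mackey functor associated with the abelian group with involution $M^{\oplus\Ztwo}$ agrees with $i_*$ of the constant (non-equivariant) Mackey functor $M$. Concretely, using the description in Example \ref{etale.29}, the Mackey functor attached to $M^{\oplus\Ztwo}$ has value $M\oplus M$ at $G/e$ with the swap involution, value $(M\oplus M)^{\Ztwo}\cong M$ (the diagonal) at $G/G$, with $\res$ the diagonal inclusion and $\tran(x,y)=(x+y,x+y)$, so that $\res\circ\tran=\id+w$ holds. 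On the other hand, the functor $i_*:\Sp\to\Sp_{\Ztwo}$ (equivalently the induction/coinduction functor on Mackey functors, which here coincide since $G=\Ztwo$ is finite) sends the constant Mackey functor $M$ precisely to this diagram: $i_*$ at the level of Mackey functors is given by $X\mapsto \big(G/e\mapsto \prod_{G/e}M,\ G/G\mapsto M\big)$, which is exactly $M^{\oplus\Ztwo}$. Then, since $\EM$ is defined by applying the functor \eqref{t.3.3} to a Mackey functor and $\EM$ commutes with $i_*$ — this is part of the package of compatibilities recorded in the appendix, cf.\ \eqref{etale.17.1} and the discussion preceding Definition \ref{etale.17} — we get the equivalence \eqref{etale.33.1}.

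Alternatively, and perhaps more cleanly, I would verify the equivalence by checking it after applying the conservative pair $(i^*,\Phi^{\Ztwo})$ from \cite[Theorem 7.12]{Schwede}, exactly as in the proof of Proposition \ref{etale.9}. On the one hand $i^*\EM(M^{\oplus\Ztwo})\simeq \EM(i^*M^{\oplus\Ztwo})=\EM(M\oplus M)$ by \eqref{etale.17.1}, and $i^*i_*\EM M\simeq \EM M\oplus \EM M\simeq \EM(M\oplus M)$ by the standard formula $i^*i_*\simeq \id\oplus\id$ (the "double"-formula underlying the notation $i_*E=E\oplus E$ of \eqref{etale.8.1}); one checks the natural comparison map realizes this identification. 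On the other hand $\Phi^{\Ztwo}i_*\simeq 0$ by Proposition \ref{orth.5}(3),(5), while $\Phi^{\Ztwo}\EM(M^{\oplus\Ztwo})$ is the Eilenberg--MacLane spectrum on the geometric fixed points of the Mackey functor $M^{\oplus\Ztwo}$, which is $\coker(\tran)=\coker\big(M\to (M\oplus M)^{\Ztwo}\big)$; since $\tran$ here is the diagonal $M\xrightarrow{\Delta}(M\oplus M)^{\Ztwo}\cong M$ it is surjective, so the geometric fixed points vanish and $\Phi^{\Ztwo}\EM(M^{\oplus\Ztwo})\simeq 0$. Hence both sides agree after $i^*$ and after $\Phi^{\Ztwo}$, so the comparison map is an equivalence.

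For the ring statement, when $M$ is a commutative ring the swap involution on $M\oplus M$ is a ring automorphism, so $M^{\oplus\Ztwo}$ is a commutative ring with involution and $\EM(M^{\oplus\Ztwo})\in\NAlg_{\Ztwo}$ via \cite[Example 11.12]{Schwede}; dually $i_*$ of a commutative ring spectrum carries a canonical $\NAlg_{\Ztwo}$-structure (this is the multiplicative refinement of $i_*$, part of the functoriality discussed in section \ref{orth.sec}). The comparison map above is visibly multiplicative — it is built from units and counits of the $(i^*,i_*)$-adjunction, which are maps of algebras — so the underlying-spectrum equivalence promotes to an equivalence in $\NAlg_{\Ztwo}$. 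The main obstacle I anticipate is purely bookkeeping: pinning down precisely which model of $i_*$ (genuine coinduction on orthogonal $\Ztwo$-spectra, or the $\infty$-categorical right adjoint) is being used, and citing the exact statements in the appendix that give (a) $\EM\circ(\text{Mackey-level }i_*)\simeq i_*\circ\EM$ and (b) the multiplicative structure on $i_*$; once those are in hand the argument is immediate.
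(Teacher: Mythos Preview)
Your first approach---identifying the Mackey functors and then invoking that $\EM$ commutes with $i_*$---is essentially circular as written: the compatibility you cite from \eqref{etale.17.1} is not there (that equation records only $i^*\EM\simeq\EM i^*$ and $(\EM-)^{\Ztwo}\simeq\EM((-)^{\Ztwo})$), and the statement ``$\EM$ commutes with $i_*$'' is precisely what Proposition~\ref{etale.33} asserts. The fix is easy and is in fact what the paper does: construct the comparison map explicitly by adjunction from the summation map $\EM(M\oplus M)\to\EM M$, then observe that both sides lie in the heart (for $i_*\EM M$ one uses $[\Sigma^n\Sigma^\infty X_+,i_*\EM M]\cong[\Sigma^n\Sigma^\infty(i^*X)_+,\EM M]=0$ for $n\neq 0$), so it suffices to check the map on $\ul{\pi}_0$; the paper does this by an explicit computation on the generators $X=e,\Ztwo$. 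Your Mackey-functor identification would serve equally well for this last step.

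Your second approach via the conservative pair $(i^*,\Phi^{\Ztwo})$ has a genuine gap: you assert that $\Phi^{\Ztwo}\EM N$ is the Eilenberg--MacLane spectrum on $\coker(\tran)$, but this is false for general Mackey functors $N$. From the cofiber sequence $(i^*X)_{h\Ztwo}\to X^{\Ztwo}\to\Phi^{\Ztwo}X$ applied to $X=\EM N$ one finds $\pi_k\Phi^{\Ztwo}\EM N\cong H_{k-1}(\Ztwo;N(G/e))$ for $k\geq 2$; for instance $N=\iota\Z$ gives $\pi_2=\Z/2\neq 0$. It happens that for $N=M^{\oplus\Ztwo}$ these groups do vanish---by Shapiro's lemma, since $M\oplus M$ with the swap action is an induced $\Ztwo$-module---but that argument is missing from your sketch. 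The paper sidesteps the issue entirely by testing against the compact generators $\Sigma^n\Sigma^\infty X_+$ rather than using $\Phi^{\Ztwo}$.
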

\begin{proof}
There is an equivalence
\begin{equation}
\label{etale.33.3}
i^* \EM(M^{\oplus \Ztwo})\simeq \EM (M\oplus M)
\end{equation}
by \eqref{etale.17.1}.
Compose this with the map $\EM(M\oplus M)\to \EM M$ induced by the summation homomorphism $M\oplus M \to M$, and then we construct \eqref{etale.33.1} by adjunction.
We need to show that the induced map
\[
\Hom_{\Ho(\Sp_{\Ztwo})}(\Sigma^n\Sigma^\infty X_+,\EM(M^{\oplus \Ztwo}))
\to
\Hom_{\Ho(\Sp_{\Ztwo})}(\Sigma^n\Sigma^\infty X_+,i_*\EM M)
\]
is an isomorphism for $X=\Ztwo,e$ and integer $n\in \Z$.
If $n\neq 0$, then both sides are vanishing.
Assume $n=0$.
More concretely, it remains to show that the composite of the induced maps
\begin{equation}
\label{etale.33.2}
\begin{split}
\Hom_{\Ho(\Sp_{\Ztwo})}(\Sigma^\infty X_+,\EM(M^{\oplus \Ztwo}))
\to &
\Hom_{\Ho(\Sp)}(i^*\Sigma^\infty X_+,i^*\EM(M^{\oplus \Ztwo}))
\\
\to &
\Hom_{\Ho(\Sp)}(i^*\Sigma^\infty X_+,\EM M)
\end{split}
\end{equation}
is an isomorphism.

If $X=\Ztwo$, then \eqref{etale.33.2} can be written as the homomorphisms
\[
M\oplus M \to M\oplus M\oplus M\oplus M \to M\oplus M
\]
given by $(x,y)\mapsto (x,0,y,0)$ and $(x,y,z,w)\mapsto (x+y,z+w)$.
The composite is an isomorphism.
If $X=e$, then \eqref{etale.33.2} can be written as the homomorphisms
\[
M\to M\oplus M \to M
\]
given by $x\mapsto (x,0)$ and $(x,y)\mapsto x+y$.
The composite is also an isomorphism.

If $M$ is a commutative ring, then \eqref{etale.33.3} is an equivalence in $\NAlg_{\Ztwo}$.
Hence we obtain \eqref{etale.33.1} as an equivalence in $\NAlg_{\Ztwo}$.
\end{proof}

\begin{df}
\label{etale.12}
For a commutative ring $A$ with involution, we set
\[
\THR(A):=\THR(\EM A).
\]
From the map \eqref{etale.14.1}, we see that $\THR(A)$ is an $\EM A$-module.
\end{df}

Recall from \eqref{Mack.2.3} that $\ul{\pi}_0$ of an equivariant spectrum is a Mackey functor.

\begin{prop}
\label{etale.19}
For every commutative ring $A$, the morphism of Mackey functors
\begin{equation}
\label{etale.19.1}
\iota A
\to
\ul{\pi}_0(\THR(\iota A))
\end{equation}
induced by \eqref{etale.14.1} is an isomorphism.
\end{prop}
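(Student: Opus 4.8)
The plan is to combine the conservativity of the pair $(i^*,\Phi^{\Ztwo})$ with the fact that \eqref{etale.14.1} is split. Write $u\colon\EM\iota A\to\THR(\iota A)=\THR(\EM\iota A)$ for the map \eqref{etale.14.1}, so that \eqref{etale.19.1} is $\ul{\pi}_0(u)$, and note that $\THR(\iota A)$ is connective, being a pushout of connective objects of $\NAlg_{\Ztwo}$. Since $\THR(\iota A)=\EM\iota A\sqcup_{N^{\Ztwo}\EM A}\EM\iota A$ is a pushout of two copies of $\EM\iota A$ over the counit $N^{\Ztwo}\EM A\to\EM\iota A$, the map $\epsilon\colon\THR(\iota A)\to\EM\iota A$ classified by the identity on each copy satisfies $\epsilon\circ u\simeq\id$. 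Hence $\THR(\iota A)\simeq\EM\iota A\oplus C$ with $C:=\cofib(u)$, and because $\ul{\pi}_0\EM\iota A=\iota A$ it suffices to show that $C$ is $1$-connective, i.e.\ $\ul{\pi}_0 C=0$. Now connectivity of a $\Ztwo$-spectrum is detected by the geometric fixed point functors $\Phi^e=i^*$ and $\Phi^{\Ztwo}$ (if both $i^*C$ and $\Phi^{\Ztwo}C$ are $1$-connective, then so is $(i^*C)_{h\Ztwo}$, hence so is $C^{\Ztwo}$ by the norm cofiber sequence), so I only need to treat these two.

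For the underlying spectrum, Proposition \ref{etale.10} identifies $i^*u$ with the canonical map $\EM A\to\THH(\EM A)$. This is an isomorphism on $\pi_0$ (the classical fact $\pi_0\THH(\EM A)\cong A$), so $i^*C=\cofib(i^*u)$ is $1$-connective.

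The main step is the geometric fixed points. Since $\Phi^{\Ztwo}$ is symmetric monoidal and preserves colimits, it sends the pushout defining $\THR(\iota A)$ to a pushout, and using the canonical equivalence $\Phi^{\Ztwo}N^{\Ztwo}\simeq\id$ one gets
\[
\Phi^{\Ztwo}\THR(\iota A)\simeq\bar A\wedge_{\EM A}\bar A,\qquad \bar A:=\Phi^{\Ztwo}\EM\iota A,
\]
where the $\EM A$-algebra structure on $\bar A$ is the map $\phi\colon\EM A\simeq\Phi^{\Ztwo}N^{\Ztwo}\EM A\to\bar A$ obtained by applying $\Phi^{\Ztwo}$ to the counit, and $\Phi^{\Ztwo}u$ becomes one of the two coprojections $\bar A\to\bar A\wedge_{\EM A}\bar A$. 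Here $\bar A$ is connective ($\Phi^{\Ztwo}$ preserves connectivity), $\pi_0\bar A\cong A/2A$ is the geometric fixed points of the Mackey functor $\iota A$ (whose transfer is multiplication by $2$), and $\pi_0\phi$ is the canonical surjection $A\to A/2A$; these are properties of the geometric fixed points of equivariant Eilenberg--MacLane spectra and of norms established in the appendix. Since $\pi_*\EM A$ is concentrated in degree $0$, the spectral sequence $\Tor^{\pi_*\EM A}_{*}(\pi_*\bar A,\pi_*\bar A)\Rightarrow\pi_*(\bar A\wedge_{\EM A}\bar A)$ collapses in total degree $0$ to $\pi_0(\bar A\wedge_{\EM A}\bar A)\cong A/2A\otimes_A A/2A\cong A/2A$, the last isomorphism because $A\twoheadrightarrow A/2A$ is an epimorphism of rings; moreover under this identification $\pi_0(\Phi^{\Ztwo}u)$ is the identity. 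Therefore $\Phi^{\Ztwo}C=\cofib(\Phi^{\Ztwo}u)$ is $1$-connective, which finishes the argument.

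I expect the third paragraph to be the crux. The equivalence $\Phi^{\Ztwo}\THR(\iota A)\simeq\bar A\wedge_{\EM A}\bar A$ is formal, but identifying $\pi_0\bar A$ and, above all, verifying that the structure map $\phi$ induces the plain quotient $A\to A/2A$ (and not a twisted variant) on $\pi_0$ is exactly where the appendix material on geometric fixed points of Eilenberg--MacLane spectra and of norms is needed; granting that, the rest is bookkeeping. Alternatively, one can avoid spectra after the first reduction: $\ul{\pi}_0$ preserves colimits and is compatible with $\EM\iota(-)$ and $N^{\Ztwo}(-)$ and the corresponding constructions on $\Ztwo$-Tambara functors, so that $\ul{\pi}_0\THR(\iota A)$ is a pushout of $\Ztwo$-Tambara functors which one then checks directly to be $\iota A$.
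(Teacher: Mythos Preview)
The statement you are trying to prove is in fact false in general, as the paper's own addendum explains: one has $\ul{\pi}_0\THR(\iota A)(G/G)\cong(A\otimes A)/T_A$ with $T_A$ generated by $x\otimes a^2y-a^2x\otimes y$ and $x\otimes 2ay-2ax\otimes y$, and this coincides with $A$ precisely when the Frobenius on $A/2$ is surjective (so e.g.\ when $2\in A^\times$ or $A=\Z$, but not when $A=\F_2[t]$). The original proof in the body of the paper is wrong as well --- it misdescribes the ideal $T_A$. Your overall strategy (splitting off $\EM\iota A$ and testing $1$-connectivity of the cofiber on $i^*$ and $\Phi^{\Ztwo}$) is perfectly sound; since the conclusion is false, the error has to be in the one step you yourself flagged.

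And indeed that step is wrong: $\pi_0\phi\colon A\to A/2$ is \emph{not} the plain quotient but the Frobenius $a\mapsto a^2\bmod 2$. Unwind the equivalence $\Phi^{\Ztwo}N^{\Ztwo}\simeq\id$. A class $a\in A=\pi_0\EM A$ is a map $f\colon\Sphere\to\EM A$; applying $N^{\Ztwo}$ gives $N^{\Ztwo}f\colon\Sphere_{\Ztwo}\to N^{\Ztwo}\EM A$, a class $n(a)\in\pi_0^{\Ztwo}N^{\Ztwo}\EM A$ with $\res(n(a))=a\otimes a\in A\otimes A$ since $i^*N^{\Ztwo}f\simeq f\wedge f$. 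Applying $\Phi^{\Ztwo}$ returns $f$, so under $A\cong\pi_0\Phi^{\Ztwo}N^{\Ztwo}\EM A$ the element $a$ corresponds to the class of $n(a)$. The counit at level $G/e$ is the multiplication $A\otimes A\to A$, and compatibility with $\res$ forces the counit at level $G/G$ to send $n(a)$ to an element restricting to $a\cdot a=a^2$; its image in $A/2=\pi_0\Phi^{\Ztwo}\EM\iota A$ is therefore $a^2$. With this correction your Tor computation yields
\[
\pi_0(\bar A\wedge_{\EM A}\bar A)\;\cong\;(A/2)\otimes_A(A/2)
\;\cong\;(A/2\otimes A/2)/\langle a^2x\otimes y-x\otimes a^2y\rangle,
\]
where $A$ acts on $A/2$ through $a\mapsto a^2$. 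This recovers exactly the addendum's description and shows $\Phi^{\Ztwo}C$ fails to be $1$-connective for $A=\F_2[t]$, for instance.
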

\begin{proof}
By \cite[Theorem 5.1]{DMPR21}, the morphism \eqref{etale.19.1} can be described as a diagram
\[
\begin{tikzcd}
A
\ar[r,"\alpha"]
\ar[d,"\res"',shift right=0.5ex]
\ar[d,"\tran",shift left=0.5ex,leftarrow]
&
(A\otimes A)/T,
\ar[d,"\res"',shift right=0.5ex]
\ar[d,"\tran",shift left=0.5ex,leftarrow]
\\
A
\ar[r,"\id"]
\ar[loop left]
&
A
\ar[loop right]
\end{tikzcd}
\]
where $\alpha(x)=1\otimes x$ for $x\in A$, and $T$ is the subgroup generated by $ax\otimes b-a\otimes xb$ for $a,b,x\in A$.
The description of $T$ means that $\alpha$ is an isomorphism.
It follows that \eqref{etale.19.1} is an isomorphism.
\end{proof}

\subsection{Norm functors and flat modules}

The purpose of this subsection is to prove Proposition \ref{etale.25}, which is one ingredient of the proof of Theorem \ref{etale.5}.

\begin{df}\label{defnztwoam}
Suppose $A$ is a commutative ring and $M$ is an $A$-module.
Let $N_A^{\Ztwo} M$ denote the $\iota A$-module $M\otimes_A M$ whose $\Ztwo$-action is given by $a\otimes b\mapsto b\otimes a$.
If $A=\Z$, we simply write $N^{\Ztwo}M$ instead of $N_{\Z}^{\Ztwo}M$.

Observe that there is an isomorphism $N_A^{\Ztwo} A\cong \iota A$ of commutative rings with involutions.
We prefer to use the notation $\iota A$ instead of $N_A^{\Ztwo}A$ for brevity.
\end{df}

\begin{prop}
\label{etale.25}
Let $M$ be an $A$-module, where $A$ is a commutative ring.
Then there exists a canonical map of $\EM \iota A$-modules
\begin{equation}
\label{etale.25.2}
N^{\Ztwo}\EM M\wedge_{N^{\Ztwo}\EM A} \EM \iota A
\to
\EM(N_A^{\Ztwo} M).
\end{equation} If $M$ is flat, then this map is an equivalence.
\end{prop}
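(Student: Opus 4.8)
The plan is to construct the map \eqref{etale.25.2}, call it $\theta_M$, by $0$-truncation, and then to prove it is an equivalence when $M$ is flat by checking separately that $i^{*}\theta_M$ and $\Phi^{\Ztwo}\theta_M$ are equivalences and invoking that $(i^{*},\Phi^{\Ztwo})$ is conservative \cite{Schwede}. Throughout write $E_M:=N^{\Ztwo}\EM M\wedge_{N^{\Ztwo}\EM A}\EM\iota A$ for the source.

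First I would construct $\theta_M$. The spectrum $E_M$ is connective (the norm of a connective spectrum is connective, and a relative smash product of connective modules over a connective ring is connective), while $\EM N_A^{\Ztwo}M$ is $0$-truncated; hence giving a map $E_M\to\EM N_A^{\Ztwo}M$ amounts to giving a morphism of Mackey functors $\ul{\pi}_0(E_M)\to N_A^{\Ztwo}M$. Since $i^{*}$ is symmetric monoidal, $i^{*}N^{\Ztwo}X\simeq X\wedge X$, and $i^{*}\EM\iota A\simeq\EM A$ by \eqref{etale.17.1}, one has
\[
i^{*}E_M\simeq(\EM M\wedge \EM M)\wedge_{\EM A\wedge \EM A}\EM A\simeq \EM M\wedge_{\EM A}\EM M,
\]
so $\ul{\pi}_0(E_M)(\Ztwo/e)\cong \pi_0(\EM M\wedge_{\EM A}\EM M)\cong M\otimes_A M$. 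I would then use the morphism of Mackey functors which is the identity on level $\Ztwo/e$ and equals $\res$ (whose image lies in the swap-invariants) on level $\Ztwo/\Ztwo$; compatibility with $\res$ is clear, and compatibility with $\tran$ is exactly the double coset identity $\res\circ\tran=\id+w$ of Example \ref{etale.29} matched against the formula $\tran=\id+w$ defining $N_A^{\Ztwo}M$. Then $\theta_M$ is the composite $E_M\to\tau_{\leq 0}E_M=\EM\ul{\pi}_0(E_M)\to \EM N_A^{\Ztwo}M$; by construction it is $\EM\iota A$-linear, natural in $M$, and $\theta_A$ is the identity of $\EM\iota A$ (since $N_A^{\Ztwo}A\cong\iota A$ and $E_A\simeq\EM\iota A$ canonically).

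Now suppose $M$ is flat. From the display, $i^{*}\theta_M$ is the canonical truncation map $\EM M\wedge_{\EM A}\EM M\to \EM(M\otimes_A M)\simeq i^{*}\EM N_A^{\Ztwo}M$, and $\pi_{*}(\EM M\wedge_{\EM A}\EM M)\cong\Tor^{A}_{*}(M,M)$ is concentrated in degree $0$ by flatness, so $i^{*}\theta_M$ is an equivalence. For $\Phi^{\Ztwo}$, I would use that $\Phi^{\Ztwo}$ is symmetric monoidal and $\Phi^{\Ztwo}N^{\Ztwo}X\simeq X$ (which follows from $p\circ i=\id_{\pt}$ and functoriality of $(-)_{\otimes}$; cf.\ Proposition \ref{orth.5}), giving $\Phi^{\Ztwo}E_M\simeq \EM M\wedge_{\EM A}\Phi^{\Ztwo}\EM\iota A$ for all $M$. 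Both $\Phi^{\Ztwo}E_{(-)}$ and $\Phi^{\Ztwo}\EM N_A^{\Ztwo}(-)$ preserve filtered colimits of flat modules (for the latter, $N_A^{\Ztwo}$ does, since the diagonal is cofinal in a filtered square, and $\EM$ and $\Phi^{\Ztwo}$ preserve colimits), so by Lazard's theorem I may assume $M=A^{n}$. There $N_A^{\Ztwo}(A^{n})\cong \iota(A^{n})\oplus (A^{\binom n2})^{\oplus\Ztwo}$ as abelian groups with involution---the $e_i\otimes e_i$ are fixed, the pairs $\{e_i\otimes e_j,e_j\otimes e_i\}$ with $i<j$ are swapped---so Proposition \ref{etale.33} together with $\Phi^{\Ztwo}i_{*}\simeq 0$ (Proposition \ref{orth.5}) yields $\Phi^{\Ztwo}\EM N_A^{\Ztwo}(A^{n})\simeq\Phi^{\Ztwo}\EM\iota(A^{n})\simeq(\Phi^{\Ztwo}\EM\iota A)^{\oplus n}\simeq\Phi^{\Ztwo}E_{A^n}$. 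By naturality of $\theta$ along the $n$ coordinate inclusions $A\hookrightarrow A^{n}$, the fact that $\theta_A=\id$, and the fact that $N_A^{\Ztwo}$ of the $i$-th coordinate inclusion lands in the summand $A\langle e_i\otimes e_i\rangle$ of $\iota(A^n)$, the map $\Phi^{\Ztwo}\theta_{A^{n}}$ becomes the identity of $(\Phi^{\Ztwo}\EM\iota A)^{\oplus n}$. Hence $\Phi^{\Ztwo}\theta_M$ is an equivalence, and by conservativity of $(i^{*},\Phi^{\Ztwo})$ so is $\theta_M$.

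I expect the main obstacle to be this last step---pinning down that the abstract splittings of $\Phi^{\Ztwo}E_{A^n}$ and $\Phi^{\Ztwo}\EM N_A^{\Ztwo}(A^{n})$ into free rank-$n$ modules over $\Phi^{\Ztwo}\EM\iota A$ are genuinely compatible with $\theta$, so that $\Phi^{\Ztwo}\theta_{A^{n}}$ is the identity and not merely an automorphism; this forces one to track the $\ul{\pi}_0$-level description of $\theta$ through the coproduct structure maps. (Alternatively, a map between free rank-$n$ modules over the connective ring $\Phi^{\Ztwo}\EM\iota A$ is an equivalence as soon as it induces an isomorphism on $\pi_0\cong(A/2A)^{\oplus n}$, which reduces to the same bookkeeping.) A minor secondary point is to make the equivalences $i^{*}E_M\simeq \EM M\wedge_{\EM A}\EM M$ and $\Phi^{\Ztwo}E_M\simeq \EM M\wedge_{\EM A}\Phi^{\Ztwo}\EM\iota A$ respect the module structures throughout and be compatible with $\theta$.
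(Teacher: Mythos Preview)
Your proposal is correct, and the construction of $\theta_M$ is essentially the paper's (both use connectivity plus the fact that a Mackey map into an abelian-group-with-involution is determined at level $G/e$; this is the paper's Lemma~\ref{etale.28}, which you reprove inline). The difference is in the proof that $\theta_M$ is an equivalence for flat $M$.

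You argue via conservativity of $(i^*,\Phi^{\Ztwo})$: the $i^*$ check is the Tor computation, and for $\Phi^{\Ztwo}$ you reduce to $M=A^n$, split $N_A^{\Ztwo}(A^n)\cong \iota(A^n)\oplus (A^{\binom n2})^{\oplus\Ztwo}$, kill the induced part, and track $\Phi^{\Ztwo}\theta_{A^n}$ through the coordinate inclusions. The paper instead avoids $\Phi^{\Ztwo}$ entirely. Its key observation is that $N^{\Ztwo}\EM(A^n)\simeq V_n\wedge N^{\Ztwo}\EM A$, where $V_n=[n]\times[n]$ with the swap $\Ztwo$-action; hence $E_{A^n}\simeq V_n\wedge \EM\iota A\simeq \EM(V_n\times \iota A)$ is already an Eilenberg--MacLane spectrum. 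Since the target is too, and both arise from abelian groups with involution, Lemma~\ref{etale.28} reduces the question to the underlying $\pi_0$, which is the elementary isomorphism $(M\otimes M)\otimes_{A\otimes A}A\cong M\otimes_A M$. This sidesteps exactly the bookkeeping you flag as the main obstacle: there is no need to match two abstract rank-$n$ decompositions under $\Phi^{\Ztwo}$, because the entire check happens at the level of ordinary $A$-modules. Your route is more conceptual in that it uses a general detection principle, but the paper's is shorter and cleaner for this particular statement.
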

\begin{proof}
First, we have a map $N^{\Ztwo} \EM A \to \EM \iota A$ using the adjunction $(N^{\Ztwo},i^*)$, that $i^*$ commutes with $\EM$, and that $i^*\iota\simeq id$.
We have the $\iota A$-module structure on $N_A^{\Ztwo} M$ given by $a(x\otimes y)=ax\otimes y$ for all $a\in A$ and $x\otimes y\in N_A^{\Ztwo} M$.
Hence we can regard $\EM(N_A^{\Ztwo} M)$ as an $\EM \iota A$-module.
By \cite[Proposition 4.6.2.17]{HA}, to construct \eqref{etale.25.2}, it suffices to construct a map of $N^{\Ztwo}\EM A$-modules
\begin{equation}
\label{etale.25.3}
N^{\Ztwo} \EM M
\to
\EM (N_A^{\Ztwo} M).
\end{equation}

By Proposition \ref{t.5}, $N^{\Ztwo}\EM A$ and $N^{\Ztwo} \EM M$ are $(-1)$-connected.
From the adjoint pairs \eqref{Green.4.1} and \eqref{Green.4.3}, we obtain a map in $\CAlg_{\Z/2}$
\[
N^{\Ztwo}\EM A \to \EM \ul{\pi}_0(N^{\Ztwo}\EM A)
\]
and a map of $N^{\Ztwo}\EM A$-modules
\[
N^{\Ztwo}\EM M \to \EM \ul{\pi}_0(N^{\Ztwo}\EM M).
\]
Hence to construct \eqref{etale.25.3}, it suffices to construct a map of $\ul{\pi}_0(N^{\Ztwo} \EM A)$-modules
\[
\ul{\pi}_0(N^{\Ztwo} \EM M)
\to
N_A^{\Ztwo}M.
\]
By Lemma \ref{etale.28} and Definition \ref{defnztwoam}, this is equivalent to constructing a morphism of $\pi_0(i^*N^{\Ztwo}\EM A)$-modules
\begin{equation}
\label{etale.25.4}
\pi_0(i^*N^{\Ztwo}\EM M)
\to
M\otimes_A M,
\end{equation}
i.e., a morphism of $A\otimes A$-modules $M\otimes M\to M\otimes_A M$ since $i^*N^{\Ztwo}\simeq (-)^{\wedge 2}$ by Proposition \ref{orth.5}(6).
The canonical assignment $x\otimes y\to x\otimes y$ finishes the construction.

The class of $A$-modules such that \eqref{etale.25.2} is an equivalence is closed under filtered colimits.
By Lazard's theorem, every flat $A$-module is a filtered colimit of finitely generated free $A$-modules.
Hence to show that \eqref{etale.25.2} is an equivalence if $M$ is flat, we may assume $M=A^n$.
In this case, there is an equivalence
\[
N^{\Ztwo} \EM M
\simeq
V_n \wedge N^{\Ztwo} \EM A,
\]
where $V_n$ is the set $[n]\times [n]$ with the $\Ztwo$-action given by $(a,b)\mapsto (b,a)$.
Hence we obtain equivalences
\[
N^{\Ztwo}\EM M\wedge_{N^{\Ztwo}\EM A} \EM \iota A
\simeq
V_n \wedge \EM \iota A
\simeq
\EM(V_n \times \iota A).
\]
Combining this with \eqref{etale.25.2} we obtain a map
\begin{equation}
f
\colon
\EM(V_n\times \iota A)
\to
\EM(N_A^{\Ztwo} A^n).
\end{equation}
To show that $f$ is an equivalence, it suffices to show that $\ul{\pi}_0(f)$ is an equivalence.
Since $V_n \times \iota A$ and $N_A^{\Ztwo} A^n$ are rings with involutions, by Lemma \ref{etale.28} it suffices to show that $\pi_0(f)$ is an equivalence of modules over rings with involution.
Hence to show that \eqref{etale.25.2} is an equivalence, it suffices to show that it is an equivalence after applying $\pi_0$, i.e., the induced map
\[
g
\colon
(M\otimes M)\otimes_{A\otimes A} A
\to
M\otimes_A M
\]
is an isomorphism.
From the description of \eqref{etale.25.4}, we see that $g$ is given by
\[
g((x\otimes y)\otimes a)=ax\otimes y
\]
for $x,y\in M$ and $a\in A$.
One can readily check that this $g$ is an isomorphism.
\end{proof}

\section{Descent properties of \texorpdfstring{$\THR$}{THR}}
\label{sec3}

\subsection{Some equivariant topologies}

We refer to \cite{HeKO} for the definition of the stable $\Ztwo$-equivariant motivic homotopy category $\SHG$, which is compatible with the later work of Hoyois \cite{Hoy}, but differs from Hermann's \cite{Herrmann}. See his Corollary 2.13 and Example 3.1, as well as \cite[Example 2.16 and section 6.1]{HeKO}, for a comparison. The following definitions are taken from  \cite{HeKO}.
\emph{Throughout this section, we assume that $G$ is an abstract finite group, which we will identify with its associated finite group scheme over a fixed base scheme.} We are mostly interested in the case $G=\Ztwo=C_2$.

\begin{df}
\label{equitop.1}
Let $x$ be a point of a $G$-scheme $X$.
The \emph{set-theoretic stabilizer of $X$ at $x$} is defined to be
\[
S_x
:=
\{g\in G : gx=x\}.
\]
The \emph{scheme-theoretic stabilizer of $X$ at $x$} is defined to be
\[
G_x
:=
\ker(S_x\to \mathrm{Aut}(k(x))).
\]
Let $f\colon Y\to X$ be an equivariant morphism of $G$-schemes.
We say that
\begin{enumerate}
\item[(i)] $f$ is \emph{(equivariant) \'etale} if its underlying morphism of schemes is \'etale,
\item[(ii)] $f$ is an \emph{equivariant \'etale cover} if $f$ is \'etale and surjective,
\item[(iii)] $f$ is \emph{isovariant} if for every point $y\in Y$, the induced homomorphism $G_y\to G_{f(y)}$ is an isomorphism,
\item[(iv)] $f$ is an \emph{isovariant \'etale cover} if $f$ is isovariant and an equivariant \'etale cover,
\item[(v)] $f$ is a \emph{fixed point \'etale cover} if it is an \'etale cover and for every point $x\in X$, there exists a point $y\in f^{-1}(x)$ such that $G_x\simeq G_y$.
\item[(vi)] $f$ is a \emph{equivariant Nisnevich cover} if 
$f$ is an \'etale cover and for every point $x\in X$, there exists a point $y\in f^{-1}(x)$ such that $k(x)\simeq k(y)$ and $S_x\simeq S_y$.

\end{enumerate}
These covers define \emph{equivariant  \'etale, isovariant \'etale, fixed point \'etale, and equivariant Nisnevich topologies} on the category of $G$-schemes.

The isovariant \'etale topology
was first studied by Thomason \cite{Tho88}, see e.g. \cite[section 6.1]{HeKO}. By \cite[Remark 3.1]{Ryd13}, isovariant is the same as "fixed-point reflecting", compare Definition 3.3 of loc.\ cit.. The equivariant Nisnevich topology is due to Voevodsky \cite{Del09}.

The discussion in \cite[p.\ 1223]{HeKO} shows that the equivariant Nisnevich topology is coarser than the fixed point \'etale topology.
As observed in the proof of \cite[Corollary 6.6]{HeKO}. the fixed point \'etale topology is equivalent to the isovariant \'etale topology.
Hence we have the following inclusions of topologies:
\begin{align*}
\text{(equivariant Nisnevich)}
\subset &
\text{(fixed point \'etale)}
\\
\simeq &
\text{(isovariant \'etale)}
\subset
\text{(equivariant \'etale).}
\end{align*}
\end{df}

For a $G$-scheme $X$, let $X/G$ denote the geometric quotient, which is an algebraic space.
For the existence, see e.g.\ \cite[Corollary 5.4]{Ryd13}.
If $S$ is a locally noetherian scheme with the trivial $G$-action and $X\to S$ is a quasi-projective $G$-equivariant morphism, then $X/G$ is representable by an $S$-scheme according to \cite[Th\'eor\`eme V.7.1]{SGA3}.
If $A$ is a commutative ring with $G$-action, then there is a canonical isomorphism $\Spec(A)/G\simeq \Spec(A^G)$, see e.g.\ \cite[Theorem 4.1]{Ryd13}.

\begin{prop}
\label{equitop.2}
Let $f\colon Y\to X$ be a separated isovariant \'etale morphism of $G$-schemes.
Then the quotient morphism $Y/G\to X/G$ of algebraic spaces is \'etale, and the induced square
\[
\begin{tikzcd}
Y\ar[d]\ar[r,"f"]&
X\ar[d]
\\
Y/G\ar[r]&
X/G
\end{tikzcd}
\]
is cartesian.
\end{prop}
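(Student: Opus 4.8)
The plan is to reduce to the affine case and then invoke the invariant theory of quotients together with the characterization of isovariant morphisms as fixed-point reflecting. First I would check that the statement is local on $X/G$ (and hence on $X$, after pulling back along an affine chart of $X/G$), so that we may assume $X = \Spec(A)$ with $A$ a commutative ring carrying a $G$-action; then $X/G = \Spec(A^G)$ by the cited result of Rydh. Since $f$ is separated, quasi-finite (\'etale), and $G$-equivariant, $Y$ is again a scheme (affine over $X$, at least after a further localization), say $Y = \Spec(B)$, with $Y/G = \Spec(B^G)$.

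The heart of the argument is to show that the square
\[
\begin{tikzcd}
\Spec(B)\ar[d]\ar[r]& \Spec(A)\ar[d]\\
\Spec(B^G)\ar[r]& \Spec(A^G)
\end{tikzcd}
\]
is cartesian, i.e.\ that the natural map $A\otimes_{A^G} B^G \to B$ is an isomorphism, and that $A^G \to B^G$ is \'etale. For the cartesianness, I would argue pointwise using the isovariance hypothesis. Over a point $\bar x$ of $X/G$, the fibre of $X \to X/G$ is a single $G$-orbit (this uses that $G$ acts with the geometric quotient being a true orbit space on points, cf.\ the classical theory of quotients by finite groups); isovariance means precisely that the scheme-theoretic stabilizers are preserved, so over each point $y \in Y$ lying above $x \in X$ the map $G_y \to G_x$ is an isomorphism, forcing the fibre of $Y \to Y/G$ over the image $\bar y$ to be exactly the $G$-orbit of $y$, which maps isomorphically (as a $G$-set, and then scheme-theoretically because $f$ is \'etale hence unramified and flat) onto the $G$-orbit $G\cdot x$. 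Thus $Y \to X \times_{X/G} (Y/G)$ is a $G$-equivariant \'etale morphism which is bijective on points with trivial residue field extensions and which respects the $G$-action fibrewise; since both sides are \'etale over $X$ and the map is radicial and surjective, it is an isomorphism. Once the square is cartesian, $Y/G \to X/G$ becomes \'etale by faithfully flat descent: $X \to X/G$ is an fppf cover (the quotient map by a finite group is integral and surjective, and generically flat; more carefully, one passes to the situation where it is finite locally free, or uses that \'etale-ness can be checked after the fppf base change $X/G \leftarrow X$), and the base change $Y = X\times_{X/G}(Y/G) \to X$ of $Y/G \to X/G$ is $f$, which is \'etale by hypothesis.

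The main obstacle I anticipate is handling the non-flatness of the quotient map $X \to X/G$: for a general finite group action the map $\Spec A \to \Spec A^G$ need not be flat, so "descend \'etaleness along $X \to X/G$" is not literally an instance of fppf descent. The way around this is either (i) to work \'etale-locally on $X/G$ where, after a suitable localization, one can arrange the action to be free or the quotient map to be finite locally free — this is where one genuinely uses isovariance, which on the level of orbits says the action on $X$ has "constant stabilizer" matching that on $X/G$, and in the extreme relevant case $G_x$ trivial the map is finite \'etale — or (ii) to use the Noetherian and quasi-projective hypotheses flagged before the proposition, invoking \cite[Th\'eor\`eme V.7.1]{SGA3} and Rydh's results \cite{Ryd13} on quotients to guarantee that $Y/G \to X/G$ is representable and that \'etaleness may be tested after the cartesian base change along $X \to X/G$. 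A clean route is: the formation of geometric quotients by $G$ commutes with the \'etale base change $Y/G \to X/G$ precisely because $f$ is isovariant (fixed-point reflecting), so $Y = (Y/G)\times_{X/G} X$ by construction of the quotient, giving the cartesian square for free; then \'etaleness of $Y/G\to X/G$ follows from \'etaleness of $f = Y \to X$ together with the fact that $X \to X/G$ is surjective and universally submersive, and \'etaleness descends along surjective universally open integral maps of finite presentation. I would present the isovariance-implies-quotient-is-cartesian step as the conceptual core and relegate the descent-of-\'etaleness to a citation of Thomason \cite{Tho88} or \cite[section 6.1]{HeKO}, where this comparison is already carried out.
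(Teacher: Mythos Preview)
The paper's own ``proof'' is a single-line citation: \emph{This appears on \cite[p.\ 1225]{HeKO}.} So there is nothing to compare at the level of argument --- the paper treats this as a known result from the literature (ultimately going back to Thomason), while you have attempted to sketch an actual proof.

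Your sketch has the right architecture: reduce to affines, use isovariance to match stabilizers and hence $G$-orbits fibrewise to get cartesianness, then deduce \'etaleness of the quotient map. The cartesianness step is essentially sound. You have also correctly identified the genuine difficulty, namely that $X \to X/G$ need not be flat, so descending \'etaleness of $f$ to $Y/G \to X/G$ is not a routine fppf-descent application. Your proposed workarounds (stratify by stabilizer type, or invoke that \'etaleness descends along finite surjective universally submersive maps) are in the right direction but would need a precise citation to be rigorous --- and you yourself end by suggesting to relegate this to a citation of Thomason \cite{Tho88} or \cite[\S6.1]{HeKO}. That is exactly what the paper does, only the paper cites the entire proposition rather than just the delicate step. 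In short: your proposal is more content than the paper provides, it is correct in outline, and the one place it remains soft is precisely the place where a citation is warranted and where the paper's cited source does the work.
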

\begin{proof}
This appears on \cite[p.\ 1225]{HeKO}.
\end{proof}

\begin{df}
\label{equitop.3}
For a $G$-scheme $X$, let $X_{iso\et}$ denote the small isovariant \'etale site with the isovariant \'etale coverings $Y\to X$ of $G$-schemes. 
\end{df}

\begin{prop}[Thomason]
\label{equitop.4}
Let $X$ be a $G$-scheme.
Then there exists an equivalence of sites
\[
(X/G)_{\et}
\xrightarrow{\simeq}
X_{iso\et}
\]
sending any $X/G$-scheme $Y$ to $Y\times_{X/G} X$.
\end{prop}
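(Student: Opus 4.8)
The plan is to upgrade the purely geometric assertion already recorded in Proposition~\ref{equitop.2} into an equivalence of sites, following Thomason~\cite{Tho88}. The functor is the obvious one: send an étale $X/G$-scheme $V\to X/G$ to the $G$-scheme $V\times_{X/G}X\to X$, with $G$ acting through the second factor. I first need to check this lands in $X_{iso\et}$: given $V\to X/G$ étale, the base change $V\times_{X/G}X\to X$ is étale (étale morphisms are stable under base change), and it is isovariant because, for a point $y$ lying over $x\in X$ with image $\bar x\in X/G$, the scheme-theoretic stabilizer $G_y$ is computed from the $G$-orbit structure on the fiber over $\bar x$, which is unchanged by the étale base change $V\to X/G$ (the fiber of $V\times_{X/G}X$ over $\bar x$ is $V_{\bar x}\times_{k(\bar x)}(X_{\bar x})$, and $G$ acts only on the second factor, so stabilizers are preserved). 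So the functor is well-defined.

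Next I would produce a quasi-inverse. Given a separated isovariant étale $Y\to X$, Proposition~\ref{equitop.2} says $Y/G\to X/G$ is étale and the square relating $Y,X,Y/G,X/G$ is cartesian; so $Y\mapsto (Y/G\to X/G)$ is the candidate inverse, and the cartesianness of that square is exactly the statement that the two composites are naturally isomorphic to the identity on $X_{iso\et}$. For the other composite, starting from étale $V\to X/G$ I must show $(V\times_{X/G}X)/G\simeq V$ over $X/G$; this is the statement that forming the quotient commutes with étale base change along $V\to X/G$, which follows from the representability/compatibility properties of geometric quotients recalled before Proposition~\ref{equitop.2} (in the affine case $\Spec(A)/G=\Spec(A^G)$, and $(-)^G$ commutes with the relevant flat base change; in general one reduces to this by étale-locality of the quotient, or cites \cite[Theorem 4.1, Corollary 5.4]{Ryd13}). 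Finally I would check the functor is an equivalence of \emph{sites}, not merely of categories: covering families on $X/G$ are surjective families of étale maps, and their images under the functor are exactly the isovariant étale covers of $X$ (surjectivity is preserved and reflected by the cartesian square, since $Y\to X$ is surjective iff $Y/G\to X/G$ is, the vertical maps being surjective), so the functor is continuous and cocontinuous, hence induces an equivalence of topoi.

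The main obstacle is the bookkeeping around geometric quotients in the non-affine (algebraic space) setting: one must be careful that $Y/G$ exists, that $Y/G\to X/G$ inherits separatedness and étaleness, and above all that quotient formation commutes with the étale base changes appearing in the quasi-inverse construction. All of this is available from \cite{Ryd13} and the cartesian square of Proposition~\ref{equitop.2}, but assembling it into a clean proof that the unit and counit are isomorphisms — rather than just objectwise equivalences — is where the real work lies; the étale and isovariance verifications for the forward functor are routine by comparison.
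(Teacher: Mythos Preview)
Your proposal is correct and in fact supplies considerably more detail than the paper does: the paper's own ``proof'' consists solely of the citation \cite[Proposition 6.11]{HeKO}. The argument you sketch---forward functor by base change, quasi-inverse by taking quotients, with the cartesian square of Proposition~\ref{equitop.2} providing one natural isomorphism and base-change compatibility of geometric quotients (as in \cite{Ryd13}) the other---is exactly the standard route and is essentially what the cited reference does.
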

\begin{proof}
We refer to \cite[Proposition 6.11]{HeKO}.
\end{proof}

\begin{df}
\label{equitop.5}
Let $X$ be a separated $G$-scheme.
The presheaf $X^G$ on the category of separated schemes $\Sch$ is defined to be
\[
X^G(Y)
:=
\Hom_{\Sch}(Y,X)^G
\]
for $Y\in \Sch$.
By \cite[Proposition 9.2 in Expos\'e XII]{SGA3}, $X^G$ is representable by a closed subscheme of $X$.
Furthermore,
the points of the topological space underlying the scheme $X^G$ are in canonical bijection with the points of the topological space of fixed points (recall $G$ is finite).
\end{df}

\subsection{Isovariant \'etale base change}
\label{sec3.2}

\begin{lem}
\label{etale.26}
Let $A\to B$ be an \'etale homomorphism of commutative rings.
Then the map $\EM(m)\colon \EM(N_A^{\Ztwo} B)\to \EM(\iota B)$ 
induced by the multiplication map
\[
m\colon N_A^{\Ztwo} B\to \iota B, 
\;
x\otimes y \mapsto xy
\]
is flat.
\end{lem}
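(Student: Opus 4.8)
The plan is to reduce the assertion to a statement about ordinary rings and modules, and then to check flatness explicitly using the étale hypothesis. First I would recall that by Proposition \ref{etale.25}, since $B$ is flat over $A$ (being étale, hence flat), the canonical map
\[
N^{\Ztwo}\EM B\wedge_{N^{\Ztwo}\EM A}\EM \iota A
\xrightarrow{\simeq}
\EM(N_A^{\Ztwo}B)
\]
is an equivalence of $\EM\iota A$-modules. Thus $\EM(N_A^{\Ztwo}B)$ is identified with a base change of $N^{\Ztwo}\EM B$ along $N^{\Ztwo}\EM A\to \EM\iota A$, and the map $\EM(m)$ is, up to this identification, obtained from the counit-type map $N^{\Ztwo}\EM B\to \EM\iota B$ by base change along $N^{\Ztwo}\EM A\to\EM\iota A$. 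Since flatness of module maps is stable under base change, it would suffice to show that $N^{\Ztwo}\EM B\to \EM\iota B$ is flat as a map of $N^{\Ztwo}\EM A$-modules, or more precisely to extract a statement at the level of $\ul{\pi}_0$.

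The key reduction is that all the spectra in sight are $(-1)$-connected (by Proposition \ref{t.5}, as in the proof of Proposition \ref{etale.25}), so that equivariant flatness of the module map can be tested on $\ul{\pi}_0$, i.e.\ on the associated Mackey/Green functors; and by Lemma \ref{etale.28} a map of modules over a Green functor whose target is associated with an abelian group with involution is detected on the underlying level $G/e$. Concretely, $\ul{\pi}_0 N^{\Ztwo}\EM B$ is the Green functor whose value at $G/e$ is $B\otimes_A B$ with the swap involution and whose value at $G/G$ is $N_A^{\Ztwo}B=(B\otimes_A B)^{\Ztwo}$, and $\ul\pi_0\EM\iota B$ is the constant Green functor on $B$; the map $m$ at level $G/e$ is the multiplication $B\otimes_A B\to B$. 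So everything comes down to: \emph{for an étale map $A\to B$, the multiplication $\mu\colon B\otimes_A B\to B$ is a flat morphism, i.e.\ $B$ is flat as a $B\otimes_A B$-module via $\mu$.} This is the classical fact that an étale (more generally, unramified) algebra is, as a module over its enveloping algebra, given by a diagonal idempotent: the kernel of $\mu$ is generated by an idempotent $e\in B\otimes_A B$, so $B\cong (B\otimes_A B)/(e)\cong (B\otimes_A B)(1-e)$ is a direct summand of $B\otimes_A B$, hence flat (indeed projective).

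I would then need to upgrade this $\pi_0$-level flatness to flatness of the spectrum-level module map $\EM(m)$. The mechanism is the one already used implicitly in Proposition \ref{etale.25}: since source and target are $(-1)$-connected $\EM$-modules and the map on $\ul\pi_0$ exhibits the target as (the $\EM$ of) a flat module, one concludes that $\EM(m)$ is flat in the appropriate equivariant sense — one can even argue that $1-e$ splits off a direct summand already at the level of $\EM$-modules, so $\EM(\iota B)$ is a retract of $\EM(N_A^{\Ztwo}B)$ as an $\EM(N_A^{\Ztwo}B)$-module, whence flat. The main obstacle, and the point requiring care, is precisely the passage between the equivariant/Mackey-functor bookkeeping and the underlying commutative-algebra statement: one must make sure that the diagonal idempotent $e$ for $A\to B$ is compatible with the $\Ztwo$-swap action on $B\otimes_A B$ (it is, since $\mu$ is symmetric, so $e$ is a fixed point, i.e.\ lies in $N_A^{\Ztwo}B$) so that the splitting is genuinely a splitting of $\EM\iota A$-modules and descends through Lemma \ref{etale.28}. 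Everything else is a routine unwinding of the functors $N^{\Ztwo}$, $\EM$, and $\ul\pi_0$ already established in the preceding subsections.
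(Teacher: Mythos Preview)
Your core idea is correct and coincides with the paper's: the diagonal idempotent for an \'etale extension is fixed by the swap involution, so $N_A^{\Ztwo}B \cong \iota B \times C$ splits as commutative rings \emph{with involution}, exhibiting $\iota B$ as a retract (hence projective, hence flat) $N_A^{\Ztwo}B$-module; applying $\EM$ gives the result since both source and target are Eilenberg--MacLane and so condition~(ii) of Definition~\ref{etale.20} is automatic. This is exactly what the paper does, phrased as writing $\iota B \cong N_A^{\Ztwo}B[1/e]$ as a filtered colimit of free modules and invoking Proposition~\ref{flat.4}. Your parenthetical remark at the end (``one can even argue that $1-e$ splits off a direct summand already at the level of $\EM$-modules\ldots'') \emph{is} the proof.

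The detour through Proposition~\ref{etale.25} and the spectrum-level norm $N^{\Ztwo}\EM B$, however, is both unnecessary and contains genuine errors. Your computation of $\ul{\pi}_0 N^{\Ztwo}\EM B$ is wrong: at $G/e$ one gets $\pi_0(\EM B\wedge \EM B) = B\otimes_{\Z} B$, not $B\otimes_A B$ (you have silently base-changed); and at $G/G$ the value is \emph{not} simply the swap-fixed points $(B\otimes B)^{\Ztwo}$, since the norm is not an Eilenberg--MacLane spectrum and its genuine fixed points involve more. Relatedly, because $N^{\Ztwo}\EM B$ has higher homotopy, you cannot reduce flatness of $N^{\Ztwo}\EM B \to \EM\iota B$ to a $\ul{\pi}_0$-statement alone; condition~(ii) of Definition~\ref{etale.20} is no longer automatic. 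None of this is needed: just work directly with the rings-with-involution $N_A^{\Ztwo}B$ and $\iota B$ from the start, as the paper does.
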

\begin{proof}
Let $C$ be the kernel of $m$, which is an ideal of $B\otimes_A B$ with the induced involution.
Since $A\to B$ is \'etale, the diagonal morphism of schemes $\Spec(B)\to \Spec(B)\times_{\Spec(A)}\Spec(B)$ is an open and closed immersion by the implication a)$\Rightarrow$b) in \cite[Corollaire IV.17.4.2]{EGA}.
This implies that the ring structure on $B\otimes_A B$ makes a ring structure on $C$.
Hence we have an isomorphism of commutative rings with involutions $N_A^{\Ztwo}B\cong \iota B\times C$.
We set $e:=(1.0)\in \iota B\times C$.
There is an isomorphism $N_A^{\Ztwo}B[1/e]\cong \iota B$, which gives an isomorphism
\[
\colim(N_A^{\Ztwo}B \xrightarrow{\cdot e}N_A^{\Ztwo}B \xrightarrow{\cdot e} \cdots)
\cong
\iota B.
\]
Hence $\iota B$ is a filtered colimit of free $N_A^{\Ztwo}B$-modules.
By Proposition \ref{flat.4}, $m$ is flat.
It follows that $\EM(m)$ is flat too.
\end{proof}

Let $A\to B$ be a map in $\NAlg_{\Ztwo}$.
Then we have the commutative square in $\NAlg_{\Ztwo}$
\[
\begin{tikzcd}
A\ar[d]\ar[r]&
\THR(A)\ar[d]
\\
B\ar[r]&
\THR(B),
\end{tikzcd}
\]
where the horizontal maps are given by \eqref{etale.14.1}.
Hence we obtain an induced map
\begin{equation}
\THR(A)\wedge_A B\to \THR(B)
\end{equation}
in $\NAlg_{\Ztwo}$.
We will now study this map for $\EM \iota A \to \EM \iota B$.

Note that the following result does not directly follow from the \'etale base change result \cite[Proposition 3.2.1]{GH} for $\THH$ as neither $\EM$ nor $\THR$ commute with $\iota$. This result is crucial for the descent results further below.

\begin{prop}
\label{etale.23}
Let $A\to B$ be an \'etale homomorphism of commutative rings.
Then we have an induced equivalence 
\begin{equation}
\label{etale.23.1}
\THR(\iota A)\wedge_{\EM \iota A}\EM \iota B
\stackrel{\simeq}{\to}
\THR(\iota B).
\end{equation}
\end{prop}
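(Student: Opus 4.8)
The plan is to reduce the statement to a computation that has already been prepared, namely Proposition~\ref{etale.25} together with Lemma~\ref{etale.26}. Recall that by Definition~\ref{etale.14} applied to $\EM\iota A$, we have
\[
\THR(\iota A)
=
\EM\iota A\wedge_{N^{\Ztwo}i^*\EM\iota A}\EM\iota A
=
\EM\iota A\wedge_{N^{\Ztwo}\EM A}\EM\iota A,
\]
where we use $i^*\EM\iota A\simeq\EM i^*\iota A\simeq \EM A$ from \eqref{etale.17.1} and the identification $A^{\wedge\Ztwo}=N^{\Ztwo}i^*(\EM\iota A)$. So the base change map \eqref{etale.23.1} becomes the map
\[
(\EM\iota A\wedge_{N^{\Ztwo}\EM A}\EM\iota A)\wedge_{\EM\iota A}\EM\iota B
\;\longrightarrow\;
\EM\iota B\wedge_{N^{\Ztwo}\EM B}\EM\iota B .
\]
First I would rewrite the left-hand side by collapsing the iterated pushouts of $\EM\iota A$-algebras: it is canonically equivalent to $\EM\iota B\wedge_{N^{\Ztwo}\EM A}\EM\iota B$. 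The task is therefore to show that the canonical map
\[
\EM\iota B\wedge_{N^{\Ztwo}\EM A}\EM\iota B
\;\longrightarrow\;
\EM\iota B\wedge_{N^{\Ztwo}\EM B}\EM\iota B
\]
is an equivalence; equivalently, that base change of $\EM\iota B$ along $N^{\Ztwo}\EM A\to N^{\Ztwo}\EM B$ already produces $\EM\iota B$ as an $N^{\Ztwo}\EM B$-algebra.

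The key step is to compute the base change $\EM\iota B\wedge_{N^{\Ztwo}\EM A}N^{\Ztwo}\EM B$. Here I would invoke Proposition~\ref{etale.25}: since $B$ is flat over $A$ (étale implies flat), \eqref{etale.25.2} gives an equivalence of $\EM\iota A$-modules
\[
N^{\Ztwo}\EM B\wedge_{N^{\Ztwo}\EM A}\EM\iota A
\;\xrightarrow{\simeq}\;
\EM(N_A^{\Ztwo}B).
\]
Tensoring with $\EM\iota B$ over $\EM\iota A$ (and using that $N_A^{\Ztwo}B$ is an $\iota A$-algebra, while multiplication $m\colon N_A^{\Ztwo}B\to\iota B$ is an $\iota A$-algebra map) identifies
\[
N^{\Ztwo}\EM B\wedge_{N^{\Ztwo}\EM A}\EM\iota B
\;\simeq\;
\EM(N_A^{\Ztwo}B)\wedge_{\EM\iota A}\EM\iota B .
\]
Now by Lemma~\ref{etale.26} the map $\EM(m)\colon\EM(N_A^{\Ztwo}B)\to\EM\iota B$ is flat, and in fact the proof of that lemma exhibits $\iota B$ as a localization $N_A^{\Ztwo}B[1/e]$ with $e$ the idempotent cutting out the diagonal component; consequently $\EM(N_A^{\Ztwo}B)\wedge_{\EM\iota A}\EM\iota B\simeq \EM\iota B$, the diagonal factor being the one that survives after base change to $\iota B$. (One can see this on $\ul\pi_0$ using Lemma~\ref{etale.28}, reducing to the classical fact that $(B\otimes_A B)\otimes_{B\otimes_A B,\,m}B\cong B$ for $A\to B$ étale, and then conclude by connectivity since all spectra in sight are $(-1)$-connected by Proposition~\ref{t.5}.) This yields $N^{\Ztwo}\EM B\wedge_{N^{\Ztwo}\EM A}\EM\iota B\simeq\EM\iota B$ as $N^{\Ztwo}\EM B$-algebras.

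Feeding this back in finishes the argument:
\[
\EM\iota B\wedge_{N^{\Ztwo}\EM A}\EM\iota B
\simeq
\EM\iota B\wedge_{N^{\Ztwo}\EM B}\bigl(N^{\Ztwo}\EM B\wedge_{N^{\Ztwo}\EM A}\EM\iota B\bigr)
\simeq
\EM\iota B\wedge_{N^{\Ztwo}\EM B}\EM\iota B
=
\THR(\iota B),
\]
and one checks that this chain of equivalences is induced by the canonical base change map \eqref{etale.23.1}. The main obstacle I anticipate is the bookkeeping in the second paragraph: making sure that the flat base change identity from Proposition~\ref{etale.25} is applied with the correct $\iota A$-algebra (as opposed to module) structures, and that the idempotent-localization picture of Lemma~\ref{etale.26} genuinely identifies the base change $\EM(N_A^{\Ztwo}B)\wedge_{\EM\iota A}\EM\iota B$ with $\EM\iota B$ compatibly with the maps in \eqref{etale.23.1}—this is where conservativity of $(i^*,\Phi^{\Ztwo})$ and the $\ul\pi_0$-computation via Lemma~\ref{etale.28} do the real work. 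Everything else is formal manipulation of relative smash products in $\NAlg_{\Ztwo}$.
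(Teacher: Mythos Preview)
There are two concrete errors.

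\emph{First}, the collapse $(\EM\iota A\wedge_{N^{\Ztwo}\EM A}\EM\iota A)\wedge_{\EM\iota A}\EM\iota B\simeq\EM\iota B\wedge_{N^{\Ztwo}\EM A}\EM\iota B$ is wrong. The $\EM\iota A$-algebra structure \eqref{etale.14.1} on $\THR(\iota A)$ is via \emph{one} of the two inclusions $\EM\iota A\to\EM\iota A\wedge_{N^{\Ztwo}\EM A}\EM\iota A$, and these are not homotopic (already for ordinary rings: the two maps $B\to B\otimes_A B$ differ). So base-changing along $\EM\iota A\to\EM\iota B$ replaces only that one factor, giving $\EM\iota A\wedge_{N^{\Ztwo}\EM A}\EM\iota B$ --- exactly the upper-left entry in the paper's square \eqref{etale.23.2}.

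\emph{Second}, and more seriously, the claim $\EM(N_A^{\Ztwo}B)\wedge_{\EM\iota A}\EM\iota B\simeq\EM\iota B$ is false. After $i^*$ and on $\pi_0$ this would say $(B\otimes_A B)\otimes_A B\cong B$, which fails for any nontrivial finite \'etale extension. The ``classical fact'' you cite, $(B\otimes_A B)\otimes_{B\otimes_A B,\,m}B\cong B$, is a tautology over the base $B\otimes_A B$, but your smash is over $\EM\iota A$; you have silently swapped the base ring. Even if you repair the first error and factor through $N^{\Ztwo}\EM B$ instead, arriving at the map
\[
\EM(N_A^{\Ztwo}B)\wedge_{N^{\Ztwo}\EM B}\EM\iota B\longrightarrow\EM\iota B\wedge_{N^{\Ztwo}\EM B}\EM\iota B
\]
(which is precisely the lower horizontal map of \eqref{etale.23.2}), there is no direct identification available: the relative smash is derived, and your appeal to ``connectivity'' alone does not control its higher homotopy.

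The paper closes this gap by a flatness argument. Lemma~\ref{etale.26} shows $\EM(m)$ is flat, Proposition~\ref{etale.21} then propagates flatness to the lower horizontal map of \eqref{etale.23.2}, hence to \eqref{etale.23.1}. A flat map which is an isomorphism on $\ul\pi_0$ is an equivalence by Proposition~\ref{etale.22}, and the $\ul\pi_0$ computation is Proposition~\ref{Green.4} together with Proposition~\ref{etale.19} (or its corrected form in the addendum). So your ingredients Proposition~\ref{etale.25} and Lemma~\ref{etale.26} are exactly the right ones, but they feed into a flatness-plus-$\ul\pi_0$ argument, not a direct identification of relative smash products.
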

\begin{proof}
Consider the commutative square
\begin{equation}
\label{etale.23.2}
\begin{tikzcd}
\EM \iota A\wedge_{N^{\Ztwo} \EM A}\EM \iota B\ar[d,"\simeq"']\ar[r]&
\EM \iota B\wedge_{N^{\Ztwo} \EM B}\EM \iota B\ar[d,"\simeq"]
\\
\EM(N_A^{\Ztwo}B)\wedge_{N^{\Ztwo}\EM B}\EM \iota B\ar[r]&
\EM(N_B^{\Ztwo}B)\wedge_{N^{\Ztwo}\EM B}\EM \iota B,
\end{tikzcd}
\end{equation}
where the horizontal maps are induced by the map $A\to B$, the vertical equivalences are obtained by Proposition \ref{etale.25}, and the right hand one even comes from the algebraic isomorphism mentioned after Definition \ref{defnztwoam}.
We have an equivalence $H\iota B\wedge_{N^{\Ztwo} \EM B} \EM \iota B \simeq \THR(\iota B)$.
(This equivalence involves a computation for the index under the $\wedge$, namely $i^*\EM\iota B \simeq \EM B$, which follows from \eqref{etale.17.1}.
The same equivalence is used when identifying the upper left entry in the square.)
This implies that \eqref{etale.23.1} is equivalent to the upper horizontal map of \eqref{etale.23.2}.
Proposition \ref{etale.21} and Lemma \ref{etale.26} show that the lower horizontal map of \eqref{etale.23.2} is flat.
Putting everything together, we deduce that the map \eqref{etale.23.1} is flat. 

We will show that this map is an equivalence as claimed by applying  Proposition \ref{etale.22}. 
By Proposition \ref{t.5}, $N^{\Ztwo} \EM A$ and  $N^{\Ztwo} \EM B$ are $(-1)$-connected.
Hence we may  use Proposition \ref{Green.4}, and are reduced to showing that the induced morphism
\[
\ul{\pi}_0(\THR(\iota A))
\Box_{\iota A}
\iota B
\to
\ul{\pi}_0(\THR(\iota B))
\]
is an isomorphism.
This follows easily from applying Proposition \ref{etale.19} to $A$ and $B$.
\end{proof}

We now establish
isovariant \'etale base change for commutative rings with possibly non-trivial involution.

\begin{thm}
\label{etale.5}
Let $A\to B$ be an isovariant \'etale homomorphism of commutative rings with involutions.
Then there are canonical equivalences in $\NAlg_{\Ztwo}$
\[
\THR(B)
\simeq
\THR(A)\wedge_{\EM \iota (A^{\Ztwo})}\EM \iota (B^{\Ztwo})
\simeq
\THR(A)\wedge_{\EM A}\EM B.
\]
\end{thm}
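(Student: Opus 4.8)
The strategy is to reduce the isovariant étale case to the trivial-involution case already handled in Proposition \ref{etale.23}, using Thomason's equivalence of sites (Proposition \ref{equitop.4}) to replace $A \to B$ by an honest étale map on fixed points. First I would unwind what ``isovariant étale'' means ring-theoretically: for $A \to B$ a morphism of commutative rings with $\Ztwo$-action, isovariance means that the square
\[
\begin{tikzcd}
\Spec B \ar[d]\ar[r] & \Spec A \ar[d] \\
\Spec (B^{\Ztwo}) \ar[r] & \Spec (A^{\Ztwo})
\end{tikzcd}
\]
is cartesian (this is the affine incarnation of Proposition \ref{equitop.2}, using $\Spec(A)/\Ztwo \simeq \Spec(A^{\Ztwo})$), and that $A^{\Ztwo} \to B^{\Ztwo}$ is étale (this is the image of $\Spec B \to \Spec A$ under the equivalence of Proposition \ref{equitop.4}). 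In particular $B \simeq A \otimes_{A^{\Ztwo}} B^{\Ztwo} = A \wedge_{\EM \iota(A^{\Ztwo})} \EM\iota(B^{\Ztwo})$ at the level of rings with involution, hence $\EM B \simeq \EM A \wedge_{\EM\iota(A^{\Ztwo})} \EM\iota(B^{\Ztwo})$, which already identifies the last two terms of the asserted chain once the middle term is established.

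For the main equivalence $\THR(B) \simeq \THR(A) \wedge_{\EM\iota(A^{\Ztwo})} \EM\iota(B^{\Ztwo})$, I would argue as follows. Write $A' = A^{\Ztwo}$, $B' = B^{\Ztwo}$, so $A' \to B'$ is étale and $B \cong A \otimes_{A'} B'$ as rings with involution. Applying Proposition \ref{etale.1} (base change for $\THR$ along pushouts in $\NAlg_{\Ztwo}$) to the pushout square $\iota A' \to \EM$-algebras $\iota A'$, $A$, and their pushout $B = A \wedge_{\iota A'} \iota B'$, one gets
\[
\THR(B) \simeq \THR(A) \wedge_{\THR(\iota A')} \THR(\iota B').
\]
Here I am using that $\THR(\EM A) = \THR(A)$ by Definition \ref{etale.12}, and that $\EM$ carries the algebraic pushout $B \cong A \otimes_{A'} B'$ to a pushout of $\EM$-algebras because $A'\to B'$ is flat (étale), so $\EM B \simeq \EM A \wedge_{\EM\iota A'} \EM\iota B'$ — this flatness input is exactly the kind of statement packaged in Proposition \ref{etale.25} and the surrounding machinery. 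Now Proposition \ref{etale.23} gives $\THR(\iota B') \simeq \THR(\iota A') \wedge_{\EM\iota A'} \EM\iota B'$. Substituting into the displayed equivalence and cancelling $\THR(\iota A')$ yields
\[
\THR(B) \simeq \THR(A) \wedge_{\THR(\iota A')} \THR(\iota A') \wedge_{\EM\iota A'} \EM\iota B' \simeq \THR(A) \wedge_{\EM\iota A'} \EM\iota B' = \THR(A)\wedge_{\EM\iota(A^{\Ztwo})} \EM\iota(B^{\Ztwo}),
\]
which is the first claimed equivalence; the second then follows from $\EM B \simeq \EM A \wedge_{\EM\iota A'} \EM\iota B'$ observed above, together with $\THR(A) \wedge_{\EM A} \EM B \simeq \THR(A) \wedge_{\EM A} (\EM A \wedge_{\EM\iota A'} \EM\iota B') \simeq \THR(A) \wedge_{\EM\iota A'} \EM\iota B'$.

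The main obstacle, and the step requiring the most care, is the ring-theoretic translation of ``isovariant étale'': I need that $A^{\Ztwo} \to B^{\Ztwo}$ is étale and that $B \cong A \otimes_{A^{\Ztwo}} B^{\Ztwo}$, i.e.\ that the natural arrow $A \otimes_{A^{\Ztwo}} B^{\Ztwo} \to B$ is an isomorphism of rings with involution. This is the affine shadow of Propositions \ref{equitop.2} and \ref{equitop.4}, but one must be slightly careful that $\Spec$ of the purely algebraic base change agrees scheme-theoretically with the fiber product of schemes, and that the $\Ztwo$-equivariant structure is respected; separatedness of $A \to B$ is used here. A secondary point to verify is that $\EM$ sends the flat base change $B \cong A\otimes_{A^{\Ztwo}} B^{\Ztwo}$ to a base change of $\EM$-algebras in $\NAlg_{\Ztwo}$, which is where the flatness hypotheses in the norm-functor results of Subsection \ref{sec3.2} (Proposition \ref{etale.25} and Lemma \ref{etale.26}) enter; once these two inputs are in hand, the rest is formal manipulation with Propositions \ref{etale.1} and \ref{etale.23}.
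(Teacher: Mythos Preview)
Your proposal is correct and follows essentially the same route as the paper: reduce to fixed points via Proposition~\ref{equitop.2} to get $A^{\Ztwo}\to B^{\Ztwo}$ \'etale and $B\cong A\otimes_{\iota(A^{\Ztwo})}\iota(B^{\Ztwo})$, lift this to an equivalence of $\EM$-algebras using flatness, apply Proposition~\ref{etale.1} to obtain $\THR(B)\simeq \THR(A)\wedge_{\THR(\iota A^{\Ztwo})}\THR(\iota B^{\Ztwo})$, and then invoke Proposition~\ref{etale.23} and cancel. Two small corrections on references: the flatness step promoting $B\cong A\otimes_{\iota(A^{\Ztwo})}\iota(B^{\Ztwo})$ to $\EM B\simeq \EM A\wedge_{\EM\iota(A^{\Ztwo})}\EM\iota(B^{\Ztwo})$ is not Proposition~\ref{etale.25} or Lemma~\ref{etale.26} (those concern norms and the \'etale diagonal, and are inputs to Proposition~\ref{etale.23} rather than here) but rather Proposition~\ref{Mack.1} together with Proposition~\ref{flat.4}; and separatedness of $\Spec B\to \Spec A$ is automatic in the affine case, so no extra hypothesis is needed there.
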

\begin{proof}
We have isomorphisms $\Spec(A^{\Ztwo})\simeq \Spec(A)/(\Ztwo)$ and $\Spec(B^{\Ztwo})\simeq \Spec(B)/(\Ztwo)$.
Hence by Proposition \ref{equitop.2}, the induced homomorphism $A^{\Ztwo}\to B^{\Ztwo}$ is \'etale, and there is an isomorphism of commutative rings
with involution 
\begin{equation}
\label{etale.5.1}
B\cong A\otimes_{\iota (A^{\Ztwo})}\iota (B^{\Ztwo}).
\end{equation}
Since $A^{\Ztwo}\to B^{\Ztwo}$ is flat, $B^{\Ztwo}$ is a filtered colimit of finite free $A^{\Ztwo}$-modules.
It follows that $\iota A^{\Ztwo}\to \iota B^{\Ztwo}$ is flat by Proposition \ref{flat.4}.
Hence the isomorphism \eqref{etale.5.1} induces an  equivalence $\EM B \simeq \EM A \wedge_{\EM (\iota A^{\Ztwo})} \EM (\iota B^{\Ztwo})$ by Proposition \ref{Mack.1}. 
Then Proposition \ref{etale.1} yields an equivalence
\[
\THR(B)
\simeq
\THR(A)\wedge_{\THR(\iota (A^{\Ztwo}))}\THR(\iota (B^{\Ztwo})).
\]

Now Proposition \ref{etale.23} implies the left equivalence of the Proposition after canceling out one smash factor $\THR(\iota (A^{\Ztwo}))$. Applying the isovariance condition once more gives the right hand side
equivalence.
\end{proof}

\subsection{Presheaves of equivariant spectra}

Zariski and other sheaves and completely determined by their behaviors on {\em affine} schemes. This is known to be true in some homotopical settings as well. The purpose of this section is to establish a rather general result, namely Proposition \ref{affinedetermine}, which applies to our setting, that is the isovariant \'etale site and $\Ztwo$-equivariant spectra. For this, a result of \cite{Ayo} will be very useful.

\begin{df}
\label{etale.15}
Let $\cC$ be a category with a 
Grothendieck topology $t$, and let $\cV$ be a presentable $\infty$-category.
Let
\[
\infPsh(\cC,\cV) := \Fun(\Nerve(\cC)^{op},\cV)
\]
denote the $\infty$-category of presheaves on $\cC$ with values in $\cV$.
We say that a presheaf $\cF\in \infPsh(\cC,\cV)$ satisfies \emph{t-descent} if the induced map
\begin{equation}
\label{etale.15.1}
\cF(X)\to \lim_{i\in \Delta}\cF(\sX_i)
\end{equation}
is an equivalence for every $t$-hypercover $\sX\to X$.
Let $\infShv_t(\cC,\cV)$ denote the full
subcategory of $\infPsh(\cC,\cV)$ consisting of presheaves satisfying $t$-descent.
We often omit $t$ in the above notation if it is clear from the context.

The above condition is sometimes called \emph{t-hyperdescent}, in order to distinguish it from the weaker descent condition only for covering sieves rather than all hypercovers. We refer to \cite[section 6.5]{HTT} and \cite{DHI04} for a careful comparison.

If $\cV\to \cV'$ is a functor of $\infty$-categories, then this induces a functor $\infPsh(\cC,\cV)\to \infPsh(\cC,\cV')$.
In this way, we obtain functors $i^*$, $(-)^{\Ztwo}$, etc for the presheaf categories.
\end{df}

\begin{prop}
\label{shveq.8}
Suppose $\cF\in \infPsh(\cC,\Sp_{\Ztwo})$, where $\cC$ is a site.
Then $\cF\in \infShv(\cC,\Sp_{\Ztwo})$ if and only if $i^*\cF,\cF^{\Ztwo}\in \infShv(\cC,\Sp)$.
\end{prop}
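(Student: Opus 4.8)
The plan is to reduce the statement to the conservativity of the pair of functors $(i^*,\Phi^{\Ztwo})$ on $\Sp_{\Ztwo}$, together with the fact that $(-)^{\Ztwo}$ and $\Phi^{\Ztwo}$ determine each other's descent behavior through $i^*$. More precisely, recall from \cite[Theorem 7.12]{Schwede} that $(i^*,\Phi^{\Ztwo})$ is conservative on $\Sp_{\Ztwo}$. Since the descent condition \eqref{etale.15.1} asserts that a certain map in $\Sp_{\Ztwo}$ is an equivalence, and since limits in $\Sp_{\Ztwo}$ are computed pointwise while $i^*$ and $\Phi^{\Ztwo}$ both preserve limits (being right adjoints, or at least preserving the relevant cosimplicial limits), the map in \eqref{etale.15.1} for $\cF$ is an equivalence if and only if it is an equivalence after applying both $i^*$ and $\Phi^{\Ztwo}$. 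Thus $\cF\in\infShv(\cC,\Sp_{\Ztwo})$ if and only if $i^*\cF\in\infShv(\cC,\Sp)$ and $\Phi^{\Ztwo}\cF\in\infShv(\cC,\Sp)$.

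The remaining point is to replace the condition on $\Phi^{\Ztwo}\cF$ by the condition on $\cF^{\Ztwo}$. For this I would use the isotropy separation (Tate) square: for any $\Ztwo$-spectrum $E$ there is a pullback square relating $E^{\Ztwo}$, $\Phi^{\Ztwo}E$, the homotopy fixed points $(i^*E)^{h\Ztwo}$, and the Tate construction, all functorial in $E$. The homotopy fixed point construction $(-)^{h\Ztwo}$ and the Tate construction are built out of $i^*$ by taking a limit (resp.\ a cofiber of a norm map), hence they send $i^*$-sheaves to sheaves; so if $i^*\cF$ satisfies descent, so do the presheaves $X\mapsto (i^*\cF(X))^{h\Ztwo}$ and the Tate-valued presheaf. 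Given that $i^*\cF$ is a sheaf, the isotropy separation square then shows that $\cF^{\Ztwo}$ is a sheaf if and only if $\Phi^{\Ztwo}\cF$ is a sheaf: the class of sheaves is closed under pullbacks (finite limits) in the functor category, and conversely $\Phi^{\Ztwo}\cF$ sits in a pullback square whose other three terms are sheaves once $\cF^{\Ztwo}$ and $i^*\cF$ are. Combining this equivalence with the previous paragraph gives the claim.

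The main obstacle I anticipate is making the functoriality and limit-preservation statements precise at the $\infty$-categorical level: one must check that $\Phi^{\Ztwo}$, $(-)^{\Ztwo}$, $(-)^{h\Ztwo}$, and the Tate construction all commute with the cosimplicial limits appearing in \eqref{etale.15.1}, and that the isotropy separation square is natural in the presheaf $\cF$ as a square of functors $\cC^{op}\to\Sp$. Since $(-)^{\Ztwo}$ and $i^*$ are right adjoints they preserve all limits; $\Phi^{\Ztwo}$ does not preserve all limits in general, but it does preserve the totalization of a cosimplicial object of connective-type or, more robustly, one can avoid this by working only with the two conditions ($i^*\cF$ and $\cF^{\Ztwo}$) and deducing the condition on $\cF$ directly from the isotropy separation square plus conservativity of $(i^*,\Phi^{\Ztwo})$ — this is the cleanest route and is what I would actually write up. A secondary, purely bookkeeping obstacle is that the problem is phrased for hypercovers, so one should note that $\infShv$ here means hypersheaves and that all the operations above are compatible with hyperdescent as well, which follows formally since they preserve the relevant limits.
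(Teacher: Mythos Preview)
Your approach can be made to work, but it is considerably more circuitous than the paper's. The paper's proof is a two-line argument: since $i^*$ and $(-)^{\Ztwo}$ are right adjoints they preserve limits, so for any hypercover $\sX\to X$ the descent map $\cF(X)\to\lim_{\Delta}\cF(\sX_\bullet)$ becomes, after applying either functor, the descent map for $i^*\cF$ resp.\ $\cF^{\Ztwo}$; one then invokes that the pair $(i^*,(-)^{\Ztwo})$ is \emph{already} jointly conservative on $\Sp_{\Ztwo}$. This last fact is immediate from Proposition~\ref{norm.1}: equivalences in $\Sp_G$ are by definition detected on $\ul{\pi}_*$, i.e.\ on the genuine fixed points $(-)^H$ for all $H\leq G$, which for $G=\Ztwo$ means exactly $i^*$ and $(-)^{\Ztwo}$.

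You instead start from the conservativity of $(i^*,\Phi^{\Ztwo})$, which forces you to confront the fact that $\Phi^{\Ztwo}$ does not preserve limits, and then to repair this via the isotropy separation square. That detour is unnecessary: the whole point of phrasing the proposition in terms of $(-)^{\Ztwo}$ rather than $\Phi^{\Ztwo}$ is precisely that $(-)^{\Ztwo}$ is a right adjoint. Your ``cleanest route'' at the end --- applying the Tate square to source and target of the descent map and arguing that three of the four corners are equivalences --- does go through (since $(-)^{h\Ztwo}$ and $(-)^{t\Ztwo}$ factor through $i^*$, so $i^*(g)$ being an equivalence forces those corners), but it is still doing work that the direct conservativity of $(i^*,(-)^{\Ztwo})$ makes superfluous. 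In short: drop $\Phi^{\Ztwo}$ and the Tate square entirely, and use that genuine fixed points for all subgroups detect equivalences.
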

\begin{proof}
Let $\sX\to X$ be a hypercover.
Since $i^*$ and $(-)^{\Ztwo}$ preserve limits, \eqref{etale.15.1} is an equivalence if and only if
\[
i^*\cF(X)\to \lim_{i\in \Delta}i^*\cF(\sX_i)
\text{ and }
\cF(X)^{\Ztwo}\to \lim_{i\in \Delta}\cF(\sX_i)^{\Ztwo}
\]
are equivalences.
Equivalently, $i^*\cF,\cF^{\Ztwo}\in \infShv(\cC,\Sp)$.
\end{proof}

\begin{df}
\label{shveq.9}
Let $\cC$ be a site.
As a consequence of Proposition \ref{shveq.8}, we see that $\infShv(\cC,\Sp_{\Ztwo})$ is the full subcategory of local objects of \cite[Definition 5.5.4.1]{HTT} with respect to the class of maps consisting of
\[
\iota \Sigma^n\Sigma^\infty \sX_+\to \iota \Sigma^n \Sigma^\infty X_+
\text{ and }
i_\sharp \Sigma^n\Sigma^\infty \sX_+\to i_\sharp \Sigma^n \Sigma^\infty X_+
\]
for all hypercovers $\sX\to X$ and integers $n$.
In particular, there is an adjoint pair
\begin{equation}
\label{shveq.9.1}
L:\infPsh(\cC,\Sp_{\Ztwo})\rightleftarrows \infShv(\cC,\Sp_{\Ztwo}):\eta
\end{equation}
by \cite[Proposition 5.5.4.15(3)]{HTT}, where $\eta$ is the inclusion functor.
A map $\cF\to \cG$ in $\infPsh(\cC,\Sp_{\Ztwo})$ is called a \emph{local equivalence} if $L\cF \to L\cG$ is an equivalence.
\end{df}

\begin{prop}
\label{shveq.11}
Let $\cC$ be a site.
Then the functor
\[
i^*\colon \infPsh(\cC,\Sp_{\Ztwo})\to \infPsh(\cC,\Sp)
\]
preserves local equivalences.
\end{prop}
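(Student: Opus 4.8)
The plan is to reduce the statement to a formal consequence of the left-adjoint characterization of the localizations involved, together with the compatibility of $i^*$ with the suspension-spectrum functors that generate the class of maps being inverted. Concretely, recall from Definition \ref{shveq.9} that the localization $L\colon \infPsh(\cC,\Sp_{\Ztwo})\to \infShv(\cC,\Sp_{\Ztwo})$ is the Bousfield localization at the strongly saturated class generated by the maps
\[
\iota \Sigma^n\Sigma^\infty \sX_+\to \iota \Sigma^n \Sigma^\infty X_+
\quad\text{and}\quad
i_\sharp \Sigma^n\Sigma^\infty \sX_+\to i_\sharp \Sigma^n \Sigma^\infty X_+,
\]
ranging over hypercovers $\sX\to X$ and integers $n$, and similarly on the non-equivariant side $\infPsh(\cC,\Sp)$ one localizes at the class $W$ generated by $\Sigma^n\Sigma^\infty \sX_+\to \Sigma^n \Sigma^\infty X_+$. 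Since $i^*\colon \infPsh(\cC,\Sp_{\Ztwo})\to \infPsh(\cC,\Sp)$ is a left adjoint (it is computed objectwise and the objectwise $i^*\colon \Sp_{\Ztwo}\to\Sp$ is a left adjoint, with right adjoint $i_*$), it preserves colimits; hence $i^*$ preserves the strongly saturated class generated by any set of maps it sends into $W$. It therefore suffices to check that $i^*$ sends each generating $t$-local equivalence of $\infPsh(\cC,\Sp_{\Ztwo})$ to a $t$-local equivalence of $\infPsh(\cC,\Sp)$.

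The key computation is the interaction of $i^*$ with the two families of generators. For the $\iota$-family: $i^*\iota\simeq \id$ on spectra (this is the identity $i^*\iota\simeq\id$ already used repeatedly in the paper, e.g.\ in Proposition \ref{etale.25} and after Definition \ref{defnztwoam}), so $i^*$ applied to $\iota \Sigma^n\Sigma^\infty \sX_+\to \iota \Sigma^n \Sigma^\infty X_+$ is, up to equivalence, the map $\Sigma^n\Sigma^\infty \sX_+\to \Sigma^n \Sigma^\infty X_+$, which lies in $W$. For the $i_\sharp$-family: here $i_\sharp$ denotes the left adjoint to $i^*$ on presheaf level (the objectwise left Kan extension of the left adjoint $i_\sharp\colon \Sp\to\Sp_{\Ztwo}$ to $i^*$), and one has the projection-type formula $i^*i_\sharp E\simeq E\oplus E$ for $E\in\Sp$ — equivalently $i^*i_\sharp\simeq \id\oplus\id$, coming from the fact that $i^*i_\sharp$ is given by the double coset / Mackey description for the subgroup $e\le \Ztwo$. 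Consequently $i^*$ applied to $i_\sharp \Sigma^n\Sigma^\infty \sX_+\to i_\sharp \Sigma^n \Sigma^\infty X_+$ becomes the map $(\Sigma^n\Sigma^\infty \sX_+)^{\oplus 2}\to (\Sigma^n\Sigma^\infty X_+)^{\oplus 2}$, which is again in $W$ since $W$ is closed under finite coproducts. (Alternatively, $i_\sharp$ on presheaves can be identified with $i_*$ up to a shift in this $\Ztwo$-equivariant context, but the coproduct description is the cleanest input.)

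Putting these together: the image under $i^*$ of the generating set of local equivalences lies in $W$; since $i^*$ is a left adjoint it preserves the strongly saturated closure; hence $i^*$ carries every local equivalence of $\infPsh(\cC,\Sp_{\Ztwo})$ to a local equivalence of $\infPsh(\cC,\Sp)$. Finally, to phrase this in terms of the localization functors as in the statement: if $\cF\to\cG$ is a local equivalence then $i^*\cF\to i^*\cG$ is a local equivalence on the target, so $L(i^*\cF)\to L(i^*\cG)$ is an equivalence, which is exactly the assertion. I expect the only genuine point requiring care is the identification $i^*i_\sharp\simeq\id\oplus\id$ (and the bookkeeping of which "$i_\sharp$" appears — the objectwise equivariant functor versus its presheaf-level Kan extension); the closure-under-left-adjoint argument and the $i^*\iota\simeq\id$ identity are formal. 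One should also make sure hypercovers, not merely covers, are handled, but this is automatic since the generators in Definition \ref{shveq.9} already range over all hypercovers and $i^*$ preserves the relevant colimits.
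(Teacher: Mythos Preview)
Your proposal is correct and follows essentially the same approach as the paper's proof: reduce to checking that $i^*$ sends the generating local equivalences of Definition \ref{shveq.9} into the generating local equivalences on the non-equivariant side, then invoke the identities $i^*\iota\simeq\id$ and $i^*i_\sharp\simeq(-)^{\oplus 2}$. The paper records these two identities as Proposition \ref{orth.5}(2) and (7) and leaves the ``left adjoint preserves strongly saturated classes'' step implicit, whereas you spell it out; otherwise the arguments are the same.
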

\begin{proof}
We need to show that the maps
\[
i^*\iota \Sigma^n\Sigma^\infty \sX_+\to i^*\iota \Sigma^n\Sigma^\infty X_+
\text{ and }
i^*i_\sharp \Sigma^n\Sigma^\infty \sX_+\to i^*i_\sharp \Sigma^n\Sigma^\infty X_+
\]
are local equivalences for all hypercovers $\sX\to X$ and integers $n$, which follow from Proposition \ref{orth.5}(2),(7).
\end{proof}

\begin{prop}
\label{shveq.10}
Let $\cC$ be a site.
Then the functor
\[
(-)^{\Ztwo}\colon \infPsh(\cC,\Sp_{\Ztwo})\to \infPsh(\cC,\Sp)
\]
preserves local equivalences.
\end{prop}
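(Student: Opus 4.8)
The plan is to mimic the strategy used for $i^*$ in Proposition~\ref{shveq.11}: by the characterization of local equivalences in Definition~\ref{shveq.9}, it suffices to check that $(-)^{\Ztwo}$ sends each generating map
\[
\iota\Sigma^n\Sigma^\infty\sX_+\to \iota\Sigma^n\Sigma^\infty X_+
\quad\text{and}\quad
i_\sharp\Sigma^n\Sigma^\infty\sX_+\to i_\sharp\Sigma^n\Sigma^\infty X_+
\]
to a local equivalence, for every hypercover $\sX\to X$ and every integer $n$. The point is that $(-)^{\Ztwo}=p_*$ is a right adjoint, so it does not obviously commute with the relevant colimits; one must instead understand its effect on these specific objects.

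First I would treat the $\iota$-generators. Here the relevant input is the projection formula / computation of the fixed points of $\iota$ applied to a suspension spectrum, i.e.\ something like $(\iota E)^{\Ztwo}\simeq E\oplus(\text{Tate-type term})$ — more precisely I expect Proposition~\ref{orth.5} to give $(\iota\Sigma^\infty X_+)^{\Ztwo}$ as a (split) sum whose summands are, up to suspension, themselves of the form $\Sigma^m\Sigma^\infty X_+$ appearing in a hypercover-compatible way. Since a (finite) direct sum of local equivalences is a local equivalence, and the generating hypercover maps are local equivalences essentially by definition, this handles the $\iota$-case. For the $i_\sharp$-generators, one uses $p_* i_\sharp$: by the double-coset / base-change relation among the groupoid maps in \eqref{etale.0.2} (again packaged in Proposition~\ref{orth.5}), $p_* i_\sharp$ should agree with the non-equivariant identity functor (or with $p_* i_* = \mathrm{id}$ composed with a comparison $i_\sharp\simeq i_*$ that holds for $\Ztwo$), so $(i_\sharp\Sigma^n\Sigma^\infty\sX_+)^{\Ztwo}\simeq \Sigma^n\Sigma^\infty\sX_+$ compatibly with the map to $X$; this is then literally a generating local equivalence.

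The main obstacle I anticipate is the bookkeeping in the $i_\sharp$-case: one must verify that the identification $p_* i_\sharp(-)\simeq(-)$ (or the analogous finite-sum decomposition) is natural in the hypercover, so that the comparison map really is the generating local equivalence and not merely abstractly equivalent to one. This is where a clean statement from Proposition~\ref{orth.5} about the interaction of $p_*$, $i_\sharp$, $i_*$, and $\iota$ with suspension spectra does all the work; given such a statement the proof is a one-line invocation, and without it one would have to unwind the orthogonal-spectra models. Assuming the needed parts of Proposition~\ref{orth.5}, the argument reduces, exactly as for $i^*$, to: "$(-)^{\Ztwo}$ carries each generator to a finite direct sum of generators, hence to a local equivalence," and we conclude by Definition~\ref{shveq.9} that $(-)^{\Ztwo}$ preserves local equivalences.
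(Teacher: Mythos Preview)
Your overall strategy matches the paper's exactly: reduce via Definition~\ref{shveq.9} to the generating maps and analyze $(-)^{\Ztwo}$ on $\iota\Sigma^n\Sigma^\infty X_+$ and $i_\sharp\Sigma^n\Sigma^\infty X_+$ separately. Your treatment of the $i_\sharp$-generators is correct and is precisely what the paper does: $p_*i_\sharp\simeq p_*i_*\simeq\id$ by Proposition~\ref{orth.5}(2),(3), so $(i_\sharp\Sigma^n\Sigma^\infty\sX_+)^{\Ztwo}\to(i_\sharp\Sigma^n\Sigma^\infty X_+)^{\Ztwo}$ is even an \emph{equivalence}, not merely a local one.

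The gap is in the $\iota$-case. Proposition~\ref{orth.5} contains nothing about $p_*p^*=(-)^{\Ztwo}\circ\iota$; it only records $p_\otimes p^*\simeq\id$, which concerns geometric fixed points, not genuine fixed points. The actual input is the tom Dieck splitting \cite[Theorem 3.10]{GM95}, which gives
\[
(\iota\Sigma^\infty X_+)^{\Ztwo}\;\simeq\;\Sigma^\infty X_+\;\oplus\;\Sigma^\infty(\rB(\Ztwo)\times X)_+,
\]
naturally in $X$. Note in particular that the second summand is \emph{not} of the form $\Sigma^m\Sigma^\infty X_+$ (and it is not a ``Tate-type term'' either; it is the contribution of the free orbit, i.e.\ the homotopy orbits $\Sigma^\infty(X_{h\Ztwo})_+$). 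One then argues that both $\Sigma^\infty\sX_+\to\Sigma^\infty X_+$ and $\Sigma^\infty(\rB(\Ztwo)\times\sX)_+\to\Sigma^\infty(\rB(\Ztwo)\times X)_+$ are local equivalences in $\infPsh(\cC,\Sp)$; the second because smashing with the constant spectrum $\Sigma^\infty\rB(\Ztwo)_+$ preserves local equivalences. With this correction your proof is complete and agrees with the paper's.
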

\begin{proof}
We need to show that the maps
\begin{equation}
\label{shveq.10.1}
(\iota \Sigma^\infty \sX_+)^{\Ztwo}\to (\iota \Sigma^\infty X_+)^{\Ztwo}
\text{ and }
(i_\sharp \Sigma^\infty \sX_+)^{\Ztwo}\to (i_\sharp \Sigma^\infty X_+)^{\Ztwo}
\end{equation}
are local equivalences.
Since $(-)^{\Ztwo} i_\sharp\simeq (-)^{\Ztwo} i_*\simeq \id$ by Proposition \ref{orth.5}(2),(3), the second map in \eqref{shveq.10.1} is an equivalence.

The formulation \eqref{norm.3.2} means that $\iota$ commutes with $\Sigma^\infty$, and there is an equivalence $(\iota X)^{\Ztwo}\simeq X$.
Together with the tom Dieck splitting \cite[Theorem 3.10]{GM95},
we have an equivalence
\[
(\iota \Sigma^\infty X_+)^{\Ztwo}
\simeq
\Sigma^\infty X_+ \oplus \Sigma^\infty (\rB (\Ztwo)\times X)_+.
\]
We have a similar equivalence for $\sX$ too.
Since
\[
\Sigma^\infty \sX_+\to \Sigma^\infty X_+
\text{ and }
\Sigma^\infty (\rB (\Ztwo) \times \sX)_+\to \Sigma^\infty (\rB (\Ztwo) \times X)_+
\]
are local equivalences in $\infPsh(\cC,\Sp)$, the first map in \eqref{shveq.10.1} is a local equivalence.
\end{proof}

\begin{rmk}
Some of the results in this subsection including Propositions \ref{shveq.8} and \ref{shveq.11} have obvious generalizations to $\Sp_G$ for finite groups $G$.
We also expect that Proposition \ref{shveq.10} can be generalized too, but this should require extra effort since the tom Dieck splitting becomes more complicated. 
\end{rmk}

\begin{prop}
\label{shveq.12}
Let $f\colon \cF\to \cG$ be a morphism in $\infPsh(\cC,\Sp_{\Ztwo})$, where $\cC$ is a site.
If $i^*f$ and $f^{\Ztwo}$ are local equivalences, then $f$ is a local equivalence.
\end{prop}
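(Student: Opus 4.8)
The plan is to leverage the conservativity of the pair $(i^*, \Phi^{\Ztwo})$ established in \cite[Theorem 7.12]{Schwede}, together with the two preceding propositions (Propositions \ref{shveq.11} and \ref{shveq.10}), which show that $i^*$ and $(-)^{\Ztwo}$ preserve local equivalences. The key point is that "local equivalence'' was \emph{defined} in Definition \ref{shveq.9} as "becomes an equivalence after applying the localization functor $L$'', and $L$ is built precisely so that a morphism $g$ in $\infPsh(\cC,\Sp_{\Ztwo})$ is a local equivalence if and only if $\Map(g, \cH)$ is an equivalence for every $\cH \in \infShv(\cC,\Sp_{\Ztwo})$. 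By Proposition \ref{shveq.8}, $\infShv(\cC,\Sp_{\Ztwo})$ is detected by $i^*$ and $(-)^{\Ztwo}$ landing in $\infShv(\cC,\Sp)$. So I want to translate the hypothesis on $i^*f$ and $f^{\Ztwo}$ into a statement about all $\cH$.

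First I would reduce to a pointwise statement. A morphism $f\colon \cF \to \cG$ is a local equivalence iff its cofiber $\cofib(f)$ is locally trivial, i.e.\ $L\cofib(f) \simeq 0$ (using stability of $\Sp_{\Ztwo}$ and that $L$ is exact). Since $i^*$ and $(-)^{\Ztwo}$ are exact, $i^*\cofib(f) \simeq \cofib(i^*f)$ and $\cofib(f)^{\Ztwo} \simeq \cofib(f^{\Ztwo})$, so the hypothesis says exactly that $i^*\cofib(f)$ and $\cofib(f)^{\Ztwo}$ are locally trivial in $\infPsh(\cC,\Sp)$. Thus it suffices to prove: if $\cE \in \infPsh(\cC,\Sp_{\Ztwo})$ has $i^*\cE$ and $\cE^{\Ztwo}$ locally trivial, then $\cE$ is locally trivial. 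Equivalently, $L\cE \simeq 0$; and since $\infShv(\cC,\Sp_{\Ztwo})$ is detected by $i^*$ and $(-)^{\Ztwo}$ via Proposition \ref{shveq.8}, and these functors preserve the relevant limits, it is enough to know $i^*L\cE \simeq 0$ and $(L\cE)^{\Ztwo} \simeq 0$. Here I would use that $i^*$ and $(-)^{\Ztwo}$ preserve local equivalences (Propositions \ref{shveq.11}, \ref{shveq.10}), hence commute with sheafification up to natural equivalence: $i^* L \simeq L_{\Sp} i^*$ and $(-)^{\Ztwo} L \simeq L_{\Sp}(-)^{\Ztwo}$ on the nose, because $L\cE \to \cE$ wait --- more carefully, the unit $\cE \to \eta L \cE$ is a local equivalence, so $i^*(\cE \to \eta L\cE)$ is a local equivalence, and $i^*\eta L\cE$ is a sheaf (as $L\cE$ is); this exhibits $i^*\eta L\cE$ as a sheafification of $i^*\cE$. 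Since $i^*\cE$ is locally trivial, its sheafification vanishes, so $i^*L\cE \simeq 0$; similarly $(L\cE)^{\Ztwo} \simeq 0$.

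To finish, I would invoke conservativity. The functor $(-)^{\Ztwo}$ on $\Sp_{\Ztwo}$ differs from $\Phi^{\Ztwo}$, so I cannot directly apply \cite[Theorem 7.12]{Schwede} with $(-)^{\Ztwo}$; instead I would note that $\Phi^{\Ztwo}$ sits in a cofiber sequence (the isotropy separation sequence) relating $(-)^{\Ztwo}$, the homotopy orbits, and $\Phi^{\Ztwo}$, or more simply that $\Phi^{\Ztwo}E$ can be recovered from $E^{\Ztwo}$ and $(i^*E)_{h\Ztwo}$; once $i^*L\cE \simeq 0$ and $(L\cE)^{\Ztwo} \simeq 0$, all the relevant pieces vanish, so $\Phi^{\Ztwo}L\cE \simeq 0$ pointwise as well. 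Then $(i^*, \Phi^{\Ztwo})$ being conservative forces $L\cE \simeq 0$ pointwise, hence $\cE$ is locally trivial and $f$ is a local equivalence. \textbf{The main obstacle} I anticipate is bookkeeping the difference between the categorical fixed points $(-)^{\Ztwo}$ used in Proposition \ref{shveq.8} and the geometric fixed points $\Phi^{\Ztwo}$ appearing in the conservativity statement: one must either reprove a conservativity statement for the pair $(i^*, (-)^{\Ztwo})$ on presheaves --- which is false in general, so this is not an option --- or argue as above that vanishing of $i^*L\cE$ and $(L\cE)^{\Ztwo}$ already forces vanishing of $\Phi^{\Ztwo}L\cE$ via the isotropy separation sequence. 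An alternative, perhaps cleaner route avoiding $\Phi^{\Ztwo}$ altogether: directly combine Proposition \ref{shveq.8} with the fact that $L$ commutes with $i^*$ and $(-)^{\Ztwo}$ (as argued above) to conclude $L\cE$ is both a sheaf and has vanishing $i^*$ and $(-)^{\Ztwo}$, hence by Proposition \ref{shveq.8} applied in reverse — or simply because a sheaf is determined by its image under the jointly conservative-on-sheaves pair $(i^*,(-)^{\Ztwo})$, which \emph{is} conservative when restricted to $\infShv$ by Proposition \ref{shveq.8} — we get $L\cE \simeq 0$.
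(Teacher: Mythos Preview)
Your approach is essentially the same as the paper's: reduce to the case of a single presheaf with locally trivial $i^*$ and $(-)^{\Ztwo}$, sheafify it using the unit $\cE \to L\cE$, transport the local triviality along $i^*$ and $(-)^{\Ztwo}$ using Propositions~\ref{shveq.11} and~\ref{shveq.10}, and conclude that the sheafification vanishes. The paper takes the fiber rather than the cofiber, but in a stable setting this is immaterial.

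There is one point of confusion worth flagging. You write that conservativity of the pair $(i^*,(-)^{\Ztwo})$ on $\Sp_{\Ztwo}$ (and hence on presheaves) ``is false in general, so this is not an option.'' In fact it \emph{is} true: by Proposition~\ref{norm.1} the objects $\Sigma^n\Sigma^\infty(\Ztwo)_+$ and $\Sigma^n\Sigma^\infty \pt_+$ generate $\Sp_{\Ztwo}$, and mapping out of them computes exactly $i^*E$ and $E^{\Ztwo}$. So if $i^*L\cE(X)\simeq 0$ and $(L\cE)(X)^{\Ztwo}\simeq 0$ for every $X$, then $L\cE(X)\simeq 0$ for every $X$, and you are done. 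This is precisely what the paper uses (tacitly) in its final sentence ``Hence $\cF'$ is equivalent to $0$.'' Your detour through $\Phi^{\Ztwo}$ and the isotropy separation sequence is therefore unnecessary, and your appeal to ``Proposition~\ref{shveq.8} applied in reverse'' is not quite the right justification either: that proposition characterizes which presheaves are sheaves, not which sheaves vanish. The clean closing step is simply the pointwise conservativity of $(i^*,(-)^{\Ztwo})$ on $\Sp_{\Ztwo}$.
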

\begin{proof}
By considering the fiber of $f$, we reduce to the case when $\cG=0$.
This means that $i^*\cF$ and $\cF^{\Ztwo}$ are local equivalent to $0$.
The adjoint pair \eqref{etale.9.1} gives a local equivalence $\cF\to \cF'$ with $\cF'\in \infShv(\cC,\Sp_{\Ztwo})$.
By Propositions \ref{shveq.11} and \ref{shveq.10}, $i^*\cF'$ and $\cF'^{\Ztwo}$ are local equivalent to $0$.
Since $i^*\cF',\cF'^{\Ztwo}\in \infShv(\cC,\Sp)$ by Proposition \ref{shveq.8}, it follows that $i^*\cF'$ and $\cF'^{\Ztwo}$ are equivalent to $0$.
Hence $\cF'$ is equivalent to $0$, i.e., $\cF$ is local equivalent to $0$.
\end{proof}

\begin{df}
\label{shveq.3}
Let $\mathfrak{M}$ be a combinatorial model category.
For a category $\cC$, we set
\[
\Psh(\cC,\mathfrak{M})
:=
\Fun(\cC^{op},\mathfrak{M}).
\]
A morphism $\cF\to \cG$ in $\Psh(\cC,\mathfrak{M})$ is a weak equivalence (resp.\ fibration) if $\cF(X)\to \cG(X)$ is a weak equivalence (resp.\ fibration) for all $X\in \cC$.
One can form a projective model structure based on these, see \cite[Definition 4.4.18]{Ayo}.

If $\cV$ is the underlying $\infty$-category of $\mathfrak{M}$, then the underlying $\infty$-category of $\Psh(\cC,\mathfrak{M})$ with respect to the projective model structure is equivalent to $\infPsh(\cC,\cV)$ by \cite[Proposition 1.3.4.25]{HA}.
\end{df}

\begin{df}[{\cite[Definition 4.4.23]{Ayo}}]
\label{shveq.1}
A \emph{category of coefficients} is a left proper cofibrantly generated stable model category $\mathfrak{M}$ satisfying the following conditions:
\begin{enumerate}
\item[(i)]
Finite coproducts of weak equivalences are weak equivalences.
\item[(ii)]
there exists a set $\cE$ of compact objects of $\mathfrak{M}$ that generates the homotopy category $\Ho(\mathfrak{M})$.
\end{enumerate}
The set $\cE$ is considered as a part of the data.
\end{df}

\begin{exm}
\label{shveq.2}
According to \cite[Proposition B.63]{HHR}, $\SpO_G$ is a cofibrantly generated stable model category. 
By \cite[Example B.10, Remark B.64]{HHR}, $\SpO_G$ is left proper.
A consequence of \cite[Corollary B.43]{HHR} is that finite coproducts of weak equivalences are weak equivalences.
Let $\cE$ be the set of $\Sigma^n\Sigma^\infty (G/H)_+$ for all subgroups $H$ of $G$ and integers $n$, which consists of compact objects and generates $\Ho(\SpO_G)$ by Proposition \ref{norm.1}.
In conclusion, $\SpO_G$ is a category of coefficients.

We also note that $\SpO_G$ is a combinatorial model category.
\end{exm}

\begin{df}\label{deflocalmodelstructure}
\label{shveq.7}
Let $\cC$ be a site.
According to \cite[Definition 4.4.28]{Ayo}, a morphism $\cF\to \cG$ in $\Psh(\cC,\SpO_G)$ is called a \emph{local weak equivalence} if the induced morphism of presheaves
\begin{align*}
&(X\in \cC \mapsto \Hom_{\Ho(\Psh(\cC,\SpO_G))}(\Sigma^n\Sigma^\infty (G/H\times X)_+,\cF)
\\
\to &
(X\in \cC \mapsto \Hom_{\Ho(\Psh(\cC,\SpO_G))}(\Sigma^n\Sigma^\infty (G/H\times X)_+,\cG)
\end{align*}
becomes an isomorphism after sheafification for every subgroup $H$ of $G$ and integer $n$.
There is a local projective model structure, see 
\cite[Definition 4.4.34]{Ayo}.
This is a Bousfield localization of the projective model structure with respect to local weak equivalences.
\end{df}

\begin{prop}
\label{shveq.13}
Let $\cC$ be a site.
The underlying $\infty$-category of $\Psh(\cC,\SpO_{\Ztwo})$ with respect to the local projective model structure is equivalent to $\infShv(\cC,\Sp_{\Ztwo})$.
\end{prop}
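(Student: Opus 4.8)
The plan is to present both sides as Bousfield localizations of $\infPsh(\cC,\Sp_{\Ztwo})$ and to show that the two localizations invert the same morphisms. By Definition~\ref{shveq.3} the underlying $\infty$-category of $\Psh(\cC,\SpO_{\Ztwo})$ with the projective model structure is $\infPsh(\cC,\Sp_{\Ztwo})$, and by Definition~\ref{shveq.7} the local projective model structure is the left Bousfield localization of the projective one at the class $W$ of local weak equivalences. Since $\Psh(\cC,\SpO_{\Ztwo})$ is combinatorial (Example~\ref{shveq.2}), a standard model-categorical dictionary --- which I would cite from \cite{HA} (see also \cite{HTT}) --- identifies the underlying $\infty$-category of this left Bousfield localization with the localization of $\infPsh(\cC,\Sp_{\Ztwo})$ at the image of $W$, i.e.\ with the full subcategory of $W$-local objects. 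On the other hand, Definition~\ref{shveq.9} and the adjunction~\eqref{shveq.9.1} present $\infShv(\cC,\Sp_{\Ztwo})$ as precisely the reflective localization of $\infPsh(\cC,\Sp_{\Ztwo})$ at the class $W'$ of local equivalences. Thus it suffices to prove $W=W'$.

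To compute $W$ I would unwind Definition~\ref{shveq.7} in the case $G=\Ztwo$, whose only subgroups are $e$ and $\Ztwo$. For $X\in\cC$ and a finite $\Ztwo$-set $\Ztwo/H$, the Yoneda adjunction yields $\Hom_{\Ho(\Psh(\cC,\SpO_{\Ztwo}))}(\Sigma^n\Sigma^\infty(\Ztwo/H\times X)_+,\cF)\cong \pi_n^H(\cF(X))$, and moreover $\pi_n^e(\cF(X))\cong\pi_n((i^*\cF)(X))$ and $\pi_n^{\Ztwo}(\cF(X))\cong\pi_n((\cF^{\Ztwo})(X))$, using that $i^*$ and $(-)^{\Ztwo}$ are computed objectwise on presheaves (Definition~\ref{etale.15}). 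Hence $f$ lies in $W$ if and only if $i^*f$ and $f^{\Ztwo}$ induce isomorphisms on all homotopy sheaves $\ul{\pi}_n$. For presheaves of ordinary spectra, inducing isomorphisms on all homotopy sheaves is the same as being a local equivalence in the sense of Definition~\ref{shveq.9}: the hypercompletion functor has homotopy sheaves obtained by sheafifying the homotopy presheaves, and Definition~\ref{etale.15}, like Ayoub's Jardine-style construction, is phrased via hypercovers; I would deduce this non-equivariant comparison from \cite{Ayo} together with standard facts about Postnikov towers of sheaves of spectra. Therefore $f\in W$ if and only if $i^*f$ and $f^{\Ztwo}$ are local equivalences, which by Propositions~\ref{shveq.11}, \ref{shveq.10} and \ref{shveq.12} happens if and only if $f$ is a local equivalence, i.e.\ $f\in W'$. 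This gives $W=W'$ and hence the proposition.

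I expect the main obstacle to be the non-equivariant comparison invoked above: matching Ayoub's local weak equivalences --- defined through one-categorical sheafification of the homotopy presheaves --- with the maps inverted by the hyperdescent localization $L$ of Definition~\ref{shveq.9}, i.e.\ confirming that both formalisms produce the same hypercomplete sheaves of spectra. Both are set up with hypercovers rather than merely covering sieves, so this should go through, but it has to be stated and referenced with care; this is presumably the ``result of \cite{Ayo}'' alluded to at the start of the subsection. A lesser technical point, also needing a precise citation, is the passage from a left Bousfield localization of a combinatorial model category to the corresponding localization of underlying $\infty$-categories. Granting these two inputs, everything else reduces to the bookkeeping above, which rests only on Propositions~\ref{shveq.11}, \ref{shveq.10}, \ref{shveq.12} and the elementary description of $\Ztwo$-equivariant homotopy groups.
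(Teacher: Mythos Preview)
Your proposal is correct and follows essentially the same route as the paper: reduce to showing that Ayoub's local weak equivalences coincide with the local equivalences of Definition~\ref{shveq.9}, unwind the former via the adjunction to presheaf-level homotopy groups for $H=e,\Ztwo$, and then invoke Proposition~\ref{shveq.12} (together with Propositions~\ref{shveq.11} and~\ref{shveq.10} for the converse direction). The non-equivariant comparison you flag as the main obstacle---that a map of presheaves of spectra is a local equivalence in the hyperdescent sense precisely when it induces isomorphisms on all sheafified homotopy groups---is handled in the paper by a direct citation to \cite[Theorem~1.3]{DHI04}, which is the reference you were looking for; the paper also works with projectively fibrant objects so that the derived $\Hom$ in Definition~\ref{shveq.7} is computed without replacement.
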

\begin{proof}
Let $f\colon \cF\to \cG$ be a morphism of fibrant objects in $\Psh(\cC,\SpO_G)$ with respect to the projective model structure.
We need to show that $f$ is a local weak equivalence if and only if the corresponding map $g$ in $\infPsh(\cC,\Sp_G)$ is a local equivalence.

By adjunction, $f$ is a local weak equivalence if and only if the induced morphism of presheaves
\[
(X\in \cC \mapsto \Hom_{\Ho(\SpO)}(\Sigma^n,\cF(X)^H))
\to
(X\in \cC \mapsto \Hom_{\Ho(\SpO)}(\Sigma^n,\cG(X)^H))
\]
becomes an isomorphism after sheafification for $H=e,\Ztwo$ and integer $n$.
By \cite[Theorem 1.3]{DHI04}, this is equivalent to saying that $i^*g$ and $g^{\Ztwo}$ are local equivalences.
Proposition \ref{shveq.12} finishes the proof.
\end{proof}

\begin{prop}\label{affinedetermine}
\label{shveq.14}
Let $\cC$ and $\cC'$ be sites.
If there is an equivalence of topoi $\Shv(\cC)\simeq \Shv(\cC')$, then there is an equivalence of $\infty$-categories
\[
\infShv(\cC,\Sp_{\Ztwo})
\simeq
\infShv(\cC',\Sp_{\Ztwo}).
\]
\end{prop}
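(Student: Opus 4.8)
The plan is to reduce the statement to the already-established identification of $\infShv(\cC,\Sp_{\Ztwo})$ with the underlying $\infty$-category of a local projective model structure on a presheaf category, and then invoke the fact that this model structure depends only on the topos. First I would recall that by Proposition \ref{shveq.13}, for any site $\cD$ the underlying $\infty$-category of $\Psh(\cD,\SpO_{\Ztwo})$ with the local projective model structure is equivalent to $\infShv(\cD,\Sp_{\Ztwo})$. So it suffices to produce a Quillen equivalence between $\Psh(\cC,\SpO_{\Ztwo})$ and $\Psh(\cC',\SpO_{\Ztwo})$, each with its local projective model structure, whenever $\Shv(\cC)\simeq \Shv(\cC')$.

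The key input here is the cited result of Ayoub: by \cite[Definition 4.4.23]{Ayo} and Example \ref{shveq.2}, $\SpO_{\Ztwo}$ is a category of coefficients, and Ayoub's theory attaches to any site $\cD$ and any category of coefficients $\mathfrak{M}$ the local projective model structure on $\Psh(\cD,\mathfrak{M})$ in such a way that the associated homotopy category (equivalently, underlying $\infty$-category) is an invariant of the topos $\Shv(\cD)$. Concretely, an equivalence of topoi $u^*\colon \Shv(\cC)\xrightarrow{\simeq}\Shv(\cC')$ is induced by a morphism of sites, which gives a Quillen adjunction $(u^*,u_*)$ between the projective model structures on $\Psh(\cC,\SpO_{\Ztwo})$ and $\Psh(\cC',\SpO_{\Ztwo})$; since $u^*$ and $u_*$ on sheaves are inverse equivalences, this descends to a Quillen equivalence between the local projective model structures. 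This is exactly the content of \cite[Section 4.4]{Ayo}, applied with the category of coefficients $\SpO_{\Ztwo}$. Passing to underlying $\infty$-categories and using Proposition \ref{shveq.13} twice then yields the desired equivalence $\infShv(\cC,\Sp_{\Ztwo})\simeq \infShv(\cC',\Sp_{\Ztwo})$.

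I expect the main obstacle to be bookkeeping rather than a genuine mathematical difficulty: one must check that Ayoub's local projective model structure on $\Psh(\cD,\SpO_{\Ztwo})$ really is a topos invariant in the strong sense needed — that is, that a bare equivalence of topoi (not necessarily arising from an equivalence of sites on the nose) can be upgraded to a Quillen equivalence. The cleanest route is to observe that the local weak equivalences in Definition \ref{shveq.7} are defined purely in terms of sheafification of the presheaves $X\mapsto \Hom_{\Ho}(\Sigma^n\Sigma^\infty(\Ztwo/H\times X)_+,-)$, and sheafification factors through $\Shv(\cD)$; hence the class of local weak equivalences, and therefore the Bousfield localization, only sees the topos. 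Alternatively, one can avoid model categories altogether and argue $\infty$-categorically: $\infShv(\cC,\Sp_{\Ztwo})$ is characterized by Proposition \ref{shveq.8} as the full subcategory of $\infPsh(\cC,\Sp_{\Ztwo})\simeq \Fun(\Nerve(\cC)^{op},\Sp_{\Ztwo})$ on those $\cF$ with $i^*\cF$ and $\cF^{\Ztwo}$ hypersheaves of ordinary spectra, and $\infShv(\cC,\Sp)\simeq \infShv(\cC',\Sp)$ is standard once $\Shv(\cC)\simeq\Shv(\cC')$ (the hypercomplete $\infty$-topos, tensored with $\Sp$, is a topos invariant). Then Proposition \ref{shveq.8} transports the sheaf condition across, giving the equivalence. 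I would present the model-categorical argument as the main proof since the paper has set up Ayoub's framework for precisely this purpose, and remark that the $\infty$-categorical argument gives an alternative.
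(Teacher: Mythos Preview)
Your overall strategy matches the paper's: reduce via Proposition~\ref{shveq.13} to a Quillen equivalence of local projective model structures on presheaf categories with coefficients in $\SpO_{\Ztwo}$, using Ayoub's framework and Example~\ref{shveq.2}. The difference lies in how the topos-invariance is actually established. You propose to compare $\Psh(\cC,\SpO_{\Ztwo})$ and $\Psh(\cC',\SpO_{\Ztwo})$ directly via a morphism of sites inducing the given equivalence of topoi, and you correctly flag the difficulty that a bare equivalence of topoi need not arise from such a morphism. The paper resolves this not by your suggested workaround (arguing that local weak equivalences only see sheafification) but by invoking \cite[Proposition~4.4.56]{Ayo} applied to the canonical functors $\cC\to\Shv(\cC)$ and $\cC'\to\Shv(\cC')$, where $\Shv(\cC)$ is itself treated as a site. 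This yields Quillen equivalences
\[
\Psh(\cC,\SpO_{\Ztwo})\to \Psh(\Shv(\cC),\SpO_{\Ztwo}),\qquad \Psh(\cC',\SpO_{\Ztwo})\to \Psh(\Shv(\cC'),\SpO_{\Ztwo}),
\]
and then the assumed equivalence $\Shv(\cC)\simeq\Shv(\cC')$ (now an equivalence of \emph{sites}, with Ayoub's canonical topology) directly gives the bridge. Passing to underlying $\infty$-categories via Proposition~\ref{shveq.13} and \cite[Lemma~1.3.4.21]{HA} finishes. So your proof is essentially right but leaves as ``bookkeeping'' precisely the step the paper handles with a specific citable result; it would strengthen your write-up to point to Ayoub's 4.4.56 and the passage through $\Psh(\Shv(\cC),-)$ rather than leaving the topos-invariance as a sketch.
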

\begin{proof}
Apply \cite[Proposition 4.4.56]{Ayo} to the canonical (compose Yoneda and sheafification) functors $\cC\to \Shv(\cC)$ and $\cC'\to \Shv(\cC')$, where the right hand side categories are equipped with the topology described in \cite[after Th\'eor\`eme 4.4.51]{Ayo}. Hence we obtain left Quillen equivalences
\[
\Psh(\cC,\SpO_{\Ztwo})
\to
\Psh(\Shv(\cC),\SpO_{\Ztwo})
\text{ and }
\Psh(\cC',\SpO_{\Ztwo})
\to
\Psh(\Shv(\cC'),\SpO_{\Ztwo})
\]
with respect to the local model structures of Definition \ref{deflocalmodelstructure}.
Owing to Proposition \ref{shveq.13} and \cite[Lemma 1.3.4.21]{HA}, we obtain equivalences of $\infty$-categories
\[
\infShv(\cC,\Sp_{\Ztwo})
\simeq
\infShv(\Shv(\cC),\Sp_{\Ztwo})
\text{ and }
\infShv(\cC',\Sp_{\Ztwo})
\simeq
\infShv(\Shv(\cC'),\Sp_{\Ztwo}).
\]
We obtain the desired equivalence of $\infty$-categories thanks to the equivalence of topoi (and hence sites)
$\Shv(\cC)\simeq \Shv(\cC')$.
\end{proof}

\subsection{Isovariant \'etale descent}
\label{sec3.4}

For an abelian group $M$, \eqref{etale.17.1} gives a canonical equivalence
\[
\EM M \xrightarrow{\simeq} (\EM \iota M)^{\Ztwo}.
\]
The last adjunction in \eqref{etale.0.1} then yields a maps $\iota \EM M \to \EM \iota M$.
We have a similar map for a commutative ring $A$.

\begin{lem}
\label{etale.16}
Let $A$ be a commutative ring and $M$  a flat $A$-module.
Then there is an equivalence
\[
\EM \iota M
\simeq
\EM \iota A \wedge_{\iota \EM A} \iota \EM M.
\]
\end{lem}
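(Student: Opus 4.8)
The plan is to produce a canonical comparison map and then show it is an equivalence by a d\'evissage to the case $M=A$, in the same spirit as the proof of Proposition \ref{etale.25}. First I would construct the map. The equivalence $\EM M \xrightarrow{\simeq}(\EM\iota M)^{\Ztwo}$ coming from \eqref{etale.17.1}, together with the adjunction $\iota\dashv(-)^{\Ztwo}$ from \eqref{etale.0.1}, gives a natural map $\iota\EM M\to\EM\iota M$; for $M=A$ this exhibits $\EM\iota A$ as an $\iota\EM A$-algebra, and for an $A$-module $M$ the map is $\iota\EM A$-linear, so extension of scalars along $\iota\EM A\to\EM\iota A$ yields a natural map of $\EM\iota A$-modules
\[
c_M\colon \EM\iota A\wedge_{\iota\EM A}\iota\EM M\to\EM\iota M .
\]
The assertion is that $c_M$ is an equivalence when $M$ is flat.

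Then I would argue by d\'evissage on the class $\mathcal{S}$ of $A$-modules $M$ for which $c_M$ is an equivalence. For $M=A$ the source is $\EM\iota A\wedge_{\iota\EM A}\iota\EM A\simeq\EM\iota A$ and $c_A$ is the identity under this identification, so $A\in\mathcal{S}$. Both functors $M\mapsto\EM\iota M$ and $M\mapsto\EM\iota A\wedge_{\iota\EM A}\iota\EM M$ carry finite direct sums to finite coproducts (using additivity of $\iota$, of the Eilenberg-MacLane functors, and of the relative smash product), and $c_{M\oplus M'}\simeq c_M\oplus c_{M'}$ by naturality, so $\mathcal{S}$ contains all finitely generated free $A$-modules. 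Both functors also preserve filtered colimits --- $\iota$ and $-\wedge_{\iota\EM A}-$ preserve all colimits, and the Eilenberg-MacLane functors preserve filtered colimits because homotopy (Mackey) groups commute with filtered colimits of (equivariant) spectra --- so $\mathcal{S}$ is closed under filtered colimits. By Lazard's theorem every flat $A$-module is a filtered colimit of finitely generated free modules, whence $\mathcal{S}$ contains all flat modules.

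The only genuinely delicate point, and the one I would spend the most care on, is the bookkeeping for the canonical map: verifying that $\iota\EM M\to\EM\iota M$ is $\iota\EM A$-linear and natural in $M$, so that $c_M$ is well defined and the naturality invoked in the d\'evissage is legitimate, and confirming that under $\EM\iota A\wedge_{\iota\EM A}\iota\EM A\simeq\EM\iota A$ the map $c_A$ really is the identity rather than some nontrivial self-equivalence. I would also make explicit (though it is standard) that the equivariant Eilenberg-MacLane functor commutes with filtered colimits, since filtered colimits of $\Ztwo$-spectra are detected on homotopy Mackey functors and $\EM$ of a filtered colimit of Mackey functors has homotopy concentrated in degree $0$, equal to that colimit. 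Everything else is formal. An alternative to the d\'evissage would be to check that $c_M$ becomes an equivalence after applying $i^*$ and $\Phi^{\Ztwo}$ and invoke conservativity of $(i^*,\Phi^{\Ztwo})$, but since the $\Phi^{\Ztwo}$ side still requires a reduction to finite free modules, the direct d\'evissage seems cleanest.
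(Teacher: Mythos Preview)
Your proposal is correct and follows essentially the same route as the paper: reduce via Lazard's theorem to finitely generated free modules using that $\EM$ (Proposition~\ref{t.4}) and the two $\iota$'s preserve filtered colimits, and handle the free case directly. You are considerably more explicit than the paper about constructing the comparison map $c_M$ and verifying its naturality and the base case $c_A\simeq\id$; the paper simply asserts that for finitely generated free $M$ ``the claim is clear'' without spelling out a canonical map, so your extra care is well placed but not a different strategy.
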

\begin{proof}
By Proposition \ref{t.4}, $\EM$ preserves filtered colimits.
The two functors $\iota$ also preserve filtered colimits since they are left adjoints.
Every flat $A$-module is a filtered colimit of finitely generated free $A$-modules by Lazard's theorem,
so we reduce to the case when $M$ is a finitely generated free $A$-module.
In this case, the claim is clear.
\end{proof}

For a scheme $X$, let $\Et\Aff/X$ denote the category of affine schemes \'etale over $X$. We start by considering the affine case $X=\Spec(A)$.

\begin{lem}
\label{etale.6}
Let $A$ be a  commutative ring, and let $M$ be an $\EM A$-module.
The presheaf $\cF$ of spectra on $\Et\Aff/\Spec(A)$ given by
\[
\cF(\Spec(B))
:=
M\wedge_{\EM A}\EM B
\]
for \'etale homomorphisms $A\to B$ satisfies \'etale descent.
\end{lem}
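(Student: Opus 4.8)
The plan is to reduce the statement to ordinary commutative algebra and invoke faithfully flat (hyper)descent for the derived category. By the Schwede--Shipley theorem (see \cite{HA}) there is an equivalence of $\infty$-categories $\Mod_{\EM A}\simeq \cD(A)$ between $\EM A$-module spectra and the derived $\infty$-category of the ring $A$, and under this equivalence $-\wedge_{\EM A}\EM B$ corresponds to the derived base change $-\otimes_A^{\mathbb{L}}B$. Since an \'etale homomorphism $A\to B$ is flat, this base change is computed degreewise, so the (limit-preserving) forgetful functor $\cD(A)\to \Sp$ carries $\cF$ to the presheaf $\Spec(B)\mapsto M\otimes_A B$ on $\Et\Aff/\Spec(A)$. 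It therefore suffices to show that for every \'etale hypercover $\sX\to \Spec(A)$ with $\sX_i=\Spec(B_i)$ the augmentation $M\to \lim_{i\in \Delta}(M\otimes_A B_i)$ is an equivalence.

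To prove this I would run the descent spectral sequence of the cosimplicial spectrum $i\mapsto M\otimes_A B_i$, whose $E_2$-page is $E_2^{s,t}=H^s(\pi_t(M)\otimes_A B_\bullet)$. Since each $\Spec(B_i)$ is affine and $B_i$ is $A$-flat, the cochain complex $\pi_t(M)\otimes_A B_\bullet$ is the complex $\Gamma(\sX_\bullet,\widetilde{\pi_t(M)})$ attached to the hypercover $\sX$ with coefficients in the quasi-coherent sheaf $\widetilde{\pi_t(M)}$ on $\Spec(A)$. Because hypercovers compute sheaf cohomology and the restriction of $\widetilde{\pi_t(M)}$ to each affine $\Spec(B_i)$ is \'etale-acyclic, $H^s$ of this complex equals $H^s_{\et}(\Spec(A),\widetilde{\pi_t(M)})$, which vanishes for $s>0$ and equals $\pi_t(M)$ for $s=0$ by Grothendieck's comparison of \'etale and Zariski cohomology of quasi-coherent sheaves on an affine scheme. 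Hence $E_2$ is concentrated in the column $s=0$, the spectral sequence degenerates and converges strongly, and the augmentation induces an isomorphism on all homotopy groups, hence is an equivalence. One could alternatively just invoke that quasi-coherent complexes form a hypercomplete sheaf of $\infty$-categories for the fpqc, and hence the \'etale, topology.

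The hard part is really the passage from a single \'etale cover to arbitrary hypercovers. For the \v{C}ech nerve of an \'etale cover $\Spec(B_0)\to \Spec(A)$ the claim is classical faithfully flat descent---the Amitsur complex is a limit diagram, with no boundedness hypothesis on $M$---but the small \'etale site is not hypercomplete, so this does not upgrade formally. The resolution is exactly the cohomological input above: the vanishing of higher \'etale cohomology of quasi-coherent sheaves on an affine forces the hypercover complex to compute the (vanishing) derived-functor cohomology. Everything else---the Schwede--Shipley identification and the flat base-change comparison between the $\EM A$-module and $A$-module sides---is routine given the material already developed here (compare Proposition \ref{Mack.1} and the non-equivariant \'etale descent for $\THH$ in \cite{GH}).
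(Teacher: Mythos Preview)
Your proposal is correct and follows essentially the same strategy as the paper: both compute $\pi_q\cF(\Spec B)\cong \pi_q(M)\otimes_A B$ from flatness of \'etale maps and then collapse a spectral sequence using the vanishing of higher \'etale cohomology of quasi-coherent sheaves on affines. The only cosmetic differences are that the paper invokes the descent-to-sheafification spectral sequence of \cite{GH} directly (avoiding the detour through Schwede--Shipley and $\cD(A)$), and that you should state the hypercover argument over an arbitrary $\Spec(B)$ in the site rather than just $\Spec(A)$---the proof is of course identical.
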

\begin{proof}
Since $\EM A\to \EM B$ is flat in the sense of \cite[Definition 7.2.2.10]{HA},
we have an isomorphism
\[
\pi_q \cF(\Spec(B)))
\simeq
\pi_q(M)\otimes_A B
\]
for all integers $q$.
In particular, $\pi_q \cF$ is a quasi-coherent sheaf on $\Et\Aff/\Spec(A)$.

According to the paragraph preceding \cite[Proposition 3.1.2]{GH}, there is a conditionally convergent spectral sequence
\[
E_2^{st}
:=
H_{\et}^s(X,a_{\et}\pi_{-t}\cF)
\Rightarrow
\pi_{-s-t}L_{\et}\cF(X),
\]
where $a_{\et}$ denotes the \'etale sheafification functor.
Since $\pi_{-t}\cF$ is a quasi-coherent sheaf for every integer $t$, the cohomology $H^s(X,a_{\et} \pi_{-t}\cF)$ vanishes for every integer $s\neq 0$.
It follows that $\pi_{-t}\cF\to \pi_{-t}L_{\et}\cF$ is an isomorphism for every integer $t$, i.e., $\cF$ satisfies \'etale descent.
\end{proof}

Let $A$ be a commutative ring with involution.
Let $\isoEtAff/\Spec(A)$ denote the category of affine schemes with involutions isovariant \'etale over $\Spec(A)$.

\begin{thm}
\label{etale.7}
The presheaf
\[
\THR\in \infPsh(\Aff_{\Ztwo},\Sp_{\Ztwo})
\]
satisfies isovariant \'etale descent, where $\Aff_{\Ztwo}$ denotes the category of affine schemes with involutions.
\end{thm}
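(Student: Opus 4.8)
The plan is to reduce isovariant \'etale descent for the $\Ztwo$-spectrum valued presheaf $\THR$ to \emph{two} non-equivariant descent statements — one for the underlying presheaf (which is $\THH$) and one for the geometric fixed points — each of which is controlled by the quasi-coherent descent Lemma \ref{etale.6}. Since any isovariant \'etale hypercover of an affine scheme with involution $\Spec A$ already lives in the small site $\isoEtAff/\Spec A$, it suffices to fix a commutative ring with involution $A$ and show that the restriction of $\THR$ to $\isoEtAff/\Spec A$ satisfies hyperdescent. By Thomason's equivalence of sites (Proposition \ref{equitop.4}), together with the fact that affine objects generate the topoi involved (cf.\ the discussion preceding Proposition \ref{affinedetermine}), this site is equivalent to $\Et\Aff/\Spec(A^{\Ztwo})$, an object corresponding to $\Spec B$ with $B\cong A\otimes_{\iota(A^{\Ztwo})}\iota C$ for an \'etale map $A^{\Ztwo}\to C$; and Theorem \ref{etale.5} identifies the value as $\THR(B)\simeq \THR(A)\wedge_{\EM\iota(A^{\Ztwo})}\EM\iota C$. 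So I am reduced to proving that the presheaf $G$ on $\Et\Aff/\Spec(A^{\Ztwo})$ given by $\Spec C\mapsto \THR(A)\wedge_{\EM\iota(A^{\Ztwo})}\EM\iota C$ satisfies \'etale hyperdescent, and by Proposition \ref{shveq.8} this reduces further to showing that $i^*G$ and $G^{\Ztwo}$ are \'etale hypersheaves of spectra.

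The presheaf $i^*G$ is the easy half: Proposition \ref{etale.10}, \eqref{etale.17.1} and $i^*\iota\simeq\id$ give $i^*G(\Spec C)\simeq \THH(i^*A)\wedge_{\EM(A^{\Ztwo})}\EM C$, where $\THH(i^*A)$ is viewed as an $\EM(A^{\Ztwo})$-module via $A^{\Ztwo}\hookrightarrow i^*A$. This is exactly the shape of presheaf treated in Lemma \ref{etale.6}, so $i^*G$ satisfies \'etale descent.

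The genuine fixed points $G^{\Ztwo}(\Spec C)=\THR(B)^{\Ztwo}=\THO(B)$ are the main obstacle, precisely because $(-)^{\Ztwo}$ does not commute with the relative smash and Lemma \ref{etale.6} is not directly applicable. Here I would use the isotropy separation (Tate) pullback square (see e.g.\ \cite{GM95} or Appendix \ref{secA}) expressing $\THR(B)^{\Ztwo}$ as the pullback of $\THR(B)^{h\Ztwo}\to\THR(B)^{t\Ztwo}\leftarrow\Phi^{\Ztwo}\THR(B)$, where $\THR(B)^{t\Ztwo}=\cofib(\THR(B)_{h\Ztwo}\to\THR(B)^{h\Ztwo})$. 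Since limits of \'etale hypersheaves of spectra are \'etale hypersheaves, and the cofiber of a map between such is again one, it suffices to show that $\Spec C\mapsto\THR(B)_{h\Ztwo}$, $\Spec C\mapsto\THR(B)^{h\Ztwo}$ and $\Spec C\mapsto\Phi^{\Ztwo}\THR(B)$ are \'etale hypersheaves. The homotopy orbits and fixed points depend only on the underlying Borel spectrum $i^*\THR(B)=\THH(i^*B)$, which by the computation of $i^*G$ is $\EM(A^{\Ztwo})$-linearly of the form $\THH(i^*A)\wedge_{\EM(A^{\Ztwo})}\EM C$ with $\Ztwo$ acting on the first factor only; since $(-)_{h\Ztwo}$ commutes with the relative smash, $\Spec C\mapsto\THR(B)_{h\Ztwo}\simeq\THH(i^*A)_{h\Ztwo}\wedge_{\EM(A^{\Ztwo})}\EM C$ is a hypersheaf by Lemma \ref{etale.6}, while $\Spec C\mapsto\THH(i^*B)$ is a hypersheaf valued in $\Fun(\rB(\Ztwo),\Sp)$ by the evident equivariant refinement of Lemma \ref{etale.6}, so $\Spec C\mapsto\THR(B)^{h\Ztwo}$ is a hypersheaf after applying the limit functor $(-)^{h\Ztwo}$, and hence so is the Tate term. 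For the geometric fixed points, I would use that $\Phi^{\Ztwo}$ is symmetric monoidal and colimit preserving with $\Phi^{\Ztwo}N^{\Ztwo}\simeq\id$ and $\Phi^{\Ztwo}\iota\simeq\id$ (Proposition \ref{orth.5}), together with $\EM B\simeq\EM A\wedge_{\EM\iota(A^{\Ztwo})}\EM\iota C$ (Proposition \ref{Mack.1}) and Lemma \ref{etale.16}, to compute
\[
\Phi^{\Ztwo}\THR(B)\simeq\Phi^{\Ztwo}\EM B\wedge_{\EM(i^*B)}\Phi^{\Ztwo}\EM B\simeq\Phi^{\Ztwo}\THR(A)\wedge_{\EM(A^{\Ztwo})}\EM C,
\]
which is once more of the form covered by Lemma \ref{etale.6}. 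Reassembling via the pullback square shows that $G^{\Ztwo}$ satisfies \'etale descent, and together with Proposition \ref{shveq.8} and the reduction above this proves the theorem.

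The main technical difficulty, as flagged, is this geometric fixed point analysis; everything else is formal once base change (Theorem \ref{etale.5}) and quasi-coherent descent (Lemma \ref{etale.6}) are in hand. Within that step the point requiring the most care is to check that all the base-change identifications $(-)\wedge_{\EM(A^{\Ztwo})}\EM C$ respect the $\Ztwo$-structures, which ultimately rests on $A^{\Ztwo}$ and $C$ carrying trivial involutions and on $A^{\Ztwo}\to C$ being \'etale, via Lemma \ref{etale.16} and Proposition \ref{Mack.1}.
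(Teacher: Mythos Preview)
Your reduction via Thomason's equivalence, Theorem \ref{etale.5}, and Proposition \ref{shveq.8} to showing that $i^*G$ and $G^{\Ztwo}$ are \'etale hypersheaves is exactly the paper's strategy, and your treatment of $i^*G$ matches the paper verbatim. The divergence is entirely in the handling of $G^{\Ztwo}$.

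You assert that ``$(-)^{\Ztwo}$ does not commute with the relative smash and Lemma \ref{etale.6} is not directly applicable,'' and then work around this via isotropy separation, treating $\Phi^{\Ztwo}$, $(-)^{h\Ztwo}$, $(-)_{h\Ztwo}$ and the Tate construction separately. This does work (your geometric fixed point computation is correct, and your argument that the Borel action lives only on the $\THR(A)$ factor so that $(-)_{h\Ztwo}$ distributes and the $\Fun(\rB(\Ztwo),\Sp)$-valued sheaf condition passes through $(-)^{h\Ztwo}$ is sound). But the paper takes a much shorter path: after using Lemma \ref{etale.16} to rewrite $\EM\iota C$ as $\EM\iota(A^{\Ztwo})\wedge_{\iota\EM(A^{\Ztwo})}\iota\EM C$, one has
\[
G(\Spec C)\simeq \THR(A)\wedge_{\iota\EM(A^{\Ztwo})}\iota\EM C,
\]
and now Lemma \ref{etale.4} (a projection formula for $p_*=(-)^{\Ztwo}$ when smashing against objects of the form $p^*M=\iota M$) gives directly
\[
G(\Spec C)^{\Ztwo}\simeq \THR(A)^{\Ztwo}\wedge_{\EM(A^{\Ztwo})}\EM C,
\]
which is immediately of the form handled by Lemma \ref{etale.6}. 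So the very commutation you declared impossible is exactly what Lemma \ref{etale.4} provides, once one has passed from $\EM\iota C$ to $\iota\EM C$; this replaces your entire isotropy-separation argument with two lines. Your route has the virtue of being a standard reflex and of exhibiting the descent of $\Phi^{\Ztwo}\THR$ and the Borel pieces separately, but the paper's route is considerably more efficient.
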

\begin{proof}

We only need to show that the restriction of $\THR$ to $\isoEtAff/\Spec(A)$ satisfies isovariant \'etale descent. 
The functor
\begin{equation}
\label{etale.7.1}
\Et\Aff/\Spec(A^{\Ztwo})
\to
\isoEtAff/\Spec(A)
\end{equation}
sending $\Spec(B)$ to $\Spec(A\otimes_{\iota A^{\Ztwo}}\iota B)$ is an equivalence of sites by Proposition \ref{equitop.4}.
Let
\[
\cF\in \infPsh(\Et\Aff/\Spec(A^{\Ztwo}),\Sp_{\Ztwo})
\]
be the presheaf given by
$
\cF(\Spec(B))
:=
\THR(A)\wedge_{\EM \iota A^{\Ztwo}}\EM \iota B
$
for \'etale homomorphisms $A^{\Ztwo}\to B$.
If we show that $\cF$ satisfies \'etale descent, then the presheaf
\[
\cG\in \infPsh(\isoEtAff/\Spec(A),\Sp_{\Ztwo})
\]
obtained from $\cF$ and \eqref{etale.7.1} satisfies isovariant \'etale descent.
Theorem \ref{etale.5} gives the third equivalence in
\[
\cG(\Spec(\iota B\otimes_{\iota A^{\Ztwo}}A))
=
\cF(\Spec(B))
=
\THR(A)\wedge_{\EM \iota A^{\Ztwo}}\EM \iota B
\simeq
\THR(A\otimes_{\iota A^{\Ztwo}}\iota B),
\]
i.e., $\cG\simeq \THR$.
This means that $\THR$ satisfies isovariant \'etale descent.

Hence it remains to check that $\cF$ satisfies \'etale descent.
There is an equivalence
\[
i^*\cF(\Spec(B))
\simeq
i^*\THR(A)\wedge_{\EM A^{\Ztwo}}\EM B.
\]
By Lemmas \ref{etale.4} and \ref{etale.16}, there are equivalences
\[
\cF(\Spec(B))^{\Ztwo}
\simeq
(\THR(A)\wedge_{\iota \EM A^{\Ztwo}}\iota \EM B)^{\Ztwo}
\simeq
\THR(A)^{\Ztwo}\wedge_{\EM A^{\Ztwo}}\EM B.
\]
Lemma \ref{etale.6} implies that $i^*\cF$ and $\cF^{\Ztwo}$ satisfy \'etale descent, which means that $\cF$ satisfies \'etale descent.
\end{proof}

\begin{prop}
\label{etale.31}
For every separated scheme $X$ with involution, there exists an isovariant \'etale covering $\{U_i\to X\}_{i\in I}$ such that each $U_i$ is an affine scheme with involution.
\end{prop}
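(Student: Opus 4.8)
The plan is to reduce the statement to the well-known fact that any scheme admits a Zariski cover by affine opens, applied to the quotient $X/G$ with $G=\Ztwo$, and then transport the cover back to $X$ via Thomason's equivalence. First I would form the geometric quotient $q\colon X\to X/(\Ztwo)$, which exists as an algebraic space for any separated $G$-scheme (as recalled after Definition \ref{equitop.3}). In fact, since $X$ is separated, the quotient $X/(\Ztwo)$ is a scheme: locally the $\Ztwo$-orbits are contained in affine opens, because a separated scheme has the property that any finite set of points lies in a common affine open, and on such an affine $\Spec(A)$ with involution one has $\Spec(A)/(\Ztwo)\simeq \Spec(A^{\Ztwo})$ affine; so $X/(\Ztwo)$ is covered by such affine opens and is a scheme.

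Next, cover the scheme $X/(\Ztwo)$ by affine opens $\{V_i\to X/(\Ztwo)\}_{i\in I}$, which is possible by the very definition of a scheme. Set $U_i:=V_i\times_{X/(\Ztwo)} X$. Each $V_i\to X/(\Ztwo)$ is an open immersion, hence \'etale, so by Proposition \ref{equitop.4} (Thomason) the pullback $U_i\to X$ is an isovariant \'etale morphism of $G$-schemes; moreover the family $\{U_i\to X\}$ is jointly surjective since $\{V_i\}$ covers $X/(\Ztwo)$ and $q$ is surjective, so $\{U_i\to X\}_{i\in I}$ is an isovariant \'etale covering. It remains to check that each $U_i$ is affine. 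Writing $V_i=\Spec(B_i)$, the morphism $U_i\to V_i$ is the base change of $q$ along an open immersion; over the affine base $\Spec(B_i)$ the quotient map is identified (as in the recollections after Definition \ref{equitop.3}, using $\Spec(A)/(\Ztwo)\simeq\Spec(A^{\Ztwo})$) with $\Spec(A_i)\to\Spec(A_i^{\Ztwo})$ for a suitable $B_i$-algebra $A_i$ with involution satisfying $A_i^{\Ztwo}=B_i$, so $U_i=\Spec(A_i)$ is affine with involution.

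The only genuine subtlety—the step I expect to be the main obstacle—is justifying that $U_i$ is affine, equivalently that over an affine open $V_i\subseteq X/(\Ztwo)$ the preimage $q^{-1}(V_i)$ is affine. This is the statement that $q$ is an affine morphism. One way to see it: the question is local on the base, and one reduces to showing that if $X$ is separated with $\Ztwo$-action and $\Spec(B)\to X/(\Ztwo)$ is an affine open, then $q^{-1}(\Spec(B))$ is affine; since every $\Ztwo$-orbit in $X$ lies in an affine open (separatedness), $q^{-1}(\Spec(B))$ is itself a separated $\Ztwo$-scheme whose quotient is affine, and one concludes by Chevalley/Serre-type affineness criteria together with the fact that $q$ is finite (hence affine), the $\Ztwo$-action having finite orbits. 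Alternatively, one can bypass this by first choosing a $\Ztwo$-stable affine open cover of $X$ directly: cover $X$ by affine opens, intersect the $\Ztwo$-translates of each to obtain $\Ztwo$-stable affine opens (again using separatedness to ensure $W\cap wW$ is affine), and take these as the $U_i$ with the inclusions $U_i\hookrightarrow X$, which are trivially isovariant \'etale since open immersions are isovariant. This second route is cleaner and avoids the affineness-of-$q$ discussion altogether, so that is the argument I would actually write down.
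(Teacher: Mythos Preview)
Your claim that ``a separated scheme has the property that any finite set of points lies in a common affine open'' is false in general: this is the Chevalley--Kleiman (AF) property, which is strictly stronger than separatedness (Hironaka's smooth proper non-projective threefolds give counterexamples). Both of your routes lean on it. In the quotient approach, you invoke it to argue that $X/(\Ztwo)$ is a scheme and that $q$ is affine; without knowing that every $\Ztwo$-orbit lies in an affine open, neither step goes through. In your ``cleaner'' second route, the stable opens $W\cap wW$ need not cover $X$: a point $x$ lies in $W\cap wW$ only if \emph{both} $x$ and $wx$ lie in $W$, so at a non-fixed point you are again asking for an affine containing the entire orbit $\{x,wx\}$.

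The paper's proof avoids this by splitting into the fixed locus and the free locus. At a fixed point $x=wx$, \emph{any} affine neighborhood $U_x$ automatically contains $wx$, so $U_x\cap w(U_x)$ is a $\Ztwo$-stable affine neighborhood of $x$ (separatedness is used only to ensure this intersection is affine). On the free locus $Y=X\setminus X^{\Ztwo}$, the paper does \emph{not} look for $\Ztwo$-stable affine opens at all: it covers $Y$ by arbitrary affine opens $W_j$ (ignoring the action) and uses the maps $W_j\times\Ztwo\to X$, where $\Ztwo$ switches the two copies. These are isovariant \'etale because the action is free on both source and the image in $Y$; they are not open immersions, which is exactly what lets one sidestep the AF issue. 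Your second route is the right idea over $X^{\Ztwo}$, but you need this extra piece over the free locus.
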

\begin{proof}
Let $X^{\Ztwo}$ denote the closed subscheme of $X$ obtained by Definition \ref{equitop.5}.
We set $Y:=X-X^{\Ztwo}$, which is an open subscheme of $X$.

If $x$ is a point of $X^{\Ztwo}$, choose an affine open neighborhood $U_x$ of $x$ in $X$.
Then $V_x:=U_x\cap w(U_x)$ is again an affine open neighborhood of $x$ since $X$ is separated, and $w$ can be restricted to $V_x$.
It follows that
\[
\{V_x\to X\}_{x\in X}
\cup
\{Y\to X\}
\]
is a Zariski covering.

The $\Ztwo$-action on $Y$ is free, so the quotient morphism
$Y\to Y/(\Ztwo)$ is \'etale since the algebraic space $Y/(\Ztwo)$ is formed by the \'etale equivalence relation $Y\times \Ztwo \rightrightarrows Y$, and there is an isomorphism $Y\times_{Y/(\Ztwo)}Y\cong Y\times \Ztwo$.
It follows that the morphism $Y\times \Ztwo\to Y$ obtained by the first projection $Y\times_{Y/(\Ztwo)}Y\to Y$ is isovariant \'etale.
Choose a Zariski covering $\{W_j\to Y\}_{j\in J}$ after forgetting involution such that each $W_j$ is an affine scheme, and then
\begin{equation}
\{V_x\to X\}_{x\in X}
\cup
\{W_j\times \Ztwo\to X\}_{j\in J}
\end{equation}
is an isovariant \'etale covering.
\end{proof}

Let $\Sch_{\Ztwo}$ denote the category of separated schemes with involutions.

\begin{prop}
\label{etale.30}
There is an equivalence of topoi
\begin{equation}
\label{etale.30.1}
\Shv_{iso\et}(\Aff_{\Ztwo})
\simeq
\Shv_{iso\et}(\Sch_{\Ztwo}).
\end{equation}
Hence there is an equivalence of $\infty$-categories
\begin{equation}
\label{etale.30.2}
\infShv_{iso\et}(\Aff_{\Ztwo},\Sp_{\Ztwo})
\simeq
\infShv_{iso\et}(\Sch_{\Ztwo},\Sp_{\Ztwo}).
\end{equation}
\end{prop}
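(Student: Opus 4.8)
\textbf{Proof proposal for Proposition \ref{etale.30}.}

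The plan is to derive the equivalence \eqref{etale.30.1} of topoi directly from the comparison between the small and big isovariant \'etale sites, and then feed this into Proposition \ref{shveq.14} (\ref{affinedetermine}) to obtain \eqref{etale.30.2}. First I would reduce the topos statement \eqref{etale.30.1} to the classical fact that a topos is determined by any site of definition: by Proposition \ref{equitop.4}, for a separated scheme $X$ with involution the small isovariant \'etale site $X_{iso\et}$ is equivalent to $(X/\Ztwo)_{\et}$, and the quotient $X/\Ztwo$ is an algebraic space. So a sheaf on $\Sch_{\Ztwo}$ for the isovariant \'etale topology is the same as a sheaf on the (big) \'etale site of algebraic spaces $X/\Ztwo$ arising as quotients; restricting to the affine objects of $\Aff_{\Ztwo}$ corresponds under this equivalence to restricting to affine schemes \'etale over such quotients, and by Proposition \ref{etale.31} every separated $X$ with involution admits an isovariant \'etale cover by objects of $\Aff_{\Ztwo}$. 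Hence $\Aff_{\Ztwo}$ is a \emph{dense} (topologically generating) subsite of $\Sch_{\Ztwo}$ for the isovariant \'etale topology, and the Comparison Lemma for sites (e.g.\ SGA 4, or \cite[Lemma 4.4.53]{Ayo} in the form used for Proposition \ref{shveq.14}) yields the equivalence of topoi \eqref{etale.30.1}.

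Concretely, the key steps are: (1) record that the inclusion $\Aff_{\Ztwo}\hookrightarrow \Sch_{\Ztwo}$ is continuous and cocontinuous for the isovariant \'etale topology, using that an isovariant \'etale cover of an affine-with-involution object can be refined by affine-with-involution objects --- this is exactly Proposition \ref{etale.31} applied to $X$ affine, together with the fact that $V_x\cap w(V_x)$ stays affine when $X$ is separated; (2) verify the density/generation condition, namely that every object of $\Sch_{\Ztwo}$ has an isovariant \'etale cover by objects of $\Aff_{\Ztwo}$, which is again Proposition \ref{etale.31}; (3) invoke the Comparison Lemma to conclude $\Shv_{iso\et}(\Aff_{\Ztwo})\simeq \Shv_{iso\et}(\Sch_{\Ztwo})$. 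Then \eqref{etale.30.2} follows immediately: apply Proposition \ref{shveq.14} (\ref{affinedetermine}) with $\cC=\Aff_{\Ztwo}$ and $\cC'=\Sch_{\Ztwo}$, both equipped with the isovariant \'etale topology, since \eqref{etale.30.1} is precisely the input equivalence of topoi required there.

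The main obstacle I anticipate is purely bookkeeping on the site level: one must be careful that $\Aff_{\Ztwo}$ and $\Sch_{\Ztwo}$ are taken as \emph{small} (or essentially small) sites so that Proposition \ref{shveq.14} applies verbatim, and that the topology induced on $\Aff_{\Ztwo}$ by viewing it inside $\Sch_{\Ztwo}$ agrees with the intrinsic isovariant \'etale topology of Definition \ref{equitop.1} --- this is where separatedness is used, to guarantee $U_x\cap w(U_x)$ is affine and that fibre products of affine-with-involution objects over objects of $\Sch_{\Ztwo}$ remain in $\Aff_{\Ztwo}$. Once the site-theoretic hypotheses of the Comparison Lemma are checked, both displayed equivalences are formal. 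No genuinely new homotopical input beyond Proposition \ref{shveq.14} is needed; everything else is classical sheaf theory combined with Thomason's description in Proposition \ref{equitop.4} and the covering statement Proposition \ref{etale.31}.
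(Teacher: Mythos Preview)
Your proposal is correct and follows essentially the same approach as the paper: the paper's proof simply cites \cite[Th\'eor\`eme III.4.1]{SGA4} (the Comparison Lemma) together with Proposition \ref{etale.31} to obtain \eqref{etale.30.1}, and then invokes Proposition \ref{shveq.14} for \eqref{etale.30.2}. Your detour through Proposition \ref{equitop.4} in the first paragraph is unnecessary --- the density of $\Aff_{\Ztwo}$ in $\Sch_{\Ztwo}$ via Proposition \ref{etale.31} is all that is needed --- but your ``concretely'' paragraph is exactly the paper's argument.
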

\begin{proof}
Combine \cite[Th\'eor\`eme III.4.1]{SGA4} and Proposition \ref{etale.31} to obtain \eqref{etale.30.1}.
Use Proposition \ref{shveq.14} for \eqref{etale.30.2}.
\end{proof}

\begin{df}
\label{etale.34}
By Theorem \ref{etale.7} and Proposition \ref{etale.30}, we obtain
\[
\THR\in \infShv_{iso\et}(\Sch_{\Ztwo},\Sp_{\Ztwo}).
\]
This definition immediately implies that $\THR(X)$  that satisfies isovariant \'etale descent for all $X\in \Sch_{\Ztwo}$.
\end{df}

Below, we carry out computations of $\THR(X)$ for projective spaces $X=\P^n$, even with involution for $n=1$. Let us first check that extending Proposition \ref{etale.10} to schemes yields a definition of $\THH$ for schemes equivalent to the one of \cite{GH} as follows. See e.g.\ \cite[p.\ 1055 and chapter 3]{BM} for a discussion of other equivalent definitions of $\THH(X)$.

\begin{prop}
\label{etale.35}
For $X\in \Sch_{\Ztwo}$ and $Y\in \Sch$, there are canonical equivalences
\[
\THH(i^*X)
\simeq
i^*\THR(X)
\text{ and }
\THR(Y\amalg Y)
\simeq
i_*\THH(Y),
\]
where the involution on $Y\amalg Y$ switches the components.
\end{prop}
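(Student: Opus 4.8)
The statement has two assertions; both follow the same strategy, which is to reduce to the affine case already proved (Proposition \ref{etale.10} and Proposition \ref{etale.33}, together with Proposition \ref{etale.9}) and then globalize using isovariant \'etale descent, Theorem \ref{etale.7} and Definition \ref{etale.34}. First I would recall that, by Proposition \ref{etale.30}, both $\THR\in\infShv_{iso\et}(\Sch_{\Ztwo},\Sp_{\Ztwo})$ and $\THH\in\infShv_{\et}(\Sch,\Sp)$ are the unique sheaf extensions of their restrictions to affine schemes. The functor $i^*\colon\Sp_{\Ztwo}\to\Sp$ preserves limits (it is both a left and a right adjoint), so $i^*\THR$ is again a sheaf on $\Sch_{\Ztwo}$ for the isovariant \'etale topology; precomposing with the forgetful functor $\Sch\to\Sch_{\Ztwo}$ (trivial involution), which sends \'etale covers to isovariant \'etale covers, yields an \'etale sheaf on $\Sch$. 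Thus both $X\mapsto\THH(i^*X)$ and $X\mapsto i^*\THR(X)$ are \'etale sheaves on $\Sch_{\Ztwo}$ (the first because $\THH$ is an \'etale sheaf on $\Sch$ and $i^*$ on schemes is just ``forget the involution''), and by Proposition \ref{etale.10} they agree on affine schemes with involution. The uniqueness of sheaf extensions then gives the first equivalence for all $X\in\Sch_{\Ztwo}$; one should check the comparison maps are natural so that the identification is a genuine equivalence of sheaves, not merely objectwise.

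For the second equivalence I would argue similarly, treating $Y\mapsto i_*\THH(Y)$ as a presheaf on $\Sch$ and $Y\mapsto\THR(Y\amalg Y)$ as its comparison. Since $i_*$ also preserves limits, $Y\mapsto i_*\THH(Y)$ is an \'etale sheaf on $\Sch$. For the other side, note that $Y\mapsto Y\amalg Y$ (with the swap involution) sends \'etale covers of schemes to isovariant \'etale covers of $\Ztwo$-schemes — indeed on the free part the quotient is just $Y$ itself — so $Y\mapsto\THR(Y\amalg Y)$ is the restriction of the isovariant \'etale sheaf $\THR$ along this functor, hence an \'etale sheaf on $\Sch$. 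On affines this is exactly Proposition \ref{etale.9} (via the identification $i_*\EM B\simeq\EM(B^{\oplus\Ztwo})$ of Proposition \ref{etale.33}, so that $\iota\EM(B\amalg B)$ corresponds to $i_*\EM B$ and $\THR(\iota(B\amalg B))\simeq\THR(i_*\EM B)\simeq i_*\THH(B)$), and again by uniqueness of sheaf extensions the equivalence holds for all $Y\in\Sch$.

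The main obstacle is bookkeeping rather than any new idea: one must verify that the relevant functors between sites ($\Sch\to\Sch_{\Ztwo}$ with trivial involution, and $Y\mapsto Y\amalg Y$ with the swap) are continuous for the isovariant \'etale topology, i.e.\ send covers to covers and are compatible with fiber products, so that restriction along them preserves the sheaf condition; and that the objectwise comparison maps of Propositions \ref{etale.10} and \ref{etale.9}, \ref{etale.33} assemble into morphisms of presheaves. Once the sheaf property is in place on both sides and agreement on the affine subcategory is known, the conclusion is immediate from the equivalence $\infShv_{iso\et}(\Aff_{\Ztwo},-)\simeq\infShv_{iso\et}(\Sch_{\Ztwo},-)$ of Proposition \ref{etale.30} and its non-equivariant analogue. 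I expect the argument to be short, essentially a ``sheafification is determined by affines'' formality, with the only subtlety being to keep the naturality of all comparison maps honest.
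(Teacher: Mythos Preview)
Your proposal is correct and follows essentially the same approach as the paper: reduce to the affine case via the sheaf property (isovariant \'etale descent for $\THR$, \'etale descent for $\THH$), then invoke Propositions \ref{etale.10}, \ref{etale.9}, and \ref{etale.33}. The paper's proof is terse---it simply asserts that ``the question is isovariant \'etale local on $X$ and \'etale local on $Y$''---whereas you spell out the bookkeeping behind that reduction; one minor wording slip is that in the first part you want both presheaves compared on $\Sch_{\Ztwo}$ for the \emph{isovariant} \'etale topology (not the plain \'etale one), using that $i^*$ on schemes sends isovariant \'etale covers to \'etale covers.
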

\begin{proof}
The question is isovariant \'etale local on $X$ and \'etale local on $Y$, so we reduce to the case when $X=\Spec(A)$ for some commutative ring $A$ with involution and $Y=\Spec(B)$ for some commutative ring $B$.
We obtain equivalences
\[
\THR(i^*A)
=
\THR(\EM(i^*A))
\simeq
\THR(i^* \EM A)
\simeq
i^*\THR(\EM A)
=
i^* \THR(A)
\]
using Proposition \ref{etale.10} and \eqref{etale.17.1}.
We similarly obtain the remaining desired equivalence using Propositions \ref{etale.9} and \ref{etale.33}.
\end{proof}

\begin{df}
\label{etale.36}
Recall from \cite[section 2.1]{HeKO} that an \emph{equivariant Nisnevich distinguished square} is a cartesian square in $\Sch_{\Ztwo}$ of the form
\begin{equation}
Q
:=
\label{etale.36.1}
\begin{tikzcd}
V\ar[r]\ar[d]&
Y\ar[d,"f"]
\\
U\ar[r,"j"]&
X
\end{tikzcd}
\end{equation}
such that $j$ is an open immersion, $f$ is equivariant \'etale, and the morphism of schemes $(Y-V)_\mathrm{red}\to (X-U)_\mathrm{red}$ is an isomorphism.
The collection of such squares forms a cd-structure, which is bounded, complete, and regular in the sense of \cite{Voe10} by \cite[Theorem 2.3]{HeKO}.
Furthermore, the associated topology is the equivariant Nisnevich topology in Definition \ref{equitop.1} by \cite[Proposition 2.17]{HeKO}.
\end{df}

\begin{cor}
\label{etale.37}
For every equivariant Nisnevich distinguished square $Q$ of the form \eqref{etale.36.1}, the induced square
\[
\THR(Q)
=
\begin{tikzcd}
\THR(X)\ar[d]\ar[r]&
\THR(U)\ar[d]
\\
\THR(Y)\ar[r]&
\THR(V)
\end{tikzcd}
\]
is cartesian.
\end{cor}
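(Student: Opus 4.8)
The plan is to deduce Corollary \ref{etale.37} directly from the isovariant \'etale descent of Theorem \ref{etale.7} together with the fact (recalled in Definition \ref{etale.36}) that the equivariant Nisnevich topology is defined by a bounded, complete, regular cd-structure whose distinguished squares are exactly those of the form \eqref{etale.36.1}. The key point is that for such a cd-structure, a presheaf satisfies descent for the associated topology if and only if it sends each distinguished square to a cartesian (i.e. homotopy pullback) square; this is the standard cd-structure criterion of Voevodsky, applied in the $\infty$-categorical setting, and is exactly what we want read in reverse.

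First I would observe that the equivariant Nisnevich topology is coarser than the isovariant \'etale topology, by the chain of inclusions established in Definition \ref{equitop.1}. Hence any presheaf satisfying isovariant \'etale descent a fortiori satisfies equivariant Nisnevich descent. By Definition \ref{etale.34}, $\THR\in \infShv_{iso\et}(\Sch_{\Ztwo},\Sp_{\Ztwo})$, so $\THR$ satisfies equivariant Nisnevich descent on $\Sch_{\Ztwo}$. It remains only to translate ``satisfies descent for the equivariant Nisnevich topology'' into ``sends equivariant Nisnevich distinguished squares to cartesian squares.''

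For this translation I would invoke Proposition \ref{shveq.8}: a presheaf $\cF\in \infPsh(\Sch_{\Ztwo},\Sp_{\Ztwo})$ is a sheaf if and only if $i^*\cF$ and $\cF^{\Ztwo}$ are sheaves of ordinary spectra. Applying $i^*$ and $(-)^{\Ztwo}$ to the square $\THR(Q)$, and using that both functors preserve limits (in particular pullbacks), the square $\THR(Q)$ is cartesian in $\Sp_{\Ztwo}$ if and only if $i^*\THR(Q)$ and $\THR(Q)^{\Ztwo}$ are cartesian in $\Sp$. Now for presheaves of spectra on a site defined by a bounded complete regular cd-structure, satisfying (hyper)descent is equivalent to the Mayer--Vietoris/excision property for the distinguished squares: this is Voevodsky's theorem \cite{Voe10} in the form used throughout the motivic literature, and the cd-structure here is bounded, complete and regular by \cite[Theorem 2.3]{HeKO}. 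Since $i^*\THR$ and $\THR^{\Ztwo}$ are equivariant Nisnevich sheaves of spectra (by Proposition \ref{shveq.8} applied to the sheaf $\THR$), they carry each distinguished square $Q$ to a cartesian square of spectra; hence so does $\THR$.

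The main obstacle I anticipate is purely bookkeeping: making precise that the cd-topology descent criterion, usually stated for presheaves valued in ordinary spectra or simplicial sets, applies verbatim here. This is why the argument routes through Proposition \ref{shveq.8} to reduce to the non-equivariant statement rather than trying to set up a cd-structure descent criterion directly in $\Sp_{\Ztwo}$. One small point to be careful about: the square $\THR(Q)$ in the statement is drawn with $\THR(X)$ in the top-left, i.e. it is the image of $Q$ under the contravariant functor $\THR$, so ``cartesian'' here means a homotopy pullback of the cospan $\THR(U)\to \THR(V)\leftarrow \THR(Y)$, and this is precisely the conclusion of the cd-excision property. No further computation is needed.
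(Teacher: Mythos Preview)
Your proof is correct and follows essentially the same route as the paper's: reduce from isovariant \'etale descent to equivariant Nisnevich descent, then invoke Voevodsky's cd-structure theorem to obtain the Mayer--Vietoris property for distinguished squares, passing through presheaves of ordinary spectra via a conservative pair of functors. The only difference is that the paper uses the pair $(i^*,\Phi^{\Ztwo})$ (geometric fixed points) and applies the cd-argument to the simplicial presheaves $\Omega^\infty\Sigma^n i^*\THR$ and $\Omega^\infty\Sigma^n\Phi^{\Ztwo}\THR$, whereas you use $(i^*,(-)^{\Ztwo})$ (categorical fixed points). Your choice is arguably tidier here: both $i^*$ and $(-)^{\Ztwo}$ are right adjoints, so they manifestly preserve limits, and Proposition~\ref{shveq.8} is already stated in exactly those terms, so the passage from ``$\THR$ is a sheaf'' to ``$i^*\THR$ and $\THR^{\Ztwo}$ are sheaves'' is immediate. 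With $\Phi^{\Ztwo}$ one has to remember that in a stable category cartesian and cocartesian squares coincide, so a colimit-preserving functor still detects them.
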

\begin{proof}
Let $P$ be the collection of equivariant Nisnevich distinguished squares.
Consider the class $G_P$ of morphisms between simplicial presheaves in \cite[p.\ 1392]{Voe10}.
By \cite[Proposition 3.8(2)]{Voe10}, every $G_P$-local equivalence is an equivariant Nisnevich local equivalence.
This implies that if $\cF$ is a simplicial presheaf satisfying equivariant Nisnevich descent, then $\cF(Q)$ is cartesian for all $Q\in P$.

Since $\THR$ satisfies isovariant \'etale descent, it satisfies equivariant Nisnevich descent.
Hence we can apply the argument in the above paragraph to $\Omega^\infty \Sigma^n i^* \THR$ and $\Omega^\infty \Sigma^n \Phi^{\Ztwo} \THR$ for all integers $n$ to see that $i^*\THR(Q)$ and $\Phi^{\Ztwo}\THR(Q)$ are cartesian, which implies the claim.
\end{proof}

For the computations about $\THR(\P^n)$ we provide in the next sections, we may simply use appropriate equivariant Nisnevich covers by affine schemes and Corollary \ref{etale.37}, which e.g.\ exhibits $\THR(\P^1)$ as part of a homotopy cocartesian square in which all other entries are of the form $\THR(Y)$ for $Y=\Spec(A[M])$ corresponding to some commutative monoid rings for $M=\N$ or $M=\Z$ over the commutative base ring $A$.
We could even go one step further and {\em define}  $\THR(\P^n_{\Sphere},\sigma)$ of THR for projective spaces, possibly with non-trivial involution $\sigma$, over the sphere spectrum $\Sphere$ rather than over $A$ or $\EM A$, using the homotopy pushout of the appropriate diagram of THR of the corresponding spherical monoid rings $\Sphere[M]$, compare the proofs in section \ref{periodicity}.

\section{The dihedral bar construction}
\label{sec4}

\subsection{Crossed simplicial groups}
Hochschild and cyclic homology and their real refinements are closely related to cyclic, real and dihedral nerves. We now present a uniform treatment of these constructions. The original references are \cite{FL} and \cite{Lo},  parts of this are also explained e.g.\ in \cite{DO19} and \cite{DMPR21}.

\begin{df}(see \cite[Definition 1.1]{FL},\cite[chapter 6.3]{Lo})\label{dih.2}
A \emph{crossed simplicial group} $G$ is a sequence of groups $\{G_n\}_{n\geq 0}$ together with a category $\Delta G$ satisfying the following conditions:
\begin{enumerate}
\item[(i)]
$\Delta G$ contains $\Delta$ as a subcategory with the same objects.
\item[(ii)]
$\mathrm{Aut}([n])$ is the opposite group $G_n^{op}$.
\item[(iii)]
A morphism in $\Delta G$ can be uniquely written as the composite $\alpha \circ g$ for some $\alpha \in \Hom_{\Delta}([m],[n])$ and $g\in G_m^{op}$.
\end{enumerate}
\end{df}

Observe that every crossed simplicial group is a (or rather has an underlying) simplicial set, see \cite[Lemma 1.3]{FL}.
A \emph{$G$-set} is a functor $\Delta G^{op}\to \Set$, which by restriction has an underlying simplicial set.

\begin{const}
\label{dih.4}
Let $X$ be a simplicial set.
For a crossed simplicial group $G$, the $G$-set
$F_G(X)$ is defined in \cite[Definition 4.3]{FL}.
In simplicial degree $q$, we have
\[
F_G(X)_q
:=
G_q\times X_q.
\]
The $G_q$ action on $F_G(X)_q$ is the left multiplication on $G_q$.
The faces and degeneracy maps are given by
\begin{equation}
d_i(g,x):=(d_i(g),d_{g^{-1}(i)}(x))
\text{ and }
s_i(g,x):=(s_i(g),s_{g^{-1}(i)}(x)).
\end{equation}

According to \cite[Proposition 5.1]{FL}, we have the projection $p_1\colon F_G(X)\to G$ given by $p_1(g,x):=g$.
We also have the projection $p_2\colon \lvert F_G(X)\rvert \to \lvert X \rvert$ given by
\[
p_2[(g,x),u]
:=
[x,gu]
\]
for $u\in \Delta_{top}^q$.
Furthermore, the two projections define a homeomorphism
\[
\lvert F_G(X) \rvert
\cong
\lvert G \rvert \times \lvert X \rvert.
\]

If $X$ is a $G$-set, then we have the evaluation map $ev\colon  F_G(X) \to X$ given by
\[
ev(g,x)
:=
gx.
\]
According to \cite[Theorem 5.3]{FL}, the composite map
\[
\lvert G \rvert \times \lvert X \rvert
\xrightarrow{\cong}
\lvert F_G(X) \rvert
\xrightarrow{ev}
\lvert X \rvert
\]
defines a $\lvert G \rvert$-action on $\lvert X \rvert$.
The \emph{$G$-geometric realization of $X$} is $\lvert X\rvert$ with this $\lvert G\rvert$-action.
\end{const}

\begin{exm}
\label{dih.3}
We have the following three fundamental examples of crossed simplicial groups.\\[5pt]
(1) \cite[Example 2, section 1.5]{FL} introduces a crossed simplicial group $G$ with $G_n:=\Ztwo$.
For this $G$, a $G$-set is called a \emph{real simplicial set}.
The $G$-geometric realization is called the \emph{real geometric realization}.

Explicitly, a real simplicial set $X$ is a simplicial set equipped with isomorphisms $w_n\colon X_n\to X_n$ with $w_n^2=\id$ for all integers $n\geq 1$ satisfying the relations
\[
d_i w_n=w_{n-1}d_{n-i}
\text{ and }
s_i w_n = w_{n+1}s_{n-i}
\]
for $0\leq i\leq n$.
\\[5pt]
(2) \cite[Example 4, section 1.5]{FL} introduces a crossed simplicial group $G$ with $G_n:=C_{n+1}$.
For this $G$, a $G$-set is called a \emph{cyclic set}, which was defined by Connes.
The $G$-geometric realization is called the \emph{cyclic geometric realization}, and comes with an action of $S^1=SO(2)$ (see also \cite[Theorem 7.1.4]{Lo}).

Explicitly, a cyclic set $X$ is a simplicial set equipped with isomorphisms $t_n\colon X_n\to X_n$ with $(t_n)^{n+1}=\id$ for all integers $n\geq 1$ satisfying the relations
\[
d_it_n=t_{n-1}d_{i-1}
\text{ and }
s_it_n=t_{n+1}s_{i-1}
\]
for $1\leq i\leq n$.\\[5pt]
(3) \cite[Example 5, section 1.5]{FL} introduces a crossed simplicial group $G$ with $G_n:=D_{n+1}$, where $D_n$ denotes the dihedral group of order $2n$.
For this $G$, a $G$-set is called a \emph{dihedral set}.
The $G$-geometric realization is called the \emph{dihedral geometric realization}.

Explicitly, a dihedral set $X$ is a simplicial set equipped with $t_n$ and $w_n$ for all integers $n\geq 1$ satisfying $w_n t_n=t_n^{-1}w_n$ and all the above conditions for $t_n$ and $w_n$.

There are obvious forgetful functors from dihedral to real and to cyclic sets.
\end{exm}

For $\lvert G \rvert$, we obtain $\Ztwo$, $S^1=U(1)=SO(2)$, and $O(2)$ in the three examples above.

\begin{exm}
\label{dih.5}
Let $G$ be the crossed simplicial set in Example \ref{dih.3}(1).
For a simplicial set $X$, the bijection $G \times X_q = X_q \amalg X_q$ induces a canonical isomorphism of $G$-sets
\[
F_{G}(X)
\cong
X\amalg X^{op},
\]
where $w_n\colon X_n\amalg X_n^{op}\to X_n\amalg X_n^{op}$ is the switching map.
If $X$ is a real simplicial set, then the evaluation map 
$ev\colon F_G(X)\to X$ sends $x\in X_n^{op}$ to $w_n(x)$.
It follows that the $\Ztwo$-action on $\lvert X \rvert$ is given by the composite map
\[
\lvert X \rvert
\xrightarrow{\cong}
\lvert X^{op} \rvert
\xrightarrow{w}
\lvert X \rvert,
\]
where the first map is the canonical identity.
\end{exm}

\begin{exm}
\label{dih.6}
Let $G$ be the crossed simplicial set in Example \ref{dih.3}(2).
For every integer $n\geq 0$, we set
\[
\Lambda^n
:=
F_G(\Delta^n).
\]
This is Connes' cyclic $n$-complex.
We warn the reader that $F_G(X)$ for a simplicial set is different from the cyclic bar construction.
\end{exm}

\subsection{Real and dihedral nerves}
We now recall how commutative monoids with involution give rise to real and dihedral simplicial sets, refining nerves and cyclic nerves for commutative monoids without involutions.

\begin{df}
\label{dih.10}
Let $M$ be a commutative monoid with involution $\sigma$.
The \emph{real nerve of $M$}, denoted $\Nsigma M$, is the real simplicial set whose underlying simplicial set is the nerve of $M$ with $(\Nsigma M)_q:=N^{\times q}$ and $w$ is given by
\[
w(x_1,\ldots,x_q)
:=
(\sigma(x_q),\ldots,\sigma(x_1))
\]
in simplicial degree $q$.
The \emph{real bar construction}, denoted $\Bsigma M$, is the real geometric realization of $\Nsigma M$.
We refer to \cite[Example 2.1.1]{DO19} for a similar account.
\end{df}

\begin{df}
\label{dih.9}
Let $M$ be a commutative monoid with involution $\sigma$.
The \emph{dihedral nerve of $M$}, denoted $\Ndi M$, is the dihedral set defined as follows.
In simplicial degree $q$, we have $(\Ndi M)_q:=M^{\times q+1}$.
The face maps are
\[
d_i(x_0,\ldots,x_q)
:=
\left\{
\begin{array}{ll}
(x_0,\ldots,x_{i-1},x_i+x_{i+1},x_{i+2},\ldots,x_q) & \text{if }i=0,\ldots,q-1
\\
(x_q+x_0,x_1,\ldots,x_{q-1}) & \text{if }i=q.
\end{array}
\right.
\]
The degeneracy maps are
\[
s_i(x_0,\ldots,x_q)
:=
(x_0,\ldots,x_{i},0,x_{i+1},\ldots,x_q) \text{ for }i=0,\ldots,q.
\]
The rotation maps are
\[
t_q(x_0,\ldots,x_q)
:=
(x_0,x_i,x_{i+1},\ldots,x_q,x_1,\ldots,x_{i-1})
\text{ for }i=0,\ldots,q.
\]
The involution is
\[
w(x_0,\ldots,x_q)
:=
(\sigma(x_0),\sigma(x_q),\ldots,\sigma(x_1)),
\]
compare also with \cite[Example 2.1.2]{DO19}.

\begin{rmk}\label{remHRloday}
This involution looks similar to the involution on the Hochschild complex as studied in \cite[5.2.1]{Lo}. In \cite[section 5.2]{Lo} Loday investigates several ``real'' versions of Hochschild and cyclic homology for rings. 
The decompositions of  \cite[Proposition 5.2.3, (5.2.7.1)]{Lo}
should be compared to the Bott sequence \cite[section 6]{schlichting17} relating algebraic to hermitian $K$-theory, which also splits after inverting $2$ or rather $1- \epsilon$. 
\end{rmk}

Note that we obtain the \emph{cyclic nerve of $M$} \cite[section 2.3]{Wal79}, denoted $\Ncy M$, if we forget the involution structure.

The \emph{dihedral bar construction}, denoted $\Bdi M$, is the dihedral geometric realization of $\Ndi M$.
By Construction \ref{dih.4}, $\Bdi M$ admits a canonical $O(2)$-action.
The underlying real simplicial set of $\Ndi M$ is different from $\Nsigma M$.

We have the map of dihedral sets
\begin{equation}
\label{dih.9.1}
\Ndi M\to M
\end{equation}
sending $(x_0,\ldots,x_q)$ to $x_0+\cdots +x_q$
in simplicial degree $q$, where we regard $M$ as the constant dihedral set whose rotation maps are the identities and whose involutions are given by $\sigma$.
For every $\sigma$-orbit $I$ in $M$, we set
\begin{equation}
\Ndi(M;I)
:=
\Ndi M \times_M I.
\end{equation}
Let $\Bdi(M;I)$ be its dihedral geometric realization.
We have an isomorphism of dihedral sets
\begin{equation}\label{ndidecomposition}
\Ndi M
\cong
\coprod_{I}\Ndi (M;I),
\end{equation}
where the coproduct runs over the $\sigma$-orbits in $M$.
\end{df}

\begin{prop}
\label{dih.26}
Let $M$ and $L$ be commutative monoids with involutions.
Then there is an isomorphism of dihedral sets
\begin{equation}
\label{dih.26.1}
\Ndi (M\times L)
\cong
\Ndi M \times \Ndi L.
\end{equation}
\end{prop}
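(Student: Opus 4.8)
The plan is to build the isomorphism one simplicial degree at a time and then check naturality with respect to all the structure maps of a dihedral set. Products in the category of dihedral sets are computed degreewise, so $(\Ndi M\times\Ndi L)_q=M^{\times q+1}\times L^{\times q+1}$, whereas $(\Ndi(M\times L))_q=(M\times L)^{\times q+1}$. I would let $\tau_q$ be the evident reshuffling bijection
\[
\tau_q\colon (M\times L)^{\times q+1}\;\xrightarrow{\ \cong\ }\;M^{\times q+1}\times L^{\times q+1},\qquad
\big((x_0,\dots),(\dots,x_q)\big)\ \text{sent to}\ \big((x_0,\dots,x_q),(y_0,\dots,y_q)\big),
\]
i.e.\ $((x_0,y_0),\dots,(x_q,y_q))\mapsto((x_0,\dots,x_q),(y_0,\dots,y_q))$, and claim that $\tau=(\tau_q)_{q\ge0}$ is an isomorphism of dihedral sets, which is \eqref{dih.26.1}.

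To see that $\tau$ is a morphism of dihedral sets, recall from condition (iii) of Definition~\ref{dih.2} that every morphism of the dihedral indexing category factors through $\Delta$ and the dihedral groups $D_{q+1}$; since $D_{q+1}$ is generated by the rotation $t_q$ and the reflection $w_q$, it is enough to check that $\tau$ commutes with the face maps $d_i$, the degeneracies $s_i$, the rotations $t_q$ and the involution $w$ (all acting diagonally on $\Ndi M\times\Ndi L$). There is no need to re-verify the dihedral relations among the $t_q$ and $w_q$: both sides of \eqref{dih.26.1} are already dihedral sets, so once $\tau$ commutes with each generator it is automatically compatible with everything. Each of the four checks is immediate from three elementary facts about the product monoid: addition on $M\times L$ is componentwise, so the sums $x_i+x_{i+1}$ and the wrap-around sum $x_q+x_0$ occurring in the $d_i$ are formed separately in the two factors; the unit of $M\times L$ is $(0_M,0_L)$, so the $0$ inserted by $s_i$ becomes the respective units; and $t_q$ is a coordinate permutation independent of the monoid, while $w$ applies $\sigma_{M\times L}=\sigma_M\times\sigma_L$ to each slot and then reverses the order of slots $1,\dots,q$ --- the very same recipe in each factor.

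Since every $\tau_q$ is already a bijection, these compatibilities suffice to make $\tau$ an isomorphism. I do not expect a genuine obstacle here: the whole content is bookkeeping, and the only places deserving a second glance are the wrap-around terms in $d_q$ and $t_q$ and the order reversal in $w$, all of which act on the $M$- and $L$-coordinates by identical formulas, so they commute past $\tau$ automatically.
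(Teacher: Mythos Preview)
Your proof is correct and essentially the same as the paper's: both identify the map with the degreewise shuffle bijection $(M\times L)^{\times(q+1)}\cong M^{\times(q+1)}\times L^{\times(q+1)}$. The only difference is that the paper obtains the map from the two projections $M\times L\to M$ and $M\times L\to L$ via functoriality of $\Ndi$, which makes compatibility with all dihedral structure maps automatic and reduces the argument to observing that the resulting map is a bijection in each degree; your explicit checks of $d_i$, $s_i$, $t_q$, $w$ accomplish the same thing by hand.
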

\begin{proof}
The two projections $M\times L\rightrightarrows M,L$ induce \eqref{dih.26.1}.
To show that it is an isomorphism, observe that it is given by the shuffle homomorphism
\[
(M\times L)^{\times (q+1)}
\to
M^{\times (q+1)} \times L^{\times (q+1)}
\]
in simplicial degree $q$.
\end{proof}

\begin{prop}
\label{dih.27}
Let $M$ and $L$ be commutative monoids with the involutions.
Then there is an isomorphism of dihedral sets
\[
\Ndi (M\times L;(x,y))
\cong
\Ndi (M;x) \times \Ndi (L;y)
\]
for all $x\in M$ and $y\in L$.
\end{prop}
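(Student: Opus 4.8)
The plan is to deduce Proposition~\ref{dih.27} from Proposition~\ref{dih.26} together with a compatibility of the decomposition \eqref{ndidecomposition} with the product isomorphism. First I would observe that under the isomorphism \eqref{dih.26.1} of dihedral sets $\Ndi(M\times L)\cong \Ndi M\times \Ndi L$, the structure map \eqref{dih.9.1} for $M\times L$ corresponds to the product of the structure maps for $M$ and $L$; this is immediate in each simplicial degree $q$ because the summation map $(M\times L)^{\times(q+1)}\to M\times L$ factors as the shuffle isomorphism $(M\times L)^{\times(q+1)}\to M^{\times(q+1)}\times L^{\times(q+1)}$ followed by the product of the two summation maps, by commutativity of $M\times L$.

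Next I would identify the $\sigma$-orbits. The involution on $M\times L$ is the product of the involutions $\sigma_M$ and $\sigma_L$, so the $(\sigma_M\times\sigma_L)$-orbit of $(x,y)$ is $(\sigma_M\text{-orbit of }x)\times(\sigma_L\text{-orbit of }y)$. Writing $I$ and $J$ for these orbits, the claim becomes the identity
\[
\Ndi(M\times L;(x,y))
=
\Ndi(M\times L)\times_{M\times L}(I\times J),
\]
and the right-hand side, computed via the pullback square and the factorization of the structure map from the previous paragraph, is
\[
\bigl(\Ndi M\times \Ndi L\bigr)\times_{M\times L}(I\times J)
\cong
\bigl(\Ndi M\times_M I\bigr)\times\bigl(\Ndi L\times_L J\bigr)
=
\Ndi(M;x)\times\Ndi(L;y),
\]
using that fiber products commute with products (a pullback of a product of maps along a product is the product of the pullbacks). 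Finally I would check that all these identifications are morphisms of dihedral sets, i.e.\ compatible with faces, degeneracies, rotations, and involutions; but this is automatic since \eqref{dih.26.1} and \eqref{ndidecomposition} are already isomorphisms of dihedral sets and fiber products of dihedral sets are computed degreewise.

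I do not expect any real obstacle here: the statement is essentially a formal consequence of Proposition~\ref{dih.26} and the fact that the orbit decomposition is natural. The only point requiring a little care is making sure the base-change-to-a-sub-object operation $(-)\times_M I$ is compatible with the product isomorphism, which as explained reduces to the elementary fact that the summation map on $M\times L$ splits as a product after the shuffle rearrangement; everything else is bookkeeping in each simplicial degree.
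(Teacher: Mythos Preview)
Your core argument is the same as the paper's: the paper simply notes that the shuffle isomorphism of Proposition~\ref{dih.26} sits over the identity of $M\times L$ via the commutative square
\[
\begin{tikzcd}
(M\times L)^{\times (q+1)}\ar[d]\ar[r]&
M^{\times (q+1)}\times L^{\times (q+1)}\ar[d]
\\
M\times L\ar[r,"\id"]&
M\times L,
\end{tikzcd}
\]
which is exactly your first paragraph. Pulling back along the inclusion of $\{(x,y)\}$ (or its orbit) then gives the claim.

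There is, however, a genuine slip in your second paragraph: the assertion that the $(\sigma_M\times\sigma_L)$-orbit of $(x,y)$ equals $(\sigma_M\text{-orbit of }x)\times(\sigma_L\text{-orbit of }y)$ is false in general. The involution on $M\times L$ acts diagonally, so if $\sigma_M(x)\neq x$ and $\sigma_L(y)\neq y$, the orbit of $(x,y)$ is $\{(x,y),(\sigma_M x,\sigma_L y)\}$, a two-element set, whereas the product of the individual orbits has four elements. With your identification, $\Ndi(M;x)\times\Ndi(L;y)$ would be strictly larger than $\Ndi(M\times L;(x,y))$.

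This does not actually break the proposition, because the notation $\Ndi(M;x)$ here is best read as the fiber over the single element $x$ (compare how the paper writes $\Ndi(G;j)$ for $j\in G$ in Proposition~\ref{dih.24} and throughout the proof of Theorem~\ref{proj.2}, where all involutions are trivial). With that reading your pullback computation goes through directly from the commutative square, and the orbit discussion is simply unnecessary. The statement about \emph{dihedral} sets is then literally correct when $\sigma_M(x)=x$ and $\sigma_L(y)=y$, which is the only case used later; for general $x,y$ one still gets an isomorphism of cyclic (or simplicial) sets. So: drop the orbit sentence, and your argument coincides with the paper's.
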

\begin{proof}
Follows from the commutativity of the square
\[
\begin{tikzcd}
(M\times L)^{\times (q+1)}\ar[d]\ar[r]&
M^{\times (q+1)}\times L^{\times (q+1)}\ar[d]
\\
M\times L\ar[r,"\id"]&
M\times L,
\end{tikzcd}
\]
where the upper horizontal homomorphism is the shuffle homomorphism, and the vertical homomorphisms are the summation homomorphisms.
\end{proof}

\begin{prop}
\label{dih.24}
Let $G$ be an abelian group with an involution $w$, and let $j$ be an element of $G$.
If $j=w(j)$, then there is an isomorphism of real simplicial sets
\begin{equation}
\label{dih.24.1}
\Ndi (G;j)
\xrightarrow{\cong}
\Nsigma G.
\end{equation}
If $j\neq w(j)$, then there is an isomorphism of real simplicial sets
\begin{equation}
\label{dih.24.2}
\Ndi(G;\{j,w(j)\})
\xrightarrow{\cong}
\Ztwo \times \Nsigma G,
\end{equation}
As a consequence, there is an isomorphism of real simplicial sets
\begin{equation}
\label{dih.24.3}
\Ndi G
\simeq
G\times \Nsigma G.
\end{equation}
\end{prop}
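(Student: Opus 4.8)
I would prove the three assertions at once by exhibiting a single isomorphism of real simplicial sets $\Psi\colon \Ndi G \xrightarrow{\cong} G\times\Nsigma G$, where on the right $G$ denotes the constant real simplicial set on the $\Ztwo$-set $(G,\sigma)$ (all faces and degeneracies the identity, $w=\sigma$) and the product of real simplicial sets is formed degreewise, with $w=\sigma\times w^{\Nsigma G}$; this is a legitimate real simplicial set because the relations of Example \ref{dih.3}(1) are preserved under degreewise products. Concretely, in simplicial degree $q$ I would set
\[
\Psi_q(x_0,\ldots,x_q):=\bigl(x_0+\cdots+x_q,\,(x_1,\ldots,x_q)\bigr),
\]
a bijection $G^{\times(q+1)}\to G\times G^{\times q}$ with inverse $(g,(x_1,\ldots,x_q))\mapsto(g-\sum_{i\ge 1}x_i,x_1,\ldots,x_q)$. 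The idea of the construction is: record the total sum (the value of the augmentation \eqref{dih.9.1}) and forget the redundant coordinate $x_0$.

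\textbf{Simplicial naturality.} First I would check that $\Psi$ is a map of simplicial sets. The first coordinate is the total sum, which is preserved by every face of $\Ndi G$ in Definition \ref{dih.9} — for the wrap-around face one has $(x_q+x_0)+x_1+\cdots+x_{q-1}=\sum_i x_i$ — and by every degeneracy, since a degeneracy only inserts a $0$. On the second coordinate, forgetting $x_0$ carries $d_0,\ldots,d_q$ and $s_0,\ldots,s_q$ of $\Ndi G$ term by term to the corresponding faces and degeneracies of the nerve $\Nsigma G$. This is a short but slightly fiddly bookkeeping step, the wrap-around face $d_q$ being the one to watch.

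\textbf{The involution, giving \eqref{dih.24.3}.} This is the only step with genuine content. From $w^{\Ndi}_q(x_0,\ldots,x_q)=(\sigma x_0,\sigma x_q,\ldots,\sigma x_1)$ in Definition \ref{dih.9} one computes $\Psi_q(w^{\Ndi}_q(x_0,\ldots,x_q))=(\sigma(\sum_i x_i),(\sigma x_q,\ldots,\sigma x_1))$, while $w^{\Nsigma G}_q(x_1,\ldots,x_q)=(\sigma x_q,\ldots,\sigma x_1)$ (Definition \ref{dih.10}) gives $w^{G\times\Nsigma G}_q(\Psi_q(x_0,\ldots,x_q))=(\sigma(\sum_i x_i),(\sigma x_q,\ldots,\sigma x_1))$; these coincide. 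Hence $\Psi$ is an isomorphism of real simplicial sets, which is \eqref{dih.24.3}. It is exactly this computation that forces the real structure on the target to couple the $G$-coordinate with the order-reversing involution of $\Nsigma G$.

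\textbf{Restriction to $\sigma$-orbits, giving \eqref{dih.24.1} and \eqref{dih.24.2}.} Because the first coordinate of $\Psi$ is the augmentation \eqref{dih.9.1}, $\Psi$ carries $\Ndi(G;I)=\Ndi G\times_G I$ isomorphically onto $I\times\Nsigma G$ for every $\sigma$-orbit $I$, the latter being a sub-real-simplicial set of $G\times\Nsigma G$ precisely because $I$ is $\sigma$-stable. For a fixed point $I=\{j\}$ (so $w(j)=j$) the factor $\{j\}$ is the trivial one-point real simplicial set, whence $\{j\}\times\Nsigma G\cong\Nsigma G$, which is \eqref{dih.24.1}; for a two-element orbit $I=\{j,w(j)\}$ (so $w(j)\ne j$), $I$ with its swapping $\Ztwo$-action is the constant real simplicial set $\Ztwo$ carrying the \emph{nontrivial} involution, whence $I\times\Nsigma G\cong\Ztwo\times\Nsigma G$, which is \eqref{dih.24.2}; here one must be explicit about this nontrivial involution, since the involution of $\Ndi(G;\{j,w(j)\})$ genuinely exchanges the fibers over $j$ and $w(j)$. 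I do not expect any deep obstacle: the two things to get right are choosing this particular $\Psi$ (rather than some partial-sum variant) and fixing the conventions for the real structure on $G\times\Nsigma G$ so that the check in the previous paragraph goes through; everything else is routine verification of the simplicial and crossed-simplicial-group identities.
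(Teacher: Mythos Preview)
Your proof is correct and uses the same underlying map as the paper: forget the $x_0$-coordinate and record the total sum. The paper's proof simply writes down the assignment $(x_0,\ldots,x_q)\mapsto(x_1,\ldots,x_q)$ orbit by orbit (encoding the sum as $a\in\{0,1\}$ in the two-element case) without further verification; you reach the identical map but package it as a single global isomorphism $\Ndi G\cong G\times\Nsigma G$ first and then restrict to $\sigma$-orbits, which is a slightly cleaner organization of the same idea.
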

\begin{proof}
If $j=w(j)$, the assignment
\[
(x_0,\ldots,x_q)\mapsto (x_1,\ldots,x_q)
\]
in simplicial degree $q$ constructs the isomorphism \eqref{dih.24.1}.
If $j\neq w(j)$, the assignment
\[
(x_0,\ldots,x_q)\mapsto (a,x_1,\ldots,x_q)
\]
in simplicial degree $q$ constructs the isomorphism \eqref{dih.24.2}, where $a:=0$ (resp.\ $a:=1$) if $x_0+\cdots+x_q=j$ (resp.\ $x_0+\cdots+x_q=-j$).
\end{proof}

\begin{df}\label{defSsigma}
Let $S^\sigma$ be the $\Ztwo$-space whose underlying space is $S^1$ and whose $\Ztwo$-action is given by the reflection $(x,y)\in S^1\subset \R^2\mapsto (x,-y)$.
This is the real geometric realization of the real simplicial set whose underlying simplicial set is the simplicial circle $\Delta^1/\partial \Delta^1$ and whose involution on the non-degenerate simplices is the identity. One easily checks that the relations in Example \ref{dih.3}(1) uniquely determines the higher $w_n$.
To understand the involution on the real realization, the reader is advised to look at the explanations in Example \ref{dih.5}.
\end{df}

\begin{prop}
\label{dih.8}
For the additive monoid $\Z$ with the trivial involution, there is a $\Ztwo$-homotopy equivalence
\[
\Bsigma \Z
\simeq
S^\sigma.
\]
\end{prop}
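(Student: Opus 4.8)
The statement asks for a $\Ztwo$-homotopy equivalence $\Bsigma\Z\simeq S^\sigma$, where $\Bsigma\Z$ is the real geometric realization of the real nerve $\Nsigma\Z$ (Definition \ref{dih.10}) and $S^\sigma$ is the reflection circle of Definition \ref{defSsigma}. The first thing I would note is that, forgetting the involution, the underlying simplicial set of $\Nsigma\Z$ is the ordinary nerve of the monoid $\Z$, whose geometric realization is the classifying space $B\Z\simeq S^1$. So the underlying (non-equivariant) statement is the familiar equivalence $B\Z\simeq S^1$; the real content is to track the $\Ztwo$-action and identify it, up to $\Ztwo$-homotopy, with the reflection action on $S^1$.

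The plan is to construct an explicit map of real simplicial sets, or at least a zig-zag through a convenient model, realizing the equivalence. Concretely, I would compare $\Nsigma\Z$ with the real simplicial circle $S^1_{\mathrm{simp}}:=\Delta^1/\partial\Delta^1$ equipped with the real structure described in Definition \ref{defSsigma} (trivial involution on non-degenerate simplices, with the higher $w_n$ forced by the relations $d_i w_n = w_{n-1}d_{n-i}$, $s_i w_n = w_{n+1}s_{n-i}$ of Example \ref{dih.3}(1)). There is a standard simplicial map $\Nsigma\Z\to S^1_{\mathrm{simp}}$ induced by the unique monoid map $\Z\to 0$ combined with the ``number of nonzero entries'' data — more precisely, the usual identification $|N\Z|\simeq S^1$ arises because $N\Z$ has, in each degree $q$, the set $\Z^{\times q}$, and collapsing along the face/degeneracy structure gives the circle. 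I would check that the involution $w(x_1,\dots,x_q)=(\sigma(x_q),\dots,\sigma(x_1))=(x_q,\dots,x_1)$ (recall $\sigma=\id$ on $\Z$) is carried to the forced involution on $S^1_{\mathrm{simp}}$: this amounts to the combinatorial observation that reversing the order of entries corresponds, after collapse, to the reflection of the circle, which in turn matches the $w_n$ dictated by Definition \ref{defSsigma}. Passing to real geometric realization (which, by Construction \ref{dih.4} and Example \ref{dih.5}, sends real simplicial sets to $\Ztwo$-spaces and a weak equivalence of underlying simplicial sets that is equivariant to a $\Ztwo$-map) then yields the $\Ztwo$-map $\Bsigma\Z\to S^\sigma$.

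To upgrade this $\Ztwo$-map to a $\Ztwo$-homotopy equivalence, by the equivariant Whitehead theorem it suffices to check that it induces a weak equivalence both on underlying spaces and on $\Ztwo$-fixed points. On underlying spaces this is the classical $B\Z\simeq S^1$. For fixed points I would use Example \ref{dih.5}: the $\Ztwo$-action on $|X|$ for a real simplicial set $X$ is the composite $|X|\xrightarrow{\cong}|X^{op}|\xrightarrow{w}|X|$, and for $S^\sigma$ the fixed points are the two points $(\pm1,0)\in S^1$. I would identify the fixed points of $\Bsigma\Z$ by a direct analysis of the real simplicial structure — the ``palindromic'' simplices $(x_1,\dots,x_q)$ with $(x_q,\dots,x_1)=(x_1,\dots,x_q)$ contribute, and one sees the fixed locus is homotopy equivalent to $S^0$, matching $(S^\sigma)^{\Ztwo}=S^0$ — and check the map is an equivalence there.

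\textbf{Main obstacle.} The delicate point is pinning down the real simplicial structure of $S^1_{\mathrm{simp}}=\Delta^1/\partial\Delta^1$ precisely enough to verify that the order-reversal involution on $\Nsigma\Z$ maps to it compatibly in every simplicial degree, since the higher $w_n$ on the simplicial circle are only implicitly defined through the crossed-simplicial-group relations and involve the degenerate simplices in a slightly intricate way. Concretely, one must verify a family of identities relating $w_q$ on $\Z^{\times q}$ (reversal) with the forced $w_q$ on $(\Delta^1/\partial\Delta^1)_q$ under the collapse map, compatibly with all faces and degeneracies; this is a finite but fiddly bookkeeping exercise. An alternative route that sidesteps some of this is to work with the real geometric realizations directly: exhibit $\Bsigma\Z$ as $|F_G(N\Z)|\to$ something using the decomposition $F_G(X)\cong X\amalg X^{op}$ of Example \ref{dih.5}, or to use a cofiber-sequence / pushout argument, but in any case the core difficulty remains the equivariant identification of the cell structure of the circle, and the fixed-point computation will require the explicit description of the $\Ztwo$-action from Example \ref{dih.5}.
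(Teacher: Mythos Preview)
Your overall strategy---construct a $\Ztwo$-map and apply the equivariant Whitehead theorem by checking underlying spaces and fixed points---is exactly right and is what the paper does. However, the specific construction you propose has a real problem: there is no natural map of (real) simplicial sets $\Nsigma\Z\to \Delta^1/\partial\Delta^1$. Your suggestion that it arises from ``the monoid map $\Z\to 0$ combined with the number of nonzero entries'' does not give a simplicial map; face maps in $\Nsigma\Z$ add adjacent entries, so the number of nonzero entries is not compatible with them. This is not merely the fiddly bookkeeping you flag as the obstacle---the map simply is not there in that direction.

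The paper runs the comparison the other way. The element $1\in(\Nsigma\Z)_1=\Z$ is fixed by the involution, so it spans a real simplicial subset whose real geometric realization is $S^\sigma$; this gives a $\Ztwo$-map $S^\sigma\to\Bsigma\Z$. On underlying spaces this is the classical $S^1\simeq B\Z$. For fixed points, rather than a direct palindrome analysis, the paper cites \cite[Proposition 2.1.6]{DO19} to identify $(\Bsigma\Z)^{\Ztwo}$ with the classifying space of a small category $\mathrm{Sym}\,\Z$, which is visibly homotopy equivalent to $S^0$. It then remains only to check that the induced map on $\pi_0$ of fixed points is injective, and this is done by an explicit calculation with Segal's edgewise subdivision $\sd_\sigma$ in low simplicial degrees. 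Your palindromic-simplex idea is pointing at the same target, but note that you would need to control not just the fixed simplices but the full homotopy type of the fixed-point space; the edgewise-subdivision model (or the cited result from \cite{DO19}) is what makes that tractable.
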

\begin{proof}
Consider the element $1\in (\Nsigma \Z)_1\simeq \Z$ in simplicial degree $1$.
This is fixed by the involution on $\Nsigma \Z$, so the real geometric realization of the real simplicial subset of $\Nsigma \Z$ whose only non-degenerate simplices are $1$ and the base point is $S^\sigma$. 
Hence we obtain a map of $\Ztwo$-spaces $S^{\sigma}\to \Bsigma \Z$.
This is the usual homotopy equivalence after forgetting the involutions.
It remains to check that the induced map
\begin{equation}
\label{dih.8.3}
(S^\sigma)^{\Ztwo}
\to
(\Bsigma \Z)^{\Ztwo}
\end{equation}
is a homotopy equivalence as well.

By \cite[Proposition 2.1.6]{DO19}, $(\Bsigma \Z)^{\Z/2}$ is the classifying space of the category $\mathop{\mathrm{Sym}}\Z$, whose objects are integers, and whose morphisms are given by
\[
\Hom_{\mathop{\mathrm{Sym}}\Z}(a,b)
:=
\left\{
\begin{array}{ll}
* & \text{if }2\mid a-b,
\\
\emptyset & \text{if }2 \nmid a-b.
\end{array}
\right.
\]
The classifying space of $\mathop{\mathrm{Sym}}\Z$ is obviously homotopy equivalent to $S^0$, and hence we deduce a homotopy equivalence
\begin{equation}
\label{dih.8.1}
S^0
\simeq
(\Bsigma \Z)^{\Z/2}.
\end{equation}
This implies that $\pi_n((\Bsigma \Z)^{\Z/2})$ is trivial for every integer $n>0$ and $\pi_0((\Bsigma \Z)^{\Z/2})$ consists of two points.

Hence it suffices to show that the map
\begin{equation}
\label{dih.8.2}
\pi_0((S^\sigma)^{\Ztwo})
\to
\pi_0((\Bsigma \Z)^{\Ztwo})
\end{equation}
induced by \eqref{dih.8.3} is injective.
Let $\sd_\sigma$ denote Segal's edgewise subdivision functor in \cite{Seg73}.
In simplicial degree $1$, the two degeneracy maps in $\sd_\sigma(\Nsigma \Z)$ are given by
\[
(x_1,x_2,x_3)\mapsto x_2,x_1+x_2+x_3.
\]
Then edges $(x_1,x_2,x_1)$ in the $\Ztwo$-fixed point space connects $x_2$ and $2x_1+x_2$.
In simplicial degree $0$, $\sd_{\sigma}(S^\sigma)$ consists of two vertices $0$ and $1$, whose images in $\Bsigma \Z$ are not connected.
Hence \eqref{dih.8.2} is injective as claimed.
\end{proof}

Together with \eqref{dih.24.1}, we obtain an equivalence
\begin{equation}
\label{dih.8.4}
\Bdi (\Z;j)\simeq S^\sigma
\end{equation}
for all $j\in \Z$.

\begin{df}
\label{dih.11}
Let $\Z^\sigma$ denote the monoid $\Z$ with the involution $x\mapsto -x$ for $x\in \Z$.
\end{df}

\begin{prop}
\label{dih.12}
There is a $\Ztwo$-homotopy equivalence
\[
\Bsigma \Z^\sigma
\simeq
S^1.
\]
\end{prop}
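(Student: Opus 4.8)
The plan is to follow the strategy of the proof of Proposition~\ref{dih.8}: describe the underlying space and the $\Ztwo$-fixed points of $\Bsigma\Z^\sigma$ separately, show that both are circles, and glue the two together with the equivariant Whitehead theorem. Since the involution of $\Z^\sigma$ has fixed-point set $(\Z^\sigma)^{\Ztwo}=\{x\in\Z:-x=x\}=\{0\}$, the outcome will be $S^1$ with the \emph{trivial} $\Ztwo$-action, and anticipating this guides the argument. First I would note that the underlying simplicial set of $\Nsigma\Z^\sigma$ is the nerve of the monoid $\Z$, so non-equivariantly $\Bsigma\Z^\sigma$ is the classifying space of $\Z$, a connected $K(\Z,1)\simeq S^1$.

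Next I would compute the fixed points, as in the proof of Proposition~\ref{dih.8}. By \cite[Proposition 2.1.6]{DO19}, $(\Bsigma\Z^\sigma)^{\Ztwo}$ is the classifying space of a category $\mathrm{Sym}^\sigma\Z^\sigma$, which I would identify explicitly, using that after Segal's edgewise subdivision $\sd_\sigma$ its objects correspond to the $w$-fixed $1$-simplices of $\Nsigma\Z^\sigma$ and its morphisms to the $w$-fixed $3$-simplices. The objects thus form the set $(\Z^\sigma)^{\Ztwo}=\{0\}$, so there is a single object; the morphisms are the triples $(x_1,0,-x_1)$ with $x_1\in\Z$, all loops at that object; and the $w$-fixed $5$-simplices $(x_1,x_2,0,-x_2,-x_1)$ encode the composition law $(x_2)\circ(x_1)=(x_1+x_2)$. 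Hence $\mathrm{Sym}^\sigma\Z^\sigma$ is the one-object groupoid with automorphism group $\Z$, so $(\Bsigma\Z^\sigma)^{\Ztwo}$ is again a connected $K(\Z,1)\simeq S^1$, with $\pi_1$ generated by the loop of the $1$-simplex $(1,0,-1)$.

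It then remains to compare these two circles. Since both $(\Bsigma\Z^\sigma)^{\Ztwo}$ and $\Bsigma\Z^\sigma$ are connected $K(\Z,1)$'s, the inclusion $j\colon(\Bsigma\Z^\sigma)^{\Ztwo}\hookrightarrow\Bsigma\Z^\sigma$ is a non-equivariant homotopy equivalence as soon as it is a $\pi_1$-isomorphism, and for this it is enough that the generating loop $(1,0,-1)$ is sent to a generator of $\pi_1(\Bsigma\Z^\sigma)=\Z$. Under the homeomorphism $\lvert\sd_\sigma\Nsigma\Z^\sigma\rvert\cong\Bsigma\Z^\sigma$ this loop lies in the $3$-cell of the simplex $(1,0,-1)$ of $\Nsigma\Z^\sigma$, joining the barycenters of the edges $[12]$ and $[03]$, both of which carry the degenerate label $0$; pushing it onto the boundary of that $3$-cell across the edge $[23]$, which carries the label $-1$, shows that it represents $-1\in\Z$, a generator. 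Therefore $j$ is a homotopy equivalence, and consequently the $\Ztwo$-map $\bigl((\Bsigma\Z^\sigma)^{\Ztwo}\bigr)_{\mathrm{triv}}\to\Bsigma\Z^\sigma$ underlying $j$ is an equivalence on underlying spaces and, tautologically, on $\Ztwo$-fixed points; by the equivariant Whitehead theorem it is a $\Ztwo$-homotopy equivalence. Together with $(\Bsigma\Z^\sigma)^{\Ztwo}\simeq S^1$ this yields the claimed equivalence $\Bsigma\Z^\sigma\simeq S^1$ with the trivial action. (As with \eqref{dih.8.4}, combining this with \eqref{dih.24.1} for $G=\Z^\sigma$ and $j=0$ also gives $\Bdi(\Z^\sigma;0)\simeq S^1$.)

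The part I expect to be the real work is the fixed-point computation: identifying $\mathrm{Sym}^\sigma\Z^\sigma$ through \cite[Proposition 2.1.6]{DO19} and the combinatorics of $\sd_\sigma$, and checking that the fixed loop $(1,0,-1)$ maps to a generator of $\pi_1(\Bsigma\Z^\sigma)$. The underlying-space statement and the final gluing are then purely formal.
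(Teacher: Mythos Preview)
The paper's own proof is just a pointer to \cite[Example 5.13]{DMPR21}, so there is no in-paper argument to compare against. Your proof is correct and self-contained; it parallels the argument for Proposition~\ref{dih.8} exactly as you intend, and the identification of $(\Bsigma\Z^\sigma)^{\Ztwo}$ via \cite[Proposition 2.1.6]{DO19} as the classifying space of a one-object category with endomorphism monoid $(\Z,+)$ is right (the $w$-fixed $1$-simplices are $\{0\}$, the $w$-fixed $3$-simplices are $(x_1,0,-x_1)$ with source and target $0$, and the $w$-fixed $5$-simplices $(x_1,x_2,0,-x_2,-x_1)$ give composition by addition). The only passage that reads as a sketch is the claim that the fixed loop $(1,0,-1)$ hits a generator of $\pi_1(\Bsigma\Z^\sigma)\cong\Z$: this is easily made precise by observing that in the $3$-simplex $(1,0,-1)$ of $\Nsigma\Z^\sigma$ the edges $[12]$ and $[03]$ are degenerate (labelled $0$) while $[01]$ and $[23]$ carry the labels $1$ and $-1$, so the subdivision path joining the barycenters of $[12]$ and $[03]$ is homotopic rel endpoints, along the boundary of the simplex, to either nondegenerate edge and hence represents $\pm 1$.
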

\begin{proof}
See \cite[Example 5.13]{DMPR21}.
\end{proof}

Combine this with \eqref{dih.24.3} to obtain a $\Z/2$-homotopy equivalence
\[
\Bdi \Z^\sigma
\simeq
S^1
\amalg
\coprod_{j>0} (\Ztwo \times S^1).
\]
Together with the description of $i_*$ in \eqref{etale.8.1},
we obtain an equivalence in $\Sp_{\Ztwo}$
\begin{equation}
\label{dih.12.1}
\Sphere[\Bdi \Z^\sigma]
\simeq
\Sphere[S^1]\oplus \bigoplus_{j>0} i_*i^*\Sphere[S^1].
\end{equation}

For every integer $j$, let $S^\sigma(j)$ denote the space $S^1$ with the $O(2)$-action, whose $SO(2)$-action is given by $(t,x)\mapsto t^j x$ for $t\in SO(2)$ and $x\in S^1$, and whose $\Ztwo$-action is given by the complex conjugate $x\mapsto \overline{x}$.
Here, we regard $S^1$ as the unit circle in $\C$.
For $j\geq 0$, let $\Delta^j_\sigma$ be the $\Ztwo$-space whose involution is the reflection mapping the vertex $i$ to $j-i$ for all $0\leq i\leq j$.

The following computations refine
\cite[Proposition 3.21]{Rog09}.

\begin{prop}
\label{dih.1}
We have $\Bdi(\N;0)\cong *$.
For every integer $j\geq 1$, there is an $O(2)$-equivariant homeomorphism
\begin{equation}
\label{dih.1.3}
\Bdi (\N;j)
\simeq
(S^\sigma\times \Delta^{j-1}_\sigma)/C_j,
\end{equation}
where the $C_j$-action on $\Delta^{j-1}_{\sigma}$ is the cyclic permutation.
Hence there is an $O(2)$-equivariant deformation retract
\begin{equation}
\label{dih.1.4}
\Bdi \N
\simeq
*
\amalg
\coprod_{j\geq 1} S^\sigma(j).
\end{equation}
\end{prop}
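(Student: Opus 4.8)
The plan is to identify the dihedral nerve $\Ndi(\N;j)$ with an explicit simplicial model on which the $O(2)$-action is transparent, and then take geometric realizations. First I would recall from \eqref{ndidecomposition} that $\Ndi\N$ decomposes over the $\sigma$-orbits in $\N$; since the involution on $\N$ is trivial, every orbit is a singleton $\{j\}$, so $\Ndi\N\cong\coprod_{j\geq 0}\Ndi(\N;j)$. For $j=0$ the only $(q+1)$-tuple summing to $0$ is $(0,\dots,0)$, so $\Ndi(\N;0)$ is the constant simplicial set on a point and $\Bdi(\N;0)\cong *$. For $j\geq 1$, a $q$-simplex of $\Ndi(\N;j)$ is a tuple $(x_0,\dots,x_q)$ of nonnegative integers with $x_0+\cdots+x_q=j$; this is precisely the data of a weakly monotone sequence of $j$ markers on a cyclic string of $q+1$ slots, i.e.\ a simplex of the join-type construction $S^1_{\mathrm{cyc}}\times\Delta^{j-1}$ after quotienting by the cyclic symmetry $C_j$ that permutes the $j$ indistinguishable markers. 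Concretely, I would write down the standard identification between the cyclic nerve $\Ncy(\N;j)$ and $\Lambda^1\times_{C_j}(\text{configuration of }j\text{ points})$ — this is exactly \cite[Proposition 3.21]{Rog09}, which the statement says we are refining — and then track the extra structure: the dihedral involution $w(x_0,\dots,x_q)=(x_0,x_q,\dots,x_1)$ corresponds under this identification to the reflection on the circle factor together with the reflection $i\mapsto (j-1)-i$ on $\Delta^{j-1}$, which is the reason $S^\sigma$ (the reflection circle of Definition \ref{defSsigma}) and $\Delta^{j-1}_\sigma$ (with its reflecting involution) appear.

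The key steps, in order, are: (1) reduce to a single orbit via \eqref{ndidecomposition} and dispatch the $j=0$ case by inspection; (2) for $j\geq 1$, build a $C_j$-equivariant isomorphism of dihedral (equivalently, real-with-rotation) sets $\Ndi(\N;j)\cong (\Nsigma\Z\text{-type model})\times\Delta^{j-1}$ before taking the $C_j$-quotient, using the "marker" combinatorics and checking compatibility with the face, degeneracy, rotation, and involution operators listed in Definition \ref{dih.9}; (3) pass to dihedral geometric realization, where $|\Nsigma\Z|\simeq S^\sigma$ by Proposition \ref{dih.8} (together with \eqref{dih.8.4}), and the realization of $\Delta^{j-1}$ with its reflecting involution is $\Delta^{j-1}_\sigma$, and geometric realization commutes with the finite quotient by $C_j$ and with products (using the homeomorphism $|F_G(X)|\cong|G|\times|X|$ from Construction \ref{dih.4} to locate the $O(2)$-action); (4) finally, assemble the pieces: $\Bdi\N\simeq *\amalg\coprod_{j\geq 1}(S^\sigma\times\Delta^{j-1}_\sigma)/C_j$, and observe that each $(S^\sigma\times\Delta^{j-1}_\sigma)/C_j$ admits an $O(2)$-equivariant deformation retraction onto its "core" circle, which is exactly $S^\sigma(j)$ — the $SO(2)$-action winding $j$ times because collapsing the $\Delta^{j-1}/C_j$ factor forces the rotation to act through the $j$-fold cover, and the $\Ztwo$-action becoming complex conjugation on that circle.

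The main obstacle I expect is step (3)–(4): correctly bookkeeping the $O(2)$-action through the $C_j$-quotient and the deformation retraction. It is easy to get the combinatorial isomorphism in step (2); the subtle point is that the $SO(2)\subset O(2)$ rotation on $\Bdi(\N;j)$ does \emph{not} act through the evident circle factor alone but mixes the circle and the simplex coordinates, and only after the $C_j$-quotient and retraction does it settle into the clean $j$-fold winding action defining $S^\sigma(j)$. I would handle this by using the explicit $|G|\times|X|$-decomposition of Construction \ref{dih.4} for $G$ the dihedral crossed simplicial group, which presents $|{\Ndi(\N;j)}|$ with a manifest $O(2)=|G|$-action, and then exhibit the retraction as an $O(2)$-equivariant map by writing down the homotopy on representatives and checking equivariance orbit-by-orbit; the commutation of realization with the finite quotient by $C_j$ is standard (realization is a left adjoint, hence preserves colimits, and $C_j$ is finite so the quotient is a coequalizer that is moreover a covering map, keeping things well-behaved). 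Once the $j\geq 1$ piece is nailed down, \eqref{dih.1.4} follows formally from \eqref{ndidecomposition} and the fact that dihedral realization sends coproducts to coproducts.
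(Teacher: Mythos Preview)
Your outline has the right shape---decompose into orbits, handle $j=0$ trivially, find a $C_j$-cover of $\Ndi(\N;j)$ whose realization is $S^\sigma\times\Delta^{j-1}_\sigma$, then contract the simplex to its barycenter---but step~(2)/(3) contains a genuine gap.

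The issue is your reliance on $\Nsigma\Z$ and Proposition~\ref{dih.8}. There is no isomorphism of dihedral (or even simplicial) sets between the $C_j$-cover of $\Ndi(\N;j)$ and a simplicial product ``(circle model)$\times\Delta^{j-1}$''. The correct cover is $\Lambda^{j-1}_\sigma$, the dihedral enrichment of Connes' cyclic complex $\Lambda^{j-1}=F_G(\Delta^{j-1})$ from Example~\ref{dih.6}; in degree $q$ it is $C_{q+1}\times\Hom([q],[j-1])$, which is not a product of simplicial sets because the first factor varies with $q$. The paper constructs the explicit surjection $\Lambda^{j-1}_\sigma\to\Ndi(\N;j)$ (induced by the generating simplex $(1,\dots,1)$) and checks it is a map of dihedral sets for a specific involution on $\Lambda^{j-1}_\sigma$. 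The homeomorphism $|\Lambda^{j-1}_\sigma|\cong S^\sigma\times\Delta^{j-1}_\sigma$ then comes directly from Construction~\ref{dih.4} (which gives $|F_G(X)|\cong|G|\times|X|$) together with Example~\ref{dih.5}: the $S^\sigma$ here is $|G|$ with its canonical involution, \emph{not} $|\Nsigma\Z|$. The cyclic part of this identification (that the realization of $\Lambda^{j-1}\to\Ncy(\N;j)$ is the $C_j$-quotient map $S^1\times\Delta^{j-1}\to(S^1\times\Delta^{j-1})/C_j$) is cited from \cite{Hes96} and \cite{Rog09}; the new content is tracking the involution through that argument.

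Proposition~\ref{dih.8} is a $\Ztwo$-\emph{homotopy equivalence}, not a homeomorphism, so routing through it could at best give \eqref{dih.1.3} up to equivariant homotopy, whereas the statement asserts an $O(2)$-equivariant homeomorphism. Your instinct to use Construction~\ref{dih.4} is correct, but you should use it to identify the realization of the cover itself, not merely to locate the $O(2)$-action afterwards. Once \eqref{dih.1.3} is in hand, your argument for \eqref{dih.1.4}---that $\Delta^{j-1}_\sigma$ is $D_j$-contractible to its barycenter, so the quotient deformation retracts $O(2)$-equivariantly onto $S^\sigma/C_j\cong S^\sigma(j)$---is exactly what the paper does.
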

\begin{proof}
The $(j-1)$-simplex $(1,\ldots,1)$ generates $\Ncy(\N;j)$ as a cyclic set.
Use this to construct a surjective map of cyclic sets
\begin{equation}
\label{dih.1.1}
\Lambda^{j-1} \to \Ncy (\N;j).
\end{equation}
In simplicial degree $q$, this can be written as the map
\[
C_{q+1}\times \Hom([q],[j-1])
\to
\{(x_0,\ldots,x_q)\in \N^{q+1} : x_0+\cdots+x_q=j\}.
\]
sending $(t,f)$ to
\[
(f(t(0))-f(t(q)),f(t(1))-f(t(0)),\ldots,f(t(q))-f(t(q-1))).
\]
In this formulation, the values are calculated modulo $j$.
Let $\Lambda_\sigma^{j-1}$ be the dihedral set whose underlying cyclic set is $\Lambda^{j-1}$ and whose involution is given by
\[
(t,f)\mapsto (q+1-t,\rho\circ f),
\]
where $\rho\colon [j-1]\to [j-1]$ is the map sending $x\in [j-1]$ to $j-1-x$.
Then \eqref{dih.1.1} becomes a morphism of dihedral sets
\begin{equation}
\label{dih.1.2}
\Lambda_\sigma^{j-1}\to \Ndi(\N;j).
\end{equation}

The cyclic geometric realization of \eqref{dih.1.1} is $S^1$-homeomorphic to the quotient map
\[
S^1\times \Delta^{j-1}
\to
(S^1\times \Delta^{j-1})/C_j,
\]
see the proof of \cite[Lemma 2.2.3]{Hes96} or \cite[Proposition 3.20]{Rog09}.
Use the observation in Example \ref{dih.5} to show that the real geometric realization of $\lvert \Lambda_\sigma^{j-1}\rvert$ is $\Ztwo$-homeomorphic to $S^\sigma \times \Delta^{j-1}_\sigma$.
Combine these two facts to deduce that the dihedral geometric realization of \eqref{dih.1.2} is $O(2)$-homeomorphic to the quotient map
\[
S^\sigma \times \Delta^{j-1}_\sigma
\to
(S^\sigma \times \Delta^{j-1}_\sigma)/C_j
\]
In particular, we obtain \eqref{dih.1.3}.
Since $\Delta_{\sigma}^{j-1}$ is $D_{j}$-contractible to its barycenter, we obtain \eqref{dih.1.4}.
\end{proof}

Let us review what the proof of \cite[Proposition 3.21]{Rog09} contains.
For every integer $r\geq 1$, let $\sd_r$ denote the $r$-fold edgewise subdivision functor in \cite{BHM93}.
For every integer $j$, the $r$-fold power map $\Ndi(\Z;j) \to \sd_r\Ndi(\Z;j)$ given by
\begin{equation}
\label{dih.7.2}
(m_0,\ldots,m_q) \mapsto (m_0,\ldots,m_q,\ldots,m_0,\ldots,m_q)
\end{equation}
induces a homeomorphism
\begin{equation}
\label{dih.7.4}
\Bdi(\Z;j)
\xrightarrow{\cong}
\Bdi(\Z;rj)^{C_r}.
\end{equation}
If $r \nmid j$, then we have
\begin{equation}
\label{dih.7.7}
\Bdi(\Z;j)^{C_r}=\emptyset.
\end{equation}
Suppose $j\geq 0$.
We similarly have a homeomorphism
\begin{equation}
\label{dih.7.3}
\Bdi(\N;j)
\xrightarrow{\cong}
\Bdi(\N;rj)^{C_r}
\end{equation}
If $r \nmid j$, then we have
\begin{equation}
\label{dih.7.5}
\Bdi(\N;j)^{C_r}=\emptyset.
\end{equation}
If $H$ is a closed subgroup of $SO(2)$, then the induced map
\begin{equation}
\label{dih.7.1}
\Bdi(\N;j)^H
\to
\Bdi(\Z;j)^H
\end{equation}
is a homotopy equivalence.

\begin{prop}
\label{dih.7}
For every integer $j\geq 1$, the induced map
\[
\Bdi(\N;j)
\to
\Bdi(\Z;j)
\]
is an $O(2)$-equivariant homotopy equivalence.
\end{prop}
\begin{proof}
We need to show that \eqref{dih.7.1} is a homotopy equivalence for all closed subgroups $H$ of $O(2)$.

If $H=O(2)$, then $\Bdi(\N;j)^{O(2)}=\Bdi(\Z;j)^{O(2)}=\emptyset$ by \eqref{dih.7.7} and \eqref{dih.7.5}.
Hence the remaining case is $H=D_r$ for every integer $r\geq 1$.
Since \eqref{dih.7.2} commutes with $w$, \eqref{dih.7.3} is $\Ztwo$-equivariant.
Similarly, \eqref{dih.7.4} is $\Ztwo$-equivariant.
Hence we have the induced homeomorphisms
\[
\Bdi(\N;j)^{\Ztwo}
\xrightarrow{\cong}
\Bdi(\N;rj)^{D_r}
\text{ and }
\Bdi(\Z;j)^{\Ztwo}
\xrightarrow{\cong}
\Bdi(\Z;rj)^{D_r}.
\]
Combine with \eqref{dih.7.5} to reduce to the case when $H=\Ztwo$.

Propositions \ref{dih.24} and \ref{dih.8} give a homotopy equivalence
\begin{equation}
\Bdi(\Z;j)^{\Ztwo}
\simeq
S^0.
\end{equation}
By Proposition \ref{dih.1}, we also have a homotopy equivalence $\Bdi(\N;j)^{\Ztwo}\simeq S^0$.
Hence it remains to check that the induced map
\begin{equation}
\label{dih.7.6}
\pi_0(\Bdi(\N;j)^{\Ztwo})
\to
\pi_0(\Bdi(\Z;j)^{\Ztwo})
\end{equation}
is a bijection.
Recall that $\sd_\sigma$ denotes Segal's edgewise subdivision functor.
In simplicial degree $1$, the two degeneracy maps in $\sd_\sigma(\Ndi(\N;j))$ and $\sd_\sigma(\Ndi(\Z;j))$ are given by
\[
(x_0,x_1,x_2,x_3)\mapsto (x_3+x_0+x_1,x_2),(x_0,x_1+x_2+x_3).
\]
The edge $(x_0,x_1,x_2,x_1)$ in the $\Ztwo$-fixed point spaces connects $(x_0,2x_1+x_2)$ and $(x_0+2x_1,x_2)$.
Hence we have
\[
\pi_0(\Bdi(\N;j))
\cong
\{(x_0,x_1)\in \N^2 : x_0+x_1=j\}/\sim
\]
and
\[
\pi_0(\Bdi(\Z;j))
\cong
\{(x_0,x_1)\in \Z^2 : x_0+x_1=j\}/\sim,
\]
where $(x_0,x_1)\sim (x_0',x_1')$ if and only if $x_0-x_0'$ is even.
This shows that \eqref{dih.7.6} is a bijection.
\end{proof}

\begin{prop}
\label{dih.21}
For a commutative monoid $M$ with involution, there is an equivalence in $\Sp_{\Ztwo}$
\[
\THR(\Sphere[M])
\simeq
\Sphere[\Bdi M].
\]
\end{prop}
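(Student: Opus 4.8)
The plan is to reduce everything to a computation with the dihedral nerve, using that $\Sphere[-]=\Sigma^\infty_+$ is a symmetric monoidal left adjoint, hence commutes with the norm $N^{\Ztwo}$, with relative smash products, and with geometric realizations.

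First I would unwind Definition \ref{etale.14}. Since $\Sphere[-]$ is symmetric monoidal it intertwines the adjunction $(N^{\Ztwo},i^*)$ with the corresponding monoid‑level adjunction, so $i^*\Sphere[M]\simeq\Sphere[i^*M]$ and
\begin{equation*}
N^{\Ztwo}i^*\Sphere[M]\;\simeq\;\Sphere\bigl[N^{\Ztwo}(i^*M)\bigr],
\end{equation*}
where $N^{\Ztwo}(i^*M)$ is the commutative monoid $i^*M\times i^*M$ with componentwise product and the swap involution, and the counit $N^{\Ztwo}i^*\Sphere[M]\to\Sphere[M]$ is $\Sphere[-]$ applied to the counit $(x,y)\mapsto x\sigma(y)$ of the monoid‑level norm adjunction. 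Writing the relative smash product as the realization of the two‑sided bar construction $B_\bullet(X,R,Y)=\bigl([q]\mapsto X\wedge R^{\wedge q}\wedge Y\bigr)$, and using that $\Sphere[-]$ carries $\times$ to $\wedge$ and commutes with realizations, I obtain
\begin{equation*}
\THR(\Sphere[M])
\;=\;
\Sphere[M]\wedge_{N^{\Ztwo}i^*\Sphere[M]}\Sphere[M]
\;\simeq\;
\Sphere\Bigl[\,\bigl\lvert B_\bullet\bigl(M,N^{\Ztwo}(i^*M),M\bigr)\bigr\rvert\,\Bigr],
\end{equation*}
the bar construction being that of $M$ over $N^{\Ztwo}(i^*M)$, with both module structures given by the counit $(x,y)\mapsto x\sigma(y)$.

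It then remains to identify the $\Ztwo$‑space $\bigl\lvert B_\bullet(M,N^{\Ztwo}(i^*M),M)\bigr\rvert$ with $\Bdi M$. For this I would check that $B_\bullet\bigl(M,N^{\Ztwo}(i^*M),M\bigr)$ is isomorphic, as a real simplicial set, to Segal's edgewise subdivision $\sd_\sigma\Ndi_\bullet M$ of the dihedral nerve of Definition \ref{dih.9}: in degree $q$ both sides are $M^{\times(2q+2)}$, and the face and degeneracy operators match once the two $N^{\Ztwo}(i^*M)$‑module structures on $M$ are compared with the dihedral operators of $\Ndi_\bullet M$ after subdivision. This is the real analogue of the classical identification $\sd_2\Ncy R\cong B_\bullet(R,R\wedge R^{\mathrm{op}},R)$, and the point of the $\sigma$‑subdivision is precisely that the dihedral reversal becomes a levelwise involution on $\sd_\sigma\Ndi_\bullet M$, matching the $\Ztwo$‑action on the bar construction assembled from $\sigma$ and the swap (compare the use of $\sd_\sigma$ in the proofs of Propositions \ref{dih.8} and \ref{dih.7}). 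Since $\lvert\sd_\sigma X\rvert\cong\lvert X\rvert$ as $\Ztwo$‑spaces and the dihedral realization of $\Ndi_\bullet M$ is $\Bdi M$ with its reflection action coming from $\Ztwo\subset O(2)$ (Construction \ref{dih.4} and Example \ref{dih.5}), this yields the claimed equivalence. As an alternative to this paragraph one may instead note that $\Sphere[M]$ satisfies the flatness hypothesis of \cite[Theorem, p.\ 65]{DMPR21} (it is a wedge of copies of $\Sphere$ and $M$ is discrete), so that $\THR(\Sphere[M])$ is computed by the B\"okstedt model, whose $I$‑indexed homotopy colimit is superfluous here and which collapses to the dihedral object $[q]\mapsto\Sphere[M]^{\wedge(q+1)}\cong\Sphere[(\Ndi M)_q]$; commuting $\Sigma^\infty_+$ past the dihedral realization again gives $\Sphere[\Bdi M]$.

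The main obstacle, in either approach, is the $\Ztwo$‑bookkeeping in the last step: one must verify that the $\Ztwo$‑action on $\THR(\Sphere[M])$ coming from the relative smash product of $\Ztwo$‑spectra (equivalently, from the B\"okstedt dihedral realization) is transported exactly to the reflection action on $\Bdi M$. Once $\sd_\sigma$ is set up compatibly with the involutions (as in Example \ref{dih.5}) this is routine, but it is the natural place for a sign or twist to be lost.
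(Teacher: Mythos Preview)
Your argument is correct, but note that the paper does not give a proof here at all: it simply cites H{\o}genhaven's preprint and \cite[Proposition 5.9]{DMPR21}. Your sketch is essentially a reconstruction of what those references contain. The alternative you mention---invoking the flatness hypothesis so that the B\"okstedt model computes $\THR(\Sphere[M])$, and then observing that the dihedral realization of $[q]\mapsto\Sphere[M]^{\wedge(q+1)}\simeq\Sphere[(\Ndi M)_q]$ is $\Sphere[\Bdi M]$---is precisely the argument of \cite[Proposition 5.9]{DMPR21}. Your first route, identifying the two-sided bar construction $B_\bullet(M,N^{\Ztwo}(i^*M),M)$ with $\sd_\sigma\Ndi_\bullet M$ as real simplicial sets, is closer in spirit to H{\o}genhaven's treatment and has the advantage of staying within the pushout definition of $\THR$ used in this paper rather than passing through the B\"okstedt model; either way the content is the same and the only delicate point is exactly the one you flag, namely matching the involutions.
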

\begin{proof}
See \cite{Hog}, and  also \cite[Proposition 5.9]{DMPR21}.
\end{proof}

\begin{prop}
\label{thrlog.5}
Let $A$ be a commutative ring with involution, and let $M$ be a commutative monoid with involution.
Then there is an equivalence in $\Sp_{\Ztwo}$
\[
\EM(A[M])
\simeq
\EM A \wedge \Sphere[M].
\]
\end{prop}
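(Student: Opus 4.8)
Since the claim is only an equivalence of underlying $\Ztwo$-spectra, there is no need to track the ring structure, and the plan is to decompose both sides along the orbits of the involution $\sigma$ on the underlying set of $M$ and match the resulting wedge summands. Write $M=\coprod_O O$, where $O$ runs over the $\sigma$-orbits; each such $O$ is either a single $\sigma$-fixed element or a free pair $\{m,\sigma(m)\}$. On the left, $M_+\cong\bigvee_O O_+$ as pointed $\Ztwo$-sets, so $\Sphere[M]=\Sigma^\infty M_+\simeq\bigvee_O\Sigma^\infty O_+$, and since $\EM A\wedge(-)$ preserves coproducts, $\EM A\wedge\Sphere[M]\simeq\bigvee_O(\EM A\wedge\Sigma^\infty O_+)$. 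On the right, the involution of $A[M]$ permutes the rank-one summands $A\cdot[m]$ according to $\sigma$, so $A[M]\cong\bigoplus_O A[O]$ as abelian groups with involution, where $A[O]:=\bigoplus_{m\in O}A\cdot[m]$ is $\sigma$-stable. As $\EM$ is additive and commutes with filtered colimits by Proposition~\ref{t.4}, it commutes with arbitrary direct sums, whence $\EM(A[M])\simeq\bigvee_O\EM(A[O])$. It therefore suffices to produce, for each orbit $O$, an equivalence $\EM A\wedge\Sigma^\infty O_+\simeq\EM(A[O])$.

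If $O=\{m\}$ with $\sigma(m)=m$, then $\Sigma^\infty O_+\simeq\Sphere$ (the $\Ztwo$-equivariant sphere), so the left side is $\EM A$, while $A[O]\cong A$ as rings with involution, so the right side is $\EM A$ as well. If $O=\{m,\sigma(m)\}$ with $m\neq\sigma(m)$, then $O\cong\Ztwo/e$ as $\Ztwo$-sets, so $\Sigma^\infty O_+\simeq i_*\Sphere$, the induced $\Ztwo$-spectrum $\Sphere\oplus\Sphere$ with the swap involution (see \eqref{etale.8.1}); the projection formula together with \eqref{etale.17.1} then gives $\EM A\wedge\Sigma^\infty O_+\simeq i_*(i^*\EM A)\simeq i_*\EM(i^*A)$. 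On the other hand, $(a,b)\mapsto a\cdot[m]+\sigma(b)\cdot[\sigma(m)]$ defines an isomorphism $(i^*A)^{\oplus\Ztwo}\xrightarrow{\cong}A[O]$ of abelian groups with involution: one checks directly that it intertwines the swap on the source with the involution $a\cdot[m]+b\cdot[\sigma(m)]\mapsto\sigma(a)\cdot[\sigma(m)]+\sigma(b)\cdot[m]$ on the target. Hence Proposition~\ref{etale.33} yields $\EM(A[O])\simeq\EM((i^*A)^{\oplus\Ztwo})\simeq i_*\EM(i^*A)$. The two summands thus agree, and taking the wedge over all $O$ produces the desired equivalence $\EM A\wedge\Sphere[M]\simeq\EM(A[M])$.

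The routine points are the identification $\Sigma^\infty(\Ztwo/e)_+\simeq i_*\Sphere$ and the projection formula $\EM A\wedge i_*\Sphere\simeq i_*\EM(i^*A)$, both among the standard properties of $i^*,i_*$ recalled in the appendix (cf.\ Proposition~\ref{orth.5}), together with the fact that $\EM$ carries the orbit decomposition of $A[M]$ to a wedge, whose only substantive input is Proposition~\ref{t.4}. The step I would write out most carefully is the free-orbit case: fixing the isomorphism $(i^*A)^{\oplus\Ztwo}\cong A[O]$ of abelian groups with involution compatibly with the projection-formula equivalence on the spectrum side, since this is where an indexing or $\sigma$-twist error would pass unnoticed. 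If a canonical comparison map is preferred over an abstract equivalence, one can instead use the map $\EM A\wedge\Sphere[M]\to\EM(A[M])$ obtained by multiplying the algebra maps $\EM A\to\EM(A[M])$ (induced by $A\to A[M]$) and $\Sphere[M]\to\EM(A[M])$ (induced by the monoid map $M\to A[M]$), and verify that its restriction to each wedge summand recovers the equivalences above.
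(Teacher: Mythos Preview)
Your proof is correct and follows essentially the same route as the paper: decompose $M$ into $\Ztwo$-orbits, handle fixed orbits trivially, and for free orbits combine the projection formula with Proposition~\ref{etale.33} to match $\EM A\wedge\Sigma^\infty(\Ztwo)_+$ with $\EM((i^*A)^{\oplus\Ztwo})$. The only cosmetic difference is that the paper phrases the free-orbit step via $i_\sharp$ and Proposition~\ref{norm.2} before invoking $i_\sharp\simeq i_*$ from Proposition~\ref{orth.5}(3), whereas you work with $i_*$ throughout.
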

\begin{proof}
If we regard $M$ as a $\Ztwo$-set, then $M$ is a coproduct of copies $\Ztwo$ and $e$.
If $M=e$, then the claim is clear.
Hence it suffices to show that there is an equivalence
\begin{equation}
\label{thrlog.5.1}
\EM((i^* A)^{\oplus \Ztwo})
\simeq
\EM A\wedge \Sigma^\infty (\Ztwo)_+,
\end{equation}
where $(i^* A)^{\oplus \Ztwo}$ is the commutative monoid $A\oplus A$ with the involution given by $(x,y)\mapsto (y,x)$.
By \eqref{etale.17.1} and Propositions \ref{norm.2}, \ref{orth.5}(3), and \ref{etale.33}, we obtain equivalences
\[
\EM A \wedge i_\sharp i^* \Sphere
\simeq
i_\sharp(i^*\EM A \wedge i^*\Sphere)
\simeq
i_\sharp \EM i^*A
\simeq
i_* \EM i^*A 
\simeq
\EM((i^* A)^{\oplus \Ztwo}).
\]
Together with the equivalence $i_\sharp i^* \Sphere\simeq \Sigma^\infty (\Ztwo)_+$, we obtain \eqref{thrlog.5.1}.
\end{proof}

\begin{prop}
\label{thrlog.2}
Let $A$ be a commutative ring with involution, and let $M$ be a commutative monoid with involution.
Then there is a canonical equivalence in $\Sp_{\Ztwo}$
\[
\THR(A[M])
\simeq
\THR(A)\wedge\Sphere[\Bdi M].
\]
\end{prop}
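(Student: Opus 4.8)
The plan is to present $\EM(A[M])$ as a pushout in $\NAlg_{\Ztwo}$ over the initial object and then to apply the K\"unneth-type formula of Proposition \ref{etale.1} for $\THR$, reducing everything to Propositions \ref{thrlog.5}, \ref{etale.10} and \ref{dih.21}. Recall from Definition \ref{etale.12} that $\THR(A[M]):=\THR(\EM(A[M]))$ and $\THR(A):=\THR(\EM A)$, and note that $N^{\Ztwo}\Sphere$ is the initial object of $\NAlg_{\Ztwo}$ (it is the image under the left adjoint $N^{\Ztwo}$ of the initial object $\Sphere\in\CAlg$), while $N^{\Ztwo}\Sphere\simeq\Sphere_{\Ztwo}$ is the unit of $\Sp_{\Ztwo}$ because $N^{\Ztwo}$ is symmetric monoidal.

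First I would identify $\EM(A[M])\simeq\EM A\wedge_{N^{\Ztwo}\Sphere}\Sphere[M]$ in $\NAlg_{\Ztwo}$. The ring inclusion $A\hookrightarrow A[M]$ induces a map $\EM A\to\EM(A[M])$ in $\NAlg_{\Ztwo}$, and the homomorphism $M\to(A[M],\cdot)$ of commutative monoids with involution induces a map $\Sphere[M]\to\EM(A[M])$ in $\NAlg_{\Ztwo}$; together with the unique maps out of $N^{\Ztwo}\Sphere$ these assemble into a map
\[
\EM A\wedge_{N^{\Ztwo}\Sphere}\Sphere[M]\longrightarrow\EM(A[M])
\]
in $\NAlg_{\Ztwo}$, where the source is the pushout. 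Since $N^{\Ztwo}\Sphere$ is initial, this pushout is the coproduct, whose underlying $\Ztwo$-spectrum is $\EM A\wedge\Sphere[M]$; tracing through the proof of Proposition \ref{thrlog.5} shows that the displayed map realizes the equivalence $\EM A\wedge\Sphere[M]\xrightarrow{\simeq}\EM(A[M])$ established there. As the forgetful functor $\NAlg_{\Ztwo}\to\Sp_{\Ztwo}$ is conservative, the displayed map is then an equivalence in $\NAlg_{\Ztwo}$.

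Now Proposition \ref{etale.1}, applied with $R:=N^{\Ztwo}\Sphere$ and the two (unique) maps to $\EM A$ and $\Sphere[M]$, gives
\[
\THR(A[M])\simeq\THR(\EM A)\wedge_{\THR(N^{\Ztwo}\Sphere)}\THR(\Sphere[M]).
\]
By Proposition \ref{etale.10}, $\THR(N^{\Ztwo}\Sphere)\simeq N^{\Ztwo}\THH(\Sphere)$, and $\THH(\Sphere)\simeq\Sphere\wedge_{\Sphere\wedge\Sphere}\Sphere\simeq\Sphere$, so $\THR(N^{\Ztwo}\Sphere)\simeq N^{\Ztwo}\Sphere\simeq\Sphere_{\Ztwo}$ is the unit of $\Sp_{\Ztwo}$; hence the relative smash product over it coincides with the absolute smash product in $\Sp_{\Ztwo}$. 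Combining this with $\THR(\Sphere[M])\simeq\Sphere[\Bdi M]$ from Proposition \ref{dih.21} and $\THR(\EM A)=\THR(A)$ yields the desired equivalence $\THR(A[M])\simeq\THR(A)\wedge\Sphere[\Bdi M]$ in $\Sp_{\Ztwo}$; it is canonical because every equivalence used above is.

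I expect the first step to be the main obstacle: one must check that the comparison map is genuinely a map of \emph{normed} commutative ring spectra, and that the pushout in $\NAlg_{\Ztwo}$ over the initial object $N^{\Ztwo}\Sphere$ has underlying $\Ztwo$-spectrum $\EM A\wedge\Sphere[M]$, so that Proposition \ref{thrlog.5} is applicable. Everything after that is formal manipulation with Propositions \ref{etale.1}, \ref{etale.10} and \ref{dih.21} together with the elementary computation $\THH(\Sphere)\simeq\Sphere$.
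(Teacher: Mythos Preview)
Your proposal is correct and follows essentially the same route as the paper's proof: identify $\EM(A[M])$ with the coproduct $\EM A\wedge\Sphere[M]$ in $\NAlg_{\Ztwo}$ via Proposition~\ref{thrlog.5}, apply the K\"unneth formula of Proposition~\ref{etale.1} over the initial object, and finish with Proposition~\ref{dih.21}. The paper's proof is terser---it writes $\THR(\EM A\wedge\Sphere[M])\simeq\THR(\EM A)\wedge\THR(\Sphere[M])$ directly without isolating the computation $\THR(N^{\Ztwo}\Sphere)\simeq\Sphere_{\Ztwo}$ or the lift of Proposition~\ref{thrlog.5} to $\NAlg_{\Ztwo}$---but your extra care on exactly these two points is well placed, and your use of Proposition~\ref{etale.10} to handle the base is a clean way to make the implicit step explicit.
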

\begin{proof}
By Propositions \ref{etale.1} and \ref{dih.21}, we have equivalences
\[
\THR(\EM A \wedge \Sphere[M])
\simeq
\THR(\EM A)\wedge \THR(\Sphere[M])
\simeq
\THR(A)\wedge \Sphere[\Bdi M].
\]
Proposition \ref{thrlog.5} finishes the proof.
\end{proof}

\section{Properties of real topological Hochschild homology}
\label{sec5}

\subsection{THR of the projective line}\label{periodicity}

We now establish the first computations of $\THR$ for non-affine schemes, namely $\P^1$ and $\P^{\sigma}$. 

\begin{rmk}\label{krpn}
Hermitian $K$-theory $\KO$ resp.\ $\KR$ is not an orientable theory, that is the usual projective bundle formula as for Chow groups and algebraic $K$-theory does not hold. The computation of $\P^1$ over regular rings is \cite[Proposition 6.1]{Ho05}, where a $(8,4)$-motivic periodic spectrum $\KO$ is constructed.
This computation is extended by \cite[Theorem 9.10]{schlichting17} to rather general base schemes (still with 2 invertible). For the projective line with involution, a variation of Schlichting's proof leads to the computation of $\KR(\P^{\sigma})$, see \cite[Theorem 5.1]{Ca} and compare \cite[Theorem 7.1]{Xi} and \cite{HuKO} for different proofs.
In particular, this leads to an equivariant motivic spectrum $\KR$ which is $\P^1 \wedge \P^{\sigma}$-periodic. For further periodicities of $\KR$ see \cite[Theorem 10]{HuKO}. 
In their notation, we have $\P^1 \simeq S^1 \wedge S^{\alpha}$ and $\P^{\sigma} \simeq S^{\gamma} \wedge S^{\gamma \alpha} \simeq \P^1_{-}$. For the definition of $S^{\gamma}=S^{\sigma}$ in the motivic setting and a proof of the last equivalence, we refer to \cite[section 2.5]{Ca}. 
Although $\THR$ is not $\A^1$-invariant, it seems reasonable to expect that the formulas for $\THR(\P^n)$ are similar to those for $\KR$. In the cases considered below this is indeed the case: the following Proposition implies that $\Omega^{1 + \alpha}\THR \simeq \Sigma^{\gamma -1}\THR$. Smashing with $S^1$, we obtain the same periodicity as (35) in \cite{HuKO}. Similarly, the next proposition corresponds to (36) of loc.\ cit. 
\end{rmk}

The following computations rely on Proposition \ref{dih.21} and the computations for dihedral nerves in the previous section. 

\begin{thm}
\label{period.3}
For any $X \in \Sch_{\Ztwo}$,
there is an equivalence of $\Ztwo$-spectra
\[
\THR(X\times \P^1)
\simeq
\THR(X)\oplus \Sigma^{\sigma-1}\THR(X).
\]
\end{thm}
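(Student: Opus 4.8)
The plan is to reduce to the affine case by isovariant \'etale descent, and there to combine the Mayer--Vietoris square of Corollary~\ref{etale.37} for the two standard affine charts of $\P^1$ with the monoid-ring formula of Proposition~\ref{thrlog.2} and the dihedral-nerve computations of Section~\ref{sec4}.

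First I would check that both sides of the claimed equivalence are isovariant \'etale sheaves on $\Sch_{\Ztwo}$. Since $\P^1$ carries the trivial involution, $X \mapsto X \times \P^1$ preserves isovariant \'etale covers (the set- and scheme-theoretic stabilizers at a point of $X\times\P^1$ coincide with those at its image in $X$), so $X \mapsto \THR(X\times\P^1)$ is a sheaf by Theorem~\ref{etale.7} and Definition~\ref{etale.34}; and $X \mapsto \THR(X)\oplus\Sigma^{\sigma-1}\THR(X)$ is a sheaf because $\Sigma^{\sigma-1}$ is an auto-equivalence of $\Sp_{\Ztwo}$ and a finite direct sum of sheaves is a sheaf. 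The equivalence constructed below will be natural in $A$, so by the equivalence $\infShv_{iso\et}(\Aff_{\Ztwo},\Sp_{\Ztwo}) \simeq \infShv_{iso\et}(\Sch_{\Ztwo},\Sp_{\Ztwo})$ of Proposition~\ref{etale.30} it suffices to treat $X = \Spec A$ for a commutative ring $A$ with involution.

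For such an $A$, I would cover $\P^1_A$ by $\Spec A[t]$ and $\Spec A[t^{-1}]$ with intersection $\Spec A[t^{\pm 1}]$; since $\P^1$ has the trivial involution this is an equivariant Nisnevich distinguished square in the sense of Definition~\ref{etale.36}, so Corollary~\ref{etale.37} yields a fibre sequence in $\Sp_{\Ztwo}$
\[
\THR(\P^1_A) \longrightarrow \THR(A[\N]) \oplus \THR(A[\N]) \xrightarrow{\ \psi\ } \THR(A[\Z]),
\]
where $\N$ and $\Z$ carry the trivial involution and the two components of $\psi$ are, up to a sign, induced by the monoid maps $\iota_+\colon \N\to\Z,\ n\mapsto n$ and $\iota_-\colon\N\to\Z,\ n\mapsto -n$. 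By Proposition~\ref{thrlog.2} and exactness of $\THR(A)\wedge(-)$ this is $\THR(A)\wedge(-)$ applied to the map $\overline\psi\colon \Sphere[\Bdi\N]\oplus\Sphere[\Bdi\N]\to\Sphere[\Bdi\Z]$ induced by $\Bdi\iota_\pm$, so it remains to compute $\fib(\overline\psi)$ as a $\Ztwo$-spectrum. Using \eqref{ndidecomposition} and triviality of the involutions, $\Sphere[\Bdi\N]\simeq\bigoplus_{j\geq 0}\Sphere[\Bdi(\N;j)]$ and $\Sphere[\Bdi\Z]\simeq\bigoplus_{j\in\Z}\Sphere[\Bdi(\Z;j)]$. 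For $j\geq 1$ the map $\Bdi\iota_+$ restricts to the $O(2)$-equivalence $\Bdi(\N;j)\xrightarrow{\simeq}\Bdi(\Z;j)$ of Proposition~\ref{dih.7}, while $\Bdi\iota_-$ restricts to this composed with the isomorphism $\Bdi(\Z;j)\cong\Bdi(\Z;-j)$ coming from $x\mapsto -x$ on $\Z$; hence $\overline\psi$ is an equivalence on the summands with $j\neq 0$ and these contribute nothing to the fibre. On the $j=0$ summand, $\Bdi(\N;0)=*$ and both $\Bdi\iota_+$ and $\Bdi\iota_-$ send $*$ to the (necessarily $\Ztwo$-fixed) class of the zero word, so this part of $\overline\psi$ is the composite $\Sphere\oplus\Sphere\xrightarrow{\nabla}\Sphere\xrightarrow{\phi}\Sphere[\Bdi(\Z;0)]$ with $\nabla=(\id,-\id)$. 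Here $\nabla$ is a split epimorphism with $\fib(\nabla)\simeq\Sphere$, and via the $\Ztwo$-equivalence $\Bdi(\Z;0)\simeq S^\sigma$ of \eqref{dih.8.4} the map $\phi$ is the inclusion of a $\Ztwo$-fixed point of $S^\sigma$, so that $\cofib(\phi)\simeq\Sigma^\sigma\Sphere$ and $\fib(\phi)\simeq\Sigma^{\sigma-1}\Sphere$. Since $\nabla$ is a split epimorphism, $\fib(\phi\circ\nabla)\simeq\fib(\nabla)\oplus\fib(\phi)\simeq\Sphere\oplus\Sigma^{\sigma-1}\Sphere$, whence $\fib(\overline\psi)\simeq\Sphere\oplus\Sigma^{\sigma-1}\Sphere$, and smashing with $\THR(A)$ gives $\THR(\P^1_A)\simeq\THR(A)\oplus\Sigma^{\sigma-1}\THR(A)$.

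The routine points I am skipping are the verification that the chart cover is an equivariant Nisnevich distinguished square, the bookkeeping between reduced and unreduced suspension spectra implicit in the notation $\Sphere[-]$, the elementary identity $\fib(p\circ q)\simeq\fib(q)\oplus\fib(p)$ for a split epimorphism $q$, and the functoriality in $A$ needed for the descent step. The step I expect to demand the most care is the dihedral-nerve computation: pinning down $\Bdi\iota_\pm$ on the individual orbit pieces so that Proposition~\ref{dih.7} makes all the components with $j\neq 0$ cancel in the fibre, leaving exactly the contribution of $\Bdi(\Z;0)\simeq S^\sigma$ that produces the twist $\Sigma^{\sigma-1}$.
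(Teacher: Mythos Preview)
Your proof is correct and follows essentially the same route as the paper: the standard equivariant Zariski cover of $\P^1$, Corollary~\ref{etale.37}, Proposition~\ref{thrlog.2} to factor out $\THR(A)$, and the dihedral-nerve cancellations via Proposition~\ref{dih.7} to reduce to the $j=0$ contribution. The paper phrases the endgame as a cocartesian square with corners $\Sphere[*]$, $\Sphere[*]$, $\Sphere[S^\sigma]$ and says ``the result follows,'' whereas you make that last step explicit via the factorization $\phi\circ\nabla$ and the split-epi identity $\fib(p\circ q)\simeq\fib(q)\oplus\fib(p)$; and the paper handles general $X$ (as spelled out later in the proof of Theorem~\ref{proj.2}) by covering $X$ itself rather than by your isovariant \'etale sheaf argument --- but these are cosmetic differences, not a different strategy.
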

\begin{proof}
For notational convenience, we will write the proof as if everything takes place over $\Sphere$ rather than $X$.
Using the description of $\THR$ for spherical groups rings from Proposition \ref{dih.21} and its extension to log schemes with involution from Proposition \ref{thrlog.2}, we are reduced to consider the following homotopy cocartesian square, corresponding to the standard Zariski cover of $\P^1$ by two copies of $\A^1$ and using Corollary \ref{etale.37} and the description of $\THR(\Sphere[M])$ from Proposition \ref{dih.21}:

\[
\begin{tikzcd}
\THR(\P^1_{\Sphere}) \ar[d]\ar[r]&
\Sphere[\Bdi \N]  \ar[d]
\\
\Sphere[\Bdi (-\N)]\ar[r]&
\Sphere[\Bdi \Z]
\end{tikzcd}
\]

Here the notation $\N$ and $-\N$ indicates the two different embeddings of $\A^1$ in $\P^1$.
It is crucial to notice that even as $\G_m$ has trivial involution, the involution given by the dihedral nerve (see Definition \ref{dih.9} above) yields nontrivial involutions.
Using the decomposition
of \eqref{ndidecomposition} and Propositions \ref{dih.24}, \ref{dih.8}, and \ref{dih.1}, we obtain the following $\Ztwo$-equivariant (homotopy) cocartesian square:
 
\[
\begin{tikzcd}
\THR(\P^1_{\Sphere}) \ar[d]\ar[r]&
\Sphere[*
\amalg
\coprod_{j\geq 1} S^\sigma]  \ar[d]
\\
\Sphere[* \amalg \coprod_{j\leq - 1} S^\sigma] \ar[r]&
\Sphere[\coprod_{j \in \Z}S^{\sigma}]
\end{tikzcd}
\]

An obvious cancellation, using Proposition \ref{dih.7} on the relevant maps,
yields the following $\Ztwo$-equivariant (homotopy) cocartesian square

\[
\begin{tikzcd}
\THR(\P^1_{\Sphere}) \ar[d]\ar[r]&
\Sphere[*] \ar[d]
\\
\Sphere[*] \ar[r]&
\Sphere[S^{\sigma}]
\end{tikzcd}
\]
and the result follows.
\end{proof}

We now turn to the slightly more subtle computation of the projective line $\P^{\sigma}$ with involution. Recall that $(i_\sharp,i^*,i_*)$ denotes the free-forgetful-cofree adjunction for the map $i\colon \pt \to \rB(\Ztwo)$.
Let $\Sigma^\sigma\colon \Sp_{\Ztwo}\to \Sp_{\Ztwo}$ be the functor $\Sigma^\infty S^\sigma \wedge (-)$, which has an inverse functor $\Sigma^{-\sigma}$ since the sphere $\Sigma^\infty S^\sigma$ is $\wedge$-invertible in $\Sp_{\Ztwo}$.
For integers $m$ and $n$, we set $\Sigma^{m+n\sigma}:=\Sigma^m (\Sigma^\sigma)^{\wedge n}$,
which is a functor $\Sp_{\Ztwo}\to \Sp_{\Ztwo}$.
We also set $\Sigma^{m+n\sigma}:=\Sigma^{m+n\sigma}\Sphere\in \Sp_{\Ztwo}$ for abbreviation.

For an adjoint pair of $\infty$-categories $F:\cC\rightleftarrows \cD:G$, let $ad\colon \id \to GF$ (resp.\ $ad'\colon FG\to \id$) denote the unit (resp.\ counit).

\begin{lem}
\label{period.2}
There is a natural equivalence of functors
\[
\Sigma^{-\sigma}
\simeq
\fib(\id\xrightarrow{ad} i_*i^*).
\]
\end{lem}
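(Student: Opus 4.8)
The plan is to produce a cofiber sequence in $\Sp_{\Ztwo}$ realizing $i_*i^*$ as the cofiber of the unit $\id\to i_*i^*$ twisted by $\Sigma^\sigma$, or equivalently to identify the fiber of the unit map with $\Sigma^{-\sigma}$. The key input is the cofiber sequence of pointed $\Ztwo$-spaces
\[
(\Ztwo)_+ \to S^0 \to S^\sigma,
\]
where the first map is the fold map $\Ztwo \to \ast$ (collapsing the two points of $\Ztwo$ to the non-basepoint of $S^0$) and $S^\sigma$ appears as the cofiber, since $S^\sigma$ has a $\Ztwo$-CW structure with one fixed $0$-cell, one fixed basepoint, and a free $1$-cell (this is exactly the cell structure on $S^\sigma$ recorded in Definition \ref{defSsigma}). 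Applying $\Sigma^\infty$ and smashing with an arbitrary $E\in\Sp_{\Ztwo}$, and using that $\Sigma^\infty(\Ztwo)_+ \simeq i_\sharp i^*\Sphere \simeq i_* i^*\Sphere$ (by Proposition \ref{orth.5}(3)), we get a cofiber sequence
\[
i_\sharp i^* E \to E \to \Sigma^\sigma E
\]
natural in $E$, where the first map is the counit $ad'$ for the adjunction $(i_\sharp, i^*)$, i.e.\ the transfer/Wirthmüller map.

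Next I would dualize. Since $\Ztwo$ is finite, $i_\sharp$ and $i_*$ agree up to the Wirthmüller isomorphism, and in fact here they agree on the nose (Proposition \ref{orth.5}(3): $i_\sharp \simeq i_*$). Dually, the counit $i_\sharp i^* E\to E$ is, under this identification, the analogue of the norm/transfer, and the Spanier–Whitehead dual of the sequence $(\Ztwo)_+\to S^0\to S^\sigma$ is $S^{-\sigma}\to S^0\to (\Ztwo)_+$ with the last map now the diagonal/restriction. Smashing with $E$ gives the cofiber sequence
\[
\Sigma^{-\sigma} E \to E \xrightarrow{ad} i_* i^* E,
\]
now with the second map the unit of $(i^*, i_*)$ (the "diagonal"), because after dualizing the transfer becomes the restriction-then-inclusion, which is precisely the unit $E\to i_*i^*E$. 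Taking fibers of the right-hand map identifies $\fib(\id\xrightarrow{ad} i_*i^*) \simeq \Sigma^{-\sigma}$, naturally in $E$; evaluating at $E=\Sphere$ and then noting both sides are exact functors (they commute with colimits and finite limits) upgrades this to the claimed natural equivalence of functors on all of $\Sp_{\Ztwo}$.

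The main obstacle is bookkeeping about which of the several structure maps — unit of $(i^*,i_*)$, counit of $(i_\sharp, i^*)$, transfer, restriction, norm — appears at each stage, and making sure the dualization swaps them correctly; this is a place where a sign or a direction can easily be gotten backwards. Concretely, the crux is verifying that the attaching map of the free $1$-cell in $S^\sigma$ dualizes to the diagonal $S^0\to(\Ztwo)_+$, so that the connecting map of the dual cofiber sequence is genuinely the unit $ad\colon \id\to i_*i^*$ and not (say) its composite with the flip. I would do this by working at the level of $\Ztwo$-CW complexes / finite $\Ztwo$-spectra, where Spanier–Whitehead duality of the pair $((\Ztwo)_+, S^0)$ is completely explicit, rather than trying to chase the abstract $\infty$-categorical counit. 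Once the maps are correctly identified, passing from spaces to spectra and from a single $E$ to the functor statement is formal.
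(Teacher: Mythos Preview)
Your proposal is correct and follows essentially the same route as the paper. The paper obtains the cofiber sequence $i_\sharp i^*\xrightarrow{ad'}\id\to\Sigma^\sigma$ from a Mayer--Vietoris pushout for $S^\sigma$ (removing the two fixed points), which is equivalent to your CW/cofiber sequence $(\Ztwo)_+\to S^0\to S^\sigma$; it then passes to $\Sigma^{-\sigma}\simeq\fib(\id\xrightarrow{ad}i_*i^*)$ with the single phrase ``by adjunction,'' which is exactly your Spanier--Whitehead/internal-hom dualization step spelled out.
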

\begin{proof}
Consider the cocartesian square of $\Ztwo$-spaces
\[
\begin{tikzcd}
S^\sigma-\{(1,0),(-1,0)\}\ar[d]\ar[r]&
S^\sigma-\{(1,0)\}\ar[d]
\\
S^\sigma-\{(-1,0)\}\ar[r]&
S^\sigma.
\end{tikzcd}
\]
There are $\Ztwo$-homotopy equivalences $S^\sigma-\{(1,0),(-1,0)\}\simeq \Ztwo$ and $S^\sigma-\{(1,0)\}\simeq S^\sigma-\{(-1,0)\}\simeq \pt$.
Together with the explicit descriptions of $i_\sharp$ and $i^*$ in Construction \ref{etale.8}, we obtain a natural equivalence
\begin{equation}
\label{period.2.1}
\Sigma^\sigma
\simeq
\cofib(i_{\sharp}i^*\xrightarrow{ad'} \id).
\end{equation}
By adjunction, we obtain the desired natural equivalence.
\end{proof}

\begin{thm}
\label{period.1}
For any $X \in \Sch_{\Ztwo}$, there is an equivalence of $\Ztwo$-spectra
\[
\THR(X\times \P^\sigma)
\simeq
\THR(X)\oplus \Sigma^{1-\sigma}\THR(X).
\]
\end{thm}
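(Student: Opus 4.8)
The plan is to rerun the argument of the proof of Theorem~\ref{period.3}: use equivariant Nisnevich descent to present $\THR(X\times\P^\sigma)$ as a homotopy cartesian square assembled from dihedral bar constructions, and then read off the splitting using Lemma~\ref{period.2}. As there, I would argue as if everything takes place over $\Sphere$ rather than over $X$, repeatedly using Proposition~\ref{thrlog.2} to identify $\THR$ of the relevant (log) schemes over $X$ with $\THR(X)\wedge\Sphere[\Bdi(-)]$; write $T:=\THR(X)$ for brevity.

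First I would set up the equivariant Nisnevich distinguished square (Definition~\ref{etale.36}) attached to the standard chart cover of $\P^\sigma$, now keeping track of the involution that swaps the two charts. The coordinate points $[1:0]$ and $[0:1]$ are interchanged, so $U:=\P^\sigma\setminus\{[1:0],[0:1]\}$ is a $\Ztwo$-stable open isomorphic to $\Spec\Sphere[\Z^\sigma]$ (the coordinate $t$ satisfying $w(t)=t^{-1}$), while the two affine charts, identified through their coordinates, assemble to the $\Ztwo$-scheme $Y:=i_\sharp\A^1$; the upper-left corner is $V:=Y\times_{\P^\sigma}U\cong i_\sharp\Gm$. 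The product of this square with $X$ is again an equivariant Nisnevich distinguished square, so Corollary~\ref{etale.37}, together with Proposition~\ref{thrlog.2} and the scheme-level identity $\THR\circ i_\sharp i^*\simeq i_*i^*\circ\THR$ of Proposition~\ref{etale.35}, yields a homotopy cartesian square in $\Sp_{\Ztwo}$
\[
\begin{tikzcd}
\THR(X\times\P^\sigma)\ar[r]\ar[d] &
T\wedge\Sphere[\Bdi\Z^\sigma]\ar[d,"g"]
\\
i_*i^*\bigl(T\wedge\Sphere[\Bdi\N]\bigr)\ar[r,"h"] &
i_*i^*\bigl(T\wedge\Sphere[\Bdi\Z]\bigr),
\end{tikzcd}
\]
where $\N$ and $\Z$ carry the trivial involution and the identification $i^*\Bdi\Z^\sigma=\Bcy\Z=i^*\Bdi\Z$ is used in the target of $g$.

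The key point is that the map $V\to U$ is exactly the counit $i_\sharp i^*U\to U$ of the free/forgetful adjunction on $\Ztwo$-schemes (the two copies of $\Gm$ map into $U$ via $t$ and $t^{-1}$); applying the contravariant functor $\THR$ and using the ambidexterity of $i$ identifies $g$ with the unit $ad\colon\id\to i_*i^*$ of Lemma~\ref{period.2} applied to $E:=T\wedge\Sphere[\Bdi\Z^\sigma]$. Hence
\[
\THR(X\times\P^\sigma)\simeq\fib\Bigl(\,i_*i^*\bigl(T\wedge\Sphere[\Bdi\N]\bigr)\xrightarrow{\ \bar h\ }\cofib(g)\,\Bigr),\qquad\cofib(g)\simeq\Sigma^{1-\sigma}E,
\]
the last equivalence by Lemma~\ref{period.2}, with $\bar h$ being $h$ followed by the quotient map. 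Next I would substitute the component decompositions \eqref{ndidecomposition}: $\Bdi\N\simeq *\amalg\coprod_{j\geq1}S^\sigma(j)$ (Proposition~\ref{dih.1}); $\Bdi(\Z^\sigma;0)\simeq S^1$ with trivial action while $\Bdi(\Z^\sigma;\{j,-j\})\simeq i_\sharp i^*S^1$ for $j\geq1$ (Proposition~\ref{dih.12} and \eqref{dih.24.2}, i.e.\ \eqref{dih.12.1}); and $\Bdi\Z\simeq\coprod_{j\in\Z}\Bdi(\Z;j)$ with each $\Bdi(\Z;j)\simeq S^\sigma$ (\eqref{dih.24.1} and Proposition~\ref{dih.8}). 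Since $\Bdi(\N;j)\to\Bdi(\Z;j)$ is an equivalence for $j\geq1$ (Proposition~\ref{dih.7}) and $\Sigma^{1-\sigma}(i_*i^*Z)\simeq i_*i^*Z$ (projection formula with $i^*S^\sigma\simeq S^1$), all summands indexed by $j\geq1$ in the source of $\bar h$ map by equivalences onto the correspondingly indexed summands of $\cofib(g)\simeq\Sigma^{1-\sigma}E$ and so contribute nothing to the fiber — the same cancellation pattern as in the proof of Theorem~\ref{period.3}. Only the $j=0$ part survives, where $\bar h$ becomes the composite
\[
i_*i^*T\ \hookrightarrow\ i_*i^*\bigl(T\wedge\Sphere[S^1]\bigr)\ \longrightarrow\ \cofib\bigl(ad\colon T\wedge\Sphere[S^1]\to i_*i^*(T\wedge\Sphere[S^1])\bigr),
\]
the first arrow being $i_*i^*$ of the basepoint inclusion $T=T\wedge\Sphere\hookrightarrow T\wedge\Sphere[S^1]$, with $S^1$ carrying the trivial action.

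Finally I would compute this fiber. Using $T\wedge\Sphere[S^1]\simeq T\oplus\Sigma T$ and $\cofib(ad_T)\simeq\Sigma^{1-\sigma}T$ (Lemma~\ref{period.2}), the displayed composite takes the form
\[
i_*i^*T\xrightarrow{(q,0)}\Sigma^{1-\sigma}T\oplus\Sigma^{2-\sigma}T,
\]
where $q\colon i_*i^*T\to\Sigma^{1-\sigma}T$ is the cofiber projection of $ad_T$. Its fiber is $\fib(q)\oplus\Sigma^{-1}\Sigma^{2-\sigma}T\simeq\fib(q)\oplus\Sigma^{1-\sigma}T$, and $\fib(q)\simeq T$ because $q$ is the third map in the cofiber triangle $T\xrightarrow{ad}i_*i^*T\xrightarrow{q}\Sigma^{1-\sigma}T$. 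This gives $\THR(X\times\P^\sigma)\simeq\THR(X)\oplus\Sigma^{1-\sigma}\THR(X)$, as claimed. I expect the main obstacle to be the third paragraph: correctly identifying $g$ with the unit map (tracing the ambidexterity of $i$ through the contravariant $\THR$) and then bookkeeping the $j\neq0$ summands carefully enough to see that passing to $\cofib(g)$ collapses all of them, leaving exactly the $j=0$ contribution — the same combinatorics that appears in simpler form in the $\P^1$ case.
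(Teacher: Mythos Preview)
Your approach is essentially the paper's: the same equivariant Nisnevich square \eqref{period.1.3}, the same identification of the right map $g$ with the unit $ad\colon E\to i_*i^*E$, the same decomposition into $j$-indexed summands, and the same use of Lemma~\ref{period.2} at the end. The packaging via $\fib(\bar h)$ with $\bar h\colon i_*i^*(T\wedge\Sphere[\Bdi\N])\to\cofib(g)$ is equivalent to the paper's direct analysis of the cartesian square, and your $j=0$ computation is correct.

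The obstacle you flag in the third paragraph is real and is exactly where your argument has a gap. For $j\geq 1$ you observe that the $j$-summand of the source of $\bar h$ and the $\{j,-j\}$-summand of $\cofib(g)\simeq\Sigma^{1-\sigma}E$ are both abstractly equivalent to $i_*Z$ (with $Z=i^*T\wedge\Sphere[S^1]$), but you do not verify that the map $\bar h_j$ between them is an equivalence. This is not automatic: in the $\Z$-grading of the pushout corner $i_*i^*E$, the unit $g$ on the orbit piece $E_j=i_*Z$ is the map $(\id,\tau)\colon i_*Z\to i_*Z\oplus i_*Z$ (landing in the $j$- and $(-j)$-graded pieces, with $\tau$ the swap on $i_*Z$), while $h$ lands only in the $j$-piece via $(\id,0)$. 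One then has to check that $(\id,0)$ followed by the quotient by $(\id,\tau)$ is an equivalence, which amounts to the invertibility of $\begin{pmatrix}\id & \id\\ 0 & \tau\end{pmatrix}$. The paper does exactly this, phrased as checking that the $j>0$ sub-square $Q$ in \eqref{period.1.1} is cartesian by applying $i^*$ and $\Phi^{\Ztwo}$ and writing out the resulting $4\times 2$ matrices. Once you supply that check, your argument is complete.
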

\begin{proof}
As before, we will write the proof as if everything takes place over $\Sphere$.
Consider the cartesian equivariant Nisnevich square
\begin{equation}
\label{period.1.3}
\begin{tikzcd}
\Ztwo\times \G_{m,\Sphere}^\sigma\ar[d]\ar[r]&
(\P_{\Sphere}^1-\infty) \amalg (\P_{\Sphere}^1-0)\ar[d]
\\
\G_{m,\Sphere}^\sigma\ar[r]&
\P_{\Sphere}^{\sigma}.
\end{tikzcd}
\end{equation}
as in \cite[Lemma 2.23]{Ca}, where the $\Ztwo$-action on the upper right corner is induced by the action on $\P_{\Sphere}^\sigma$.

Propositions \ref{etale.10} and \ref{etale.35} give equivalences
\[
\THR((\P_\Sphere^1-\infty)\amalg (\P_\Sphere^1-0))
\simeq
i_*\THH(i^*\A_\Sphere^1)
\simeq
i_*i^*\THR(\A_\Sphere^1).
\]
We similarly have an equivalence $\THR(\Ztwo\times \G_{m,\Sphere}^\sigma)\simeq i_*i^*\THR(\G_{m,\Sphere}^\sigma)$ since there are isomorphisms $\Ztwo\times \G_{m,\Sphere}^\sigma \cong \Ztwo \times \G_{m,\Sphere}$ and $i^*\G_{m,\Sphere}^\sigma\cong i^*\G_{m,\Sphere}$.
The induced map $\THR(\G_{m,\Sphere}^\sigma)\to \THR(\Z/2\times \G_{m,\Sphere}^\sigma)$ can be identified with the map
\[
\THR(\G_{m,\Sphere}^\sigma)
\to
i_*i^*\THR(\G_{m,\Sphere}^\sigma)
\]
obtained by the unit of the adjunction pair $(i^*,i_*)$.
As in the proof of Theorem \ref{period.3},
use Propositions \ref{dih.24}, \ref{dih.8}, \ref{dih.1}, and \ref{dih.7} to see that the induced map $\THR(\A_{\Sphere}^1)\to \THR(\G_{m,\Sphere})$ can be identified with the map
\[
\Sphere \oplus \bigoplus_{j>0} \Sigma_+^\infty S^\sigma
\to
\Sphere \oplus \bigoplus_{j>0} i_*i^*\Sigma_+^\infty S^\sigma
\]
obtained by the unit of the adjunction pair $(i^*,i_*)$.
By \eqref{dih.12.1}, we obtain an equivalence
\[
\THR(\G_{m,\Sphere}^\sigma)
\simeq
\Sphere \oplus \bigoplus_{j>0} i_*i^*\Sigma_+^\infty S^1.
\]
Applying $\THR$ to \eqref{period.1.3} and combining with the above discussion yield the following homotopy cocartesian square:
\begin{equation}
\label{period.1.2}
\begin{tikzcd}[column sep=small, row sep=small]
\THR(\P_{\Sphere}^\sigma)\ar[dd]\ar[r]&
i_*i^*(\Sphere \oplus \bigoplus_{j> 0} \Sigma^\infty_+ S^\sigma)\ar[d,"\simeq"]
\\
&
i_*i^*(\Sphere \oplus \bigoplus_{j>0} \Sigma^\infty_+ S^1)\ar[d]
\\
\Sigma_+^\infty S^1\oplus \bigoplus_{j>0} i_*i^* \Sigma_+^\infty S^1 \ar[r]&
i_*i^*(\Sigma_+^\infty S^1\oplus \bigoplus_{j>0} i_*i^* \Sigma_+^\infty S^1) 
\end{tikzcd}
\end{equation}

Consider the commutative square
\begin{equation}
\label{period.1.1}
Q
:=
\begin{tikzcd}[column sep=large]
0\ar[d]\ar[rr]&&
\bigoplus_{j>0} i_*i^*\Sigma_+^\infty S^1\ar[d]
\\
\bigoplus_{j>0} i_*i^*\Sigma_+^\infty S^1\ar[rr]&&
\bigoplus_{j>0} i_*i^*i_*i^*\Sigma_+^\infty S^1
\end{tikzcd}
\end{equation}
extracted from \eqref{period.1.2}, where the right vertical map (resp.\ lower horizontal map) is obtained by applying $i_*i^*$ to the left (resp.\ right) of the unit map $\id \to i_*i^*$.
Since $\Phi^{\Ztwo} i_*\simeq 0$ by Proposition \ref{orth.5}(3),(5), $\Phi^{\Ztwo}Q$ is cartesian.
The objects in the square $i^*Q$ are direct sums of $\bigoplus_{j>0}\Sigma_+^\infty S^1$, and the right vertical and lower horizontal maps are the matrix multiplications given by
\[
\left(
\begin{array}{cc}
1 & 0\\
1 & 0\\
0 & 1\\
0 & 1
\end{array}
\right)
\quad
\left(
\begin{array}{cc}
1 & 0\\
0 & 1\\
0 & 1\\
1 & 0
\end{array}
\right)
\]
for certain choices of bases. From this, one can check that $i^*Q$ is cartesian too.
It follows that $Q$ is cartesian.

Hence the other direct summand of the square \eqref{period.1.2}
\[
\begin{tikzcd}
\THR(\P_{\Sphere}^\sigma)\ar[r]\ar[d]&
i_*i^*\Sigma^0\ar[d]
\\
\Sigma^0\oplus \Sigma^1\ar[r]&
i_*i^*\Sigma^0\oplus i_*i^*\Sigma^1
\end{tikzcd}
\]
is also cartesian.
It follows that $\THR(\P_\Sphere^\sigma)$ is equivalent to the direct sum
\[
\lim(\Sigma^0\to i_*i^*\Sigma^0\leftarrow i_*i^*\Sigma^0) \oplus \lim(\Sigma^1\to i_*i^*\Sigma^1\leftarrow 0).
\]
Together with Lemma \ref{period.2}, we obtain the desired equivalence.
\end{proof}

\begin{prop}
There is an equivalence of $\Ztwo$-spectra
\[
\THR(X)
\simeq
\Omega_{\P^1\wedge \P^\sigma}\THR(X).
\]
\end{prop}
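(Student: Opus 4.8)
The plan is to derive this periodicity formally from the two preceding computations, Theorems \ref{period.3} and \ref{period.1}. First I would unwind the statement. Writing $\overline{\THR}(X\wedge Y)$ for the cofiber (equivalently, a chosen complementary summand) of $\THR(X)\to\THR(X\times Y)$ along the section coming from a fixed rational point of $Y$, one has $\THR(X)\simeq\Omega_{\P^1\wedge\P^\sigma}\THR(X)$ precisely because $\Omega_{\P^1\wedge\P^\sigma}$ is inverse to the endofunctor $X\mapsto\overline{\THR}(X\wedge\P^1\wedge\P^\sigma)$ of $\Sp_{\Ztwo}$-valued presheaves, and the claim amounts to saying this endofunctor is equivalent to the identity. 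By associativity of the reduced smash, $\overline{\THR}(X\wedge\P^1\wedge\P^\sigma)\simeq\overline{\THR}\big((X\wedge\P^1)\wedge\P^\sigma\big)$, so it suffices to compute the two one-variable operations $X\mapsto\overline{\THR}(X\wedge\P^1)$ and $X\mapsto\overline{\THR}(X\wedge\P^\sigma)$ and then compose.

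Theorem \ref{period.3} identifies $\overline{\THR}(X\wedge\P^1)$ with $\Sigma^{\sigma-1}\THR(X)$ and Theorem \ref{period.1} identifies $\overline{\THR}(X\wedge\P^\sigma)$ with $\Sigma^{1-\sigma}\THR(X)$; since both theorems hold for arbitrary $X\in\Sch_{\Ztwo}$, I may feed $X\times\P^1$ into Theorem \ref{period.1}. Concretely, applying Theorem \ref{period.1} with $X$ replaced by $X\times\P^1$ gives $\THR(X\times\P^1\times\P^\sigma)\simeq\THR(X\times\P^1)\oplus\Sigma^{1-\sigma}\THR(X\times\P^1)$, whose complement to $\THR(X\times\P^1)$ along the base-point section of $\P^\sigma$ is $\Sigma^{1-\sigma}\THR(X\times\P^1)$; plugging in Theorem \ref{period.3} and using $\Sigma^{1-\sigma}\Sigma^{\sigma-1}\simeq\id$ yields $\overline{\THR}\big((X\times\P^1)\wedge\P^\sigma\big)\simeq\Sigma^{1-\sigma}\THR(X)\oplus\THR(X)$. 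Finally, $\overline{\THR}(X\wedge\P^1\wedge\P^\sigma)$ is the fiber of the induced map $\overline{\THR}\big((X\times\P^1)\wedge\P^\sigma\big)\to\overline{\THR}(X\wedge\P^\sigma)\simeq\Sigma^{1-\sigma}\THR(X)$, which under these identifications is the projection killing the $\THR(X)$-summand; its fiber is $\THR(X)$. Dually, in terms of suspensions, $\Sigma_{\P^1\wedge\P^\sigma}\THR\simeq\Sigma^{\sigma-1}\Sigma^{1-\sigma}\THR\simeq\THR$, so $\Omega_{\P^1\wedge\P^\sigma}\THR\simeq\THR$.

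The main obstacle is purely bookkeeping: one must check that the splitting of $\THR(X\times\P^1\times\P^\sigma)$ produced by Theorem \ref{period.1} is compatible with the base-point section $\P^1\hookrightarrow\P^1\times\P^\sigma$, and likewise that composing the $\P^1$- and $\P^\sigma$-splittings reproduces the total-fiber description of $\overline{\THR}(X\wedge\P^1\wedge\P^\sigma)$. These are naturality statements for the equivalences of Theorems \ref{period.3} and \ref{period.1}; since those equivalences are obtained by splitting off the contribution of a point in the equivariant Nisnevich squares of Corollary \ref{etale.37}, they are functorial in the base scheme and respect base points, which is all that is needed. If one prefers to avoid tracking maps, it suffices to know that the two equivalences are natural in $X$: then "$\overline{\THR}((-)\wedge\P^1)$" and "$\overline{\THR}((-)\wedge\P^\sigma)$" are the mutually inverse autoequivalences $\Sigma^{\sigma-1}$ and $\Sigma^{1-\sigma}$ of $\Sp_{\Ztwo}$, hence their composite "$\overline{\THR}((-)\wedge\P^1\wedge\P^\sigma)$" and its adjoint $\Omega_{\P^1\wedge\P^\sigma}$ are the identity.
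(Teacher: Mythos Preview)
Your proposal is correct and takes essentially the same approach as the paper. The paper's proof is the single sentence ``Combine Theorems \ref{period.3} and \ref{period.1}'', and you have simply unpacked what that combination means: the reduced $\P^1$- and $\P^\sigma$-parts contribute the mutually inverse shifts $\Sigma^{\sigma-1}$ and $\Sigma^{1-\sigma}$, so their composite is the identity.
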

\begin{proof}
Combine Theorems \ref{period.3} and \ref{period.1}.
\end{proof}

\subsection{THR of projective spaces}

\begin{df}
As usual, for any integer $n\geq 1$, we consider the $n$-cube $(\Delta^1)^n$ as a partially ordered set, and use the same symbol for the associated category. For an $\infty$-category $\cC$, an \emph{$n$-cube in $\cC$} is a functor
\[
Q\colon \Nerve (\Delta^1)^n \to \cC,
\]
compare \cite[Definition 6.1.1.2]{HA}.
If $\cC$ admits limits, the \emph{total fiber of $Q$} is defined to be
\[
\tfib(Q)
:=
\fib(Q(0,\ldots,0) \to \lim Q|_{(\Delta^1)^n-\{(0,\ldots,0)\}}).
\]
\end{df}

The following $\infty$-categorical result can be shown by dualizing the arguments from \cite[Proposition A.6.5]{BPO2}.

\begin{prop}
\label{proj.1}
Let $Q$ be an $n$-cube in an $\infty$-category $\cC$ with small limits, where $n$ is a nonnegative integer.
Then for every integer $1\leq i\leq n$, there exists a fiber sequence
\[
\tfib(Q)
\to
\tfib(Q|_{(\Delta^1)^{i-1}\times \{0\} \times (\Delta^1)^{n-i-1}})
\to
\tfib(Q|_{(\Delta^1)^{i-1}\times \{1\} \times (\Delta^1)^{n-i-1}}).
\]
\end{prop}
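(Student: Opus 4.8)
The plan is to deduce this from the formally dual statement \cite[Proposition A.6.5]{BPO2} by passing to the opposite category. Let $\tau$ be the order-reversing involution $(\epsilon_1,\dots,\epsilon_n)\mapsto(1-\epsilon_1,\dots,1-\epsilon_n)$ of the poset $(\Delta^1)^n$; it identifies $((\Delta^1)^n)^{op}$ with $(\Delta^1)^n$, swaps the initial and terminal vertices, and swaps the punctured cube $(\Delta^1)^n\setminus\{(0,\dots,0)\}$ with the ``copunctured'' cube $(\Delta^1)^n\setminus\{(1,\dots,1)\}$. Hence, given the $n$-cube $Q$ in $\cC$, the composite $Q':=(Q\circ\tau)^{op}$ is an $n$-cube in $\cC^{op}$, which admits small colimits since $\cC$ admits small limits; unwinding $\colim_{\cC^{op}}=\lim_{\cC}$ and $\cofib_{\cC^{op}}=\fib_{\cC}$ gives a natural equivalence $\tcofib(Q')\simeq\tfib(Q)$. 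Since $\tau$ also interchanges the two faces of $Q$ in the $i$-th direction, the same bookkeeping identifies $\tcofib$ of the $x_i=1$ (resp.\ $x_i=0$) face of $Q'$ with $\tfib$ of the $x_i=0$ (resp.\ $x_i=1$) face of $Q$.

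Now \cite[Proposition A.6.5]{BPO2}, applied to $Q'$ and the direction $i$, supplies a cofiber sequence $\tcofib(Q'|_{x_i=0})\to\tcofib(Q'|_{x_i=1})\to\tcofib(Q')$ in $\cC^{op}$. A cofiber sequence in $\cC^{op}$ is a fiber sequence in $\cC$ read in reverse — a pushout square with initial-object corner becomes a pullback square with terminal-object corner — so, substituting the translations of the previous paragraph, this becomes the fiber sequence
\[
\tfib(Q)\to\tfib(Q|_{x_i=0})\to\tfib(Q|_{x_i=1})
\]
in $\cC$, which is the assertion. The only point needing care in this reduction is that the equivalence along $\tau$ and $(-)^{op}$ is compatible with restriction to faces and with the maps between total fibers induced by the cube, which is a direct check on posets.

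For completeness we indicate what underlies \cite[Proposition A.6.5]{BPO2}, since that is where the genuine work sits. Currying $Q'$ along the $i$-th coordinate presents it as an arrow $Q'_0\to Q'_1$ in $\Fun(\Nerve(\Delta^1)^{n-1},\cC^{op})$ between its two $i$-th faces. Writing $\square:=(\Delta^1)^{n-1}$ and $\partial\square:=\square\setminus\{\text{terminal vertex}\}$ for the copunctured $(n-1)$-cube, one analyses cocones out of the copunctured $n$-cube, which splits according to the value of the $i$-th coordinate into the full face $\square$ mapping into the copunctured face $\partial\square$, and obtains
\[
\colim_{(\Delta^1)^n\setminus\{(1,\dots,1)\}}Q'\;\simeq\;\big(\colim_{\square}Q'_0\big)\amalg_{\colim_{\partial\square}Q'_0}\colim_{\partial\square}Q'_1.
\]
Because $\square$ has a terminal object, $\colim_{\square}Q'_0\simeq Q'_0(\text{terminal vertex})$; feeding this into the definition of $\tcofib$ and pasting the resulting pushout squares (pushouts commute with pushouts) yields $\tcofib(Q')\simeq\cofib(\tcofib(Q'_0)\to\tcofib(Q'_1))$, whence the cofiber sequence. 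The main obstacle is exactly this computation: identifying the colimit over the copunctured cube with the displayed iterated pushout and then matching the two descriptions of $\tcofib(Q')$ requires a careful cofinality-and-pasting argument, which is carried out in \cite[Proposition A.6.5]{BPO2}; in the write-up we therefore simply invoke that result after the dualization above.
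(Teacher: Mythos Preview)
Your proposal is correct and follows exactly the approach indicated in the paper, which simply says that the result ``can be shown by dualizing the arguments from \cite[Proposition A.6.5]{BPO2}''; you have carried out this dualization explicitly via the order-reversing involution $\tau$ and the passage to $\cC^{op}$, including the bookkeeping that swaps the two $i$-th faces. The additional paragraph sketching the content of \cite[Proposition A.6.5]{BPO2} is supplementary but accurate.
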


\begin{prop}
\label{proj.5}
Let $\cC$ be a symmetric monoidal stable $\infty$-category with small limits such that the tensor product operation on $\cC$ preserves fiber sequences in each variable.
If $Q$ is an $n$-cube and $f\colon X_0\to X_1$ is a map in $\cC$, then there is a canonical equivalence
\[
\tfib(Q)\otimes \fib(f)
\simeq
\tfib(Q\otimes f),
\]
where $Q\otimes f$ is the associated $(n+1)$-cube sending $(a_1,\ldots,a_{n+1})\in (\Delta^1)^{n+1}$ to $Q(a_1,\ldots,a_n)\otimes X_{a_{n+1}}$.
\end{prop}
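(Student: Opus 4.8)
The plan is to deduce the statement from Proposition \ref{proj.1} together with the hypothesis that $\otimes$ preserves fiber sequences, in two steps. First I would isolate the special case in which the cube is tensored with a single object: for an $m$-cube $R$ in $\cC$ and an object $Y\in\cC$, write $R\otimes Y$ for the $m$-cube $(a_1,\dots,a_m)\mapsto R(a_1,\dots,a_m)\otimes Y$, and prove a \emph{natural} equivalence $\tfib(R\otimes Y)\simeq\tfib(R)\otimes Y$. Second, I would run the map $f\colon X_0\to X_1$ through this by regarding the $(n+1)$-cube $Q\otimes f$ as a morphism of $n$-cubes along its last coordinate.

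For the first step the natural argument is induction on $m$. The case $m=0$ is a tautology, since the total fiber of a $0$-cube is the underlying object. For the inductive step, view $R$ as a morphism $R_0\to R_1$ of $(m-1)$-cubes by restricting the last coordinate to $0$ and to $1$; then $R\otimes Y$ is the morphism $R_0\otimes Y\to R_1\otimes Y$. Applying Proposition \ref{proj.1} (restricting the last coordinate) to both $R$ and $R\otimes Y$ produces fiber sequences $\tfib(R)\to\tfib(R_0)\to\tfib(R_1)$ and $\tfib(R\otimes Y)\to\tfib(R_0\otimes Y)\to\tfib(R_1\otimes Y)$; the inductive hypothesis identifies the second with $\tfib(R\otimes Y)\to\tfib(R_0)\otimes Y\to\tfib(R_1)\otimes Y$, and since $(-)\otimes Y$ preserves fiber sequences the fiber of the last map is $\tfib(R)\otimes Y$. (Alternatively one can observe that $(-)\otimes Y$ preserves fiber sequences, hence is exact, hence preserves the finite limits out of which $\tfib$ is built; I would keep the inductive version to stay strictly within the cited results.)

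For the second step I would note that, restricting its $(n+1)$-st coordinate, the $(n+1)$-cube $Q\otimes f$ is exactly the morphism of $n$-cubes $Q\otimes X_0\to Q\otimes X_1$ in the notation of the first step, since $(Q\otimes f)(a_1,\dots,a_n,\epsilon)=Q(a_1,\dots,a_n)\otimes X_\epsilon$. Proposition \ref{proj.1} then gives a fiber sequence $\tfib(Q\otimes f)\to\tfib(Q\otimes X_0)\to\tfib(Q\otimes X_1)$, which by the first step is equivalent to $\tfib(Q\otimes f)\to\tfib(Q)\otimes X_0\to\tfib(Q)\otimes X_1$; since $\otimes$ preserves fiber sequences in the second variable, the fiber of $\tfib(Q)\otimes X_0\to\tfib(Q)\otimes X_1$ is $\tfib(Q)\otimes\fib(f)$, which yields the desired equivalence.

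The main obstacle is not any single computation but the bookkeeping needed to make everything \emph{canonical} and mutually compatible: the equivalence of the first step must be produced as a natural transformation of functors of $R$ (and $Y$), so that in both the inductive step and the second step it may be applied "naturally in $\epsilon\in\{0,1\}$" and hence be compatible with the maps $R_0\to R_1$ and $X_0\to X_1$ that glue the respective fiber sequences together. Concretely this means phrasing the induction as a statement about the functors $R\mapsto\tfib(R\otimes Y)$ and $R\mapsto\tfib(R)\otimes Y$ on $\Fun(\Nerve(\Delta^1)^m,\cC)$ rather than about individual cubes, and recording that the fiber sequence of Proposition \ref{proj.1} is itself natural in the cube. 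Once this is in place, the remaining verifications are formal.
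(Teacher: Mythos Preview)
Your proposal is correct and follows essentially the same route as the paper: the paper's proof explicitly records the intermediate step $\tfib(Q)\otimes X\simeq\tfib(Q\otimes X)$ for an object $X$ (your first step) and then invokes Proposition~\ref{proj.1} (your second step), citing a dual argument in \cite{BPO2} for details. Your write-up is in fact more detailed than the paper's own proof, and your remark about keeping the first-step equivalence natural in the cube is exactly the care needed to make the second step go through.
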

\begin{proof}
This is again obtained by dualizing the arguments for the corresponding one \cite[Proposition A.6.7]{BPO2}.
An intermediate step is to show $\tfib(Q)\otimes X\simeq \tfib(Q\otimes X)$ for any $X\in \cC$, where $Q\otimes X$ is the associated $n$-cube sending $(a_1,\ldots,a_n)\in (\Delta^1)^n$ to $Q(a_1,\ldots,a_n)\otimes X$.
Then one can use Proposition \ref{proj.1}.
\end{proof}

\begin{prop}
\label{proj.4}
Let $\cC$ be a symmetric monoidal stable $\infty$-category with small limits such that the tensor product operation on $\cC$ preserves fiber sequences
in each variable.
If $i_1\colon  X_{0,1} \to  X_{1,1}$, $\ldots$, $i_n \colon  X_{0,n} \to X_{1,n}$ are maps in $\cC$, then there is a canonical equivalence
\[
\fib(i_1)\otimes \cdots \otimes \fib(i_n) \simeq \tfib(i_1\otimes \cdots \otimes i_n),
\]
where $i_1\otimes \cdots \otimes i_n$ is the associated $n$-cube sending $(a_1,\ldots,a_n)\in (\Delta^1)^n$ to $X_{a_1,1}\otimes \cdots \otimes X_{a_n,n}$ and arrows given by tensor products of $i_j$s and identities.
\end{prop}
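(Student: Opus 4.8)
The plan is to argue by induction on $n$, with Proposition \ref{proj.5} providing the inductive step. The base case $n=1$ is a tautology: a $1$-cube in $\cC$ is just a morphism $X_{0,1}\to X_{1,1}$, and by the definition of total fiber one has $\tfib(i_1)=\fib(i_1)$, so the asserted equivalence is the identity.

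For the inductive step, assume $n\geq 2$ and that the claim holds for any $n-1$ morphisms. Let $Q'$ denote the $(n-1)$-cube $i_1\otimes\cdots\otimes i_{n-1}$ built from the first $n-1$ maps. The inductive hypothesis supplies a canonical equivalence $\fib(i_1)\otimes\cdots\otimes\fib(i_{n-1})\simeq\tfib(Q')$. Tensoring this equivalence with $\fib(i_n)$ and then applying Proposition \ref{proj.5} to the cube $Q'$ and the morphism $f=i_n$ gives canonical equivalences
\[
\fib(i_1)\otimes\cdots\otimes\fib(i_n)
\simeq
\tfib(Q')\otimes\fib(i_n)
\simeq
\tfib(Q'\otimes i_n).
\]
It remains only to identify the $(n+1-1)=n$-cube $Q'\otimes i_n$ with $i_1\otimes\cdots\otimes i_n$: by construction $Q'\otimes i_n$ sends $(a_1,\ldots,a_n)\in(\Delta^1)^n$ to $Q'(a_1,\ldots,a_{n-1})\otimes X_{a_n,n}=X_{a_1,1}\otimes\cdots\otimes X_{a_{n-1},n-1}\otimes X_{a_n,n}$, with transition maps given by tensoring the $i_j$'s and identities, which is precisely the $n$-cube $i_1\otimes\cdots\otimes i_n$ in the statement. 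This closes the induction.

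Note that the hypotheses needed to invoke Proposition \ref{proj.5} at each stage — that $\cC$ is symmetric monoidal stable with small limits and that $\otimes$ preserves fiber sequences in each variable — are exactly the standing assumptions of the present proposition, so the argument is legitimate at every step. There is no real obstacle here; the only point requiring a little care is the word \emph{canonical}, but since Proposition \ref{proj.5} already yields a canonical equivalence and the identification $Q'\otimes i_n = i_1\otimes\cdots\otimes i_n$ holds on the nose, no coherence difficulty beyond that already handled in Proposition \ref{proj.5} is introduced. (One could alternatively prove the statement directly by iterating the fiber sequences of Proposition \ref{proj.1}, but the inductive route through Proposition \ref{proj.5} is the shortest.)
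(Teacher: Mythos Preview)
Your proof is correct and follows essentially the same approach as the paper, which simply says ``Use Proposition \ref{proj.5} repeatedly.'' Your inductive argument is exactly the unpacking of this one-line proof.
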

\begin{proof}
Use Proposition \ref{proj.5} repeatedly.
\end{proof}

\begin{prop}
\label{proj.3}
Suppose $X\in \Sch_{\Ztwo}$, and let $\{U_1,\ldots,U_n\}$ be a Zariski covering of $X$ with the induced involutions.
Let $Q$ be the $S$-cube given by
\[
Q(\{i_1,\ldots,i_r\}):=U_{i_1}\cap \cdots \cap U_{i_r}
\]
for nonempty $\{i_1,\ldots,i_r\}\subset S:=\{1,\ldots,n\}$ and $Q(\emptyset):=X$.
Then there is an equivalence
\[
\tfib(\THR(Q))
\simeq
0.
\]
\end{prop}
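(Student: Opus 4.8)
The plan is to deduce the vanishing of the total fiber from the Zariski descent that $\THR$ enjoys as a sheaf on $\Sch_{\Ztwo}$ (Definition \ref{etale.34}), by induction on the number $n$ of opens in the cover. After unwinding the definition of $\tfib$, the assertion $\tfib(\THR(Q))\simeq 0$ says precisely that the restriction map $\THR(X)\to \lim_{\emptyset\neq T\subseteq S}\THR(\bigcap_{i\in T}U_i)$ is an equivalence, i.e.\ that $\THR$ satisfies \v{C}ech descent for the finite $\Ztwo$-equivariant open cover $\{U_1,\dots,U_n\}$; here $Q$ is the cube whose vertex at $T\subseteq S$ is $\bigcap_{i\in T}U_i$, with $T=\emptyset$ giving $X$, and whose structure maps are the evident open inclusions, so that $\THR(Q)$ is the cube of restriction maps.

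I first record the two inputs. Since $\THR$ is a sheaf for the isovariant \'etale topology and the empty family covers $\emptyset$, we have $\THR(\emptyset)\simeq 0$. More importantly, whenever $W,U\subseteq X$ are $\Ztwo$-stable opens with $W\cup U=X$, the commutative square with corners $X$, $W$, $U$, $W\cap U$ is an equivariant Nisnevich distinguished square in the sense of Definition \ref{etale.36}: take $j\colon W\hookrightarrow X$ as the open immersion and $f\colon U\hookrightarrow X$ as the equivariant \'etale map, and note the condition on reduced complements holds because $X\setminus W=U\setminus(W\cap U)$ as $X=W\cup U$. Hence by Corollary \ref{etale.37} the square obtained by applying $\THR$ is cartesian. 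For the base case $n=1$ the cover forces $U_1=X$, so $\THR(Q)$ is the identity $\THR(X)\xrightarrow{\id}\THR(X)$ and $\tfib(\THR(Q))\simeq 0$.

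For the inductive step, assume the claim for all covers with fewer than $n$ opens, $n\geq 2$. Put $W:=U_1\cup\cdots\cup U_{n-1}$, a $\Ztwo$-stable open; then $\{U_1,\dots,U_{n-1}\}$ covers $W$ and $\{U_i\cap U_n\}_{i<n}$ covers $W\cap U_n$. Write $Q_0$ and $Q_1$ for the faces of $\THR(Q)$ spanned by those $T\subseteq S$ with $n\notin T$, respectively $n\in T$. Applying Proposition \ref{proj.1} in the coordinate corresponding to $U_n$ gives a fiber sequence $\tfib(\THR(Q))\to\tfib(Q_0)\to\tfib(Q_1)$. Now $Q_0$ is the $(n-1)$-cube $T\mapsto\THR(\bigcap_{i\in T}U_i)$ over $T\subseteq\{1,\dots,n-1\}$ with $\THR(X)$ at the initial vertex, and the inductive hypothesis for the cover $\{U_1,\dots,U_{n-1}\}$ of $W$ identifies $\lim_{\emptyset\neq T}\THR(\bigcap_{i\in T}U_i)$ with $\THR(W)$, so $\tfib(Q_0)\simeq\fib(\THR(X)\to\THR(W))$. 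Likewise $Q_1$ is the \v{C}ech cube of $\{U_i\cap U_n\}_{i<n}$ with $\THR(U_n)$ at the initial vertex, so the inductive hypothesis for the cover of $W\cap U_n$ gives $\tfib(Q_1)\simeq\fib(\THR(U_n)\to\THR(W\cap U_n))$. Hence $\tfib(\THR(Q))$ is the fiber of the map $\fib(\THR(X)\to\THR(W))\to\fib(\THR(U_n)\to\THR(W\cap U_n))$ induced by the square $\THR$ applied to the distinguished square for $\{W,U_n\}$. That square is cartesian by the previous paragraph, so the induced map on horizontal fibers is an equivalence and $\tfib(\THR(Q))\simeq 0$.

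The main obstacle is the bookkeeping in the inductive step: one has to match the two last-coordinate faces of $\THR(Q)$ with the \v{C}ech cubes of the smaller covers of $W$ and of $W\cap U_n$, keeping track of the fact that deleting $U_n$ yields a cover of $W$ rather than of $X$, so that these faces carry $\THR(X)$ and $\THR(U_n)$ at their initial vertices while the inductive hypothesis replaces the remaining limits by $\THR(W)$ and $\THR(W\cap U_n)$. Once this is arranged, the argument collapses to the two-term Mayer--Vietoris square furnished by Corollary \ref{etale.37}. (Alternatively, one could note that the \v{C}ech nerve of $\{U_i\}$ is a hypercover and invoke isovariant \'etale descent together with a cofinality comparison between $\lim_{\Delta}$ of the \v{C}ech cosimplicial object and $\lim_{\emptyset\neq T\subseteq S}$, but the inductive route above keeps everything within the cubical formalism already set up in Propositions \ref{proj.1}--\ref{proj.4}.)
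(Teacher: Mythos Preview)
Your proof is correct and follows essentially the same strategy as the paper's: induction on $n$, splitting the cube along one coordinate via Proposition \ref{proj.1}, using the inductive hypothesis to identify the total fibers of the two faces with $\fib(\THR(X)\to\THR(W))$ and $\fib(\THR(U_n)\to\THR(W\cap U_n))$, and then invoking Corollary \ref{etale.37} for the Mayer--Vietoris square of $\{W,U_n\}$. The only cosmetic difference is that you split along the last coordinate (with $W=U_1\cup\cdots\cup U_{n-1}$) whereas the paper splits along the first (with $U_2\cup\cdots\cup U_n$), and you spell out a few points---such as why the Zariski square is an equivariant Nisnevich distinguished square---that the paper leaves implicit.
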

\begin{proof}
Let us use the equivalence $\Nerve(\mathbf{P}(S))\simeq(\Delta^1)^n$.
We include the description of this equivalence if $S=\{1\}$.
We regard the partially ordered set $\mathbf{P}(\{1\})$ as the category associated with the diagram $\emptyset \to \{1\}$.
The nerve of this category is $\Delta^1$.

We proceed by induction on $n$.
The claim is clear if $n=1$.
Assume $n>1$.
By induction, we fiber sequences
\[
\tfib(\THR(Q|_{\{0\}\times (\Delta^1)^{n-1}}))
\to
\THR(X)
\to
\THR(U_2\cup \cdots \cup U_n)
\]
and
\[
\tfib(\THR(Q|_{\{1\}\times (\Delta^1)^{n-1}}))
\to
\THR(U_1)
\to
\THR(U_1\cap (U_2\cup \cdots \cup U_n)).
\]
Together with Proposition \ref{proj.1}, we reduce to showing that the induced square
\[
\begin{tikzcd}
\THR(X)\ar[r]\ar[d]&
\THR(U_2\cup \cdots \cup U_n)\ar[d]
\\
\THR(U_1)\ar[r]&
\THR(U_1\cap (U_2\cup \cdots \cup U_n))
\end{tikzcd}
\]
is cartesian.
This follows from Corollary \ref{etale.37}.
\end{proof}

\begin{thm}
\label{proj.2}
For any $X\in \Sch_{\Ztwo}$ and integer $n\geq 0$, there is an equivalence of $\Ztwo$-spectra
\[
\THR(X\times \P^{n})
\simeq
\left\{
\begin{array}{ll}
\THR(X)
\oplus
\bigoplus_{j=1}^{\lfloor n/2 \rfloor} i_*\THH(X) &
\text{if $n$ is even,}
\\
\THR(X)
\oplus
\bigoplus_{j=1}^{\lfloor n/2 \rfloor} i_*\THH(X) \oplus \Sigma^{n(\sigma-1)}\THR(X) &
\text{if $n$ is odd.}
\end{array}
\right.
\]
\end{thm}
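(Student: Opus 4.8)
\medskip

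\emph{Step 1: reduction to the sphere.} Exactly as in the proofs of Theorems \ref{period.3} and \ref{period.1}, I would first reduce to $X=\Spec\Z$. A Zariski cover of $X$ by affines (which computes $\THR$ by Corollary \ref{etale.37}, or Proposition \ref{proj.3}) reduces us to $X=\Spec A$ affine; for every affine chart $U=\Spec\Z[M]$ of $X\times\P^n$ over $X$ one has $\THR(\Spec A[M])\simeq\THR(A)\wedge\Sphere[\Bdi M]$ by Proposition \ref{thrlog.2}, and by Proposition \ref{proj.5} the homotopy limits arising below commute with $\THR(A)\wedge(-)$. Since $i_*\THH(X)\simeq i_\sharp i^*\THR(X)\simeq\THR(X)\wedge\Sigma^\infty(\Ztwo)_+$ (Proposition \ref{etale.35} and \eqref{etale.8.1}) and $\Sigma^{n(\sigma-1)}\THR(X)\simeq\THR(X)\wedge\Sigma^{n(\sigma-1)}\Sphere$, it suffices to prove the formula for $\THR(\P^n_\Sphere)$ with $\THR(X)$ replaced by the unit $\Sphere$, $i_*\THH(X)$ by $\Sigma^\infty(\Ztwo)_+$, and $\Sigma^{n(\sigma-1)}\THR(X)$ by $\Sigma^{n(\sigma-1)}\Sphere$.

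\medskip

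\emph{Step 2: the \v Cech cube.} Take the standard affine cover $U_0,\dots,U_n$ of $\P^n$ with trivial involution, so that $\bigcap_{i\in T}U_i\cong\Spec\Z[\Z^{|T|-1}\oplus\N^{n+1-|T|}]$ for $\emptyset\ne T\subseteq\{0,\dots,n\}$. By Proposition \ref{proj.3} the total fibre of the $(n+1)$-cube $T\mapsto\THR(\bigcap_{i\in T}U_i)$ vanishes, so $\THR(\P^n_\Sphere)$ is the limit of the associated punctured cube; by Propositions \ref{dih.21} and \ref{dih.26} its vertices are $\Sphere[(\Bdi\Z)^{\times(|T|-1)}\times(\Bdi\N)^{\times(n+1-|T|)}]$, and each structure map is the smash of identities with the canonical map $g\colon\Sphere[\Bdi\N]\to\Sphere[\Bdi\Z]$ on the one factor whose monoid generator gets inverted.

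\medskip

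\emph{Step 3: evaluation.} Peeling off the chart $U_0$ by Proposition \ref{proj.1} (applied in the $U_0$-direction) identifies $\fib(\THR(\P^n)\to\THR(\P^n\setminus 0))$ with $\fib(\THR(\A^n)\to\THR(\A^n\setminus 0))$, where $0=[1:0:\dots:0]$; equivalently the Mayer--Vietoris square $\THR(\P^n)\simeq\THR(\A^n)\times_{\THR(\A^n\setminus 0)}\THR(\P^n\setminus 0)$ is cartesian. The cover $\{t_i\ne 0\}_{i=1}^n$ of $\A^n\setminus 0$ is \emph{untwisted} (the coordinate hyperplanes of $\A^n$ are ``independent''), so its cube is the product cube $g^{\otimes n}$ and Proposition \ref{proj.4} yields $\fib(\THR(\A^n)\to\THR(\A^n\setminus 0))\simeq\fib(g)^{\otimes n}$; likewise $\THR(\P^n\setminus 0)$ is, by Proposition \ref{proj.3} applied to its cover $\{U_1,\dots,U_n\}$, the limit of an explicit $n$-cube of $\Sphere[\Bdi M]$'s. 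Substituting the decompositions of Propositions \ref{dih.1}, \ref{dih.24}, \ref{dih.8}, \ref{dih.12} together with the equivalence $\Bdi(\N;j)\xrightarrow{\simeq}\Bdi(\Z;j)$ for $j\ge 1$ of Proposition \ref{dih.7}, one finds (just as for $\fib(g)$ in the proofs of Theorems \ref{period.3} and \ref{period.1}) that almost all weight components cancel against one another in this pullback, and the surviving summands are $\Sphere$ (the ``contracted'' weight-$0$ part), $\lfloor n/2\rfloor$ copies of $\Sigma^\infty(\Ztwo)_+$ coming from the free $\Ztwo$-components (as the $\bigoplus i_\sharp i^*\Sigma^\infty_+S^1$ did in the proof of Theorem \ref{period.1}), and, when $n$ is odd, one further genuinely-fixed summand which Lemma \ref{period.2} rewrites as $\Sigma^{n(\sigma-1)}\Sphere$. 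Smashing back with $\THR(X)$ gives the claimed formula; one can equally run this as an induction on $n$ with base cases $\P^0$ and $\P^1=$ Theorem \ref{period.3}.

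\medskip

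\emph{Main obstacle.} The real work is the bookkeeping in Step 3: through the iterated cancellations one must control precisely which weight components of the various $\Bdi\N$'s and $\Bdi\Z$'s survive in the homotopy limit and, crucially, with which $\Ztwo$-structure --- a \emph{free} component contributes a summand $i_\sharp i^*(-)$, which forgets to $\THH\oplus\THH$ and produces the $i_*\THH(X)$'s, whereas a \emph{fixed} component contributes a genuinely equivariant summand. This is exactly what produces the parity dichotomy: for even $n$ all genuinely equivariant contributions beyond $\THR(X)$ cancel in pairs, while for odd $n$ exactly one $\Sigma^{n(\sigma-1)}\THR(X)$ is left over; establishing the precise count $\lfloor n/2\rfloor$ and the vanishing of the remaining terms --- whether by the above induction or by organizing the full $(n+1)$-cube via Propositions \ref{proj.4} and \ref{proj.5} and reading off the multidegree-graded pieces --- is the crux of the proof.
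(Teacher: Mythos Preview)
Your Steps 1 and 2 coincide with the paper's setup; the reduction to $\Sphere$ and the \v Cech cube over the standard cover are exactly what the paper does (the paper actually proves the $\Sphere$-case first and only smashes with $\THR(X)$ at the very end, but this is a minor reordering).

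The gap is Step 3. Two concrete problems:
\begin{itemize}
\item Your Mayer--Vietoris reorganisation $\fib\bigl(\THR(\P^n)\to\THR(\P^n\setminus 0)\bigr)\simeq \fib(g)^{\wedge n}$ is correct, but it does not set up an induction on $n$: since $\THR$ is not $\A^1$-invariant, $\THR(\P^n\setminus 0)$ is \emph{not} $\THR(\P^{n-1})$, so the other term of your pullback is not known inductively. Moreover $\fib(g)$ is an \emph{infinite} wedge (it carries a summand $\Sigma^{-1}\Sigma^\infty_+ S^\sigma$ for every negative weight of $\Bdi\Z$), so $\fib(g)^{\wedge n}$ is not a priori the finite object you want; the required cancellations against $\THR(\P^n\setminus 0)$ are precisely the work you have not done.
\item More seriously, nothing in your outline explains \emph{why} the surviving summands are $\lfloor n/2\rfloor$ copies of $i_\sharp\Sphere$ together with (for odd $n$) one copy of $\Sigma^{n(\sigma-1)}$. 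You assert that ``free components'' produce the former and ``fixed components'' the latter, but you give no mechanism that converts anything into a free summand; all the basic building blocks $\Bdi(\N;j)$, $\Bdi(\Z;j)$ at fixed weight are $\Ztwo$-fixed spheres $S^\sigma$, not free orbits.
\end{itemize}

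The paper's proof supplies exactly this missing mechanism, and it is not just bookkeeping. After restricting to weight $O$ (all $v\neq O$ contribute $0$ to the total fibre of the full $(n+1)$-cube, by Propositions \ref{dih.7}, \ref{dih.27} and \ref{proj.1}) and splitting off the constant $\Sphere$-cube, one is left with a reduced cube $Q$ with $Q(0,\dots,0)=0$. The paper computes $\tfib\bigl(Q|_{(\Delta^1)^{d+1}\times\{0\}^{n-d}}\bigr)$ by induction on $d$: at each step the ``new face'' contributes a single sphere $\Sigma^{d\sigma-d}$ (via Proposition \ref{proj.4}), and the fibre sequence of Proposition \ref{proj.1} forces one to analyse an explicit attaching map $\Sigma^{(d-1)\sigma-d}\to\Sigma^{d\sigma-d}$ induced by the monoid inclusion $\Z^{d-1}\hookrightarrow\Z^d$, $(a_1,\dots,a_{d-1})\mapsto(a_1,\dots,a_{d-1},-a_1-\cdots-a_{d-1})$. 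For odd $d$ this map is null (a $\pi_{-1}$-vanishing), and the sequence splits; for even $d$ one computes the unstable cofibre of $(S^\sigma)^{\wedge d-1}\hookrightarrow(S^\sigma)^{\wedge d}$ and finds it to be $S^d\vee S^d$ with the swap involution, i.e.\ $i_\sharp S^d$. This single geometric computation is what turns two consecutive fixed-sphere contributions into one free summand $i_\sharp\Sphere$, producing both the parity dichotomy and the count $\lfloor n/2\rfloor$. Your proposal contains no analogue of this step.
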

\begin{proof}
Proposition \ref{orth.5}(3) allows us to replace $i_*$ by $i_\sharp$ in the claim.
We set
\[
M_j:=\{(x_1,\ldots,x_n)\in \Z^n : x_j\geq 0\}
\]
for $j=1,\ldots,n$ 
and
\[
 M_{n+1}:=\{(x_1,\ldots,x_n)\in \Z^n : x_1+\cdots+ x_n \leq 0\}.
\]
Observe that there is an isomorphism of commutative monoids
$M_j\cong \Z^{n-1}\oplus \N$ for all $j=1,\ldots,n+1$.
We set $M_I:=M_{i_1}\cap \cdots \cap M_{i_q}$ for all nonempty subsets $I:=\{i_1,\ldots,i_q\}$ of $\{1,\ldots,n+1\}$, and consequently
$M_\emptyset := \Z^n$.
Together with the obvious maps, we obtain an $(n+1)$-cube $M$ in commutative monoids associated with $M_I$ for any subset $I\subset \{1,\ldots,n+1\}$.
Here we set $M(b_1,...,b_{n+1})=M_I$, where $I$ is the set of indices $i$ such that $b_i=0$.
By \eqref{ndidecomposition}, we have a canonical decomposition
\[
\Bdi M_I
\simeq
\coprod_{v\in \Z^n}
\Bdi(M_I;v),
\]
where $\Bdi(M_I;v):=\emptyset$
if $v\notin M_I$.
Hence for every $v$ the above $\Bdi(M_I;v)$ assemble to an $(n+1)$-cube in $\Ztwo$-spaces, and a decomposition of the $(n+1)$-cube $\Bdi M$ into these smaller cubes as $v$ varies in $\Z^n$.
Combine Propositions \ref{dih.26} and \ref{dih.7} to show that the induced map in this cube
\[
\Bdi(\N \oplus \N^s \oplus \Z^{n-s-1};v)
\to
\Bdi(\Z \oplus \N^s \oplus \Z^{n-s-1};v)
\]
is an equivalence
for every integer $0\leq s\leq n-1$ if the first coordinate of $v$ is greater than $0$.
By a change of coordinates in the target, we see that the induced map
\[
\Bdi(M_{I\cup \{j\}};v)
\to
\Bdi(M_I;v)
\]
is an equivalence for all $j=1,\ldots,n+1$, subsets $I$ of $\{1,\ldots,n+1\}-\{j\}$, and $v\in M_j^+$, where $M_j^+$ denotes the set of non-units of $M_j$.
Use Proposition \ref{proj.1} repeatedly to show
\begin{equation}
\label{proj.2.1}
\tfib(\Sphere[\Bdi(M;v)])
\simeq
0
\end{equation}
for all $v\in M_j^+$.
This means $\tfib(\Sphere[\Bdi(M;v)])\simeq 0$ whenever $v\neq O$ since $M_\emptyset-(M_1^+\cup \cdots \cup M_{n+1}^+)=\{O\}$, where $O$ denotes the origin in $\Z^n$.

Now consider the standard cover of $\P^n_{\Sphere}$ by $(n+1)$ copies of $\A^n_{\Sphere}$,
and recall that $\A^n_{\Sphere}$ is the spherical monoid ring of $\N^n$.
(We continue switching between spherical monoid rings and honest schemes over rings as before. Also, note that by Proposition \ref{thrlog.2} products of affine schemes correspond to products of monoids when computing $\THR$.) Choosing suitable coordinates, the intersections of $s$ elements of the cover with $0<s\leq n+1$ are given by (the spherical monoid rings of) $M_I$ above such that $\lvert I \rvert = n+1-s$. By Proposition \ref{proj.3},
we have an equivalence
\[
\THR(\P_{\Sphere}^n)
\simeq
\lim \Sphere[\Bdi M|_{(\Delta^1)^n-\{(0,\ldots,0)\}}].
\]
Together with \eqref{proj.2.1}, we have an equivalence
\[
\THR(\P_{\Sphere}^n)
\simeq
\lim \Sphere[\Bdi (M;O)|_{(\Delta^1)^n-\{(0,\ldots,0)\}}]
\]
since $\Bdi(M_{\{1,\ldots,n\}};O)\cong \Bdi(M_{\{1,\ldots,n\}})$.
For every subset $I$ of $\{1,\ldots,n\}$, we have the canonical decomposition
\[
\Bdi(M_I;O)
\cong
V_I\amalg *,
\]
where $V_I$ is obtained by removing the base point $*$ of $\Bdi(M_I;O)$ corresponding to the element $0\in M_I$ in simplicial degree $0$.
This yields the canonical decomposition
\[
\Sphere[\Bdi (M;O)]
\simeq
Q\oplus Q',
\]
where every entry of $Q'$ is $\Sigma^\infty_+ * \simeq \Sphere$.
Use Proposition \ref{proj.1} repeatedly to have an equivalence $\tfib(Q') \simeq 0$.
This implies that we have an equivalence
\[
\lim Q'|_{(\Delta^1)^n-\{(0,\ldots,0)\}}
\simeq
\Sphere.
\]
On the other hand, we have $Q(0,\ldots,0)=0$ since $M_{\{1,\ldots,n\}}=0$.
This implies that we have an equivalence
\[
\tfib(Q)
\simeq
\Sigma^{-1} \lim Q|_{(\Delta^1)^n-\{(0,\ldots,0)\}}.
\]
Combine what we have discussed above to have an equivalence
\begin{equation}
\label{proj.2.3}
\THR(\P_{\Sphere}^n)
\simeq
\Sphere
\oplus
\Sigma^1\tfib(Q).
\end{equation}

We claim that
\[
\tfib(Q|_{(\Delta^1)^{d+1}\times \{0\}^{n-d}})
\simeq
\left\{
\begin{array}{ll}
\bigoplus_{j=1}^{\lfloor d/2 \rfloor} i_\sharp\Sigma^{-1} & \text{if $d$ is even},
\\
\bigoplus_{j=1}^{\lfloor d/2 \rfloor} i_\sharp\Sigma^{-1} \oplus \Sigma^{d\sigma-d-1}& \text{if $d$ is odd}.
\end{array}
\right.
\]
Let us proceed by induction on $d$.
The claim is clear if $d=0$ by the definition of the total fiber.
Assume $0<d\leq n$.
Let $\{e_1,\ldots,e_n\}$ be the standard basis in $\Z^n$.
The $d$-cube $M|_{(\Delta^1)^{d} \times \{1\} \times \{0\}^{n-d}}$ is isomorphic to the naturally associated $d$-cube sending $(a_1,\ldots,a_d)\in (\Delta^1)^d$ to \[
P_{a_1}(e_1-e_{d+1})\oplus \cdots \oplus P_{a_d}(e_d-e_{d+1})\oplus \N (-e_{d+1}) \oplus \N(e_{d+2}-e_{d+1})\oplus \cdots \oplus \N (e_{n}-e_{d+1}),
\]
where $P_0:=\N$, $P_1:=\Z$, and $e_{n+1}:=0$.
Combine Propositions \ref{dih.27} and \ref{proj.4} to obtain an equivalence
\begin{align*}
&\tfib(\Sphere[\Bdi(M|_{(\Delta^1)^{d}\times \{1\} \times  \{0\}^{n-d}};O)])
\\
\simeq &
\fib(\Sphere[\Bdi(\N;0)]\to \Sphere[\Bdi(\Z;0)])^{\wedge d}
\wedge
\Sphere[\Bdi(\N;0)]^{\wedge n-d}.
\end{align*}
Use Proposition \ref{proj.1} repeatedly to deduce an equivalence
\[
\tfib(Q'|_{(\Delta^1)^{d}\times \{1\} \times  \{0\}^{n-d}}) \simeq 0.
\]
Together with Proposition \ref{dih.1} and \eqref{dih.8.4}, we obtain equivalences
\begin{align*}
\tfib(Q|_{(\Delta^1)^{d}\times \{1\} \times  \{0\}^{n-d}})
\simeq &
\fib(\Sphere \to \Sphere[S^\sigma])^{\wedge d} \wedge \Sphere^{\wedge n-d}
\\
\simeq &
\Sigma^{-d} (\Sigma^\infty S^\sigma/(1,0))^{\wedge d}
\simeq
\Sigma^{d\sigma-d},
\end{align*}
where $(1,0)$ is the base point of $S^\sigma$.
Proposition \ref{proj.1} gives a fiber sequence
\[
\tfib(Q|_{(\Delta^1)^{d+1}\times   \{0\}^{n-d}})
\to
\tfib(Q|_{(\Delta^1)^{d}\times \{0\} \times  \{0\}^{n-d}})
\to
\tfib(Q|_{(\Delta^1)^{d}\times \{1\} \times  \{0\}^{n-d}}).
\]

If $d$ is odd, then we obtain a fiber sequence
\begin{equation}
\label{proj.2.2}
\tfib(Q|_{(\Delta^1)^{d+1}\times \{0\}^{n-d}})
\to
\bigoplus_{j=1}^{\lfloor d/2 \rfloor} i_\sharp\Sigma^{-1}
\xrightarrow{f}
\Sigma^{d\sigma-d}
\end{equation}
by induction.
Since $\pi_{-1}(\Sigma^0)=0$,
the map $\Sigma^{-1}\to i^*\Sigma^{d\sigma-d}\simeq \Sigma^0$ obtained by adjunction is equivalent to $0$.
It follows that $f$ is equivalent to $0$, i.e., \eqref{proj.2.2} splits.
This completes the induction argument for odd $d$.

If $d$ is even, we obtain a fiber sequence
\[
\tfib(Q|_{(\Delta^1)^{d+1}\times \{0\}^{n-d}})
\to
\bigoplus_{j=1}^{\lfloor d/2 \rfloor-1} i_\sharp\Sigma^{-1} \oplus \Sigma^{(d-1)\sigma-d}
\to
\Sigma^{d\sigma-d}
\]
by induction.
As above, the induced map $\bigoplus_{j=1}^{\lfloor d/2 \rfloor-1} i_\sharp\Sigma^{-1} \to \Sigma^{d\sigma-d}$ is equivalent to $0$.
It follows that we have an equivalence
\[
\tfib(Q|_{(\Delta^1)^{d+1}\times \{0\}^{n-d}})
\simeq
\bigoplus_{j=1}^{\lfloor d/2 \rfloor-1} i_\sharp\Sigma^{-1} 
\oplus
\fib(\Sigma^{(d-1)\sigma-d}\xrightarrow{g}\Sigma^{d\sigma-d}).
\]

We now analyze the non-trivial map $g\colon \Sigma^{(d-1)\sigma-d} \to \Sigma^{d\sigma-d}$. On the level of commutative monoids, this corresponds to the inclusion
\begin{align*}
&\Z (u_1-u_d)\oplus \cdots \oplus \Z (u_{d-1}-u_d) \oplus \N(-e_d) \oplus \N(e_{d+1}-e_d) \oplus \cdots \oplus \N(e_n-e_d)
\\
\to &
\Z u_1\oplus \cdots \oplus \Z u_d \oplus \N u_{d+1} \oplus \cdots \oplus \N u_{n},
\end{align*}
where $u_i:=e_i-e_{d+1}$ for $i\in \{1,\ldots,n\}-\{d+1\}$ and $u_{d+1}:=e_{d+1}$.
As the $0$-entry for $\Bdi \N$ is just a point, we only need to study the homomorphism  $\Z^{d-1}\to \Z^d$ given by
\[
(a_1,\ldots,a_{d-1})
\mapsto
(a_1,\ldots,a_{n-1},-a_1-\cdots-a_{d-1}).
\]
In the degree $(0,\ldots,0)$, this is easily seen to induce via Proposition \ref{dih.8}  the map $(S^{\sigma})^{\times d-1} \to (S^{\sigma})^{\times d}$ given by
\[
(x_1,\ldots,x_{d-1})\mapsto (x_1,\ldots,x_{d-1},x_1^{-1}\cdots x_{d-1}^{-1}).
\]
After a further cancellation of base points, we are left with studying the map $h\colon (S^{\sigma})^{\wedge d-1}\to (S^{\sigma})^{\wedge d}$.
This is an equivariant cofibration, as it is the $(S^{\sigma})^{\wedge d-1}$-suspension of the push-out of the cofibration $G \times S^{0} \to G \times I$ along the projection $G \times S^0 \to (G/G) \times S^0$. Hence unstably the equivariant (homotopy)  cofiber of $f$ is given by $(S^{\sigma})^{\wedge d}/h((S^{\sigma})^{\wedge d-1})\simeq S^d \vee S^d$ where $G=\Ztwo$ acts on the latter by switching the spheres. Thus the stable equivariant (homotopy) fiber $\fib(g)$ is given by $\Sigma^{-d-1}\Sigma^{\infty}(S^{d} \vee S^{d}) \simeq i_\sharp \Sigma^{-1}$ using \eqref{etale.8.1}.
This completes the induction argument for even $d$.

Together with \eqref{proj.2.3}, we obtain an equivalence
\[
\THR(\P_{\Sphere}^{n})
\simeq
\left\{
\begin{array}{ll}
\Sphere
\oplus
\bigoplus_{j=1}^{\lfloor n/2 \rfloor} i_\sharp\Sphere &
\text{if $n$ is even,}
\\
\Sphere
\oplus
\bigoplus_{j=1}^{\lfloor n/2 \rfloor} i_\sharp\Sphere \oplus \Sigma^{n(\sigma-1)} &
\text{if $n$ is odd.}
\end{array}
\right.
\]

Use Propositions \ref{thrlog.2} and \ref{proj.3} for the standard cover of $\P^n$ to obtain an equivalence
\[
\THR(X\times \P^n)
\simeq
\THR(X)\wedge \THR(\P_{\Sphere}^n)
\]
whenever $X$ is an affine scheme with involution.
Use Proposition \ref{proj.3} again to generalize this equivalence to the case when $X$ is a separated scheme with involution.
Hence to obtain the desired equivalence, it suffices to obtain an equivalence
\[
\THR(X)\wedge i_\sharp\Sphere
\simeq
i_\sharp\THH(X).
\]
This follows from Propositions \ref{norm.2} and \ref{etale.10}.
\end{proof}

Note that this result is compatible with the projective bundle theorem for the oriented theory $\THH$, see \cite{BM}, after applying $i^*$ and using Propositions \ref{etale.35} and \ref{orth.5}.

For all $X\in \Sch_{\Ztwo}$ and integers $m$, we set
\begin{equation}
\THO^{[m]}(X)
:=
(\Sigma^{m(1-\sigma)}\THR(X))^{\Ztwo}.
\end{equation}
Recall that $(-)^{\Ztwo}$ commutes with $\Sigma^1$, but not with $\Sigma^\sigma$.

Looking at fixed points, the result becomes 
\begin{equation}
\THO^{[m]}(X\times \P^{n})
\simeq
\left\{
\begin{array}{ll}
\THO^{[m]}(X)
\oplus
\bigoplus_{j=1}^{\lfloor n/2 \rfloor} \THH(X) &
\text{if $n$ is even,}
\\
\THO^{[m]}(X)
\oplus
\bigoplus_{j=1}^{\lfloor n/2 \rfloor} \THH(X) \oplus \THO^{[m-n]}(X) &
\text{if $n$ is odd.}
\end{array}
\right.
\end{equation}

\begin{rmk}\label{thrpn}
As for $n=1$ in the previous subsection, the formula for $\THO(X\times \P^n)$ corresponds to the one for $\KO(X\times \P^n)$.
Indeed, for higher dimensional projective spaces $\P^n$ with trivial involution, $\KO=\KR$ has been recently computed by \cite{Rohrbach22} and \cite{KSW21}. Analyzing the arguments of \cite{Rohrbach22}, one sees that for even $n$ the results for $\KR(\P^n)$ with and without involution are the same.
However, when trying to compute $\THR$ of higher dimensional $\P^n$ with involution, the standard cover of $\P^n$ will not respect the involution. For the one-dimensional $\P^{\sigma}$ we used the square in \eqref{period.1.3} instead, and for $(\P^n,\tau)$ in general one would have to construct more complicated cubes that respect the involution $\tau$.
\end{rmk}

\appendix

\section{Equivariant homotopy theory}
\label{secA}
The purpose of this appendix is to review equivariant homotopy theory.
Throughout this section, $G$ is a finite group.

\subsection{\texorpdfstring{$\infty$-}{Infinity }categories of equivariant spectra}\label{equivinfty.sec}

In this subsection we review the $\infty$-categorical formulation of equivariant homotopy theory following Bachmann and Hoyois \cite{BH21}.
We restrict to finite groupoids, although Bachmann and Hoyois deal more generally with profinite groupoids.
This approach will be compared to more classical references like \cite{HHR} in section \ref{orth.sec} below.

\begin{df}
Let $\FinGpd$ denote the $2$-category of finite groupoids, that is those with only finitely many objects and morphisms.
A morphism in $\FinGpd$ is called a \emph{finite covering} if its fibers (which by our  assumptions have only finitely many objects) automatically are sets, i.e., do not have non-trivial automorphisms.
Recall that the fiber of a $1$-morphism $f\colon Y\to X$ over a point $*$ in $X$ is $Y\times_X *$.
For $X\in \FinGpd$, let $\Fin_X$ denote the category of finite coverings of $X$.
There is an equivalence between $\Fin_{\rB G}$ and the category of finite $G$-sets by \cite[Lemma 9.3]{BH21}.

For a morphism $f\colon Y\to X$ in $\FinGpd$, there is an adjunction
\begin{equation}
\label{norm.0.1}
f^*:\Fin_X \rightleftarrows \Fin_Y : f_*,
\end{equation}
where $f^*$ sends $V\in \Fin_X$ to $V\times_X Y$.
If $f$ is a finite covering, then there is an adjunction
\begin{equation}
\label{norm.0.2}
f_\sharp: \Fin_Y \rightleftarrows \Fin_X : f^*,
\end{equation}
where $f_\sharp$ sends $V\in \Fin_Y$ to $V$, compare the  paragraph preceding \cite[section 9.2]{BH21}.
\end{df}

\begin{exm}\label{setadj}
If $i$ is the obvious morphism of groupoids $\pt\to \rB G$, then $i_{\sharp}E=\coprod_G E$ and $(i_\sharp,i^*)$ is the usual free-forgetful adjunction between sets and $G$-sets.
On the other hand, for any finite set $E$ the $G$-set $i_*E$ is isomorphic to $\prod_G E$ with $G$ acting by permuting the indices.

For $p:\rB G\to \pt$ we have $p_* E = E^G$ for every finite $G$-set $E$. Note that $p_{\sharp} E$ is not defined as $p$ is not a finite covering, although for finite $G$-sets $E$ the left adjoint to $p^*$ exists, and is given by the orbit set $E/G$.
\end{exm}

\begin{df}
For a category $\cC$ with pull-backs, let $\Span(\cC)$ denote the category of spans, whose objects are the same as $\cC$, whose morphisms are given by the diagrams $(X\xleftarrow{f} Y \xrightarrow{p} Z)$ in $\cC$, and whose compositions of morphisms are given by pullbacks.
A morphism $(X\xleftarrow{f} Y \xrightarrow{p} Z)$ is called a \emph{forward morphism} (resp.\ \emph{backward morphism}) if $f=\id$ (resp.\ $p=\id$).
The notion of spans can be generalized to the case when $\cC$ is an $\infty$-category, see \cite[section 5]{Bar17} for the details.
\end{df}

\begin{const}
\label{norm.3}
In \cite[section 9.2]{BH21}, Bachmann and Hoyois construct three functors $\infH,
\infHpt$, and $\infSH$ on $\FinGpd$ by certain presheaves on $\Fin_X$, and then further refine these to functors
\begin{equation}
\label{norm.3.1}
\infH^\otimes,
\infHpt^\otimes,
\infSH^\otimes
\colon
\Span(\FinGpd)\to \CAlg(\Cat_\infty),
\;
(X\xleftarrow{f} Y\xrightarrow{p} Z)\mapsto p_\otimes f^*
\end{equation}
together with natural transformations
\begin{equation}
\label{norm.3.2}
\infH^\otimes
\xrightarrow{(-)_+}
\infHpt^\otimes
\xrightarrow{\Sigma^\infty}
\infSH^\otimes.
\end{equation}
Let us explain parts of their construction.

For an $\infty$-category $\cC$ with finite coproducts, let $\PSigma(\cC)$ denote the $\infty$-category of presheaves of spaces which transform finite coproducts into finite products.
For $X\in \FinGpd$, we set $\infH(X):=\PSigma(\Fin_X)$.
Then we set $\infHpt(X):=\infH(X)_*$, which is the $\infty$-category of pointed objects in $\infH(X)$.
As claimed in \cite[p.\ 81]{BH21}, for $X=\rB G$ these yield the usual $\infty$-categories of $G$-spaces and pointed $G$-spaces.

For a morphism $f\colon Y\to X$, the functor $f^*$ for $\infH$ and $\infHpt$ is induced by \eqref{norm.0.1},
and $f^*$ admits a right adjoint $f_*$.
For $\infHpt$, $f_\otimes$ is a symmetric monoidal functor preserving sifted colimits such that $f_\otimes(V_+)\simeq f_*(V)_+$ for $V\in \Fin_X$.
If $f$ is a finite covering, $f^*$ for $\infH$ and $\infHpt$ admits a left adjoint $f_\sharp$.

We obtain $\infSH(X)$
from $\infHpt(X)$ by $\otimes$-inverting $p_\otimes(S^1)$ for all finite coverings $p\colon Y\to X$.
The functor $f^*$ for $\infSH$ is induced by that for $\infHpt$.
This admits a right adjoint $f_*$, and this admits a left adjoint $f_\sharp$ if $f$ is a finite covering.
The functor $f_\otimes$ for $\infSH$ is the unique symmetric monoidal functor preserving sifted colimits such that the square
\[
\begin{tikzcd}
\infHpt(Y)\ar[d,"\Sigma^\infty"']\ar[r,"f_\otimes"]&
\infHpt(X)\ar[d,"\Sigma^\infty"]
\\
\infSH(Y)\ar[r,"f_\otimes"]&
\infSH(X)
\end{tikzcd}
\]
commutes.
Furthermore, if $f$ has connected fibers, then $f_\otimes$ preserves colimits.

\begin{prop}
\label{norm.5}
Suppose $X_1,\ldots,X_n\in \FinGpd$.
Then there exists a canonical equivalence
\[
\infSH(X_1\amalg \cdots \amalg X_n)
\simeq
\infSH(X_1)\times \cdots \times \infSH(X_n).
\]
\end{prop}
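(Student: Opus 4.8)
The plan is to prove this by induction on $n$, reducing to the case $n=2$ (the cases $n=0,1$ being trivial, noting that $\infSH(\emptyset)$ of the empty groupoid is the terminal $\infty$-category). So it suffices to produce an equivalence $\infSH(X\amalg Y)\simeq \infSH(X)\times\infSH(Y)$, and the candidate is the pair of restriction functors $(j_X^*,j_Y^*)$ along the component inclusions $j_X\colon X\to X\amalg Y$ and $j_Y\colon Y\to X\amalg Y$. The idea is to run the construction of $\infSH$ from $\PSigma(\Fin_{(-)})$ one stage at a time and verify that each stage turns a disjoint union of finite groupoids into a product of $\infty$-categories.

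First, at the level of finite coverings: pulling back along $j_X$ and $j_Y$ gives an equivalence $\Fin_{X\amalg Y}\simeq \Fin_X\times\Fin_Y$ with inverse $(V,W)\mapsto V\amalg W$, and this equivalence takes finite coproducts (computed componentwise on the right) to finite coproducts and the empty covering to the pair of empty coverings. The next step is the elementary but crucial lemma that for categories $\cC,\cD$ admitting finite coproducts one has $\PSigma(\cC\times\cD)\simeq \PSigma(\cC)\times\PSigma(\cD)$, implemented by external product $(F,G)\mapsto F\boxtimes G$ with $(F\boxtimes G)(c,d):=F(c)\times G(d)$: essential surjectivity uses the identity $(c,d)=(c,\emptyset)\amalg(\emptyset,d)$ in $\cC\times\cD$, which forces any finite-product-preserving presheaf $H$ to split as $H(-,\emptyset)\boxtimes H(\emptyset,-)$; full faithfulness follows because $F\boxtimes-$ and $-\boxtimes G$ preserve sifted colimits (sifted colimits commute with finite products of spaces), so $F\boxtimes G$ is a sifted colimit of representables $y_{(c,d)}=y_c\boxtimes y_d$ and the mapping-space computation collapses to $\lim_{\alpha,\beta}(F'(c_\alpha)\times G'(d_\beta))\simeq(\lim_\alpha F'(c_\alpha))\times(\lim_\beta G'(d_\beta))$. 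Combining the two gives $\infH(X\amalg Y)\simeq\infH(X)\times\infH(Y)$; passing to pointed objects (which for a product $\infty$-category is the product of pointed $\infty$-categories) gives the same for $\infHpt$; and the monoidal structures match, the one on $\infH$ being cartesian and the one on $\infHpt$ being the induced smash product.

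Finally, $\infSH(X\amalg Y)$ is the symmetric monoidal localization of $\infHpt(X\amalg Y)\simeq\infHpt(X)\times\infHpt(Y)$ obtained by $\otimes$-inverting all $p_\otimes(S^1)$ for finite coverings $p\colon Z\to X\amalg Y$. Writing $Z=Z_X\amalg Z_Y$ and $p=p_X\amalg p_Y$, the base-change (distributivity) relations built into the functor $\infSH^\otimes$ on $\Span(\FinGpd)$ give $j_X^*\circ p_\otimes\simeq (p_X)_\otimes\circ k_X^*$ for the component inclusion $k_X\colon Z_X\hookrightarrow Z$, and since $k_X^*(S^1)=S^1$ the object $p_\otimes(S^1)$ corresponds under the product decomposition to $\big((p_X)_\otimes(S^1),(p_Y)_\otimes(S^1)\big)$ — with the first coordinate the monoidal unit $S^0$ when $Z_X=\emptyset$, and symmetrically. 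Hence the class of objects to invert, viewed in $\infHpt(X)\times\infHpt(Y)$, contains all $(s,S^0)$ and $(S^0,t)$ where $s$, $t$ range over the classes inverted to build $\infSH(X)$, $\infSH(Y)$; since inverting the unit is vacuous and $\otimes$-inversion commutes with finite products of presentable symmetric monoidal $\infty$-categories, the localization splits as $\infSH(X)\times\infSH(Y)$ (the remaining objects $(s,t)$ being then already invertible). Applying the inductive hypothesis with $Y=X_2\amalg\cdots\amalg X_n$ finishes the argument. I expect this last step to be the main obstacle: it requires keeping track of how the norm functors $p_\otimes$ interact with the component decomposition (via naturality on $\Span(\FinGpd)$ and base change) and the fact that $\otimes$-inversion of a ``product-separated'' class of objects in a product of presentable symmetric monoidal $\infty$-categories commutes with the product; the $\infH$ and $\infHpt$ stages are comparatively routine once the $\PSigma$-of-a-product lemma is in hand.
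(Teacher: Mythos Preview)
Your argument is correct. The paper's proof, by contrast, is a single citation: ``This is a consequence of \cite[Lemma 9.6]{BH21}.'' That lemma in Bachmann--Hoyois already records the product decomposition for the functors $\infH$, $\infHpt$, $\infSH$ on disjoint unions of (profinite) groupoids, so the paper simply defers to it. What you have written is essentially a reconstruction of that lemma's content: the equivalence $\Fin_{X\amalg Y}\simeq\Fin_X\times\Fin_Y$, the identity $\PSigma(\cC\times\cD)\simeq\PSigma(\cC)\times\PSigma(\cD)$ for categories with finite coproducts (including an initial object), passage to pointed objects, and the compatibility of $\otimes$-inversion with finite products are exactly the ingredients one needs. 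Your handling of the final step---identifying $p_\otimes(S^1)$ with the pair $\big((p_X)_\otimes(S^1),(p_Y)_\otimes(S^1)\big)$ via base change for norms on $\Span(\FinGpd)$, and then noting that the mixed objects $(s,t)$ become invertible once the separated ones $(s,S^0)$ and $(S^0,t)$ are---is sound; the underlying fact that $\otimes$-inverting $(s,1)$ in a product $\cC\times\cD$ yields $\cC[s^{-1}]\times\cD$ can be seen from the sequential-colimit model of inversion in $\PrL$ and the fact that products commute with such colimits there. The benefit of your route is a self-contained argument independent of \cite{BH21}; the cost is length.
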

\begin{proof}
This is a consequence of \cite[Lemma 9.6]{BH21}.
\end{proof}

There is an alternative construction only inverting $S^1$: By \cite[Proposition 9.11]{BH21}, we have an equivalence
\begin{equation}
\label{norm.1.1}
\infSH(X)
\simeq
\Sp(\PSigma(\Span(\Fin_X))).
\end{equation}
\end{const}

\begin{prop}
\label{norm.1}
Suppose $X\in \FinGpd$.
Then the family
\[
\{\Sigma^n \Sigma^\infty V_+ : V\in \Fin_X,n\in \Z\}
\]
compactly generates $\infSH(X)$.
In other words, the functor $\Map_{\infSH(X)}(\Sigma^n\Sigma^\infty V_+,-)$ preserves filtered colimits for all $V\in \Fin_X$ and $n\in \Z$, and the family of functors
\begin{equation}
\label{norm.1.2}
\{\Map_{\infSH(X)}(\Sigma^n\Sigma^\infty V_+,-):V\in \Fin_X,n\in \Z\}
\end{equation}
is conservative.
\end{prop}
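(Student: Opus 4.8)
The plan is to play off the two descriptions of $\infSH(X)$ recorded in Construction \ref{norm.3}: its definition as a localization of $\infHpt(X)=\PSigma(\Fin_X)_*$, and the identification \eqref{norm.1.1}. The starting point is the unstable statement: $\infH(X)=\PSigma(\Fin_X)$ is compactly generated and the representables $\{h_V\}_{V\in\Fin_X}$ form a set of compact projective generators (this is the universal property of $\PSigma$ of a small $\infty$-category with finite coproducts, see \cite[\S5.5.8]{HTT}), which applies since $\Fin_X$ is essentially small. Passing to pointed objects, $\infHpt(X)$ is then compactly generated by the objects $V_+:=h_V\sqcup *$, because $\Map_{\infHpt(X)}(V_+,F)\simeq F(V)$ and evaluation at $V$ commutes with filtered colimits, the latter being computed pointwise in $\PSigma(\Fin_X)$.

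Next I would establish the two halves of the assertion. For compactness: $\Sigma^\infty\colon\infHpt(X)\to\infSH(X)$ is left adjoint to $\Omega^\infty$, and $\Omega^\infty$ preserves filtered colimits --- the objects $p_\otimes(S^1)$ inverted in the construction of $\infSH(X)$ are compact (finite pointed $X$-spaces), so the localization is compatible with filtered colimits; alternatively this is read off \eqref{norm.1.1}. Hence $\Sigma^\infty V_+$ is compact, and so is every shift $\Sigma^n\Sigma^\infty V_+$, since $\infSH(X)$ is stable. For joint conservativity: the family $\{\Map_{\infHpt(X)}(V_+,-)\}_V$ is jointly conservative by Yoneda (the $h_V$ generate $\infH(X)$ under colimits), while $\{\Omega^\infty\Sigma^{-n}\colon\infSH(X)\to\infHpt(X)\}_{n\in\Z}$ is jointly conservative by the standard properties of stabilization (it amounts to the statement that the homotopy Mackey functors detect the zero object). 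Composing, and using $\Map_{\infSH(X)}(\Sigma^n\Sigma^\infty V_+,E)\simeq\Map_{\infHpt(X)}(V_+,\Omega^\infty\Sigma^{-n}E)$, the family \eqref{norm.1.2} is jointly conservative. Finally, $\infSH(X)$ is presentable and stable (a localization of the presentable $\infHpt(X)$), so a jointly conservative family of compact objects is automatically a set of compact generators, which finishes the proof.

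The one point I expect to need genuine care is the interface between the localization defining $\infSH(X)$ and filtered colimits --- concretely, that $\Omega^\infty$ preserves filtered colimits and that $\{\Omega^\infty\Sigma^{-n}\}_{n\in\Z}$ is conservative --- but this is part of the Bachmann--Hoyois formalism \cite{BH21}. A clean way to sidestep it altogether is to argue entirely through \eqref{norm.1.1}: there $\infSH(X)\simeq\Sp(\PSigma(\Span(\Fin_X)))$ with $\Span(\Fin_X)$ an essentially small semiadditive $\infty$-category, and one invokes the general fact that the stabilization of $\PSigma$ of such a category is compactly generated by the suspension spectra of the representables and their shifts; since $\Span(\Fin_X)$ has the same objects as $\Fin_X$, these correspond to the objects $\Sigma^n\Sigma^\infty V_+$, and the two clauses of the proposition become instances of the corresponding statements for $\Sp(\PSigma(-))$.
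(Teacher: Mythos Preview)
Your argument is correct and close in spirit to the paper's. For conservativity, both you and the paper use \eqref{norm.1.1} to reduce to checking equivalences pointwise on $\Fin_X$ and then invoke the fact that $\{\Omega^\infty\Sigma^{-n}\}_{n\in\Z}$ is jointly conservative on a stable $\infty$-category; your phrasing through the unstable generators of $\infHpt(X)$ is just a slightly more explicit unpacking of the same step.

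The only genuine difference is in the compactness half. The paper does not argue abstractly that $\Omega^\infty$ preserves filtered colimits; instead it reduces to the case $X=\rB G$ using Proposition \ref{norm.5} (every finite groupoid is a finite disjoint union of $\rB G$'s) and then cites the classical equivariant fact that orbits $\Sigma^\infty(G/H)_+$ are compact in the genuine $G$-stable category (\cite[Lemma I.5.3]{LMS86}) together with \cite[Proposition 1.4.4.1(3)]{HA}. Your route --- observing that the inverted objects $p_\otimes(S^1)$ are compact in $\infHpt(X)$, so the stabilization has an $\Omega^\infty$ commuting with filtered colimits --- is a legitimate alternative that avoids the reduction to $\rB G$ and the classical literature, at the cost of needing to justify that $\otimes$-inversion of compact objects has this property. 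Your final paragraph, arguing entirely through $\Sp(\PSigma(\Span(\Fin_X)))$, is probably the cleanest way to package both halves at once and is essentially what the paper does for conservativity.
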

\begin{proof}
Using \eqref{norm.1.1}, we see that 
a map $\cG\to \cF$ in $\infSH(X)$ is an equivalence if and only if the induced map $\cG(V)\to \cF(V)$ is an equivalence for all $V\in \Fin_X$.
This is further equivalent to saying that the induced morphism $\Omega^\infty \Sigma^{-n} \cG(V)\to \Omega^\infty \Sigma^{-n} \cF(V)$ is an equivalence for all $V\in \Fin_X$ and $n\in \Z$.
This proves that \eqref{norm.1.2} is conservative.
For the other claim, we reduce to the case when $X=\rB G$ using Proposition \ref{norm.5} since every finite groupoid is equivalent to a finite disjoint union of classifying spaces.
Then combine \cite[Lemma I.5.3]{LMS86} and \cite[Proposition 1.4.4.1(3)]{HA} to conclude.
See also \cite[Theorem 9.4.3]{HPS97}.
\end{proof}

Let $\Fold_X$ denote the full subcategory of $\Fin_X$ consisting of the finite fold maps
\[
(\id,\ldots,\id)
\colon
X\amalg \cdots \amalg X \to X,
\]

\begin{prop}
\label{norm.6}
Let $f\colon X^{\amalg n} \to X$ be the $n$-fold map, where $X\in \FinGpd$ and $n\geq 1$ is an integer.
Then the composite
\[
\infSH(X)^{\times n}
\xrightarrow{\simeq}
\infSH(X^{\amalg n})
\xrightarrow{f_\otimes}
\infSH(X)
\]
is the $n$-fold smash product.
\end{prop}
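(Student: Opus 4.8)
The plan is to unwind the construction of $f_\otimes$ from Construction \ref{norm.3} and the equivalence $\infSH(X^{\amalg n})\simeq \infSH(X)^{\times n}$ from Proposition \ref{norm.5}, and to check that the composite has the three defining properties of the $n$-fold smash product: it is symmetric monoidal, it preserves sifted colimits in each variable, and it sends the tuple $(\Sigma^\infty V_{1,+},\dots,\Sigma^\infty V_{n,+})$ of suspension spectra to $\Sigma^\infty(V_1\times\cdots\times V_n)_+$ — equivalently, the fold map $\nabla\colon X^{\amalg n}\to X$ satisfies $\nabla_\otimes\circ\Sigma^\infty\simeq\Sigma^\infty\circ\nabla_\otimes$ and the analogous statement holds already at the level of $\infHpt$. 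First I would reduce to the unpointed/pointed statement: by the compatibility square in Construction \ref{norm.3} relating $\Sigma^\infty$ and $f_\otimes$, together with the fact that $\Sigma^\infty$ and the smash product commute with sifted colimits and that the $\Sigma^n\Sigma^\infty V_+$ compactly generate $\infSH(X)$ (Proposition \ref{norm.1}), it suffices to prove the corresponding identity for $\nabla_\otimes$ on $\infHpt$, namely that $\nabla_\otimes(V_{1,+}\wedge\cdots\wedge V_{n,+})\simeq (V_1\times\cdots\times V_n)_+$ for $V_i\in\Fin_X$, with all the coherence packaged by the functoriality on $\Span(\FinGpd)$.

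Next I would carry out the key computation at the level of finite coverings. Under $\infSH(X^{\amalg n})\simeq\prod_i\infSH(X)$, an object $V\in\Fin_{X^{\amalg n}}$ is a tuple $(V_1,\dots,V_n)$ with $V_i\in\Fin_X$, and $\nabla_\otimes$ is computed by the pushforward $\nabla_*$ along the fold map in the finite-groupoid picture: by the formula $\nabla_\otimes(W_+)\simeq(\nabla_* W)_+$ from Construction \ref{norm.3}, everything comes down to the isomorphism $\nabla_*(V_1,\dots,V_n)\cong V_1\times_X\cdots\times_X V_n$ in $\Fin_X$. This is an elementary fact about spans of finite groupoids: the right adjoint to $\nabla^*\colon\Fin_X\to\Fin_{X^{\amalg n}}=\prod_i\Fin_X$ is exactly the iterated fiber product, which over $X=\rB G$ is the statement that the $G$-set of sections of a fold map is the product of the $G$-sets. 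I would state this as a lemma (or cite \cite[Lemma 9.3, Lemma 9.6]{BH21} together with the adjunction $(\nabla^*,\nabla_*)$), and then note that because $\nabla_\otimes$ is symmetric monoidal and preserves sifted colimits, the equivalence on representables $\Sigma^\infty V_+$ extends uniquely to the claimed equivalence of symmetric monoidal functors on all of $\infSH(X)^{\times n}$.

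The main obstacle I anticipate is not the pointwise identification but the \emph{coherence}: one must check that the two symmetric monoidal structures — the one transported from $\infSH(X^{\amalg n})$ via $\nabla_\otimes$ and the Day-convolution/smash symmetric monoidal structure on $\infSH(X)$ — are identified compatibly, not just that the underlying functors agree on objects. The clean way to handle this is to observe that both functors $\infSH(X)^{\times n}\to\infSH(X)$ in question are symmetric monoidal, colimit-preserving in each variable, and agree on the compact generators $\Sigma^\infty V_+$ together with their multiplicative structure; by the universal property of $\infSH(X)$ as generated under sifted colimits by $\Fin_X$ (the presheaf description \eqref{norm.1.1}), a symmetric monoidal sifted-colimit-preserving functor out of $\infSH(X)^{\times n}$ is determined by its restriction to $\Span(\Fin_X)^{\times n}$, where the comparison is the finite-groupoid statement above. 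So the proof reduces to invoking this rigidity together with the finite-covering computation, and the remaining bookkeeping — tracking the fold map through \eqref{norm.3.1} and \eqref{norm.3.2} — is routine.
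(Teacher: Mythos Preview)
Your proposal is correct and follows essentially the same two-step route as the paper, which simply cites \cite[Theorem 3.3(6)]{BH21} for the analogous identification at the level of $\infHpt$ (your computation of $\nabla_\otimes$ on representables via $\nabla_*(V_1,\dots,V_n)\cong V_1\times_X\cdots\times_X V_n$) and \cite[Lemma 4.1]{BH21} to pass from $\infHpt$ to $\infSH$ (your reduction via the compatibility square with $\Sigma^\infty$ and uniqueness of symmetric monoidal sifted-colimit-preserving extensions). You are essentially rederiving what those references already prove in the motivic setting; the paper opts to cite rather than unwind.
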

\begin{proof}
One can show an analogous claim for $\infHpt$ as in \cite[Theorem 3.3(6)]{BH21}.
To obtain the claim for $\infSH$, use \cite[Lemma 4.1]{BH21}.
\end{proof}

\begin{df}[{\cite[Definition 9.14]{BH21}}]
Suppose $X\in \FinGpd$.
A \emph{normed $X$-spectrum} is a section of $\infSH^\otimes$
over $\Span(\Fin_X)$ that is cocartesian over the backward morphisms.
Let $\NAlg(\infSH(X))$ denote the $\infty$-category of normed $X$-spectra.
\end{df}

Mapping $X$ to $\pt$ yields an equivalence between $\Fold_X$ and $\Fold_{\pt}$, and forgetting the map to $\pt$ $\Fold_{\pt}$ is obviously equivalent to the category of finite sets. 
By \cite[Corollary C.2]{BH21}, $\CAlg(\infSH(X))$ is equivalent to the $\infty$-category of sections of $\infSH^{\otimes}$ over $\Span(\Fold_X)$ that is cocartesian over the backward morphisms.
Hence there is a forgetful functor
\begin{equation}
\NAlg(\infSH(X))
\to
\CAlg(\infSH(X)),
\end{equation}
which is conservative and preserves colimit and limits as in \cite[Proposition 7.6(3)]{BH21}.
This is an equivalence if $X=\pt$ since $\Fold_\pt\simeq \Fin_\pt$.
The forgetful functor
\begin{equation}
\CAlg(\infSH(X))
\to
\infSH(X)
\end{equation}
is conservative by \cite[Lemma 3.2.2.6]{HA}.
It follows that the composite forgetful functor $\NAlg(\infSH(X))\to \infSH(X)$ is conservative too.

Suppose $X\in \FinGpd$ and $R\in \CAlg(\infSH(X))$.
There is an induced commutative square
\[
\begin{tikzcd}
\CAlg(\Mod_R)\ar[d]\ar[r,"U"]&
\CAlg(\infSH(X))\ar[d]
\\
\Mod_R\ar[r]&
\infSH(X).
\end{tikzcd}
\]
The vertical functors are the forgetful functors, which is symmetric monoidal according to \cite[Example 3.2.4.4, Proposition 3.2.4.10]{HA}.
Note that the symmetric monoidal structure on $\CAlg(-)$ is given by the coproduct.
The monoidal product in $\infSH(X)$ (resp.\ $\Mod_R$) is denoted by $\wedge$ (resp.\ $\wedge_R$).
Then we have the induced monoidal products on $\CAlg(\infSH(X))$ and $\CAlg(\Mod_R)$.
There is an equivalence between $\CAlg(\Mod_R)$ and the $\infty$-category of $R$-algebras $\CAlg(\infSH(X))^{/R}$ by \cite[Corollary 3.4.1.7]{HA}.

The coproduct in $\NAlg(\infSH(X))$ is also denoted by $\wedge$.
Since the forgetful functor $\NAlg(\infSH(X))\to \CAlg(\infSH(X))$ preserves colimits, the notation $\wedge$ on $\CAlg(\infSH(X))$ and $\NAlg(\infSH(X))$ is compatible.

\begin{prop}
\label{norm.9}
Let
\[
\begin{tikzcd}
Y'\ar[d,"f'"']\ar[r,"g'"]&
Y\ar[d,"f"]
\\
X'\ar[r,"g"]&
X
\end{tikzcd}
\]
be a cartesian square in $\FinGpd$ such that $f$ is a finite covering.
For $\infSH$, the natural transformation
\[
f'_\sharp g'^*
\to
g^*f_\sharp
\]
given by the composite
\[
f'_\sharp g'^*
\xrightarrow{ad}
f'_\sharp g'^*f^*f_\sharp
\xrightarrow{\simeq}
f'_\sharp f'^*g^*f_\sharp
\xrightarrow{ad'}
g^*f_\sharp
\]
is an equivalence.
\end{prop}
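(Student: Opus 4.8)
The plan is to reduce the assertion to an elementary base change identity for finite coverings in $\FinGpd$, by exploiting the compact generation of $\infSH$ established in Proposition \ref{norm.1}.

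First I would observe that finite coverings are stable under base change, so that $f'$ is again a finite covering and $f'_\sharp$ is defined. The four functors $f'_\sharp$, $g'^*$, $g^*$, $f_\sharp$ are then all left adjoints — of $f'^*$, $g'_*$, $g_*$, $f^*$ respectively — hence colimit-preserving, hence exact between stable $\infty$-categories, and in particular commute with the shift $\Sigma^{\pm1}$. Thus both composites $f'_\sharp g'^*$ and $g^*f_\sharp$ are colimit-preserving functors $\infSH(Y)\to\infSH(X')$, and by Proposition \ref{norm.1} it suffices to check that the displayed natural transformation is an equivalence after evaluation on the compact generators $\Sigma^n\Sigma^\infty V_+$ with $V\in\Fin_Y$ and $n\in\Z$; commuting with $\Sigma^{\pm1}$, this reduces to the case $n=0$, i.e. to the objects $\Sigma^\infty V_+$ for $V\in\Fin_Y$.

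The second step is to pass from $\infSH$ down to $\Fin$ on these generators. By the construction of $f^*$ and $f_\sharp$ for $\infSH$ recalled in Construction \ref{norm.3} — they are the stabilizations of the corresponding functors on $\infHpt$, which are the pointed incarnations of the functors on $\infH=\PSigma(\Fin_{-})$, and on representables the latter are exactly the functors $f^*$ of \eqref{norm.0.1} and $f_\sharp$ of \eqref{norm.0.2} on the categories $\Fin_{-}$ — there are natural equivalences $f^*\Sigma^\infty V_+\simeq\Sigma^\infty(f^*V)_+$ and $f_\sharp\Sigma^\infty V_+\simeq\Sigma^\infty(f_\sharp V)_+$, compatible with the relevant units and counits. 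Under these identifications the transformation of the Proposition, evaluated at $\Sigma^\infty V_+$, becomes $\Sigma^\infty$ applied (through $(-)_+$) to the mate transformation $f'_\sharp g'^*V\to g^*f_\sharp V$ formed in $\Fin_{X'}$, so it is enough to prove that this last map is an equivalence.

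That final point is immediate: writing $V\in\Fin_Y$ as a finite covering $v\colon V\to Y$, the object $f_\sharp V$ is $(fv\colon V\to X)$, so $g^*f_\sharp V$ is the projection $X'\times_X V\to X'$, while $g'^*V=(Y'\times_Y V\to Y')$ and $f'_\sharp g'^*V$ is $Y'\times_Y V$ regarded over $X'$ through $f'$. Since $Y'=X'\times_X Y$, the pasting law for pullbacks yields a canonical identification $Y'\times_Y V\simeq X'\times_X V$ over $X'$, and unwinding the definition of the mate — the unit of $(f_\sharp,f^*)$, followed by the exchange equivalence $g'^*f^*\simeq f'^*g^*$ coming from $fg'=gf'$, followed by the counit of $(f'_\sharp,f'^*)$ — identifies the above map with exactly this identification. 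The step I expect to be the genuine obstacle is this last reduction: not the geometry, which is trivial, but the coherence verification that the passage to $\Fin$ respects all the units, counits and the exchange equivalence, so that the displayed transformation really is $\Sigma^\infty$ of the pullback-pasting isomorphism; everything else is either formal or the pasting law. The same line of argument runs verbatim for $\infH$ and $\infHpt$, which is how the analogous exchange equivalences invoked in Proposition \ref{norm.6} are obtained.
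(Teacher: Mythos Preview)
Your proof is correct and follows essentially the same approach as the paper: reduce via the compact generation of Proposition~\ref{norm.1} to the generators $\Sigma^n\Sigma^\infty W_+$, and then identify the mate transformation on these with the pasting-law isomorphism $Y'\times_Y W\cong X'\times_X W$ in $\Fin_{X'}$. The paper simply writes out this last composite as $Y'\times_Y W\to Y'\times_Y(Y\times_X W)\xrightarrow{\cong}Y'\times_{X'}(X'\times_X W)\to X'\times_X W$ without further comment, whereas you spell out the coherence concern explicitly; the content is the same.
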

\begin{proof}
As usual, Proposition \ref{norm.1} allows us to reduce to showing that the induced map $f_\sharp' g'^*\Sigma^n\Sigma^\infty W_+ \to g^*f_\sharp \Sigma^n\Sigma^\infty W_+$ is an equivalence for every $W\in \Fin_Y$ and integer $n$.
This follows from the fact that the composite of the induced morphisms
\[
Y' \times_Y W
\to
Y'\times_Y (Y\times_X W)
\xrightarrow{\cong}
Y'\times_{X'}(X'\times_X W)
\to
X'\times_X W
\]
is an isomorphism.
\end{proof}

\begin{prop}
\label{norm.2}
Let $f\colon Y\to X$ be a finite covering.
Then for $\cF\in \infSH(Y)$ and $\cG\in \infSH(X)$, there exists a canonical equivalence
\[
f_\sharp(\cF \wedge f^* \cG)
\simeq
f_\sharp \cF \wedge \cG.
\]
\end{prop}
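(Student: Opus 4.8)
The plan is to write down the natural comparison map and then verify it on compact generators. Since $f^*\colon\infSH(X)\to\infSH(Y)$ is symmetric monoidal (it is a value of $\infSH^\otimes\colon\Span(\FinGpd)\to\CAlg(\Cat_\infty)$) and $f_\sharp$ is its left adjoint, there is a canonical \emph{projection map}
\[
\theta\colon f_\sharp(\cF\wedge f^*\cG)\longrightarrow f_\sharp\cF\wedge\cG,
\]
natural in $\cF\in\infSH(Y)$ and $\cG\in\infSH(X)$, namely the adjoint under $f_\sharp\dashv f^*$ of the composite
\[
\cF\wedge f^*\cG
\xrightarrow{ad\wedge\id}
f^*f_\sharp\cF\wedge f^*\cG
\xrightarrow{\simeq}
f^*(f_\sharp\cF\wedge\cG),
\]
the last equivalence being the strong symmetric monoidality of $f^*$. (Making this into an honest natural transformation of $\infty$-functors is standard; alternatively one packages it as the statement that $f_\sharp$ is $\infSH(X)$-linear.) It remains to show $\theta$ is an equivalence.

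First I would reduce to representable objects. Both $\cF\mapsto f_\sharp(\cF\wedge f^*\cG)$ and $\cF\mapsto f_\sharp\cF\wedge\cG$ preserve colimits, and likewise in the variable $\cG$, since $f_\sharp$, $f^*$ and $\wedge$ all preserve colimits. By Proposition \ref{norm.1}, $\infSH(Y)$ is compactly generated by $\{\Sigma^n\Sigma^\infty W_+ : W\in\Fin_Y,\ n\in\Z\}$ and $\infSH(X)$ by $\{\Sigma^n\Sigma^\infty V_+ : V\in\Fin_X,\ n\in\Z\}$, so it suffices to check that $\theta$ is an equivalence for $\cF=\Sigma^n\Sigma^\infty W_+$ and $\cG=\Sigma^m\Sigma^\infty V_+$; as both sides commute with $\Sigma^{\pm1}$ we may take $n=m=0$, i.e.\ $\cF=\Sigma^\infty W_+$ and $\cG=\Sigma^\infty V_+$ with $W\in\Fin_Y$, $V\in\Fin_X$.

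Then I would compute both sides using the explicit descriptions from Construction \ref{norm.3}: $f^*$ and $f_\sharp$ are induced by the pullback and the composition functors on finite coverings, both are compatible with $(-)_+$ and $\Sigma^\infty$, and $\Sigma^\infty((-)_+)$ is symmetric monoidal, hence carries a product $W\times_Y W'$ of finite coverings of $Y$ to $\Sigma^\infty W_+\wedge\Sigma^\infty W'_+$. Thus
\[
f_\sharp\bigl(\Sigma^\infty W_+\wedge f^*\Sigma^\infty V_+\bigr)
\simeq
f_\sharp\Sigma^\infty\bigl(W\times_Y(Y\times_X V)\bigr)_+
\simeq
\Sigma^\infty(W\times_X V)_+,
\]
the last object lying in $\infSH(X)$ via $W\times_X V\to W\to Y\to X$, while on the other side
\[
f_\sharp\Sigma^\infty W_+\wedge\Sigma^\infty V_+
\simeq
\Sigma^\infty W_+\wedge\Sigma^\infty V_+
\simeq
\Sigma^\infty(W\times_X V)_+
\]
with the same structure map, using the canonical isomorphism $W\times_Y(Y\times_X V)\cong W\times_X V$ of finite coverings of $X$. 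Tracing the unit and counit through these identifications shows that $\theta$ is induced by this isomorphism, hence is an equivalence.

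The main obstacle is the first step: producing $\theta$ with enough coherence that the reduction to compact generators in the second step is legitimate, and then checking in the third step that under the explicit identifications it really is the canonical isomorphism rather than merely some equivalence. Everything past the construction of $\theta$ is bookkeeping with the formulas for $f^*$, $f_\sharp$, $(-)_+$ and $\Sigma^\infty$.
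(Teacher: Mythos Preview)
Your proof is correct and follows essentially the same approach as the paper: reduce to compact generators via Proposition~\ref{norm.1} and then invoke the canonical isomorphism $W\times_Y(Y\times_X V)\cong W\times_X V$ of finite coverings. The paper's proof is terser and does not spell out the construction of the comparison map $\theta$; your explicit description of it via the unit and the monoidality of $f^*$ is a welcome addition and makes the word ``canonical'' in the statement honest.
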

\begin{proof}
As usual, Proposition \ref{norm.1} allows us to reduce to the case when $\cF=\Sigma^m\Sigma^\infty V_+$ and $\cG=\Sigma^n\Sigma^\infty W_+$ for some $V\in \Fin_Y$, $W\in \Fin_X$, and $m,n\in \Z$.
In this case, the canonical isomorphism
\[
V\times_Y (W\times_X Y)
\cong
V \times_X W
\]
gives the desired equivalence.
\end{proof}

Let $f\colon Y\to X$ be a morphism in $\FinGpd$.
The formulation \eqref{norm.3.1} tells that the functor $f^*$ is symmetric monoidal.
Hence we obtain an induced adjoint pair
\begin{equation}
f^*
:
\CAlg(\infSH(X))
\rightleftarrows
\CAlg(\infSH(Y))
:
f_*
\end{equation}
by \cite[Remark 7.3.2.13]{HA}.
The formulation of these functors provided in \cite[Proposition 7.3.2.5]{HA} shows that the two squares in
\begin{equation}
\label{norm.4.3}
\begin{tikzcd}
\CAlg(\infSH(X))\ar[r,"f^*",shift left=0.5ex]\ar[r,"f_*"',leftarrow,shift right=0.5ex]\ar[d]&
\CAlg(\infSH(Y))\ar[d]
\\
\infSH(X)\ar[r,"f^*",shift left=0.5ex]\ar[r,"f_*"',leftarrow,shift right=0.5ex]&
\infSH(Y)
\end{tikzcd}
\end{equation}
commute, where the vertical functors are the forgetful functors.

If $f$ has connected fibers, then we noted that $f_\otimes$ preserves colimits.
Hence we similarly obtain a functor
\begin{equation}
\label{norm.4.1}
f_\otimes
:
\CAlg(\infSH(Y))
\to
\CAlg(\infSH(X))
\end{equation}
and a commutative square
\begin{equation}
\begin{tikzcd}
\label{norm.4.2}
\CAlg(\infSH(Y))\ar[d]\ar[r,"f_\otimes"]&
\CAlg(\infSH(X))\ar[d]
\\
\infSH(Y)\ar[r,"f_\otimes"]&
\infSH(X).
\end{tikzcd}
\end{equation}

The following should be compared with \eqref{orth.4.1}.

\begin{prop}
\label{norm.7}
Let $f\colon Y\to X$ be a finite covering in $\FinGpd$.
Then there is an induced adjunction
\[
f_\otimes
:
\NAlg(\infSH(Y))
\rightleftarrows
\NAlg(\infSH(X))
:
f^*.
\]
Furthermore, the two squares in
\begin{equation}
\label{norm.7.1}
\begin{tikzcd}
\NAlg(\infSH(Y))\ar[r,"f_\otimes",shift left=0.5ex]\ar[r,"f^*"',leftarrow,shift right=0.5ex]\ar[d]&
\NAlg(\infSH(X))\ar[d]
\\
\infSH(Y)\ar[r,"f_\otimes",shift left=0.5ex]\ar[r,"f^*"',leftarrow,shift right=0.5ex]&
\infSH(X)
\end{tikzcd}
\end{equation}
commute, where the vertical functors are the forgetful functors.
\end{prop}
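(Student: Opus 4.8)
The plan is to read both functors off the description of normed spectra as sections of $\infSH^\otimes$ over span categories, in the spirit of Bachmann--Hoyois \cite{BH21}, and to deduce the adjunction and the commuting squares from that description; this will run parallel to the orthogonal-spectrum statement \eqref{orth.4.1}.

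\textbf{Constructing $f^*$.} I would first note that, since $f$ is a finite covering, post-composition with $f$ gives a pullback-preserving functor $f_\sharp\colon\Fin_Y\to\Fin_X$, $(W\to Y)\mapsto(W\to X)$ (a composite of finite coverings is again one), hence a functor $\Span(f_\sharp)\colon\Span(\Fin_Y)\to\Span(\Fin_X)$ carrying forward morphisms to forward morphisms and backward morphisms to backward morphisms. Since the underlying finite groupoid of an object of $\Fin_Y$ is untouched by $f_\sharp$, composing $\Span(f_\sharp)$ with $\infSH^\otimes|_{\Span(\Fin_X)}$ recovers $\infSH^\otimes|_{\Span(\Fin_Y)}$ canonically, so restriction of sections along $\Span(f_\sharp)$ sends a section of $\infSH^\otimes$ over $\Span(\Fin_X)$ to one over $\Span(\Fin_Y)$, and preserves the cocartesian-over-backward condition because backward morphisms go to backward morphisms. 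This defines $f^*\colon\NAlg(\infSH(X))\to\NAlg(\infSH(Y))$. For the left square of \eqref{norm.7.1}: the underlying spectrum is the value at the terminal object ($X\in\Fin_X$, resp. $Y\in\Fin_Y$), and the cocartesian condition over the backward morphism $X\leftarrow Y$ in $\Span(\Fin_X)$, which $\infSH^\otimes$ sends to $f^*\colon\infSH(X)\to\infSH(Y)$, identifies the value at $Y\in\Fin_Y$ of $f^*$ of a section with $f^*$ of its underlying spectrum.

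\textbf{Constructing $f_\otimes$ and the adjunction.} Next I would produce $f_\otimes$ as the left adjoint of the restriction functor on $\infty$-categories of \emph{all} sections of $\infSH^\otimes$, namely as the pointwise left Kan extension along $\Span(f_\sharp)$; this exists because every fiber $\infSH(V)$ is presentable and the structural pushforward functors of $\infSH^\otimes$ preserve the relevant colimits (Construction \ref{norm.3}), with Proposition \ref{norm.1} reducing checks to the compact generators $\Sigma^n\Sigma^\infty V_+$. The crux is to show that this left Kan extension sends normed $Y$-spectra to normed $X$-spectra, i.e. preserves the cocartesian-over-backward condition; this is a base-change statement that I would extract from the distributivity coherences of $\infSH^\otimes$ --- in practice from the base-change equivalence of Proposition \ref{norm.9} and the projection formula of Proposition \ref{norm.2}, applied to the unique forward/backward factorizations of morphisms in $\Span(\Fin_X)$ compatibly with $\Span(f_\sharp)$. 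Granting it, the left adjoint restricts to $f_\otimes\colon\NAlg(\infSH(Y))\to\NAlg(\infSH(X))$ with $f_\otimes\dashv f^*$, the triangle identities being inherited from the section level. The right square of \eqref{norm.7.1} I would then get from the left Kan extension formula: the underlying spectrum of $f_\otimes R$ is its value at $X\in\Fin_X$, which is computed through the forward morphism $Y\to X$ of $\Span(\Fin_X)$, i.e. through $f_\otimes\colon\infSH(Y)\to\infSH(X)$; alternatively, check it on the generators of Proposition \ref{norm.1} after applying the (conservative) forgetful functor $\CAlg(\infSH(X))\to\infSH(X)$, using the left square already in hand.

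The hard part will be the preservation of the cocartesian-over-backward condition under the left Kan extension, equivalently the assertion that $\infSH^\otimes$ is functorial for norms along finite coverings. This is where the full distributivity package of \cite{BH21} for $\infSH^\otimes$ enters (and is the $\infty$-categorical counterpart of the more explicit verifications the orthogonal model would demand); once it is in place, the adjunction and both square compatibilities follow formally from the section-theoretic descriptions of $\NAlg(\infSH(-))$, $f^*$ and $f_\otimes$.
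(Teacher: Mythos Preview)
Your construction of $f^*$ by restriction along $\Span(f_\sharp)$ is correct and matches the standard picture. The gap is in your construction of $f_\otimes$.

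First, ``pointwise left Kan extension along $\Span(f_\sharp)$'' is not well-posed as stated: a normed $Y$-spectrum is a section of a cocartesian fibration over $\Span(\Fin_Y)$, not a functor into a fixed target, so the relevant left adjoint to restriction is a \emph{relative} (fiberwise) Kan extension. You hint at this by mentioning presentability of the fibers, but the existence of the relative left adjoint and the identification of its value at $X\in\Fin_X$ with $f_\otimes$ of the underlying spectrum require a genuine argument, not just Proposition~\ref{norm.1}. Second, the propositions you invoke for the ``hard part'' are off target: Propositions~\ref{norm.9} and~\ref{norm.2} concern $f_\sharp$ (additive transfer) and say nothing about the multiplicative norm $f_\otimes$; the compatibility you actually need is the distributivity law relating $f_\otimes$ and pullbacks, which is a separate and more delicate coherence statement.

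The paper's proof bypasses all of this by a direct appeal to \cite[Theorem~8.5]{BH21}: one takes $\cC=\FinGpd$, $\cA=\infSH^\otimes$, declares ``right'' to be all morphisms and ``left'' to be finite coverings, and then $\mathrm{Sect}(\cA_X)=\NAlg(\infSH(X))$; \cite[Remark~8.6]{BH21} then gives the adjunction together with the commuting squares~\eqref{norm.7.1}. Your ``distributivity package of \cite{BH21}'' that you invoke at the end \emph{is} essentially this theorem, so your longer route ends at the same citation you could have started from. If you want a more hands-on argument, the cleaner path is to use the span-level adjunction of \cite[Corollary~C.21]{BH21} (compare the proof of Proposition~\ref{norm.8}) rather than an unstructured Kan extension; but even that requires care with how the fibers of $\infSH^\otimes$ change along $\Span(f^*)$ versus $\Span(f_\sharp)$.
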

\begin{proof}
Apply \cite[Theorem 8.5]{BH21} to the case when $\cC:=\FinGpd$, $\cA:=\infSH^{\otimes}$, and $\mathrm{right}$ (resp.\ $\mathrm{left}$) is the class of all morphisms (finite coverings) in $\FinGpd$.
Then $\mathrm{Sect}(\cA_X)$ in the reference is precisely $\NAlg(\infSH(X))$ for $X\in \FinGpd$.
Hence as observed in \cite[Remark 8.6]{BH21}, we have the desired adjunction such that the two squares in \eqref{norm.7.1} commutes.
\end{proof}

\begin{prop}
\label{norm.8}
Let $f\colon Y\to X$ be a finite covering in $\FinGpd$.
Then there is an induced adjunction
\[
f^*
:
\NAlg(\infSH(X))
\rightleftarrows
\NAlg(\infSH(Y))
:
f_*.
\]
Furthermore, the two squares in
\begin{equation}
\label{norm.8.1}
\begin{tikzcd}
\NAlg(\infSH(X))\ar[r,"f^*",shift left=0.5ex]\ar[r,"f_*"',leftarrow,shift right=0.5ex]\ar[d]&
\NAlg(\infSH(Y))\ar[d]
\\
\infSH(X)\ar[r,"f^*",shift left=0.5ex]\ar[r,"f_*"',leftarrow,shift right=0.5ex]&
\infSH(Y)
\end{tikzcd}
\end{equation}
commute, where the vertical functors are the forgetful functors.
\end{prop}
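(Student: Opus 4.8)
The plan is to run exactly the argument used for Proposition~\ref{norm.7}, extracting a different piece of the output of \cite[Theorem 8.5]{BH21}. Concretely, I would apply \cite[Theorem 8.5]{BH21} with $\cC:=\FinGpd$, $\cA:=\infSH^\otimes$, with $\mathrm{right}$ the class of all morphisms of $\FinGpd$ and $\mathrm{left}$ the class of finite coverings; the hypotheses of that theorem for these two classes are the base change and distributivity properties of $\infSH$, which were already verified in the proof of Proposition~\ref{norm.7}, so nothing new is needed there. As in the reference, $\mathrm{Sect}(\cA_X)$ is identified with $\NAlg(\infSH(X))$ (cocartesian sections of $\infSH^\otimes$ over the backward morphisms of $\Span(\Fin_X)$), and the functor the reference calls $f^*$ is the restriction functor appearing in the statement. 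Since a finite covering $f\colon Y\to X$ lies in the class $\mathrm{right}$, the theorem produces the desired right adjoint $f_*$ of $f^*\colon\NAlg(\infSH(X))\to\NAlg(\infSH(Y))$.

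It then remains to produce the two commuting squares in \eqref{norm.8.1}. The left square (the one for $f^*$) is immediate: on normed spectra $f^*$ is restriction along $f$ in the span formalism, hence is compatible with evaluating a section at the terminal object $X\in\Fin_X$, which is precisely the forgetful functor to $\infSH(X)$. For the right square (the one for $f_*$) I would invoke \cite[Remark 8.6]{BH21}, which records that the adjunctions supplied by \cite[Theorem 8.5]{BH21} are compatible with the underlying-object functors; applied here this says exactly that the forgetful functors intertwine the two copies of $f_*$. This is the substantive point and the main thing to be careful about: a commuting square of the left adjoints $f^*$ does not by itself force the induced square of right adjoints to commute, so merely invoking presentability of $\NAlg(\infSH(X))$ together with the adjoint functor theorem would only reprove the \emph{existence} of $f_*$, not the Beck--Chevalley identity. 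This is where the finite covering hypothesis on $f$ enters, exactly as in \cite[Section 8]{BH21}.

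I therefore expect the only real work to be organizational rather than mathematical: matching the abstract formalism of \cite[Section 8]{BH21} to the notation used here, i.e.\ the identification of $\mathrm{Sect}(\cA_X)$ with $\NAlg(\infSH(X))$ and of the abstract $f^*$ and $f_*$ with restriction and right pushforward. Since this dictionary was already set up for Proposition~\ref{norm.7}, the proof should be very short. As a consistency check one may note that specializing to the finite covering $i\colon\pt\to\rB(\Ztwo)$ recovers the adjunction $i^*\colon\NAlg_{\Ztwo}\rightleftarrows\CAlg\colon i_*$ stated at the beginning of Section~\ref{sec2}.
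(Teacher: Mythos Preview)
Your proposal has a genuine gap: \cite[Theorem 8.5]{BH21} and \cite[Remark 8.6]{BH21} supply the adjunction $(f_\otimes, f^*)$ on normed algebras (this is exactly what is used in Proposition~\ref{norm.7}), not the adjunction $(f^*, f_*)$. Saying that ``$f$ lies in the class $\mathrm{right}$'' only tells you that $f^*$ is one of the restriction functors in the span formalism; it does not produce a further right adjoint to $f^*$. Likewise, \cite[Remark 8.6]{BH21} records compatibility of the $(f_\otimes,f^*)$ adjunction with underlying objects, so invoking it for the $f_*$ square is not justified.

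The paper instead imitates the proof of \cite[Theorem 8.2]{BH21}. The finite covering hypothesis is used, via \cite[Corollary C.21(2)]{BH21}, to obtain an adjunction $f^*:\Span(\Fin_X)\rightleftarrows\Span(\Fin_Y):f_\sharp$ at the level of indexing categories. One then applies the relative adjunction machinery \cite[Proposition 8.16]{BH21} to the cocartesian fibration $\infSH^\otimes|\Span(\Fin_X)$: since each fiberwise functor $f_V^*\colon \infSH(V)\to \infSH(V\times_X Y)$ admits a right adjoint $f_{V*}$, one obtains the right adjoint $f_*$ on sections together with the explicit pointwise formula $(f_*B)(V)\simeq f_{V*}(B(V\times_X Y))$. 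Evaluating at $V=X$ gives $(f_*B)(X)\simeq f_*(B(Y))$, which is precisely the commutativity of the $f_*$ square in \eqref{norm.8.1}; the $f^*$ square is, as you say, immediate. So the missing ingredient in your outline is replacing Theorem~8.5/Remark~8.6 by Corollary~C.21(2) and Proposition~8.16.
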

\begin{proof}
Let us imitate the proof of \cite[Theorem 8.2]{BH21}.
By \cite[Corollary C.21(2)]{BH21}, there is an induced adjunction
\[
f^* : \Span(\Fin_X) \rightleftarrows \Span(\Fin_Y):f_\sharp.
\]
Let $\infSH^{\otimes}\vert \Span(\Fin_X)$ be the restriction of $\infSH^{\otimes}$ to $\Span(\Fin_X)$.
For every $V\in \Span(\Fin_X)$, let $f_V\colon V\times_X Y\to V$ be the projection.
The functor $f_V^*\colon \infSH(V)\to \infSH(V\times_X Y)$ admits the right adjoint $f_{V*}$.
Apply \cite[Proposition 8.16]{BH21} to the cocartesian fibration $\infSH^{\otimes}\vert \Span(\Fin_X)$ to obtain the desired adjunction.

Let us review the descriptions of $f^*$ and $f_*$ for $\NAlg$ in this reference.
The functor $f^*$ for $\NAlg$ used here is the same as the functor $f^*$ for $\NAlg$ in Proposition \ref{norm.7}.
Suppose $B\in \NAlg(\infSH(Y))$.
For $V\in \Span(\Fin_X)$, the section $(f_*B)(V)$ is given by $f_{V*}(B(V\times_X Y))$.
In particular, the section $(f_*B)(X)$ is given by $f_*(B(X))$.
Hence the two squares in \eqref{norm.8.1} commute.
\end{proof}

For abbreviation, we set
\[
\Sp_G
:=
\infSH(\rB G),
\;
\CAlg_G
:=
\CAlg(\infSH(\rB G)),
\text{ and }
\NAlg_G
:=
\NAlg(\infSH(\rB G)).
\]
This notation is further justified by Remark \ref{orth.3}.

\subsection{Equivariant orthogonal spectra}
\label{orth.sec}

The purpose of this section is to review equivariant homotopy theory using model categories.
Our references for that 
are \cite{GM95}, \cite{MM02}, \cite{HHR}, and \cite{Schwede}.
We will also review the comparison between $\infty$-categorical and model categorical constructions of equivariant spectra. Consequently, we may apply certain known constructions and results for equivariant orthogonal spectra to $\infty$-categories as discussed in the previous subsection.

Let $\rB G$ denote the associated finite groupoid.
In this subsection, we are interested in the obvious morphisms
\[
\rB H\xrightarrow{i} \rB G\xrightarrow{p} *,
\]
where $H\to G$ is an inclusion.

\begin{df}
\label{orth.1}
Let $\SpO$ denote the category of orthogonal spectra.
For a finite group $G$, let $\SpO_G$ denote the category of orthogonal $G$-spectra.
Recall that an \emph{orthogonal $G$-spectrum} is an orthogonal spectrum with a $G$-action.
A \emph{morphism of orthogonal $G$-spectra} is a morphism of underlying orthogonal spectra that is compatible with the $G$-actions.

The definition of orthogonal $G$-spectra in \cite{HHR} is \emph{different} from the above one, but the two categories are equivalent.
See \cite[Remark 2.7]{Schwede} for the details.
\end{df}

\begin{df}
\label{orth.2}
The category $\SpO_G$ admits a symmetric monoidal model structure, see \cite[Propositions B.63, B.76]{HHR}.
We denote the (model) category of commutative monoids in $\SpO$ by $\CAlg^O$. According to \cite{HHR}, we sometimes denote it also by $\Comm$, and the (model) category of commutative monoids in $\SpO_G$ by $\mathbf{\CommG}$.
We refer to \cite[section A.1.2]{HHR} for the details.
According to \cite[Proposition B.129]{HHR}, $\CommG$ has a nice model structure.
A morphism in $\CommG$ is a weak equivalence (resp.\ fibration) precisely when its underlying morphism in $\SpO_G$ is a weak equivalence (resp.\ fibration).
The (underived) coproduct in $\CommG$ is denoted by $\wedge$.
\end{df}

\begin{rmk}
\label{orth.3}
As observed in the preceding paragraphs of \cite[Lemma 9.6]{BH21}, $\Sp_G$ is equivalent to the underlying $\infty$-category of the model category of symmetric $G$-spectra.
This is equivalent to the underlying $\infty$-category of $\SpO_G$ by \cite{mandell04}.
See also \cite[Remark 9.12]{BH21} for another $\infty$-description. 
Furthermore, as observed in \cite[after Definition 9.14]{BH21}, $\NAlg_G$ is equivalent to the underlying $\infty$-category of the model category of $G$-$\E_\infty$-rings,
which is equivalent to the underlying $\infty$-category of $\Comm_G$. We refer to \cite{GW} for a comparison of different models, rectification results and further references. 
\end{rmk}

\begin{const}
\label{etale.8}
Let $H$ be a subgroup of $G$, with the inclusion map $H\to G$.
(For $H=\pt$, compare Example \ref{setadj}.)
Let us review several functors from \cite[sections 2.2.3, 2.5.1]{HHR}.
The \emph{norm functor}
\[
N^G_H\colon \SpO_H\to \SpO_G
\]
sends $Y\in \SpO_H$ to $\bigwedge_{i\in G/H} Y$ with a suitable $G$-action.
If $H=\pt$, we often simply write $N^G$. 

The \emph{restriction functor}
\[
i^* \colon \SpO_G\to \SpO_H
\]
sends $X\in \SpO_G$ to $X$, and the action is the restriction of the $G$-action to $H$.
Its left adjoint $i_\sharp$ and right adjoint $i_*$ send $X\in \SpO_H$ to
\begin{equation}
\label{etale.8.1}
\bigvee_{i\in G/H} X_i
\text{ and }
\prod_{i\in G/H} X_i
\end{equation}
respectively, 
where $X_i:=H_i\wedge_H X$ and $H_i\subset G$ is the coset indexed by $G$.
According to \cite[Proposition B.72]{HHR}, $i^*$ is a left and right Quillen functor.
Hence we have Quillen adjunctions
\[
i_\sharp : \SpO_H \rightleftarrows \SpO_G : i^*
\text{ and }
i^* : \SpO_G \rightleftarrows \SpO_H : i_*.
\]

We also have the functor
\[
\iota\colon \SpO\to \SpO_G
\]
imposing the trivial $G$-action.
The \emph{fixed point functor}
\[
(-)^G\colon \SpO_G\to \SpO
\]
sends an orthogonal $G$-spectrum $(X_0,X_1,\ldots)$ to $(X_0^G,X_1^G,\ldots)$.
There is a Quillen adjunction
\[
\iota :\SpO\rightleftarrows \SpO_G:(-)^G.
\]
\end{const}

\begin{const}
\label{desc.1}
For the definition of the \emph{geometric fixed point functor}
\[
\Phi^G
\colon
\SpO_G\to \SpO,
\]
we refer to \cite[section B.10.1]{HHR}.
There is yet another functor
\[
\Phi_M^{G}
\colon
\SpO_G\to \SpO,
\]
in \cite[Definition B.190]{HHR}, which is called the \emph{monoidal geometric fixed point functor}.
This is lax monoidal and preserves cofibrations and acyclic cofibrations, see \cite[sections B.10.3, B.10.4]{HHR}.

According to \cite[Proposition B.201]{HHR}, there is a zig-zag of weak equivalences between $\Phi^G(X)$ and $\Phi_M^G(X)$ whenever $X\in \SpO_G$ is cofibrant.
\end{const}

\begin{rmk}
\label{orth.4}
The functor $i^*\colon \SpO_H \to \SpO_G$ is a model for the functor of $\infty$-categories $i^*\colon \Sp_H \to \Sp_G$ since their values on $\Sigma^n\Sigma^\infty X_+$ are equivalent for all $X\in \Fin_H$ and integers $n$.
Likewise, the functor $\iota \colon \SpO\to \SpO_G$ is a model for the functor $p^*\colon \Sp\to \Sp_G$, where $p\colon \rB G \to \pt$.
It follows by the uniqueness of $\infty$-adjoints that the fixed point functor $(-)^G\colon \SpO_G \to \SpO$ is a model for the functor $p_*\colon \SpO_G\to \SpO$.
We have similar comparison results for $i_\sharp$ and $i_*$.

According to \cite[Remark 9.10]{BH21}, the norm functor $N^G_H\colon \SpO_H\to \SpO_G$ is a model for the functor $i_{\otimes}\colon \Sp_H \to \Sp_G$, and the geometric fixed point functor $\Phi^G\colon \SpO_G\to \SpO$ is a model for the functor $p_{\otimes}\colon \Sp_G\to \Sp$.
It follows that $\Phi_M^G$ is a model for $p_\otimes$ too.

The functors $N_H^G\colon \SpO_H \to \SpO_G$ and $i^*\colon \SpO_H\to \SpO_G$ are symmetric monoidal.
By \cite[Proposition 2.27]{HHR}, they induce a Quillen adjunction
\begin{equation}
\label{orth.4.1}
N_H^G: \Comm_H \rightleftarrows \CommG : i^*.
\end{equation}
In short, comparing the adjunctions between $\Comm_G$  with those for the underlying spectra, $N_H^G$ already exists for spectra, but only becomes a left adjoint to $i^*$ in $\Comm_G$, replacing $i_\sharp$.

Compare the diagram in \cite[Proposition A.56]{HHR} with \eqref{norm.4.2} and use the conservativity of the forgetful functor $\NAlg_G\to \Sp_G$ to show that $N_H^G\colon \Comm_H \to \Comm_G$ is a model for the functor $i_\otimes \colon \NAlg_H\to \NAlg_G$.
Then $i^*\colon \Comm_G\to \Comm_H$ is a model for the functor $i^*\colon \NAlg_G \to \NAlg_H$ by adjunction.
\end{rmk}

\begin{prop}
\label{orth.5}
We have the following equivalences of functors between $\infty$-categories $\Sp_G$ for appropriate $G$:
\begin{enumerate}
\item[\textup{(1)}] $p_\otimes i_\otimes \simeq \id$ if $H=e$,
\item[\textup{(2)}] $i^*p^*\simeq \id$ if $H=e$,
\item[\textup{(3)}] $i_\sharp\xrightarrow{\simeq} i_*$,
\item[\textup{(4)}] $\id \xrightarrow{\simeq} p_\otimes p^*$,
\item[\textup{(5)}] $p_\otimes i_\sharp \simeq 0$ if $G\neq e$,
\item[\textup{(6)}] $i^*i_\otimes\simeq (-)^{\wedge [G:H]}$,
\item[\textup{(7)}] $i^*i_\sharp\simeq (-)^{\oplus [G:H]}$.
\end{enumerate}
\end{prop}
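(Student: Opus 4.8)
The plan is to transport each item to the model of equivariant orthogonal spectra via Remark~\ref{orth.4}, identifying $i^*$ with restriction, $p^*$ with the trivial-action functor $\iota$, $i_\sharp$ and $i_*$ with the functors of Construction~\ref{etale.8}, $i_\otimes$ with the norm $N^G_H$, and $p_\otimes$ with the (monoidal) geometric fixed point functor $\Phi^G$, and then to invoke the corresponding classical facts (see e.g.\ \cite{HHR}, \cite{Schwede}). Some items, however, are cleanest directly in the span formalism of Construction~\ref{norm.3}. Items (1) and (2) are pure functoriality of $\infSH^\otimes\colon\Span(\FinGpd)\to\CAlg(\Cat_\infty)$: composing the forward spans attached to $i$ and $p$ gives the forward span attached to $p\circ i=\id_{\pt}$, so $p_\otimes i_\otimes\simeq(pi)_\otimes\simeq\id$, and dually the composite of the two backward spans is the backward span along $p\circ i=\id$, so $i^*p^*\simeq\id$. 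On orthogonal spectra these read $\Phi^G N^G X\simeq X$, and that the underlying spectrum of a trivial $G$-spectrum is the original one.

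For (6) and (7) I would read the answer off the explicit formulas of Construction~\ref{etale.8}: from $N^G_H Y\simeq\bigwedge_{G/H}Y$ and $i_\sharp X\simeq\bigvee_{G/H}X_i$ one obtains, after restricting along $i$, the $[G:H]$-fold smash power and the $[G:H]$-fold direct sum respectively (for $H=e$, the case used in the paper, the residual permutation action is on $\Sp$ and hence invisible). For (3) I would invoke ambidexterity for the finite covering $i$: the transfer map $i_\sharp\to i_*$ is, in $\SpO_G$, the canonical comparison from the finite coproduct $\bigvee_{G/H}X_i$ to the finite product $\prod_{G/H}X_i$, which is an equivalence because $\SpO_G$ is stable (equivalently, the Wirthm\"uller isomorphism).

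Items (4) and (5) rest on the fact that $p_\otimes\simeq\Phi^G$ preserves colimits, since $p\colon\rB G\to\pt$ has connected fibers. For (4): both $p^*$ and $p_\otimes$ are symmetric monoidal, so $p_\otimes p^*$ is a colimit-preserving symmetric monoidal endofunctor of $\Sp$; as $\Sp$ is the unit of $\PrL$, any such functor is canonically equivalent to $\id$, and the transformation $\id\to p_\otimes p^*$ appearing in the statement is seen to realize this equivalence. For (5): $p_\otimes i_\sharp$ preserves colimits (both factors do), so by Proposition~\ref{norm.1} it suffices to evaluate on the generator $\Sphere\in\Sp=\Sp_e$, where $\Phi^G i_\sharp\Sphere\simeq\Phi^G\Sigma^\infty_G(G/e)_+\simeq\Sigma^\infty\big((G/e)^G\big)_+\simeq 0$ since $(G/e)^G=\emptyset$ for $G\neq e$.

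The genuinely non-formal ingredients are the dictionary of Remark~\ref{orth.4}, which we take as given; the standard properties of geometric fixed points used in (4)--(5), namely that $\Phi^G$ is symmetric monoidal and colimit preserving with $\Phi^G\Sigma^\infty_G Z\simeq\Sigma^\infty Z^G$; and ambidexterity for finite coverings in (3). I expect the only point needing real care to be organizational: confirming that each composite under consideration preserves colimits before reducing to generators. This is exactly why (1) and (6) are treated via span functoriality and the explicit formulas rather than on generators, since the norm $i_\otimes$ does not preserve colimits.
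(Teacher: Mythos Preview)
Your proposal is correct. Items (1)--(3) coincide with the paper's proof: (1) and (2) are span functoriality along $pi=\id$, and for (3) the paper cites \cite[Proposition B.56]{HHR}, which is precisely the Wirthm\"uller isomorphism you invoke. For (4) and (5) the paper simply cites \cite[Propositions B.182, B.192]{HHR}, while you give self-contained arguments exploiting that $p_\otimes$ is symmetric monoidal and colimit preserving; your route is more informative but you should still check that the particular arrow in (4) agrees with the one furnished by the universal property of $\Sp$. The substantive divergence is in (6) and (7): you read these off the point-set formulas of Construction~\ref{etale.8}, whereas the paper stays inside the span formalism, using the cartesian square $G/H\times\rB H \simeq \rB H\times_{\rB G}\rB H$ (with $q$ the fold map) to obtain $i^*i_\otimes\simeq q_\otimes q^*$ from the composition law in $\Span(\FinGpd)$ and $i^*i_\sharp\simeq q_\sharp q^*$ from base change (Proposition~\ref{norm.9}), and then identifying $q_\otimes$ and $q_\sharp$ with the $[G:H]$-fold smash and direct sum via Propositions~\ref{norm.6} and~\ref{norm.5}. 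The paper's argument never leaves the $\infty$-categorical framework and delivers the equivalence directly between $\infty$-functors; your approach is more hands-on but, as you correctly flag, the residual $H$-permutation of the indexing set $G/H$ makes the model-level identification delicate beyond the case $H=e$ actually used in the paper.
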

\begin{proof}
The first two follow from $pi=\id$.
The next three follow from \cite[Propositions B.56, B.182, B.192]{HHR}.
For (6), consider the cartesian square
\[
\begin{tikzcd}
G/H \times \rB H\ar[d,"q"']\ar[r,"q"]&
\rB H\ar[d,"i"]
\\
\rB H\ar[r,"i"]&
\rB G,
\end{tikzcd}
\]
where $q$ is the $\lvert G/H \rvert$-fold map.
If $\bar{i}$ and $\bar{q}$ denote the forward morphisms in $\Span(\FinGpd)$ associated with $i$ and $q$, then $\bar{i}i\simeq q\bar{q}$.
Hence we have an equivalence $i^*i_\otimes \simeq q_\otimes q^*$.
Together with Proposition \ref{norm.6}, we obtain the desired equivalence.
For (7), Proposition \ref{norm.9} gives an equivalence $i^*i_\sharp\simeq q_\sharp q^*$.
The functor $q^*$ can be identified with the diagonal functor $\Sp_H \to (\Sp_H)^{\times G/H}$, and the functor $q_\sharp$ can be identified with the $[G:H]$-fold direct sum $(\Sp_H)^{\times G/H}\to \Sp_H$.
Use these facts to conclude.
\end{proof}

\begin{rmk}
Recall that the forgetful functors $\CAlg_G\to \Sp_G$ and $\NAlg_G\to \Sp_G$ are conservative.
Together with \eqref{norm.4.3} and \eqref{norm.4.2}, we see that Proposition \ref{orth.5}(2),(4) holds for $\CAlg_G$ for appropriate $G$.
Similarly, together with \eqref{norm.7.1}, we see that Proposition \ref{orth.5}(2),(6) holds for $\NAlg_G$ for appropriate $G$.
\end{rmk}

\begin{lem}
\label{etale.4}
Suppose $R\in \CAlg$, $M\in \Mod_R$, and $L\in \Mod_{p^* R}$.
If $M$ and $R$ are connective, then there exists a canonical equivalence of $R$-modules
\[
M\wedge_R p_*L \simeq p_*(p^* M\wedge_{p^* R}L).
\]
\end{lem}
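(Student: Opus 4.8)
Lemma \ref{etale.4} is a projection-formula-type statement for the fixed point functor $p_* = (-)^G$, so my plan is to produce the canonical map first and then verify it is an equivalence by reducing to generators.

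\textbf{Constructing the map.} The plan is to build the comparison map $M \wedge_R p_*L \to p_*(p^*M \wedge_{p^*R} L)$ using adjunction and the projection formula that is already available. Concretely, apply $p^*$ to the source: since $p^*$ is symmetric monoidal and $p^*p_* \to \id$ is the counit, there is a natural map $p^*(M\wedge_R p_*L) \simeq p^*M \wedge_{p^*R} p^*p_*L \to p^*M \wedge_{p^*R} L$, and transposing across the adjunction $(p^*, p_*)$ yields the desired map $M\wedge_R p_*L \to p_*(p^*M\wedge_{p^*R}L)$ of $R$-modules (the $R$-module structure on the target is via the unit $\id \to p_*p^*$ applied to $R$, i.e.\ $R \to p_*p^*R$). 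I would spell this out at the level of the relevant $\infty$-categorical adjunctions, using that everything in sight preserves colimits.

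\textbf{Showing it is an equivalence.} The connectivity hypotheses on $M$ and $R$ are the signal that one should reduce to Eilenberg--MacLane data, or at least to compact generators. Both sides preserve colimits in $M$ (the functors $M \mapsto M\wedge_R p_*L$ and $M\mapsto p_*(p^*M\wedge_{p^*R}L)$ are colimit-preserving, the latter because $p_* = (-)^G$ preserves colimits here by Proposition \ref{orth.5}(4) and the surrounding discussion, and $p^*$ and $\wedge$ preserve colimits). So it suffices to check the map is an equivalence when $M = \Sigma^n R$ for $n \in \Z$, i.e.\ on free $R$-modules, using Proposition \ref{norm.1} to generate $\Mod_R$ from $\{\Sigma^n R\}$; but for $M = R$ the map is visibly the identity $R \wedge_R p_*L \simeq p_*L \simeq p_*(p^*R\wedge_{p^*R}L)$ after unwinding, and it is compatible with suspension. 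Alternatively, and perhaps more in the spirit of the rest of the paper, one reduces via $(i^*, \Phi^G)$ conservativity \cite[Theorem 7.12]{Schwede}: apply $i^* = \Phi^e$, where $i^*p_* $ becomes a finite limit (product over $G/e$ in the relevant range) that $i^*$ of the tensor also computes, and apply $\Phi^G$, where $\Phi^G p_*$ and the relevant smash-products interact through the monoidal geometric fixed point functor. I would use whichever reduction is cleanest given the earlier lemmas; the free-module reduction seems shortest.

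\textbf{Main obstacle.} The only real subtlety is that the projection formula $p_\sharp(\cF\wedge p^*\cG)\simeq p_\sharp\cF\wedge\cG$ from Proposition \ref{norm.2} is stated for \emph{finite coverings}, and $p\colon \rB G\to \pt$ is \emph{not} a finite covering, so one cannot invoke it directly; this is exactly why the connectivity hypotheses appear. The hard part is therefore to justify the reduction to free modules carefully: one must check that $M\mapsto p_*(p^*M\wedge_{p^*R}L)$ genuinely commutes with the relevant colimits (not just filtered ones) in the connective range, which uses that $(-)^G$ on $\Sp_G$ is exact and preserves coproducts together with the fact that a connective $R$-module is built from $\{\Sigma^n R : n\ge 0\}$ under colimits. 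Once that is in place, evaluating on $M=R$ finishes the proof immediately.
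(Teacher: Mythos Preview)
Your approach is essentially the same as the paper's: construct the comparison map via adjunction from the counit $p^*p_*L\to L$, then show it is an equivalence by observing that both sides preserve colimits in $M$, that the map is trivially an equivalence for $M=R$, and that connective $R$-modules are generated under colimits and shifts by $R$.

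Two citations need fixing. First, Proposition \ref{orth.5}(4) asserts $\id\simeq p_\otimes p^*$ and says nothing about $p_*$ preserving colimits; the paper invokes \cite[after Remark 6.8]{MNN} for this fact, and you should too (it is genuinely nontrivial that $(-)^G$ preserves colimits of $G$-spectra). Second, Proposition \ref{norm.1} concerns compact generation of $\Sp_G$, not of $\Mod_R$; the relevant input is \cite[Proposition 7.1.1.13]{HA}, which says that the smallest full subcategory of $\Mod_R$ containing $R$ and closed under colimits is exactly the connective $R$-modules (this is precisely where the connectivity hypothesis on $M$ enters). Your ``Main obstacle'' paragraph correctly identifies both of these points, so the issue is only that the earlier references are misplaced. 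The alternative route via $(i^*,\Phi^{G})$-conservativity is not needed and the paper does not pursue it.
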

\begin{proof}
We have maps
\[
p^*(M\wedge_R p_*L)\xrightarrow{\simeq} p^* M\wedge_{p^* R} p^* p_*L \to p^* M\wedge_{p^* R}L,
\]
where the second map is induced by the counit map $p^*p_*L\to L$.
By adjunction, we obtain $M\wedge_R p_*L\to  p_*(p^* M\wedge_{p^*R}L)$.
We only need to show that this is an equivalence in $\Sp$ after forgetting the module structures.

Let $\cF$ be the class of $R$-modules $M$ such that this map is an equivalence.
The functors $p^*$, $\wedge_R p_*L$, and $\wedge_{p^*R} L$ preserve colimits.
As explained after \cite[Remark 6.8]{MNN}, the functor $p_*$ preserves colimits too.
It follows that $\cF$ is closed under colimits.
Furthermore, $\cF$ is closed under shifts.
Since $\cF$ contains $R$, $\cF$ contains all connective $R$-modules by \cite[Proposition 7.1.1.13]{HA}.
\end{proof}

\subsection{Mackey functors}

\begin{df}
Recall from \cite[Definition 8.2.5]{HHR21} that a \emph{Mackey functor for $G$} (or simply \emph{Mackey functor}) is a presheaf $M$ on $\Span(\Fin_{\rB G})$ of abelian groups that transforms finite coproducts into finite products. (This is easily seen to be equivalent to more classical definitions as e.g.\ recalled in \cite[Definition 3.1]{HHR}.)
For a forward (resp.\ backward) morphism $f$ in $\Span(\Fin_{\rB G})$, $M(f)$ is called a \emph{restriction map} (resp.\ \emph{transfer map}).
Let $\Mack_G$ denote the category of Mackey functors for $G$.
We include the explicit description of Mackey functors for $\Ztwo$ in Example \ref{etale.29}.
\end{df}

For all $X\in \Sp_G$, $M\in \Fin_{\rB G}$, and integers $n$, we set
\begin{equation}
\label{Mack.2.2}
\ul{\pi}_n(X)(M)
:=
\Hom_{\Ho(\Sp_G)}(\Sigma^n \Sigma^\infty M_+,X)
\cong
\Hom_{\Ho(\Sp_G)}(\Sigma^n,\Sigma^\infty M_+\wedge X),
\end{equation}
where the isomorphism comes from \cite[Example 2.6]{HHR}.
The first (resp.\ second) formulation is contravariant (resp.\ covariant) in $M$, and these two can be combined to produce the \emph{equivariant homotopy group functor}
\begin{equation}
\label{Mack.2.3}
\ul{\pi}_n
\colon
\Sp_{G}
\to
\Mack_{G}.
\end{equation}
We refer to \cite[section 3.1]{HHR} for the details.
For $X\in \Sp_G$ and an integer $n$, we say that $X$ is \emph{$n$-connected} if $\ul{\pi}_k(X)=0$ for all integers $k\leq n$.

\begin{prop}
\label{Mack.2}
For every integer $n$, $\ul{\pi}_n$ preserves products and filtered colimits.
\end{prop}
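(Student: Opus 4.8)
The plan is to reduce the claim to a pointwise statement and then feed in the compact generation of $\Sp_G$ from Proposition \ref{norm.1}. First I would recall that $\Mack_G$ is the full subcategory of the functor category $\Fun(\Span(\Fin_{\rB G})^{op},\Ab)$ consisting of those presheaves that send finite coproducts to finite products, and that this subcategory is closed under the products and the filtered colimits formed objectwise in $\Fun(\Span(\Fin_{\rB G})^{op},\Ab)$: a limit of additive presheaves is additive, and a filtered colimit of additive presheaves is additive because finite products commute with filtered colimits in $\Ab$. Hence products and filtered colimits in $\Mack_G$ are computed objectwise on $\Span(\Fin_{\rB G})$, and it suffices to show that for each fixed $M\in\Fin_{\rB G}$ the functor $X\mapsto \ul{\pi}_n(X)(M)$ from $\Sp_G$ to $\Ab$ preserves products and filtered colimits.

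Fix $M$ and $n$. By the first formulation in \eqref{Mack.2.2} there is a natural identification $\ul{\pi}_n(X)(M)\cong \pi_0\,\Map_{\Sp_G}(\Sigma^n\Sigma^\infty M_+,X)$, compatible with the abelian group structures (coming, e.g., from writing $\Sigma^n\Sigma^\infty M_+$ as a double suspension, or from the infinite-loop structure on the target). For products this gives the claim directly: a mapping space out of any fixed object sends products to products, and $\pi_0$ of spaces preserves products, so $\ul{\pi}_n(\prod_i X_i)(M)\cong\prod_i \ul{\pi}_n(X_i)(M)$ as abelian groups, naturally. For filtered colimits, I would invoke Proposition \ref{norm.1} (applied to $X=\rB G$), which says that $\Sigma^n\Sigma^\infty M_+$ is a compact object of $\Sp_G$, i.e. $\Map_{\Sp_G}(\Sigma^n\Sigma^\infty M_+,-)$ preserves filtered colimits; composing with $\pi_0$, which also preserves filtered colimits, shows $\ul{\pi}_n(-)(M)$ preserves filtered colimits. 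Combining these two points with the objectwise computation of products and filtered colimits in $\Mack_G$ yields the proposition.

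There is no real obstacle here beyond bookkeeping. The only genuinely content-carrying input is the compactness of $\Sigma^n\Sigma^\infty M_+$ supplied by Proposition \ref{norm.1}; the one minor point requiring care is the assertion that filtered colimits of Mackey functors may be formed objectwise, which rests on the commutation of finite products with filtered colimits in $\Ab$.
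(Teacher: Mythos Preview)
Your proof is correct and follows essentially the same approach as the paper: use the first formulation in \eqref{Mack.2.2} for products, and invoke the compactness of $\Sigma^n\Sigma^\infty M_+$ from Proposition \ref{norm.1} for filtered colimits. You have simply been more explicit about why products and filtered colimits in $\Mack_G$ are computed objectwise, a point the paper leaves implicit.
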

\begin{proof}
The claim for products follows from the first formulation in \eqref{Mack.2.2}.
By Proposition \ref{norm.1},
$\Sigma^n \Sigma^\infty M_+$ is compact, i.e., $\Map_{\Sp_G}(\Sigma^n\Sigma^\infty M_+,-)$ preserves filtered colimits.
This immediately implies the claim.
\end{proof}

Now suppose $M\in \Mack_{G}$.
According to \cite[Theorem 5.3]{GM95}, one can associate an \emph{equivariant Eilenberg-MacLane spectrum} $\EM M\in \SpO_{G}$, which satisfies
\[
\ul{\pi}_n(\EM M)
\cong
\left\{
\begin{array}{ll}
M & \text{if }n=0,
\\
0 & \text{if }n\neq 0.
\end{array}
\right.
\]
Furthermore, $\EM M$ is unique up to an isomorphism in the homotopy category $\Ho(\SpO_{G})\simeq \Ho(\Sp_G)$, and there is a canonical isomorphism
\begin{equation}
\label{Mack.3.1}
\Hom_{\Ho(\SpO_{G})}(\EM M,\EM L)
\cong
\Hom_{\Mack_{G}}(M,L)
\end{equation}
for all $M,L\in \Mack_{G}$.
It follows that we have a functor
\begin{equation}
\label{Mack.2.1}
\EM
\colon
\Mack_G
\to
\Ho(\Sp_G).
\end{equation}

\begin{df}
\label{t.1}
Let $n$ be an integer.
Let $(\Sp_G)_{\geq n}$ (resp.\ $(\Sp_G)_{\leq n}$) denote the full subcategory of $\Sp_G$ spanned by $X\in \Sp_G$ such that $\ul{\pi}_k(X)=0$ for all integers $k<n$ (resp.\ $k> n$).
Observe that there are equivalences
\begin{equation}
(\Sp_G)_{\geq n}
\simeq
\Sigma^n(\Sp_G)_{\geq 0}
\text{ and }
(\Sp_G)_{\leq n}
\simeq
\Sigma^n(\Sp_G)_{\leq 0}.
\end{equation}
\end{df}

Suppose $X,Y\in \Sp_G$.
By \cite[Proposition 4.11]{HHR}, $X\in (\Sp_G)_{\geq 0}$ (resp.\ $Y\in (\Sp_G)_{\leq -1}$) if and only if $X$ is slice $(-1)$-positive (resp.\ $Y$ is slice $0$-null) in the sense of \cite[Definition 4.8]{HHR}.
This immediately implies the vanishing
\begin{equation}
\Map_{\Sp_G}(X,Y)\simeq 0
\end{equation}
for $X\in  (\Sp_G)_{\geq 0}$ and $Y\in (\Sp_G)_{\leq -1}$.
According to \cite[Remark 4.12]{HHR}, there is an example of $X\in \Sp_G$ such that $X$ is slice $0$-positive but $X\notin (\Sp_G)_{\geq 1}$.

Suppose $X\in \Sp_G$.
As explained in \cite[section 4.2]{HHR}, there exists a cofiber sequence in $\Sp_G$
\[
X'\to X\to X''
\]
such that $X'$ is slice $(-1)$-positive and $X''$ is slice $0$-null, i.e., $X'\in (\Sp_G)_{\geq 0}$ and $X''\in (\Sp_G)_{\leq -1}$.

We combine what we have discussed above and recall the notion of $t$-structures in $\infty$-categories from \cite[Definitions 1.2.1.1, 1.2.1.4]{HA} to deduce the following result, which is probably known to the experts: 

\begin{prop}
\label{t.2}
The pair of $(\Sp_G)_{\geq 0}$ and $(\Sp_G)_{\leq 0}$ forms a $t$-structure on $\Sp_G$.
\end{prop}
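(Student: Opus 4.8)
The statement to prove is that $\big((\Sp_G)_{\geq 0},\ (\Sp_G)_{\leq 0}\big)$ is a $t$-structure on $\Sp_G$ in the sense of \cite[Definitions 1.2.1.1, 1.2.1.4]{HA}. The strategy is simply to verify the three axioms of a $t$-structure one at a time, using exactly the facts collected in the preceding paragraphs of the excerpt. Recall the axioms: (i) the subcategories are stable under the appropriate shift, namely $\Sigma(\Sp_G)_{\geq 0}\subseteq (\Sp_G)_{\geq 0}$ and $\Sigma^{-1}(\Sp_G)_{\leq 0}\subseteq (\Sp_G)_{\leq 0}$; (ii) for $X\in (\Sp_G)_{\geq 0}$ and $Y\in (\Sp_G)_{\leq -1}=\Sigma^{-1}(\Sp_G)_{\leq 0}$ we have $\Map_{\Sp_G}(X,Y)\simeq 0$ (equivalently $\pi_0\Map=0$; in the $\infty$-categorical formulation one wants the mapping space to be contractible, but since both subcategories are stable the mapping spectrum vanishes, which is what the excerpt already records); and (iii) every $X\in \Sp_G$ fits into a cofiber sequence $X'\to X\to X''$ with $X'\in (\Sp_G)_{\geq 0}$ and $X''\in (\Sp_G)_{\leq -1}$.

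\textbf{Carrying it out.} For (i): by Definition \ref{t.1} there are equivalences $(\Sp_G)_{\geq n}\simeq \Sigma^n(\Sp_G)_{\geq 0}$ and $(\Sp_G)_{\leq n}\simeq \Sigma^n(\Sp_G)_{\leq 0}$, and clearly $(\Sp_G)_{\geq 1}\subseteq (\Sp_G)_{\geq 0}$ and $(\Sp_G)_{\leq -1}\subseteq (\Sp_G)_{\leq 0}$ directly from the defining vanishing conditions on $\ul\pi_k$; combining these gives $\Sigma(\Sp_G)_{\geq 0}=(\Sp_G)_{\geq 1}\subseteq (\Sp_G)_{\geq 0}$ and $\Sigma^{-1}(\Sp_G)_{\leq 0}=(\Sp_G)_{\leq -1}\subseteq (\Sp_G)_{\leq 0}$. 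For (ii): this is precisely the displayed vanishing $\Map_{\Sp_G}(X,Y)\simeq 0$ for $X\in(\Sp_G)_{\geq 0}$ and $Y\in(\Sp_G)_{\leq -1}$ stated in the excerpt, which was deduced from \cite[Proposition 4.11]{HHR} identifying $(\Sp_G)_{\geq 0}$ (resp.\ $(\Sp_G)_{\leq -1}$) with the slice $(-1)$-positive (resp.\ slice $0$-null) objects of \cite[Definition 4.8]{HHR}; I would simply cite this. For (iii): again this is exactly the cofiber sequence $X'\to X\to X''$ with $X'\in(\Sp_G)_{\geq 0}$, $X''\in(\Sp_G)_{\leq -1}$ produced in \cite[section 4.2]{HHR} via the slice filtration machinery, recorded verbatim in the preceding paragraph. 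Assembling (i), (ii), (iii) and invoking \cite[Definitions 1.2.1.1, 1.2.1.4]{HA} gives the $t$-structure.

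\textbf{Remark on the main obstacle.} There is essentially no obstacle: all three axioms have been set up in the paragraphs immediately preceding the statement, so the proof is a matter of citing Definition \ref{t.1}, the slice-positivity identification via \cite[Proposition 4.11]{HHR}, the vanishing of mapping spectra between positive and negative objects, and the slice cofiber sequence of \cite[section 4.2]{HHR}. The only point requiring a word of care is the compatibility of the two possible conventions for a $t$-structure — whether one demands $\Map(X,Y)$ contractible or merely $\pi_0\Map(X,Y)=0$ — but since $(\Sp_G)_{\geq 0}$ and $(\Sp_G)_{\leq 0}$ are shift-stable the mapping \emph{spectrum} vanishes, so both versions hold and the distinction is immaterial. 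Hence the proof is short.

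\begin{proof}
We verify the axioms of a $t$-structure in the sense of \cite[Definitions 1.2.1.1, 1.2.1.4]{HA}.

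First, by Definition \ref{t.1} we have $(\Sp_G)_{\geq 1}\simeq \Sigma(\Sp_G)_{\geq 0}$ and $(\Sp_G)_{\leq -1}\simeq \Sigma^{-1}(\Sp_G)_{\leq 0}$. Directly from the defining vanishing conditions on the homotopy Mackey functors, $(\Sp_G)_{\geq 1}\subseteq (\Sp_G)_{\geq 0}$ and $(\Sp_G)_{\leq -1}\subseteq (\Sp_G)_{\leq 0}$. Hence $\Sigma (\Sp_G)_{\geq 0}\subseteq (\Sp_G)_{\geq 0}$ and $\Sigma^{-1}(\Sp_G)_{\leq 0}\subseteq (\Sp_G)_{\leq 0}$.

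Second, for $X\in (\Sp_G)_{\geq 0}$ and $Y\in (\Sp_G)_{\leq -1}$ we have $\Map_{\Sp_G}(X,Y)\simeq 0$, as recorded above using the identification of $(\Sp_G)_{\geq 0}$ and $(\Sp_G)_{\leq -1}$ with the slice $(-1)$-positive and slice $0$-null objects via \cite[Proposition 4.11]{HHR}; in particular $\pi_0\Map_{\Sp_G}(X,Y)=0$.

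Third, for every $X\in \Sp_G$ there is, by the slice filtration discussed in \cite[section 4.2]{HHR}, a cofiber sequence $X'\to X\to X''$ with $X'\in (\Sp_G)_{\geq 0}$ and $X''\in (\Sp_G)_{\leq -1}$.

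These three properties are exactly the axioms defining a $t$-structure with $(\Sp_G)_{\geq 0}$ as connective part and $(\Sp_G)_{\leq 0}$ as coconnective part. The result follows.
\end{proof}
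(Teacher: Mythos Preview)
Your proof is correct and takes essentially the same approach as the paper: the paper simply says ``We combine what we have discussed above and recall the notion of $t$-structures in $\infty$-categories from \cite[Definitions 1.2.1.1, 1.2.1.4]{HA} to deduce the following result,'' and you have made explicit exactly which of the preceding facts verify which axiom. There is nothing to add.
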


This $t$-structure is called the \emph{equivariant homotopy $t$-structure on $\Sp_G$}.
By \cite[Remark 1.2.1.12]{HA}, the heart $\Sp_G^{\heartsuit}:=(\Sp_G)_{\geq 0}\cap (\Sp_G)_{\leq 0}$ is the nerve of an abelian category, and there is an equivalence
\begin{equation}
\label{t.3.1}
\Sp_G^{\heartsuit}\simeq \Nerve(\Ho(\Sp_G^{\heartsuit})).
\end{equation}
For every integer $n$, let $\tau_{\geq n}$, $\tau_{\leq n}$, and $h_n$ denote the truncation and homology functors.

\begin{prop}
\label{t.3}
The functor of $\infty$-categories
\begin{equation}
\label{t.3.2}
\Sp_G^{\heartsuit}
\to
\Nerve(\Mack_G)
\end{equation}
sending $X\in \Sp_G^\heartsuit$ to $\ul{\pi}_0(X)$ is an equivalence.
\end{prop}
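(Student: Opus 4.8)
The plan is to reduce the statement to the standard correspondence between the heart of the equivariant homotopy $t$-structure and equivariant Eilenberg--MacLane spectra. First I would observe that both the source and the target of \eqref{t.3.2} are nerves of ordinary $1$-categories: the target is $\Nerve(\Mack_G)$ by definition, and $\Sp_G^{\heartsuit}\simeq\Nerve(\Ho(\Sp_G^{\heartsuit}))$ by \eqref{t.3.1}. Since a functor between $\infty$-categories that are nerves of $1$-categories is a categorical equivalence exactly when the induced functor on homotopy categories is an equivalence of ordinary categories (the mapping spaces being discrete, fully faithfulness and essential surjectivity are detected there), it suffices to show that the ordinary functor $\Ho(\Sp_G^{\heartsuit})\to\Mack_G$, $X\mapsto\ul{\pi}_0(X)$, obtained from \eqref{Mack.2.3} by restriction is an equivalence of categories.

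Essential surjectivity is immediate from \cite[Theorem 5.3]{GM95}: for $M\in\Mack_G$ the spectrum $\EM M$ satisfies $\ul{\pi}_n(\EM M)=0$ for $n\neq 0$, hence lies in $\Sp_G^{\heartsuit}$, and $\ul{\pi}_0(\EM M)\cong M$. For full faithfulness I would fix $X,Y\in\Sp_G^{\heartsuit}$, set $M:=\ul{\pi}_0(X)$ and $L:=\ul{\pi}_0(Y)$, and use the uniqueness part of \cite[Theorem 5.3]{GM95} together with the fullness of the inclusion $\Sp_G^{\heartsuit}\subset\Ho(\Sp_G)$ to reduce to the case $X=\EM M$, $Y=\EM L$. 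Then \eqref{Mack.3.1} identifies $\Hom_{\Mack_G}(M,L)$ with $\Hom_{\Ho(\Sp_G)}(\EM M,\EM L)$ via $\EM$, and since $\EM$ is (up to natural isomorphism) a section of $\ul{\pi}_0$, every $g\colon\EM M\to\EM L$ is of the form $\EM(h)$ for a unique $h\colon M\to L$ and $\ul{\pi}_0(g)$ corresponds to $h$; hence the induced map $\ul{\pi}_0\colon\Hom_{\Ho(\Sp_G)}(\EM M,\EM L)\to\Hom_{\Mack_G}(M,L)$ is the inverse bijection, which is precisely the asserted full faithfulness.

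The main obstacle I anticipate is the bookkeeping in the last step: one must make sure that the abstract bijection of \eqref{Mack.3.1} is genuinely realized by the functor $\ul{\pi}_0$ of \eqref{Mack.2.3}, and not by some unrelated comparison map. This comes down to the compatibility $\ul{\pi}_0\circ\EM\simeq\id_{\Mack_G}$ as functors, combined with the full faithfulness of $\EM$ furnished by \eqref{Mack.3.1}. A secondary, purely formal point is the reduction carried out in the first paragraph, namely that under \eqref{t.3.1} the restriction of the $\infty$-functor $\ul{\pi}_0$ to the heart corresponds to the expected functor of $1$-categories; this should be spelled out but presents no real difficulty.
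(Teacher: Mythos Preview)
Your proposal is correct and follows essentially the same approach as the paper: both reduce via \eqref{t.3.1} to showing that the $1$-categorical functor between $\Ho(\Sp_G^{\heartsuit})$ and $\Mack_G$ is an equivalence, and both ultimately rely on the Eilenberg--MacLane construction \eqref{Mack.2.1} together with the identification \eqref{Mack.3.1}. The only difference is presentational: the paper asserts in one line that \eqref{Mack.2.1} furnishes a quasi-inverse, while you spell out essential surjectivity and full faithfulness of $\ul{\pi}_0$ directly and flag the compatibility check between \eqref{Mack.3.1} and $\ul{\pi}_0$ that the paper leaves implicit.
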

\begin{proof}
The functor \eqref{Mack.2.1} gives an equivalence $\Mack_G\simeq \Ho(\Sp_G^\heartsuit)$.
Combine with \eqref{t.3.1} to obtain a quasi-inverse of \eqref{t.3.2}.
\end{proof}

Compose a quasi-inverse of \eqref{t.3.2} with the inclusion $\Sp_G^\heartsuit \to \Sp_G$ to obtain the functor of $\infty$-categories
\begin{equation}
\label{t.3.3}
\EM \colon \Nerve(\Mack_G) \to \Sp_G,
\end{equation}
which is an upgrade of \eqref{Mack.2.1}.

\begin{prop}
\label{t.4}
The functor of $\infty$-categories $\EM \colon \Nerve(\Mack_G) \to \Sp_G$ preserves products and filtered colimits.
\end{prop}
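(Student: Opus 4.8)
The plan is to reduce the statement to Proposition \ref{Mack.2}, namely that each functor $\ul\pi_n\colon\Sp_G\to\Mack_G$ preserves products and filtered colimits. By construction $\EM$ factors as the composite
\[
\Nerve(\Mack_G)\xrightarrow{\ \simeq\ }\Sp_G^{\heartsuit}\hookrightarrow\Sp_G,
\]
where the first arrow is a quasi-inverse of the equivalence \eqref{t.3.2}; since an equivalence preserves all limits and colimits, it suffices to show that the heart inclusion $\Sp_G^{\heartsuit}\hookrightarrow\Sp_G$ preserves products and filtered colimits. Both $\Sp_G$ (presentable and stable) and $\Nerve(\Mack_G)$ (where products and filtered colimits are computed objectwise on $\Span(\Fin_{\rB G})$, using that filtered colimits are exact in $\Ab$) have the relevant limits and colimits, so the comparison maps in question are defined.

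First I would show that $\Sp_G^{\heartsuit}$ is closed under products and under filtered colimits inside $\Sp_G$. Given a family $\{X_i\}$ in $\Sp_G^{\heartsuit}$, form $\prod_i X_i$ in $\Sp_G$; by Proposition \ref{Mack.2} we have $\ul\pi_n(\prod_i X_i)\cong\prod_i\ul\pi_n(X_i)$, which vanishes for $n\neq 0$, so $\prod_i X_i$ lies in $\Sp_G^{\heartsuit}$. Similarly, for a filtered diagram $\{X_i\}$ in $\Sp_G^{\heartsuit}$, Proposition \ref{Mack.2} gives $\ul\pi_n(\colim_i X_i)\cong\colim_i\ul\pi_n(X_i)=0$ for $n\neq 0$, so $\colim_i X_i$ lies in $\Sp_G^{\heartsuit}$. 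Because $\Sp_G^{\heartsuit}\subseteq\Sp_G$ is a full subcategory, a product or filtered colimit computed in $\Sp_G$ which happens to lie in the heart is automatically the product or filtered colimit in $\Sp_G^{\heartsuit}$; hence the inclusion preserves both. Composing with the equivalence $\Nerve(\Mack_G)\xrightarrow{\simeq}\Sp_G^{\heartsuit}$ then shows that for a family $\{M_i\}$ in $\Mack_G$ the canonical map $\EM(\prod_i M_i)\to\prod_i\EM(M_i)$ is an equivalence, and that for a filtered diagram the canonical map $\colim_i\EM(M_i)\to\EM(\colim_i M_i)$ is an equivalence; the latter simply records that a filtered colimit of equivariant Eilenberg-MacLane spectra is again an Eilenberg-MacLane spectrum.

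There is no essential obstacle: the whole argument rests on Proposition \ref{Mack.2} together with formal properties of hearts of $t$-structures. The one point to treat with a little care is identifying the comparison maps: using naturality, $\ul\pi_n$ applied to $\EM(\prod_i M_i)\to\prod_i\EM(M_i)$ is the canonical map $\ul\pi_n\EM(\prod_i M_i)\to\prod_i\ul\pi_n\EM(M_i)$, which is an isomorphism because $\ul\pi_n\circ\EM$ is the identity for $n=0$ and the zero functor for $n\neq 0$, and both obviously preserve products; joint conservativity of $\{\ul\pi_n\}_{n\in\Z}$ (Proposition \ref{norm.1}, via the compact generators $\Sigma^n\Sigma^\infty M_+$) then upgrades this to the assertion that the original map is an equivalence, and the same reasoning handles filtered colimits.
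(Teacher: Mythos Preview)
Your proof is correct and follows essentially the same approach as the paper: reduce via Proposition \ref{t.3} to showing that the inclusion $\Sp_G^{\heartsuit}\hookrightarrow\Sp_G$ preserves products and filtered colimits, and deduce this from Proposition \ref{Mack.2}. The paper's proof is a two-sentence compression of exactly the argument you have written out in detail.
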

\begin{proof}
Owing to Proposition \ref{t.3}, it remains to check that the inclusion functor $\Sp_G^\heartsuit\to \Sp_G$ preserves products and filtered colimits.
This follows from Proposition \ref{Mack.2}.
\end{proof}

\begin{prop}
\label{t.5}
Let $H$ be a subgroup of $G$.
Then the norm functor $i_{H \otimes}\colon \Sp_H \to \Sp_G$ sends $(\Sp_H)_{\geq 0}$ into $(\Sp_G)_{\geq 0}$.
\end{prop}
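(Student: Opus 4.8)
The plan is to realise $(\Sp_H)_{\geq 0}$ as a completion under sifted colimits and then exploit that the norm $i_\otimes$ preserves sifted colimits. I would first record two preliminaries. (a) For every finite group $G$ the subcategory $(\Sp_G)_{\geq 0}$ is closed under all small colimits in $\Sp_G$: by Proposition \ref{t.2} and the discussion preceding it, $X\in(\Sp_G)_{\geq 0}$ iff $\Map_{\Sp_G}(X,Y)\simeq 0$ for every $Y\in(\Sp_G)_{\leq -1}$, and this condition is stable under colimits; in particular $(\Sp_G)_{\geq 0}$ is closed under sifted colimits. (b) For a subgroup inclusion $K\leq G$ the restriction $\res^G_K\colon\Sp_G\to\Sp_K$ carries $(\Sp_G)_{\geq 0}$ into $(\Sp_K)_{\geq 0}$: it is exact and colimit-preserving, so by Proposition \ref{norm.1} it suffices to note that it sends each generator $\Sigma^\infty(G/L)_+$ to $\Sigma^\infty$ of a finite $K$-set.

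The key step is to identify $(\Sp_H)_{\geq 0}$ with $\PSigma(\mathcal{C}_0)$, where $\mathcal{C}_0\subseteq(\Sp_H)_{\geq 0}$ is the full subcategory spanned by the suspension spectra $\Sigma^\infty W_+$ of finite $H$-sets $W$ (which is closed under finite coproducts, since $\Sigma^\infty W_+\vee\Sigma^\infty W'_+\simeq\Sigma^\infty(W\amalg W')_+$). Equivalently, the objects $\Sigma^\infty(H/L)_+$, $L\leq H$, are compact projective generators of $(\Sp_H)_{\geq 0}$. Compactness is Proposition \ref{norm.1} (filtered colimits in $(\Sp_H)_{\geq 0}$ agree with those in $\Sp_H$ by (a)); generation is Proposition \ref{norm.1} together with the cell description of connective objects; and projectivity amounts to the statement that $\Map_{\Sp_H}(\Sigma^\infty(H/L)_+,-)$, computed on connective objects as $\Omega^\infty$ of the categorical $L$-fixed points of the restriction to $L$, preserves sifted colimits of connective $H$-spectra. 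This holds because $\res^H_L$ and $(-)^L$ preserve sifted colimits of connective spectra (by (b) and the tom Dieck filtration, whose associated graded consists of homotopy orbits of geometric fixed points, each colimit-preserving — note $\Phi^L=p_\otimes$ and $p\colon\rB L\to\pt$ has connected fibers) and $\Omega^\infty$ preserves sifted colimits on connective spectra. Granting this, $(\Sp_H)_{\geq 0}$ is the smallest full subcategory of $\Sp_H$ containing the $\Sigma^\infty W_+$ and closed under sifted colimits.

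To assemble the proof: the norm $i_\otimes\colon\Sp_H\to\Sp_G$ preserves sifted colimits by Construction \ref{norm.3}, and from the natural transformation $\Sigma^\infty\colon\infHpt^\otimes\to\infSH^\otimes$ of that construction together with the identity $i_\otimes(W_+)\simeq i_*(W)_+$ for $W\in\Fin_{\rB H}$ one obtains $i_\otimes(\Sigma^\infty W_+)\simeq\Sigma^\infty(i_*W)_+$, which lies in $(\Sp_G)_{\geq 0}$ since $i_*W\in\Fin_{\rB G}$. Thus $i_\otimes$ sends the generating family $\{\Sigma^\infty W_+\}$ into $(\Sp_G)_{\geq 0}$ and preserves sifted colimits, while $(\Sp_G)_{\geq 0}$ is closed under sifted colimits; as this family generates $(\Sp_H)_{\geq 0}$ under sifted colimits, we conclude $i_\otimes\big((\Sp_H)_{\geq 0}\big)\subseteq(\Sp_G)_{\geq 0}$, which is the claim.

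The main obstacle is the key step — verifying that the compact connective generators $\Sigma^\infty(H/L)_+$ are projective in $(\Sp_H)_{\geq 0}$, i.e.\ that the relevant connective fixed-point functors preserve sifted colimits; everything else is formal bookkeeping with Construction \ref{norm.3}. An alternative that sidesteps this is an induction on $|G|$: decompose $\res^G_K N^G_H Y$ by the double-coset formula (using Propositions \ref{norm.9} and \ref{norm.2}) into a smash product of norms of smaller order, handled by the inductive hypothesis and the closure of $(\Sp_K)_{\geq 0}$ under $\wedge$, and treat the top fixed points $(N^G_H Y)^G$ via the tom Dieck splitting and $\Phi^G N^G_e\simeq\id$ (Proposition \ref{orth.5}(1)); but the sifted-colimit argument above is cleaner.
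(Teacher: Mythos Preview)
Your argument is correct. The paper's own proof consists of a single citation to \cite[Proposition 4.33]{HHR}, so you are supplying content where the paper defers to the literature. The HHR argument (in its original slice-filtration context) proceeds via cellular approximation: a connective $H$-spectrum is built from cells $(H/K)_+\wedge S^n$ with $n\geq 0$, and the norm carries such a cell to a $G$-spectrum of the form $(\text{finite }G\text{-set})_+\wedge S^V$ with $V$ an actual $G$-representation, which is connective. Your approach is different and more native to the $\infty$-categorical setup of Construction~\ref{norm.3}: you identify $(\Sp_H)_{\geq 0}$ as generated under sifted colimits by $\{\Sigma^\infty W_+:W\in\Fin_{\rB H}\}$, observe that $i_\otimes$ preserves sifted colimits and sends each generator to $\Sigma^\infty(i_*W)_+\in(\Sp_G)_{\geq 0}$, and conclude by closure of $(\Sp_G)_{\geq 0}$ under sifted colimits. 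This avoids point-set cellular reasoning entirely.

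Your self-identified obstacle---projectivity of $\Sigma^\infty(H/L)_+$ in $(\Sp_H)_{\geq 0}$---is easier than you make it. The paper already records, in the proof of Lemma~\ref{etale.4} (citing \cite{MNN}), that the fixed-point functor $p_*=(-)^G\colon\Sp_G\to\Sp$ preserves \emph{all} colimits; combined with colimit-preservation of restriction this shows $Y\mapsto(\res^H_L Y)^L$ preserves all colimits outright, so the tom Dieck filtration argument you sketch, while correct, is unnecessary. Your alternative outline via the double-coset decomposition of $\res^G_K N^G_H$ and induction on $|G|$ would also work and is closer in spirit to the HHR method.
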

\begin{proof}
We refer to \cite[Proposition 4.33]{HHR}.
\end{proof}

\subsection{Green functors}
\label{seca.4}

Our references for Green functors are \cite{lewis88} and \cite{Shulman}.

\begin{df}
\label{Green.2}
For Mackey functors $M$ and $L$, the \emph{box product of $M$ and $L$} is defined to be
\[
M \Box L
:=
\ul{\pi}_0(\EM M \wedge \EM L).
\]
\end{df}

There is a purely algebraic definition of the box product of Mackey functors, which is rather explicit for $G=\Z/p$ and some prime $p$, see \cite[p.\ 61]{lewis88} and \cite[sections 2.2 and 2.4.3]{Shulman}. This is expected to coincide with the above Definition, but we won't need this.

\begin{df}
\label{Green.3}
A \emph{Green functor} $A$ is a commutative monoid in the category $\Mack_G$, i.e., $A$ is equipped with morphisms $A\Box A\to A$ and $\ul{\pi}_0(\Sphere)\to A$ satisfying the unital, associative, and commutative axioms, where $\Sphere$ denotes equivariant sphere spectrum.
Let $\Green_G$ denote the category of Green functors.

An \emph{$A$-module} $M$ is an object of $\Mack_G$ equipped with an action morphism $A\Box M\to M$ satisfying the module axioms.

For $A$-modules $M$ and $L$, $M\Box_A L$ is defined to be the coequalizer of the two action morphisms
\[
M\Box A\Box L
\rightrightarrows
M \Box L.
\]
Let $\ul{\Tor}_*^A(M,L)$ be the derived functor of $M\Box_A L$.
\end{df}

\begin{prop}
\label{Green.1}
Suppose $A\in \CAlg_G$,
and let $M$ and $L$ be $A$-modules.
Then there exists a convergent spectral sequence
\begin{equation}
\label{Green.1.1}
E_{p,q}^2
:=
\ul{\Tor}_{p}^{\ul{\pi}_*(A)}(\ul{\pi}_*(M),\ul{\pi}_*(L))_q
\Rightarrow
\ul{\pi}_{p+q}(M\wedge_A L).
\end{equation}
\end{prop}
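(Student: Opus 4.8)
The plan is to construct this as the $A$-linear Tor spectral sequence in the category of equivariant spectra, exactly parallel to the classical case. First I would set up the filtration: since $A\in\CAlg_G$ and $M,L\in\Mod_A$, one can form a simplicial resolution of $L$ by free $A$-modules — the bar resolution $\Bcy(A;A;L)$ in the $\infty$-categorical sense, whose geometric realization is equivalent to $L$. Smashing with $M$ over $A$ and using that $M\wedge_A(-)$ preserves colimits, one gets $M\wedge_A L\simeq |M\wedge_A \Bcy(A;A;L)|$, and the skeletal filtration of this simplicial object yields a filtration of $M\wedge_A L$ whose associated spectral sequence is the candidate. The $E^1$-page is computed from the terms $\ul{\pi}_*(M\wedge_A A^{\wedge \bullet}\wedge_A L)\cong \ul{\pi}_*(M)\wedge$-type expressions; taking homology of the bar complex on homotopy Mackey functors gives $\ul{\Tor}$ over $\ul{\pi}_*(A)$ (using the box product $\Box$ and its derived functors as in Definition \ref{Green.3}). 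For this identification of the $E^1$ (hence $E^2$) page one needs a Künneth-type statement: $\ul{\pi}_*$ of a smash product of free $A$-modules is the box product of the $\ul{\pi}_*$'s over $\ul{\pi}_*(A)$; this follows because free modules $A\wedge V_+$ for $V\in\Fin_{\rB G}$ have homotopy Mackey functors that are induced, and the relevant box products can be computed levelwise.

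Next I would address convergence. The spectral sequence is a homotopy spectral sequence of a filtered object indexed by the skeleta of a simplicial object, so the standard input is a connectivity/boundedness estimate: one needs that $M\wedge_A A^{\wedge n}\wedge_A L$ becomes increasingly connected, or at least that the filtration is exhaustive and Hausdorff in each degree. In the unbounded generality stated (no connectivity hypothesis on $M$, $L$, or $A$), "convergent" should be read as conditional convergence in the sense of Boardman; this holds automatically for the skeletal filtration of a realization because the filtration is exhaustive and the limit/colimit comparison is controlled. If the authors intend strong convergence they presumably are implicitly in a connective situation (as in the applications, e.g.\ Proposition \ref{t.5} guarantees $N^{\Ztwo}\EM A$ is $(-1)$-connected), in which case an Adams-type vanishing argument degreewise gives strong convergence; I would state the result with the same qualifier the paper uses elsewhere and cite the analogue in \cite{HA} (the $\Tor$ spectral sequence for modules over an $\E_\infty$-ring, \cite[Proposition 7.2.1.19]{HA} or its module-category variant) transported to $\Sp_G$.

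The main obstacle is the $E^2$-identification, specifically matching the algebraically-defined derived box product $\ul{\Tor}^{\ul{\pi}_*(A)}_*$ of Definition \ref{Green.3} with the homology of the bar complex of homotopy Mackey functors arising from the simplicial resolution. This requires knowing that $\Box$ computes $\ul{\pi}_0$ of the smash product of Eilenberg–MacLane spectra (which is the \emph{definition} given in Definition \ref{Green.2}), and then a flatness/Künneth statement ensuring no higher $\ul{\Tor}$ contributions appear at the $E^1$-stage from the free terms — i.e.\ that $\ul{\pi}_*(A\wedge V_+)$ is flat as a $\ul{\pi}_*(A)$-module for $V$ a finite $G$-set. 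This flatness is where equivariant subtleties (transfers, the structure of the Burnside Mackey functor) enter, and it is the step I would expect to need the most care; everything else is a routine transcription of the non-equivariant construction into the stable $\infty$-category $\Sp_G$, using that $\Sp_G$ is presentable stable symmetric monoidal with $\wedge$ preserving colimits in each variable.
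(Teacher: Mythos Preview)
The paper does not actually prove this proposition: its proof consists entirely of a reference to \cite[section 6]{BGS20} (and, under the stronger hypothesis $A\in\NAlg_G$, to Lewis--Mandell \cite[Theorem 6.6]{LM06}). Your sketch is essentially the standard construction carried out in those references --- resolve one module by free $A$-modules (e.g.\ via the bar construction), smash with the other, and take the skeletal filtration --- so in that sense you are reproducing the cited argument rather than diverging from the paper.

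One remark on the point you flag as the main obstacle: the flatness of $\ul{\pi}_*(A\wedge (G/H)_+)$ over $\ul{\pi}_*(A)$ is in fact handled by material already in the appendix. By \eqref{flat.5.1} one has $\ul{\pi}_*(A\wedge (G/H)_+)\cong \ul{\pi}_*(A)\Box B^{G/H}$, and Proposition \ref{flat.6} shows that such modules are projective, hence flat. So the equivariant subtlety you anticipate is not a genuine gap; the K\"unneth identification at the $E^1$-page goes through cleanly once one uses these facts, exactly as in Lewis--Mandell.
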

\begin{proof}
We refer to \cite[section 6]{BGS20}.
With the stronger assumption $A\in \NAlg_G$, this result is due to Lewis and Mandell \cite[Theorem 6.6]{LM06}.
\end{proof}

\begin{prop}
\label{Green.4}
Suppose $A\in \CAlg_G$, and let $M$ and $L$ be $A$-modules.
If $A$, $M$, and $L$ are $(-1)$-connected, then $M\wedge_A L$ is $(-1)$-connected too, and there is an isomorphism
\[
\ul{\pi}_0(M)
\Box_{\ul{\pi}_0(A)}
\ul{\pi}_0(L)
\simeq
\ul{\pi}_0(M\wedge_A L).
\]
\end{prop}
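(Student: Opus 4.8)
The plan is to derive Proposition \ref{Green.4} from the spectral sequence of Proposition \ref{Green.1} together with the $t$-structure on $\Sp_G$ established in Proposition \ref{t.2}. First I would observe that since $A$, $M$, and $L$ are $(-1)$-connected, we have $\ul{\pi}_k(A)=\ul{\pi}_k(M)=\ul{\pi}_k(L)=0$ for all $k<0$, so in the graded Green functor $\ul{\pi}_*(A)$ and the graded modules $\ul{\pi}_*(M)$, $\ul{\pi}_*(L)$ everything is concentrated in non-negative degrees. Consequently the derived box product $\ul{\Tor}_p^{\ul{\pi}_*(A)}(\ul{\pi}_*(M),\ul{\pi}_*(L))_q$ vanishes unless $p\geq 0$ and $q\geq 0$: the $p\geq 0$ part is automatic because $\ul{\Tor}$ is a left-derived functor, and the $q\geq 0$ part follows because a projective resolution of $\ul{\pi}_*(M)$ over $\ul{\pi}_*(A)$ in the category of graded modules can be chosen with all terms concentrated in non-negative degrees (a graded module concentrated in degrees $\geq 0$ over a graded ring concentrated in degrees $\geq 0$ has a graded free resolution with the same property), and tensoring such a resolution with $\ul{\pi}_*(L)$, again concentrated in degrees $\geq 0$, keeps everything in degrees $\geq 0$.

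Next I would feed this vanishing into the convergent spectral sequence
\[
E^2_{p,q}:=\ul{\Tor}_p^{\ul{\pi}_*(A)}(\ul{\pi}_*(M),\ul{\pi}_*(L))_q \Rightarrow \ul{\pi}_{p+q}(M\wedge_A L).
\]
Since $E^2_{p,q}=0$ whenever $p<0$ or $q<0$, the abutment $\ul{\pi}_n(M\wedge_A L)$ vanishes for $n<0$, which proves that $M\wedge_A L$ is $(-1)$-connected. For $n=0$ the only possibly nonzero contribution is the corner term $E^2_{0,0}=\ul{\Tor}_0^{\ul{\pi}_*(A)}(\ul{\pi}_*(M),\ul{\pi}_*(L))_0$, and there are no incoming or outgoing differentials to or from the $(0,0)$ spot within the first quadrant, so $E^2_{0,0}=E^\infty_{0,0}\cong \ul{\pi}_0(M\wedge_A L)$. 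It remains to identify $E^2_{0,0}$ with $\ul{\pi}_0(M)\Box_{\ul{\pi}_0(A)}\ul{\pi}_0(L)$: the degree-zero part of $\ul{\Tor}_0$, i.e.\ of the underived box product $\ul{\pi}_*(M)\Box_{\ul{\pi}_*(A)}\ul{\pi}_*(L)$, only sees the degree-zero parts $\ul{\pi}_0(M)$, $\ul{\pi}_0(A)$, $\ul{\pi}_0(L)$ of the three graded objects because the box product is additive and degree-multiplicative, so $(\ul{\pi}_*(M)\Box_{\ul{\pi}_*(A)}\ul{\pi}_*(L))_0=\ul{\pi}_0(M)\Box_{\ul{\pi}_0(A)}\ul{\pi}_0(L)$.

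The step I expect to require the most care is the claim that $\ul{\Tor}_p^{\ul{\pi}_*(A)}(\ul{\pi}_*(M),\ul{\pi}_*(L))$ is concentrated in internal degrees $q\geq 0$; this is where the connectivity hypotheses genuinely enter, and it rests on a statement about graded projective resolutions in the category of graded Mackey-functor modules over a graded Green functor. One should check that the category of graded $\ul{\pi}_*(A)$-modules has enough projectives and that a connective module admits a connective projective resolution — this is a standard homological-algebra fact for $\N$-graded rings, transported to the Mackey-functor setting, where "projective" is taken relative to the box product over $\ul{\pi}_*(A)$ and one uses that the forgetful functor to graded abelian groups (evaluation at $G/e$ and $G/G$) detects and creates the relevant structure. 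Modulo this, the rest of the argument is a routine first-quadrant spectral sequence edge-map identification, exactly parallel to the classical computation of $\pi_0$ of a derived tensor product of connective modules over a connective ring spectrum.
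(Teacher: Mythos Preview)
Your argument is correct and is essentially the standard deduction of this statement from the K\"unneth spectral sequence of Proposition~\ref{Green.1}. The paper itself does not give an argument here but simply cites \cite[Corollary~6.8.1]{BGS20}, and since the spectral sequence of Proposition~\ref{Green.1} is precisely \cite[\S6]{BGS20}, your approach coincides with what lies behind the cited reference.

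One small remark on the step you flagged as needing care: the existence of connective graded projective resolutions over a connective graded Green functor follows from the description of projectives in Proposition~\ref{flat.6} (the graded analogue holds with shifts of $A\Box B^{G/H}$ as building blocks), so that step is indeed routine once stated.
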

\begin{proof}
We refer to \cite[Corollary 6.8.1]{BGS20}.
\end{proof}

Apply Proposition \ref{Green.4} to the case when $A$ is the equivariant sphere spectrum to obtain the symmetric monoidal structure on $(\Sp_G)_{\geq 0}$ that is the restriction of the symmetric monoidal structure on $\Sp_G$.
Furthermore, the functor
\[
\ul{\pi}_0
\colon
(\Sp_G)_{\geq 0}
\to
\Mack_G
\]
is symmetric monoidal.
Its right adjoint is the functor $\EM \colon \Mack_G\to (\Sp_G)_{\geq 0}$ by Proposition \ref{t.3}.
Together with \cite[Remark 7.3.2.13]{HA}, the induced functors
\begin{equation}
\label{Green.4.1}
\ul{\pi}_0
:
(\CAlg_G)_{\geq 0}
\rightleftarrows
\Green_G
:
\EM
\end{equation}
form an adjoint pair, where $(\CAlg_G)_{\geq 0}:=\CAlg((\Sp_G)_{\geq 0})$.
The formulation of these functors provided in \cite[Proposition 7.3.2.5]{HA} shows that the two squares in the diagram
\begin{equation}
\label{Green.4.2}
\begin{tikzcd}
(\CAlg_G)_{\geq 0}\ar[r,shift left=0.5ex,"\ul{\pi}_0"]\ar[d,"U"']\ar[r,shift right=0.5ex,"H"',leftarrow]&
\Green_G\ar[d,"U"]
\\
(\Sp_G)_{\geq 0}\ar[r,shift left=0.5ex,"\ul{\pi}_0"]\ar[r,shift right=0.5ex,"H"',leftarrow]&
\Mack_G
\end{tikzcd}
\end{equation}
commute, where the vertical functors are the forgetful functors.

Suppose $A\in (\CAlg_G)_{\geq 0}$.
Let $(\Mod_A)_{\geq 0}$ denote the $\infty$-category of $A$-modules in $(\Sp_G)_{\geq 0}$.
By \cite[Remark 3.8]{PSW22}, we also have adjoint functors
\begin{equation}
\label{Green.4.3}
\ul{\pi}_0
:
(\Mod_A)_{\geq 0}
\rightleftarrows
\Mod_{\ul{\pi}_0(A)}
:
\EM
\end{equation}
such that the two squares in the diagram
\begin{equation}
\label{Green.4.4}
\begin{tikzcd}
(\Mod_A)_{\geq 0}\ar[r,shift left=0.5ex,"\ul{\pi}_0"]\ar[d,"U"']\ar[r,shift right=0.5ex,"H"',leftarrow]&
\Mod_{\ul{\pi}_0(A)}\ar[d,"U"]
\\
(\Sp_G)_{\geq 0}\ar[r,shift left=0.5ex,"\ul{\pi}_0"]\ar[r,shift right=0.5ex,"H"',leftarrow]&
\Mack_G
\end{tikzcd}
\end{equation}
commute, where the vertical functors are the forgetful functors.

\begin{rmk}
\label{Green.5}
The equivariant Eilenberg-MacLane spectrum of a Green functor does \emph{not} produce an object of $\NAlg_G$ in general, see \cite[Theorem 5.3, Proposition 6.1]{Ull}.
We need the stronger notion of Tambara functors to construct an object of $\NAlg_G$ as the equivariant Eilenberg-MacLane spectrum.
We refer to \cite{Ull} for the details.
\end{rmk}

\subsection{Flat modules}
\label{seca.5}

\begin{df}
\label{etale.24}
Let $A$ be a Green functor.
An $A$-module $M$ is called \emph{flat} if the functor $M\Box_A (-)$ from the category of $A$-modules to the category of Mackey functors is exact.
Equivalently, $\ul{\Tor}_s^{A}(-,M)=0$ for every integer $s\geq 1$.
This definition was considered in \cite[section 4]{LM06}.
\end{df}

If $A\to B$ is a morphism of Green functors and $M$ is a flat $A$-module, then $M\Box_A B$ is a flat $B$-module.

\begin{df}
\label{flat.5}
Recall from \cite[section 2]{LM06} that the \emph{Burnside category} $\mathfrak{B}_G$ is defined to be the additive category whose objects are the finite $G$-sets and whose hom groups are given by
\[
\Hom_{\mathfrak{B}_G}(X,Y)
:=
\Hom_{\Ho(\Sp_G)}(\Sigma^\infty X_+,\Sigma^\infty Y_+)
\]
for all finite $G$-sets $X$ and $Y$.

For a finite $G$-set $X$, let $B^X$ denote the Mackey functor $\Hom_{\mathfrak{B}_G}(-,X)$.
As explained in \cite[p.\ 519]{LM06}, there is an isomorphism
\begin{equation}
\label{flat.5.1}
M\Box B^X(Y)
\cong
M(Y\times X)
\end{equation}
for all finite $G$-sets $X$ and $Y$.
\end{df}

\begin{prop}
\label{flat.6}
Let $A$ be a Green functor.
Then an $A$-module $M$ is projective if and only if $M$ is a direct summand of a direct sum of $A$-modules of the form $A\Box B^{G/H}$, where $H$ is a subgroup of $G$.
Furthermore, every projective $A$-module is flat.
\end{prop}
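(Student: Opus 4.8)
The plan is to run the standard ``projectives are retracts of frees'' argument in the abelian category $\Mod_A$, with the modules $A\Box B^{G/H}$ playing the role of the free modules. Two structural inputs should be isolated first. The first is the free--forgetful adjunction $A\Box(-)\colon \Mack_G\rightleftarrows \Mod_A$ together with the base-change identity $N\Box_A(A\Box L)\simeq N\Box L$ for any Mackey functor $L$, which follows by evaluating the reflexive coequalizer defining $\Box_A$ and using that $\Box$ preserves colimits. The second is the Yoneda-type identification $\Hom_{\Mack_G}(B^{G/H},M)\cong M(G/H)$, valid because $B^{G/H}=\Hom_{\mathfrak{B}_G}(-,G/H)$ is a representable presheaf. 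Combining the two gives a natural isomorphism $\Hom_{\Mod_A}(A\Box B^{G/H},M)\cong M(G/H)$. Since a sequence of $A$-modules is exact precisely when it is exact on underlying Mackey functors, hence precisely when it is exact after evaluation at each orbit $G/K$, the functor $M\mapsto M(G/H)$ is exact, and therefore each $A$-module $A\Box B^{G/H}$ is projective.

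Next I would verify that there are enough such projectives: for an $A$-module $M$ and an element $m\in M(G/H)$, the adjunction produces a map of $A$-modules $A\Box B^{G/H}\to M$ sending the tautological generator to $m$, and assembling these over all subgroups $H$ and all $m\in M(G/H)$ yields a map $\bigoplus A\Box B^{G/H}\to M$ which is surjective because surjectivity is valuewise and every element of each $M(G/K)$ is manifestly in the image. From here the asserted equivalence is formal. If $M$ is projective, this surjection splits, exhibiting $M$ as a direct summand of a direct sum of copies of $A\Box B^{G/H}$. Conversely, a direct sum of the projective modules $A\Box B^{G/H}$ is projective (a product of short exact sequences of abelian groups is exact), and a direct summand of a projective module is projective.

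For the flatness statement I would first show each $A\Box B^{G/H}$ is flat and then propagate. By the base-change identity and \eqref{flat.5.1}, the functor $N\mapsto N\Box_A(A\Box B^{G/H})\cong N\Box B^{G/H}$ has value $N(Y\times G/H)$ at a finite $G$-set $Y$, so it is exact; equivalently $\ul{\Tor}^A_s(-,A\Box B^{G/H})=0$ for all $s\geq 1$, i.e. $A\Box B^{G/H}$ is flat. Since $\Box_A$ commutes with direct sums, a direct sum of flat $A$-modules is flat; and if $M$ is a direct summand of a flat module $F$, then $\ul{\Tor}^A_s(-,M)$ is a direct summand of $\ul{\Tor}^A_s(-,F)=0$, so $M$ is flat. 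Combined with the characterization of projectives just obtained, every projective $A$-module is flat.

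I expect the only delicate point to be the bookkeeping around exactness: one must be careful that ``projective'', ``flat'', ``surjective'', and ``$\ul{\Tor}$'' are each tested in the appropriate category, and that one may move freely between $\Mod_A$, $\Mack_G$, and the valuewise description (kernels and cokernels in $\Mod_A$ being computed on underlying Mackey functors, and $\Mack_G$ being an additive functor category into $\Ab$) without circularity. Once the adjunction in the first paragraph and the identity $N\Box_A(A\Box L)\simeq N\Box L$ are in place, the remainder is the usual homological algebra of free, projective, and flat modules, carried out internally to Mackey functors.
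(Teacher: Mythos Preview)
Your argument is correct and is essentially the standard proof one finds in the cited sources. The paper itself does not give a proof at all: it simply refers to \cite[Proposition 4.4, Theorem 4.5(c)]{LM06} for the graded analogue and to \cite[Corollary 1.5]{Greenlees92} for the characterization of projectives. What you have written is, in effect, an unwinding of those references in the non-graded setting: the representability isomorphism $\Hom_{\Mod_A}(A\Box B^{G/H},M)\cong M(G/H)$, the resulting projectivity and ``enough projectives'' statement, and the flatness of $A\Box B^{G/H}$ via $N\Box_A(A\Box B^{G/H})\cong N\Box B^{G/H}$ together with \eqref{flat.5.1} are exactly the ingredients Lewis--Mandell use. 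So your route is not conceptually different from theirs; it is just explicit where the paper is content to cite. The only cosmetic point worth tightening is that exactness of Mackey functors is, a priori, checked at all finite $G$-sets, and you are implicitly using additivity to reduce to orbits $G/K$; this is of course immediate but could be said in one clause.
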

\begin{proof}
These are non-graded versions of \cite[Proposition 4.4, Theorem 4.5(c)]{LM06}.
See also \cite[Corollary 1.5]{Greenlees92} for the first claim.
\end{proof}

\begin{df}
\label{etale.20}
Suppose $A\in \CAlg_G$.
An $A$-module $M$ is called \emph{flat} if the following two conditions are satisfied:
\begin{enumerate}
\item[(i)] $\ul{\pi}_0(M)$ is a flat $\ul{\pi}_0(A)$-module,
\item[(ii)] the induced map
\[
\ul{\pi}_n(A)\Box_{\ul{\pi}_0(A)}\ul{\pi}_0(M)
\to
\ul{\pi}_n(M)
\]
is an isomorphism for every integer $n$.
\end{enumerate}
\end{df}

\begin{prop}
\label{etale.21}
Let $A\to B$ be a map in $\CAlg_G$, and let $M$ be a flat $A$-module.
Then $B\wedge_A M$ is a flat $B$-module. Consequently, if we have maps $A \to B$, $A \to C$ and $B \to L$ in $\CAlg_G$ such that $L$ is a flat $B$-module, then the induced map $B \wedge_A C \to L \wedge_A C$ is flat.
\end{prop}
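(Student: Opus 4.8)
The plan is to verify the two conditions of Definition~\ref{etale.20} for $B\wedge_A M$ regarded as a $B$-module, and then to deduce the final assertion formally. Write $\pi_\ast:=\ul{\pi}_\ast$ for brevity. First I would propagate flatness to the homotopy level. Condition~(ii) of Definition~\ref{etale.20} for $M$ over $A$ provides an isomorphism of graded $\pi_\ast(A)$-modules $\pi_\ast(A)\Box_{\pi_0(A)}\pi_0(M)\xrightarrow{\ \simeq\ }\pi_\ast(M)$, and since $\pi_0(M)$ is a flat $\pi_0(A)$-module the functor $(-)\Box_{\pi_\ast(A)}\pi_\ast(M)$ is identified, via cancellation of relative box products, with the composite of the exact restriction functor along $\pi_0(A)\to\pi_\ast(A)$ with $(-)\Box_{\pi_0(A)}\pi_0(M)$; hence $\pi_\ast(M)$ is a flat graded $\pi_\ast(A)$-module. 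This step uses only the base-change statement recalled after Definition~\ref{etale.24}, in its evident graded version.

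Next I would feed this into the convergent spectral sequence of Proposition~\ref{Green.1} for the $A$-modules $B$ and $M$, namely $E^2_{p,q}=\ul{\Tor}^{\pi_\ast(A)}_p(\pi_\ast(B),\pi_\ast(M))_q\Rightarrow \pi_{p+q}(B\wedge_A M)$. Flatness of $\pi_\ast(M)$ kills $E^2_{p,q}$ for $p>0$, so the spectral sequence degenerates, and using $\pi_\ast(M)\simeq\pi_\ast(A)\Box_{\pi_0(A)}\pi_0(M)$ once more it yields
\[
\pi_n(B\wedge_A M)\cong\big(\pi_\ast(B)\Box_{\pi_\ast(A)}\pi_\ast(M)\big)_n\cong\big(\pi_\ast(B)\Box_{\pi_0(A)}\pi_0(M)\big)_n.
\]
In degree $0$ this reads $\pi_0(B\wedge_A M)\cong\pi_0(B)\Box_{\pi_0(A)}\pi_0(M)$, which is flat over $\pi_0(B)$ again by the base-change statement after Definition~\ref{etale.24}; this is condition~(i). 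Rewriting the general-degree formula as $\pi_\ast(B\wedge_A M)\cong\pi_\ast(B)\Box_{\pi_0(B)}\pi_0(B\wedge_A M)$ gives condition~(ii), so $B\wedge_A M$ is a flat $B$-module. (In our applications $A$, $B$, $M$ are all $(-1)$-connected, in which case Proposition~\ref{Green.4} supplies the degree-$0$ statement directly, but the argument above does not need this.)

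For the last assertion, transitivity of the relative smash product gives $L\wedge_A C\simeq (B\wedge_A C)\wedge_B L$, and the structure map $B\wedge_A C\to L\wedge_A C$ is exactly the one induced by $B\to L$. Since $L$ is flat over $B$, the case already proved, applied to the map $B\to B\wedge_A C$ in $\CAlg_G$, shows that $(B\wedge_A C)\wedge_B L$ is a flat $(B\wedge_A C)$-module; that is, $B\wedge_A C\to L\wedge_A C$ is flat.

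The step I expect to be the main obstacle is the homological bookkeeping in the first two paragraphs: one has to handle with care the change-of-rings and base-change identities for box products and $\ul{\Tor}$ over (graded) Green functors, since the box product of Mackey functors is considerably more delicate than the ordinary tensor product. Everything else is either cited (Proposition~\ref{Green.1}) or purely formal.
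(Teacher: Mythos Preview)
Your proof is correct and follows essentially the same approach as the paper: both arguments use conditions (i) and (ii) to show that $\ul{\pi}_*(M)$ is a flat graded $\ul{\pi}_*(A)$-module, feed this into the spectral sequence of Proposition~\ref{Green.1} to obtain the chain of isomorphisms $\ul{\pi}_n(B)\Box_{\ul{\pi}_0(A)}\ul{\pi}_0(M)\cong(\ul{\pi}_*(B)\Box_{\ul{\pi}_*(A)}\ul{\pi}_*(M))_n\cong\ul{\pi}_n(B\wedge_A M)$, and then deduce the second statement by applying the first to $B\to B\wedge_A C$. Your write-up is a bit more explicit about the base-change bookkeeping, but the strategy is identical.
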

\begin{proof}
The conditions (i) and (ii) in Definition \ref{etale.20} for $M$ imply that $\ul{\pi}_*(M)$ is a flat $\ul{\pi}_*(A)$-module.
This means $\ul{\Tor}_{p}^{\ul{\pi}_*(A)}(-,\ul{\pi}_*(M))_q=0$ for all integers $p\geq 1$ and $q$.
Together with the convergent spectral sequence
\[
E_{p,q}^2:=
\ul{\Tor}_{p}^{\ul{\pi}_*(A)}(\ul{\pi}_*(B),\ul{\pi}_*(M))_q
\Rightarrow
\ul{\pi}_{p+q}(B\wedge_A M)
\]
obtained from Proposition \ref{Green.1}, we obtain isomorphisms of Mackey functors
\begin{align*}
\ul{\pi}_n(B)\Box_{\ul{\pi}_0(A)}\ul{\pi}_0(M)
\cong &
(\ul{\pi}_*(B)\Box_{\ul{\pi}_0(A)}\ul{\pi}_0(M))_n
\\
\cong &
(\ul{\pi}_*(B)\Box_{\ul{\pi}_*(A)}\ul{\pi}_*(M))_n
\cong
\ul{\pi}_n(B\wedge_A M)
\end{align*}
for all integers $n$.
This implies the conditions (i) and (ii) in Definition \ref{etale.20}
for $B\wedge_A M$. The second statement follows from the first applied to $B \to B \wedge_A C$.
\end{proof}

\begin{prop}
\label{etale.22}
Let $f\colon A\to B$ be a flat map in $\CAlg_G$.
If the induced morphism $\ul{\pi}_0(A)\to \ul{\pi}_0(B)$ is an isomorphism, then $f$ is an equivalence.
\end{prop}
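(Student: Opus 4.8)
The plan is to reduce the statement to a computation of equivariant homotopy Mackey functors and then feed in the flatness hypothesis directly. First I would note that, by the conservativity of the forgetful functor $\CAlg_G\to \Sp_G$ recorded after \eqref{norm.1.1}, it suffices to show that the underlying morphism of $G$-spectra of $f$ is an equivalence. By Proposition \ref{norm.1} the family $\{\Sigma^n\Sigma^\infty V_+ : V\in \Fin_{\rB G},\, n\in \Z\}$ compactly generates $\Sp_G$, and since $\ul{\pi}_k(X)(V)=\pi_0\Map_{\Sp_G}(\Sigma^k\Sigma^\infty V_+,X)=\pi_k\Map_{\Sp_G}(\Sigma^\infty V_+,X)$, a map in $\Sp_G$ is an equivalence precisely when it induces an isomorphism on $\ul{\pi}_n$ for every integer $n$. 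So the task becomes: show $\ul{\pi}_n(f)\colon \ul{\pi}_n(A)\to \ul{\pi}_n(B)$ is an isomorphism of Mackey functors for all $n$.

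Next I would invoke flatness of $B$ as an $A$-module in the sense of Definition \ref{etale.20}. (Condition (i) there is automatic once $\ul{\pi}_0(f)$ is an isomorphism; only condition (ii) is used.) Condition (ii) provides, for each $n$, an isomorphism
\[
\ul{\pi}_n(A)\Box_{\ul{\pi}_0(A)}\ul{\pi}_0(B)\xrightarrow{\ \simeq\ }\ul{\pi}_n(B),
\]
where $\ul{\pi}_0(B)$ carries its $\ul{\pi}_0(A)$-module structure via $\ul{\pi}_0(f)$. Since $\ul{\pi}_0(f)$ is an isomorphism of Green functors by hypothesis, $\ul{\pi}_0(B)\cong \ul{\pi}_0(A)$ as $\ul{\pi}_0(A)$-algebras, so the unit property of the box product over the Green functor $\ul{\pi}_0(A)$ identifies the source with $\ul{\pi}_n(A)$ (via $a\mapsto a\otimes 1$). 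Tracing the construction of the map in condition (ii) — it is $a\otimes b\mapsto \ul{\pi}_n(f)(a)\cdot b$ for the $\ul{\pi}_0(B)$-action on $\ul{\pi}_n(B)$ — one sees that under this identification it becomes exactly $\ul{\pi}_n(f)$. Hence $\ul{\pi}_n(f)$ is an isomorphism for all $n$, and combined with the first paragraph this shows $f$ is an equivalence.

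The argument is almost entirely formal, and the one point I would take care to spell out is this last identification: that the canonical map $\ul{\pi}_n(A)\Box_{\ul{\pi}_0(A)}\ul{\pi}_0(B)\to \ul{\pi}_n(B)$ of Definition \ref{etale.20}(ii), precomposed with the unit isomorphism $\ul{\pi}_n(A)\cong \ul{\pi}_n(A)\Box_{\ul{\pi}_0(A)}\ul{\pi}_0(A)$ coming from $\ul{\pi}_0(f)$, agrees with $\ul{\pi}_n(f)$. This is just the compatibility of the graded $\ul{\pi}_\ast(A)$-module structure on $\ul{\pi}_\ast(B)$ with the ring map $\ul{\pi}_\ast(f)$, but since the whole proof hinges on it, it is worth stating explicitly; I do not anticipate any further difficulty.
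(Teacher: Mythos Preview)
Your proof is correct and follows exactly the approach of the paper, which simply says the claim is immediate from condition (ii) in Definition \ref{etale.20}; you have written out in detail the unpacking that the paper leaves implicit. The one-line proof in the paper and your argument are the same idea: condition (ii) plus the hypothesis $\ul{\pi}_0(A)\cong\ul{\pi}_0(B)$ forces $\ul{\pi}_n(f)$ to be an isomorphism for all $n$, hence $f$ is an equivalence.
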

\begin{proof}
Immediate from the condition (ii) in Definition \ref{etale.20}.
\end{proof}

\begin{prop}
\label{Mack.1}
Let $A$ be a Green functor, and let $M$ and $L$ be $A$-modules.
If $M$ is flat, then there is an equivalence
\[
\EM(M\Box_A L)
\simeq
\EM M\wedge_{\EM A} \EM L
\]
\end{prop}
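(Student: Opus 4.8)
The plan is to compute the equivariant homotopy Mackey functors of $\EM M\wedge_{\EM A}\EM L$, show that this spectrum is concentrated in degree $0$, and then recover it as the Eilenberg-MacLane spectrum of its $\ul{\pi}_0$.

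First I would regard the Green functor $A$ as an object of $(\CAlg_G)_{\geq 0}$ via \eqref{Green.4.1}, so that $\EM A\in \CAlg_G$ is $0$-truncated with $\ul{\pi}_0(\EM A)\cong A$. Applying the adjunction \eqref{Green.4.3} to the ring $\EM A$, the spectra $\EM M$ and $\EM L$ acquire $\EM A$-module structures with $\ul{\pi}_0(\EM M)\cong M$ and $\ul{\pi}_0(\EM L)\cong L$ as $A$-modules, and both are concentrated in degree $0$. In particular $\EM M\wedge_{\EM A}\EM L$ is defined.

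Next I would feed these data into the convergent spectral sequence of Proposition \ref{Green.1}, applied with $\EM A$, $\EM M$, $\EM L$ in place of $A$, $M$, $L$:
\[
E_{p,q}^2
:=
\ul{\Tor}_{p}^{\ul{\pi}_*(\EM A)}(\ul{\pi}_*(\EM M),\ul{\pi}_*(\EM L))_q
\Rightarrow
\ul{\pi}_{p+q}(\EM M\wedge_{\EM A}\EM L).
\]
Since $\ul{\pi}_*(\EM A)$, $\ul{\pi}_*(\EM M)$, $\ul{\pi}_*(\EM L)$ are the ungraded objects $A$, $M$, $L$ placed in internal degree $0$, the relevant box products and their derived functors are concentrated in internal degree $0$; hence $E^2_{p,q}=0$ for $q\neq 0$, while $E^2_{p,0}=\ul{\Tor}_p^{A}(M,L)$. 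By flatness of $M$ and Definition \ref{etale.24}, $\ul{\Tor}_p^{A}(M,L)=0$ for all $p\geq 1$, so the only surviving term is $E^2_{0,0}=\ul{\Tor}_0^A(M,L)=M\Box_A L$. The spectral sequence therefore degenerates and gives $\ul{\pi}_n(\EM M\wedge_{\EM A}\EM L)=0$ for $n\neq 0$ and $\ul{\pi}_0(\EM M\wedge_{\EM A}\EM L)\cong M\Box_A L$; the degree-$0$ statement is also consistent with Proposition \ref{Green.4}, since $\EM A$, $\EM M$, $\EM L$ are $(-1)$-connected.

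Finally I would conclude: $\EM M\wedge_{\EM A}\EM L$ lies in the heart $\Sp_G^{\heartsuit}$, so by Proposition \ref{t.3} (the equivalence $\Sp_G^{\heartsuit}\simeq \Nerve(\Mack_G)$ with quasi-inverse $\EM$) it is equivalent to $\EM$ of its $\ul{\pi}_0$, that is, to $\EM(M\Box_A L)$. The step requiring the most care is the degeneration of the spectral sequence: one must justify that passing from the graded $\ul{\Tor}$ over $\ul{\pi}_*(\EM A)$ to the ungraded $\ul{\Tor}$ over $A$ is legitimate — which it is, because everything is concentrated in internal degree $0$ — and that the flatness condition of Definition \ref{etale.24} is precisely what annihilates the higher $\ul{\Tor}$-terms; convergence itself is furnished by Proposition \ref{Green.1}.
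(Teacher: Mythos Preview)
Your proposal is correct and follows essentially the same argument as the paper: both apply the spectral sequence of Proposition~\ref{Green.1} to $\EM A$, $\EM M$, $\EM L$, observe that the $E^2$-page is concentrated in bidegree $(0,0)$ by flatness of $M$, and conclude that $\EM M\wedge_{\EM A}\EM L$ lies in the heart with $\ul{\pi}_0\cong M\Box_A L$. Your write-up is more explicit about the final identification via Proposition~\ref{t.3}, but the strategy is identical.
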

\begin{proof}
From the convergent spectral sequence
\[
E_{p,q}^2:=
\ul{\Tor}_{p}^{\ul{\pi}_*(\EM A)}(\ul{\pi}_*(\EM M),\ul{\pi}_*(\EM L))_q
\Rightarrow
\ul{\pi}_{p+q}(\EM M\wedge_{\EM A} \EM L)
\]
obtained from Proposition \ref{Green.1}, we have \[
M\Box_A L\xrightarrow{\simeq} \ul{\pi}_0(\EM M \wedge_{\EM A}\EM L)
\text{ and }
\ul{\pi}_k(\EM M\wedge_{\EM A}\EM L)\cong 0
\]
for every nonzero integer $k$.
\end{proof}

\begin{prop}
\label{flat.3}
Let $A$ be a Green functor.
If $\colim_{i\in I}M_i$ is a filtered colimit of $A$-modules and $L$ is an $A$-module, then there is a canonical equivalence
\[
\colim_{i\in I}(M_i \Box_A L)
\cong
(\colim_{i\in I} M_i) \Box_A L.
\]
\end{prop}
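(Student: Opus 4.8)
The plan is to reduce the statement to two facts: that the box product of Mackey functors commutes with filtered colimits in each variable, and that $M\Box_A L$ is assembled from box products and a (reflexive) coequalizer, so that the claim becomes a routine interchange of colimits in the abelian category $\Mack_G$.

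First I would verify that for a fixed Mackey functor $N$ the functor $(-)\Box N\colon \Mack_G\to \Mack_G$ commutes with filtered colimits. By Definition \ref{Green.2}, $M\Box N=\ul{\pi}_0(\EM M\wedge \EM N)$, so for a filtered diagram $\{M_i\}_{i\in I}$ there are canonical isomorphisms
\[
\Bigl(\colim_{i\in I}M_i\Bigr)\Box N\cong \ul{\pi}_0\Bigl(\bigl(\colim_{i\in I}\EM M_i\bigr)\wedge \EM N\Bigr)\cong \ul{\pi}_0\Bigl(\colim_{i\in I}(\EM M_i\wedge \EM N)\Bigr)\cong \colim_{i\in I}(M_i\Box N),
\]
where the first isomorphism uses that $\EM$ preserves filtered colimits (Proposition \ref{t.4}), the second that $\wedge$ on $\Sp_G$ preserves colimits, and the third that $\ul{\pi}_0$ preserves filtered colimits (Proposition \ref{Mack.2}). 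Equivalently, one may invoke that $(\Mack_G,\Box)$ is closed symmetric monoidal (see \cite{lewis88}), so that $(-)\Box N$ is a left adjoint and preserves all small colimits.

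Next I would recall from Definition \ref{Green.3} that $M\Box_A L$ is the coequalizer of the two action morphisms $M\Box A\Box L\rightrightarrows M\Box L$; this is a reflexive pair (the common section inserts the unit of $A$), so the coequalizer is computed underlying in $\Mack_G$, and likewise the filtered colimit $\colim_{i\in I}M_i$ of $A$-modules is computed in $\Mack_G$. Applying $(-)\Box A\Box L$ and $(-)\Box L$ to that filtered colimit and using the previous step, the coequalizer diagrams defining the $M_i\Box_A L$ have filtered colimit the diagram $(\colim_{i\in I}M_i)\Box A\Box L\rightrightarrows (\colim_{i\in I}M_i)\Box L$. Since filtered colimits commute with coequalizers (both being colimits), taking coequalizers yields $(\colim_{i\in I}M_i)\Box_A L$ on one side and $\colim_{i\in I}(M_i\Box_A L)$ on the other, which is the asserted canonical isomorphism.

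The only point that is not purely formal is the interchange in the first step, namely that $(-)\Box N$ commutes with filtered colimits; after that, the rest is bookkeeping of colimits in $\Mack_G$. Since Propositions \ref{t.4} and \ref{Mack.2} are already available in the excerpt, the argument requires no input beyond the definitions, although the closed-monoidal-structure route shortens it further.
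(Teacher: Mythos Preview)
Your argument is correct and complete. Both your proof and the paper's rest on the same two ingredients---Proposition \ref{t.4} ($\EM$ preserves filtered colimits) and Proposition \ref{Mack.2} ($\ul{\pi}_0$ preserves filtered colimits)---but they assemble them differently. The paper works one level up: it lifts everything to spectra, uses that the \emph{relative} smash $\wedge_{\EM A}$ commutes with colimits in each variable to get $\colim_i(\EM M_i\wedge_{\EM A}\EM L)\simeq(\colim_i\EM M_i)\wedge_{\EM A}\EM L$, and then applies $\ul{\pi}_0$ together with Proposition \ref{Green.4} (the K\"unneth-type identification $\ul{\pi}_0(\EM M\wedge_{\EM A}\EM L)\cong M\Box_A L$ for connective inputs) to translate both sides back to $\Box_A$. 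You instead stay in $\Mack_G$: you first show the \emph{absolute} box product $(-)\Box N$ preserves filtered colimits, and then feed this into the coequalizer presentation of $\Box_A$ from Definition \ref{Green.3}, using only that colimits commute. Your route is slightly longer but more elementary, since it avoids invoking Proposition \ref{Green.4} (and hence the spectral sequence of Proposition \ref{Green.1}); the paper's route is terser but leans on that extra input.
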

\begin{proof}
Since $\wedge$ commutes with colimits in each variable, we have a canonical equivalence
\[
\colim_{i\in I} (\EM M_i \wedge_{\EM A} \EM L)
\simeq
(\colim_{i\in I} \EM M_i) \wedge_{\EM A} \EM L.
\]
Apply $\ul{\pi}_0$ to this, and use Propositions \ref{Mack.2}, \ref{t.4}, and  \ref{Green.4}  to obtain the desired equivalence.
\end{proof}

\begin{prop}
\label{flat.1}
Let $A$ be a Green functor, and let
\[
\{0\to M_i'\to M_i\to M_i''\to 0\}_{i\in I}
\]
be a system of exact sequence of $A$-modules over a filtered category $I$.
Then the induced sequence
\[
0\to \colim_{i\in I} M_i'\to \colim_{i\in I} M_i \to \colim_{i\in I} M_i'' \to 0
\]
is exact.
\end{prop}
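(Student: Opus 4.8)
The plan is to reduce the statement in two steps to the classical fact that filtered colimits of abelian groups are exact (Grothendieck's axiom AB5 for $\Ab$).

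\emph{Step 1: reduction to Mackey functors.} The forgetful functor $U\colon \Mod_A\to \Mack_G$ is exact and conservative, hence detects exactness of sequences. It also preserves filtered colimits: indeed the free functor $M\mapsto A\Box M$ commutes with filtered colimits, since $A\Box M=\ul{\pi}_0(\EM A\wedge \EM M)$ and each of $\EM$ (Proposition \ref{t.4}), $\wedge$ on $\Sp_G$ in each variable, and $\ul{\pi}_0$ (Proposition \ref{Mack.2}) preserves filtered colimits; consequently a filtered colimit of $A$-modules is the filtered colimit of the underlying Mackey functors equipped with the induced action map. (Alternatively one may cite that $(\Mack_G,\Box)$ is closed symmetric monoidal, so that $\Box$ preserves all colimits in each variable and $\Mod_A$ is a Grothendieck abelian category.) Therefore it is enough to prove the claim when $A=\ul{\pi}_0(\Sphere)$ is the Burnside Mackey functor, i.e.\ in $\Mack_G$ itself.

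\emph{Step 2: reduction to abelian groups.} A Mackey functor for $G$ is determined by its restriction to the full subcategory of $\Span(\Fin_{\rB G})$ spanned by the orbits $G/H$, every finite $G$-set being a finite coproduct of orbits which a Mackey functor sends to a finite product; this identifies $\Mack_G$ with the category of additive functors from the Burnside category $\mathfrak{B}_G$ of Definition \ref{flat.5} to $\Ab$. In such a functor category, kernels, cokernels, and filtered colimits are all computed objectwise, i.e.\ after evaluation at the orbits $G/H$. Hence a sequence of Mackey functors is exact iff it is exact at every $G/H$, and $(\colim_{i\in I}M_i)(G/H)\cong \colim_{i\in I}M_i(G/H)$.

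Putting the two steps together, exactness of $0\to \colim_{i} M_i'\to \colim_{i} M_i\to \colim_{i} M_i''\to 0$ is checked at each orbit $G/H$, where it becomes the filtered colimit over $I$ of the exact sequences $0\to M_i'(G/H)\to M_i(G/H)\to M_i''(G/H)\to 0$ of abelian groups, and hence exact. I expect the one mildly delicate point to be the identification in Step 1 of filtered colimits in $\Mod_A$ with those of the underlying Mackey functors; everything else is routine bookkeeping.
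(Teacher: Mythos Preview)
Your argument is correct and takes a genuinely different route from the paper. The paper proves this spectrally: it passes to the system of cofiber sequences $\EM M_i'\to\EM M_i\to\EM M_i''$ in $\Sp_G$, takes the filtered colimit (which preserves cofiber sequences in a stable $\infty$-category), invokes Proposition~\ref{t.4} to identify $\colim_i\EM M_i'\simeq\EM\colim_i M_i'$ and similarly for the other terms, and then uses that a cofiber sequence whose terms all lie in the heart of a $t$-structure is the same datum as a short exact sequence in the heart. Your proof instead stays entirely within $1$-categorical algebra: you reduce to $\Mack_G$ via the monadic forgetful functor (which creates filtered colimits because the monad $A\Box-$ preserves them, essentially Proposition~\ref{flat.3}), and then observe that $\Mack_G$, being a category of additive functors $\mathfrak{B}_G^{op}\to\Ab$, has objectwise colimits and hence satisfies AB5. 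Your approach is more elementary and makes transparent that the result is just the Grothendieck axiom for $\Mack_G$; the paper's approach is consistent with its overall strategy of leveraging the embedding $\EM$ into $\Sp_G$, but is arguably heavier machinery than the statement requires. The one point you flag as ``mildly delicate'' is indeed the only place requiring care, and your argument for it (via the monad preserving filtered colimits, or alternatively via closedness of $(\Mack_G,\Box)$) is sound.
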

\begin{proof}
We have a system of cofiber sequences in $\Sp_G$
\[
\{\EM M_i' \to \EM M_i \to \EM M_i''\}_{i\in I}.
\]
Take colimits and use Proposition \ref{t.4} to obtain a cofiber sequence
\[
\EM \colim_{i\in I} M_i'\to \EM \colim_{i\in I} M_i \to \EM \colim_{i\in I} M_i''.
\]
Together with the fact that cofiber sequences and exact sequences coincide in the heart of a $t$-structure, we deduce the claim.
\end{proof}

\begin{prop}
\label{flat.2}
Let $A$ be a Green functor.
Then every filtered colimit $\colim_{i\in I} M_i$ of flat $A$-modules is flat.
\end{prop}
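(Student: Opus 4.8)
The plan is to reduce the statement directly to the two preceding propositions, namely Proposition \ref{flat.3} (that $\Box_A$ commutes with filtered colimits) and Proposition \ref{flat.1} (that filtered colimits of short exact sequences of $A$-modules are exact). By Definition \ref{etale.24}, flatness of $\colim_{i\in I}M_i$ means that the functor $(\colim_{i\in I}M_i)\Box_A(-)$, from $A$-modules to Mackey functors, is exact. So first I would fix an arbitrary short exact sequence of $A$-modules
\[
0\to L'\to L\to L''\to 0
\]
and aim to show that applying $(\colim_{i\in I}M_i)\Box_A(-)$ keeps it exact.

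Next I would invoke Proposition \ref{flat.3} three times to identify
\[
(\colim_{i\in I}M_i)\Box_A L'\cong \colim_{i\in I}(M_i\Box_A L'),
\quad
(\colim_{i\in I}M_i)\Box_A L\cong \colim_{i\in I}(M_i\Box_A L),
\quad
(\colim_{i\in I}M_i)\Box_A L''\cong \colim_{i\in I}(M_i\Box_A L''),
\]
and check that these identifications are compatible with the maps induced by $L'\to L\to L''$ — this is just naturality of the equivalence in Proposition \ref{flat.3}, but it is the one bookkeeping point worth spelling out, since one needs the whole diagram, not just the objects, to come from a filtered system. Since each $M_i$ is flat, the sequence $0\to M_i\Box_A L'\to M_i\Box_A L\to M_i\Box_A L''\to 0$ is exact for every $i\in I$, and these assemble into a filtered system of short exact sequences indexed by $I$.

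Finally I would apply Proposition \ref{flat.1} to this filtered system to conclude that
\[
0\to \colim_{i\in I}(M_i\Box_A L')\to \colim_{i\in I}(M_i\Box_A L)\to \colim_{i\in I}(M_i\Box_A L'')\to 0
\]
is exact, and hence, via the identifications above, that $(\colim_{i\in I}M_i)\Box_A(-)$ preserves the short exact sequence. Since the sequence was arbitrary, $(\colim_{i\in I}M_i)\Box_A(-)$ is exact, so $\colim_{i\in I}M_i$ is flat. I do not expect a genuine obstacle here: the only mild subtlety is verifying that the equivalences of Proposition \ref{flat.3} are natural enough to promote a short exact sequence of colimits to the colimit of a system of short exact sequences, and this follows from the fact that the equivalence there is constructed from the colimit-preservation of $\wedge$ and is therefore functorial in the module variable.
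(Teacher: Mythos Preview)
Your proposal is correct and follows essentially the same approach as the paper: fix a short exact sequence, use flatness of each $M_i$ to get exact sequences after $M_i\Box_A(-)$, apply Proposition~\ref{flat.1} to pass to the colimit, and then invoke Proposition~\ref{flat.3} to identify the result with $(\colim_i M_i)\Box_A(-)$. The only difference is that you spell out the naturality bookkeeping for Proposition~\ref{flat.3}, which the paper leaves implicit.
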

\begin{proof}
Let $0\to L'\to L\to L''\to 0$ be an exact sequence of $A$-modules.
Then the induced sequence $0\to M_i\Box_A L'\to M_i\Box_A L\to M_i\Box_A L''\to 0$ is exact, so the induced sequence
\[
0 \to \colim_{i\in I}  (M_i\Box_A L') \to \colim_{i\in I}  (M_i\Box_A L) \to \colim_{i\in I}  (M_i\Box_A L'') \to 0
\]
is exact too by Proposition \ref{flat.1}.
Combine with Proposition \ref{flat.3} to conclude.
\end{proof}

\begin{prop}
\label{flat.4}
Suppose $A\in \CAlg_G$.
Then every filtered colimit of flat $A$-modules is flat.
In particular, every filtered colimit of free $A$-modules is flat.
\end{prop}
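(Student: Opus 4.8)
The plan is to verify directly the two defining conditions of Definition \ref{etale.20} for the colimit, assembling Propositions \ref{Mack.2}, \ref{flat.2}, and \ref{flat.3} together with the standard fact that the forgetful functor $\Mod_A\to \Sp_G$ creates filtered colimits. So let $\{M_i\}_{i\in I}$ be a filtered diagram of flat $A$-modules and put $M:=\colim_{i\in I}M_i$, the colimit formed in $\Mod_A$. Its underlying $\Ztwo$-spectrum is then the colimit of the $M_i$ in $\Sp_G$, so Proposition \ref{Mack.2} supplies natural isomorphisms $\ul{\pi}_n(M)\cong \colim_{i\in I}\ul{\pi}_n(M_i)$ of $\ul{\pi}_n(A)$-modules for every integer $n$; in particular $\ul{\pi}_0(M)\cong\colim_{i\in I}\ul{\pi}_0(M_i)$ as modules over the Green functor $\ul{\pi}_0(A)$.

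First I would check condition (i): each $\ul{\pi}_0(M_i)$ is flat over $\ul{\pi}_0(A)$ in the sense of Definition \ref{etale.24}, and a filtered colimit of such modules is again flat by Proposition \ref{flat.2}, so $\ul{\pi}_0(M)$ is a flat $\ul{\pi}_0(A)$-module. For condition (ii) I would combine Proposition \ref{flat.3} (box products over the Green functor $\ul{\pi}_0(A)$ commute with filtered colimits) with condition (ii) for each $M_i$ and the identification of $\ul{\pi}_n(M)$ above to get
\[
\ul{\pi}_n(A)\Box_{\ul{\pi}_0(A)}\ul{\pi}_0(M)
\;\cong\;
\colim_{i\in I}\bigl(\ul{\pi}_n(A)\Box_{\ul{\pi}_0(A)}\ul{\pi}_0(M_i)\bigr)
\;\cong\;
\colim_{i\in I}\ul{\pi}_n(M_i)
\;\cong\;
\ul{\pi}_n(M)
\]
for every $n$. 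The one delicate point — and the main, if mild, obstacle — is to check that this composite is precisely the canonical comparison map appearing in Definition \ref{etale.20}(ii); this follows by tracking the naturality of all the maps involved, after which $M$ is flat.

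Finally, for the last sentence: $A$ is visibly flat over itself, and since both $\ul{\pi}_0(-)$ and $(-)\Box_{\ul{\pi}_0(A)}\ul{\pi}_0(A)$ commute with finite direct sums, every finite free $A$-module is flat. An arbitrary free $A$-module is a filtered colimit of finite free $A$-modules, hence flat by the first assertion, and therefore any filtered colimit of free $A$-modules is flat by applying the first assertion once more.
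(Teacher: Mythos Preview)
Your proof is correct and follows essentially the same route as the paper: verify conditions (i) and (ii) of Definition \ref{etale.20} for the filtered colimit by combining Proposition \ref{Mack.2} with Propositions \ref{flat.2} and \ref{flat.3}, and then treat free modules as a special case. The paper's own proof is extremely terse (``Combine Propositions \ref{Mack.2} and \ref{flat.2}''), and in fact you are more explicit than the paper in invoking Proposition \ref{flat.3} to handle condition (ii), which is genuinely needed; your treatment of the ``in particular'' clause via finite free modules is likewise a reasonable unpacking of the paper's one-line appeal to Proposition \ref{Mack.2}.
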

\begin{proof}
Combine Propositions \ref{Mack.2} and \ref{flat.2} to show the first claim.
For the second claim, use Proposition \ref{Mack.2} to show that every free $A$-module is flat.
\end{proof}

\section*{Addendum: Real topological Hochschild homology of schemes}

For commutative rings $A$ in which 2 is invertible, as well as for $A=\Z$,  \cite[Proposition 2.3.5]{HP23add} is true. However, for some rings $A$ in which 2 is not invertible the result is not correct as stated. The reason is the description of the ideal $T$ in the proof of loc.\ cit.\ is not correct in this case, and this might imply that the map $\alpha$ in loc.\ cit.\ is not an isomorphism.

In more detail, let $A$ be a commutative ring.
Due to \cite[Corollary 5.2]{DMPR21add},
$\ul{\pi}_0\THR(A)$ is isomorphic to the Mackey functor
\[
\begin{tikzcd}
A\ar[loop below,"\id"]\ar[r,shift left=0.5ex,"\tran"]\ar[r,shift right=0.5ex,leftarrow,"\res"']&
(A\otimes A)/T_A,
\end{tikzcd}
\]
where $\res(x\otimes y)=xy$ for $x,y\in A$,
$\tran(a)=2a\otimes 1$ for $a\in A$,
and $T_A$ is the subgroup generated by $x\otimes a^2 y-a^2 x\otimes y$ and $x\otimes 2ay-2ax\otimes y$ for $a,x,y\in A$.
Let $2A$ denote the ideal $(2)$ in $A$.
We have the monomorphism 
$2A\to (A\otimes A)/T_A$ given by $2a\mapsto 2a\otimes 1$ for $2a\in 2A$.
Its cokernel is isomorphic to $(A/2\otimes A/2)/T_{A/2}$,
where $A/2:=A/2A$.
Observe that $T_{A/2}$ is the subgroup generated by $x\otimes a^2 y-a^2 x\otimes y$ for $a,x,y\in A/2$.
Hence we have the short exact sequence
\[
0
\to
2A
\to
(A\otimes A)/T_A
\to
A/2\otimes_{\varphi,A/2,\varphi} A/2
\to
0,
\]
where $\varphi\colon A/2\to A/2$ denotes the Frobenius (i.e.\ the squaring map).
In particular, \cite[Proposition 2.3.5]{HP23add} holds if and only if $\varphi$ is surjective.

The only statement in \cite{HP23add} where this Proposition is used is the following one in the proof of \cite[Proposition 3.2.2]{HP23add}, for which we now provide an alternative proof.

\begin{prop1}
Let $A\to B$ be an \'etale homomorphism of commutative rings.
Then the induced morphism of Mackey functors
\[
\ul{\pi}_0\THR(\iota A)\square_{\iota A}\iota B
\to
\ul{\pi}_0\THR(\iota B)
\]
is an isomorphism.
\end{prop1}
\begin{proof}
By \cite[Lemma 5.1]{HPadd},
the Mackey functor $\ul{\pi}_0\THR(\iota A)\square_{\iota A}\iota B$ is isomorphic to
\[
\begin{tikzcd}
B\ar[loop below,"w"]\ar[r,shift left=0.5ex,"\tran"]\ar[r,shift right=0.5ex,leftarrow,"\res"']&
(A\otimes A)/T_A\otimes_A B.
\end{tikzcd}
\]
Using the above computations, we have the following commutative diagram where the vertical maps are induced by multiplication:

\[
\begin{tikzcd}[column sep=tiny]
0\ar[r]&
2A\otimes_A B
\ar[d,"\alpha"']\ar[r]&
(A\otimes A)/T_A \otimes_A B
\ar[d,"\beta"]\ar[r]&
A/2\otimes_{\varphi,A/2,\varphi}A/2\otimes_A B\ar[d,"\gamma"]\ar[r]&
0
\\
0\ar[r]&
2B\ar[r]&
(B\otimes B)/T_B\ar[r]&
B/2\otimes_{\varphi,B/2,\varphi}B/2\ar[r]&
0.
\end{tikzcd}
\]
We only need to show that $\beta$ is an isomorphism.
Since $B$ is flat over $A$,
the rows are short exact sequences, and $\alpha$ is an isomorphism.
The induced square of commutative rings
\[
\begin{tikzcd}
A/2\ar[d]\ar[r,"\varphi"]&
A/2\ar[d]
\\
B/2\ar[r,"\varphi"]&
B/2
\end{tikzcd}
\]
is coCartesian by \cite[Tag 0EBS]{stacksadd} since $A/2\to B/2$ is \'etale.
It follows that $\gamma$ is an isomorphism.
Hence $\beta$ is an isomorphism by the five lemma.
\end{proof}


\begin{thebibliography}{10}

\bibitem{AKGH}
{\sc G.~Angelini-Knoll, T.~Gerhardt, and M.~Hill}, {\em Real topological
  {H}ochschild homology via the norm and {R}eal {W}itt vectors}.
\newblock ArXiv Preprint 2111.06970.

\bibitem{SGA4}
{\sc M.~Artin, A.~Grothendieck, and J.~L. Verdier}, {\em Th\'eorie des topos et
  cohomologie \'etale des sch\'emas}, vol.~269, 270, 305 of Lecture Notes in
  Mathematics, Springer-Verlag, 1972--1973.
\newblock S\'eminaire de G\'eom\'etrie Alg\'ebrique du Bois--Marie 1963---64.

\bibitem{Ayo}
{\sc J.~Ayoub}, {\em Les six op\'{e}rations de {G}rothendieck et le formalisme
  des cycles \'{e}vanescents dans le monde motivique}, Ast\'{e}risque, 314, 315
  (2007).

\bibitem{BH21}
{\sc T.~Bachmann and M.~Hoyois}, {\em Norms in motivic homotopy theory},
  Ast\'{e}risque,  (2021), pp.~ix+207.

\bibitem{Bar17}
{\sc C.~Barwick}, {\em Spectral {M}ackey functors and equivariant algebraic
  {$K$}-theory ({I})}, Adv. Math., 304 (2017), pp.~646--727.

\bibitem{BGS20}
{\sc C.~Barwick, S.~Glasman, and J.~Shah}, {\em Spectral {M}ackey functors and
  equivariant algebraic {$K$}-theory, {II}}, Tunis. J. Math., 2 (2020),
  pp.~97--146.

\bibitem{BMS19}
{\sc B.~Bhatt, M.~Morrow, and P.~Scholze}, {\em Topological {H}ochschild
  homology and integral {$p$}-adic {H}odge theory}, Publ. Math. Inst. Hautes
  \'{E}tudes Sci., 129 (2019), pp.~199--310.

\bibitem{BPO2}
{\sc F.~Binda, D.~Park, and P.~A. {\O}stv{\ae}r}, {\em Logarithmic motivic
  homotopy theory}.
\newblock ArXiv Preprint 2303.02729.

\bibitem{BPOCras}
\leavevmode\vrule height 2pt depth -1.6pt width 23pt, {\em Motives and homotopy
  theory in logarithmic geometry}, C. R. Math. Acad. Sci. Paris, 360 (2022),
  pp.~717--727.

\bibitem{BM}
{\sc A.~J. Blumberg and M.~A. Mandell}, {\em Localization theorems in
  topological {H}ochschild homology and topological cyclic homology}, Geom.
  Topol., 16 (2012), pp.~1053--1120.

\bibitem{BHM93}
{\sc M.~B\"{o}kstedt, W.~C. Hsiang, and I.~Madsen}, {\em The cyclotomic trace
  and algebraic {$K$}-theory of spaces}, Invent. Math., 111 (1993),
  pp.~465--539.

\bibitem{Ca}
{\sc D.~Carmody}, {\em Cdh descent for homotopy {H}ermitian {$K$}-theory of
  rings with involution}, Doc. Math., 26 (2021), pp.~1275--1327.

\bibitem{Del09}
{\sc P.~Deligne}, {\em Voevodsky's lectures on motivic cohomology 2000/2001},
  in Algebraic topology, vol.~4 of Abel Symp., Springer, Berlin, 2009,
  pp.~355--409.

\bibitem{SGA3}
{\sc M.~Demazure and A.~Grothendieck}, {\em Sch\'{e}mas en groupes. {I}:
  {P}ropri\'{e}t\'{e}s g\'{e}n\'{e}rales des sch\'{e}mas en groupes}, Lecture
  Notes in Mathematics, Vol. 151, Springer-Verlag, Berlin-New York, 1970.
\newblock S\'{e}minaire de G\'{e}om\'{e}trie Alg\'{e}brique du Bois Marie
  1962/64 (SGA 3).

\bibitem{EGA}
{\sc J.~Dieudonn{\'e} and A.~Grothendieck}, {\em \'{E}l\'ements de
  g\'eom\'etrie alg\'ebrique}, Inst. Hautes \'Etudes Sci. Publ. Math., 4, 8,
  11, 17, 20, 24, 28, 32 (1961--1967).

\bibitem{DMPR21}
{\sc E.~Dotto, K.~Moi, I.~Patchkoria, and S.~P. Reeh}, {\em Real topological
  {H}ochschild homology}, J. Eur. Math. Soc. (JEMS), 23 (2021), pp.~63--152.

\bibitem{DO19}
{\sc E.~Dotto and C.~Ogle}, {\em {$K$}-theory of {H}ermitian {M}ackey functors,
  real traces, and assembly}, Ann. K-Theory, 4 (2019), pp.~243--316.

\bibitem{DHI04}
{\sc D.~Dugger, S.~Hollander, and D.~C. Isaksen}, {\em Hypercovers and
  simplicial presheaves}, Math. Proc. Cambridge Philos. Soc., 136 (2004),
  pp.~9--51.

\bibitem{DGM13}
{\sc B.~I. Dundas, T.~G. Goodwillie, and R.~McCarthy}, {\em The local structure
  of algebraic {K}-theory}, vol.~18 of Algebra and Applications,
  Springer-Verlag London, Ltd., London, 2013.

\bibitem{FL}
{\sc Z.~Fiedorowicz and J.-L. Loday}, {\em Crossed simplicial groups and their
  associated homology}, Trans. Amer. Math. Soc., 326 (1991), pp.~57--87.

\bibitem{GH}
{\sc T.~Geisser and L.~Hesselholt}, {\em Topological cyclic homology of
  schemes}, in Algebraic {$K$}-theory ({S}eattle, {WA}, 1997), vol.~67 of Proc.
  Sympos. Pure Math., Amer. Math. Soc., Providence, RI, 1999, pp.~41--87.

\bibitem{Greenlees92}
{\sc J.~P.~C. Greenlees}, {\em Some remarks on projective {M}ackey functors},
  J. Pure Appl. Algebra, 81 (1992), pp.~17--38.

\bibitem{GM95}
{\sc J.~P.~C. Greenlees and J.~P. May}, {\em Equivariant stable homotopy
  theory}, in Handbook of algebraic topology, North-Holland, Amsterdam, 1995,
  pp.~277--323.

\bibitem{GW}
{\sc J.~J. Guti\'{e}rrez and D.~White}, {\em Encoding equivariant commutativity
  via operads}, Algebr. Geom. Topol., 18 (2018), pp.~2919--2962.

\bibitem{HRW}
{\sc J.~Hahn, A.~Raksit, and D.~Wilson}, {\em A motivic filtration on the
  topological cyclic homology of commutative ring spectra}.
\newblock ArXiv Preprint 2206.11208.

\bibitem{HeKO}
{\sc J.~Heller, A.~Krishna, and P.~A. {\O}stv{\ae}r}, {\em Motivic homotopy
  theory of group scheme actions}, J. Topol., 8 (2015), pp.~1202--1236.

\bibitem{Herrmann}
{\sc P.~Herrmann}, {\em Equivariant {M}otivic {H}omotopy {T}heory}.
\newblock ArXiv Preprint 1312.0241.

\bibitem{Hes96}
{\sc L.~Hesselholt}, {\em On the {$p$}-typical curves in {Q}uillen's
  {$K$}-theory}, Acta Math., 177 (1996), pp.~1--53.

\bibitem{HHR}
{\sc M.~A. Hill, M.~J. Hopkins, and D.~C. Ravenel}, {\em On the nonexistence of
  elements of {K}ervaire invariant one}, Ann. of Math. (2), 184 (2016),
  pp.~1--262.

\bibitem{HHR21}
\leavevmode\vrule height 2pt depth -1.6pt width 23pt, {\em Equivariant stable
  homotopy theory and the {K}ervaire invariant problem}, vol.~40 of New
  Mathematical Monographs, Cambridge University Press, Cambridge, 2021.

\bibitem{Ho05}
{\sc J.~Hornbostel}, {\em {$A^1$}-representability of {H}ermitian {$K$}-theory
  and {W}itt groups}, Topology, 44 (2005), pp.~661--687.

\bibitem{HPS97}
{\sc M.~Hovey, J.~H. Palmieri, and N.~P. Strickland}, {\em Axiomatic stable
  homotopy theory}, Mem. Amer. Math. Soc., 128 (1997), pp.~x+114.

\bibitem{Hoy}
{\sc M.~Hoyois}, {\em The six operations in equivariant motivic homotopy
  theory}, Adv. Math., 305 (2017), pp.~197--279.

\bibitem{HuKO}
{\sc P.~Hu, I.~Kriz, and K.~Ormsby}, {\em The homotopy limit problem for
  {H}ermitian {K}-theory, equivariant motivic homotopy theory and motivic
  {R}eal cobordism}, Adv. Math., 228 (2011), pp.~434--480.

\bibitem{Hog}
{\sc A.~Høgenhaven}, {\em Real topological cyclic homology of spherical group
  rings}.
\newblock ArXiv Preprint 1611.01204.

\bibitem{KSW21}
{\sc M.~Karoubi, M.~Schlichting, and C.~Weibel}, {\em Grothendieck-{W}itt
  groups of some singular schemes}, Proc. Lond. Math. Soc. (3), 122 (2021),
  pp.~521--536.

\bibitem{lewis88}
{\sc L.~G. Lewis, Jr.}, {\em The {$R{\rm O}(G)$}-graded equivariant ordinary
  cohomology of complex projective spaces with linear {${\bf Z}/p$} actions},
  in Algebraic topology and transformation groups ({G}\"{o}ttingen, 1987),
  vol.~1361 of Lecture Notes in Math., Springer, Berlin, 1988, pp.~53--122.

\bibitem{LM06}
{\sc L.~G. Lewis, Jr. and M.~A. Mandell}, {\em Equivariant universal
  coefficient and {K}\"{u}nneth spectral sequences}, Proc. London Math. Soc.
  (3), 92 (2006), pp.~505--544.

\bibitem{LMS86}
{\sc L.~G. Lewis, Jr., J.~P. May, M.~Steinberger, and J.~E. McClure}, {\em
  Equivariant stable homotopy theory}, vol.~1213 of Lecture Notes in
  Mathematics, Springer-Verlag, Berlin, 1986.
\newblock With contributions by J. E. McClure.

\bibitem{Lo}
{\sc J.-L. Loday}, {\em Cyclic homology}, vol.~301 of Grundlehren der
  mathematischen Wissenschaften [Fundamental Principles of Mathematical
  Sciences], Springer-Verlag, Berlin, second~ed., 1998.
\newblock Appendix E by Mar\'{\i}a O. Ronco, Chapter 13 by the author in
  collaboration with Teimuraz Pirashvili.

\bibitem{HTT}
{\sc J.~{Lurie}}, {\em {Higher topos theory}}, vol.~170, Princeton, NJ:
  Princeton University Press, 2009.

\bibitem{HA}
\leavevmode\vrule height 2pt depth -1.6pt width 23pt, {\em {Higher algebra}}.
\newblock Unpublished book, {A}vailable at http://www.math.harvard.edu/
  $\sim$lurie/papers/HA.pdf., 2017.

\bibitem{mandell04}
{\sc M.~A. Mandell}, {\em Equivariant symmetric spectra}, in Homotopy theory:
  relations with algebraic geometry, group cohomology, and algebraic
  {$K$}-theory, vol.~346 of Contemp. Math., Amer. Math. Soc., Providence, RI,
  2004, pp.~399--452.

\bibitem{MM02}
{\sc M.~A. Mandell and J.~P. May}, {\em Equivariant orthogonal spectra and
  {$S$}-modules}, Mem. Amer. Math. Soc., 159 (2002), pp.~x+108.

\bibitem{MNN}
{\sc A.~Mathew, N.~Naumann, and J.~Noel}, {\em Nilpotence and descent in
  equivariant stable homotopy theory}, Adv. Math., 305 (2017), pp.~994--1084.

\bibitem{NS}
{\sc T.~Nikolaus and P.~Scholze}, {\em On topological cyclic homology}, Acta
  Math., 221 (2018), pp.~203--409.

\bibitem{PSW22}
{\sc I.~Patchkoria, B.~Sanders, and C.~Wimmer}, {\em The spectrum of derived
  {M}ackey functors}, Trans. Amer. Math. Soc., 375 (2022), pp.~4057--4105.

\bibitem{QS}
{\sc J.~D. Quigley and J.~Shah}, {\em On the equivalence of two theories of
  real cyclotomic spectra}.
\newblock ArXiv Preprint 2112.07462.

\bibitem{Rog09}
{\sc J.~Rognes}, {\em Topological logarithmic structures}, in New topological
  contexts for {G}alois theory and algebraic geometry ({BIRS} 2008), vol.~16 of
  Geom. Topol. Monogr., Geom. Topol. Publ., Coventry, 2009, pp.~401--544.

\bibitem{Rohrbach22}
{\sc H.~Rohrbach}, {\em The projective bundle formula for {Grothendieck}-{Witt}
  spectra}, J. Pure Appl. Algebra, 226 (2022), p.~106917.

\bibitem{Ryd13}
{\sc D.~Rydh}, {\em Existence and properties of geometric quotients}, J.
  Algebraic Geom., 22 (2013), pp.~629--669.

\bibitem{schlichting17}
{\sc M.~Schlichting}, {\em Hermitian {$K$}-theory, derived equivalences and
  {K}aroubi's fundamental theorem}, J. Pure Appl. Algebra, 221 (2017),
  pp.~1729--1844.

\bibitem{Schwede}
{\sc S.~Schwede}, {\em Lecture notes on equivariant stable homotopy theory}.
\newblock Unpublished notes. Available at
  https://www.math.uni-bonn.de/people/schwede/equivariant.pdf, 2016.

\bibitem{Seg73}
{\sc G.~Segal}, {\em Configuration-spaces and iterated loop-spaces}, Invent.
  Math., 21 (1973), pp.~213--221.

\bibitem{Shulman}
{\sc M.~G. Shulman}, {\em Equivariant local coefficients and the {RO(G)}-graded
  cohomology of classifying spaces}.
\newblock ArXiv Preprint 1405.1770.

\bibitem{Tho88}
{\sc R.~W. Thomason}, {\em Equivariant algebraic vs. topological {$K$}-homology
  {A}tiyah-{S}egal-style}, Duke Math. J., 56 (1988), pp.~589--636.

\bibitem{Ull}
{\sc J.~Ullman}, {\em Tambara functors and commutative ring spectra}.
\newblock ArXiv Preprint 1304.4912.

\bibitem{Voe10}
{\sc V.~Voevodsky}, {\em Homotopy theory of simplicial sheaves in completely
  decomposable topologies}, J. Pure Appl. Algebra, 214 (2010), pp.~1384--1398.

\bibitem{Wal79}
{\sc F.~Waldhausen}, {\em Algebraic {$K$}-theory of topological spaces. {II}},
  in Algebraic topology, {A}arhus 1978 ({P}roc. {S}ympos., {U}niv. {A}arhus,
  {A}arhus, 1978), vol.~763 of Lecture Notes in Math., Springer, Berlin, 1979,
  pp.~356--394.

\bibitem{GW91}
{\sc C.~A. Weibel and S.~C. Geller}, {\em \'{E}tale descent for {H}ochschild
  and cyclic homology}, Comment. Math. Helv., 66 (1991), pp.~368--388.

\bibitem{Xi}
{\sc H.~Xie}, {\em A transfer morphism for {Hermitian} {{\(K\)}}-theory of
  schemes with involution}, J. Pure Appl. Algebra, 224 (2020), p.~106215.

\end{thebibliography}

\begin{thebibliography}{9}

\bibitem{DMPR21add}
{\sc E.~Dotto, K.~Moi, I.~Patchkoria, and S.~P. Reeh}, {\em Real topological
  {H}ochschild homology}, J. Eur. Math. Soc. (JEMS), 23 (2021), pp.~63--152.

\bibitem{HPadd}
{\sc J.~Hornbostel and D.~Park}, {\em Real topological hochschild homology of
  perfectoid rings}.
\newblock ArXiv Preprint 2310.11183.

\bibitem{HP23add}
\leavevmode\vrule height 2pt depth -1.6pt width 23pt, {\em Real topological
  {H}oschschild homology of schemes}, JIMJ,  (2023).
\newblock doi:10.1017/S1474748023000178.

\bibitem{stacksadd}
{\sc {Stacks project authors}}, {\em The stacks project}.
\newblock \url{https://stacks.math.columbia.edu}, 2022.

\end{thebibliography}
\end{document}